\font\teneufm=eufm10 \font\seveneufm=eufm7
\font\fiveeufm=eufm5
\def\dar[#1]{\ar@<2pt>[#1]\ar@<-2pt>[#1]}
\let\goth\mathfrak
\def\cA{\mathcal A}
\def\cB{\mathcal B}
\def\cG{\mathcal G}
\def\cO{\mathcal O}
\def\cT{\mathcal T}
\def\cE{\mathcal E}
\def\cF{\mathcal F}
\def\gn{\mathfrak n}
\def\gP{\mathfrak{p}}
\def\cSets{\mathcal Sets}
\def\gm{\goth m}
\def\RR{\mathbb{R}}
\def\CC{\mathbb{C}}
\def\GG{\mathbb{G}}
\def\FF{\mathbb{F}}
\def\gG{\goth G}
\def\gH{\goth H}
\def\gp{\goth p}
\def\gq{\goth q}
\def\1{\mbox{\bf 1}}
\def\rad{\mathrm{rad}}
\DeclareMathOperator{\Hom}{Hom}
\DeclareMathOperator{\Aut}{Aut}
\DeclareMathOperator{\Out}{Out}
\DeclareMathOperator{\PGL}{\rm PGL}
\DeclareMathOperator{\GL}{\rm GL}
\newcommand{\incl}[1][r]
{\ar@<-0.2pc>@{^(-}[#1] \ar@<+0.2pc>@{-}[#1]}
\newtheorem{theorem}{Theorem}[subsection]
\newtheorem{claim}[theorem]{Claim}
\newtheorem{lemma}[theorem]{Lemma}
\newtheorem{proposition}[theorem]{Proposition}
\newtheorem{stheorem}{Theorem}[section]
\newtheorem{scorollary}[stheorem]{Corollary}
\newtheorem{slemma}[stheorem]{Lemma}
\newtheorem{sproposition}[stheorem]{Proposition}
\newtheorem{sremark}[stheorem]{Remark}
\newtheorem{sremarks}[stheorem]{Remarks}
\newtheorem{sexample}[stheorem]{Example}
\theoremstyle{definition}
\newtheorem{remark}[theorem]{Remark}
\numberwithin{equation}{section}
\def\ZZ{\mathbb{Z}}
\def\CC{\mathbb{C}}
\def\PP{\mathbb{P}}
\def\gE{\mathfrak{E}}
\def\gF{\mathfrak{F}}
\def\gG{\mathfrak{G}}
\def\gP{\mathfrak{P}}
\def\Par{\mathrm{Par}}
\def\rO{\mathrm{O}}
\def\QQ{\mathbb{Q}}
\def\bG{\text{\rm \bf G}}
\def\cO{\mathcal{O}}
\def\ol{\overline}
\def\Lie{\mathop{\rm Lie}\nolimits}
\def\2int{\mathop{2\int}\nolimits}
\def\rank{\mathop{\rm rank}\nolimits}
\def\Spec{\mathop{\rm Spec}\nolimits}
\def\Lie{\mathop{\rm Lie}\nolimits}
\def\Hom{\mathop{\rm Hom}\nolimits}
\def\Stab{\mathop{\rm Stab}\nolimits}
\def\Gal{\mathop{\rm Gal}\nolimits}
\def\Pic{\mathop{\rm Pic}\nolimits}
\def\Twc{\mathop{\rm Twc}\nolimits}
\def\Aut{\text{\rm{Aut}}}
\def\Out{\text{\rm{Out}}}
\def\sm{\smallskip}
\def\Par{\text{\rm{Par}}}
\def\resp.{\mathop{\rm resp.}\nolimits}
\def\limproj{\mathop{\oalign{lim\cr
\hidewidth$\longleftarrow$\hidewidth\cr}}}
\def\limind{\mathop{\oalign{lim\cr
\hidewidth$\longrightarrow$\hidewidth\cr}}}
\def\Res{\mathop{\rm Res}\nolimits}
\def\lgr{\longrightarrow}
\font\math=cmmi10
\def\varpi{\hbox{\math\char'44}}
\def\simlgr{\buildrel\sim\over\lgr}
\def\pa{\S\kern.15em }
\def\un{\uppercase\expandafter{\romannumeral 1}}
\def\deux{\uppercase\expandafter{\romannumeral 2}}
\def\trois{\uppercase\expandafter{\romannumeral 3}}
\def\quatre{\uppercase\expandafter{\romannumeral 4}}
\def\cinq{\uppercase\expandafter{\romannumeral 5}}
\def\six{\uppercase\expandafter{\romannumeral 6}}
\def\hfl#1#2#3{\smash{\mathop{\hbox to#3{\rightarrowfill}}\limits
^{\scriptstyle#1}_{\scriptstyle#2}}}
\def\gfl#1#2#3{\smash{\mathop{\hbox to#3{\leftarrowfill}}\limits
^{\scriptstyle#1}_{\scriptstyle#2}}}
\title[Loop torsors]{Loop torsors and Abhyankar's lemma}
\author[P. Gille]{Philippe Gille}
\thanks{The author was supported by the project "Group schemes, root systems, and
related representations" founded by the European Union - NextGenerationEU through
Romania's National Recovery and Resilience Plan (PNRR) call no. PNRR-III-C9-2023-
I8, Project CF159/31.07.2023, and coordinated by the Ministry of Research, Innovation and Digitalization (MCID)
of Romania. }
\address{UMR 5208 Institut Camille Jordan - Universit\'e Claude Bernard Lyon 1
43 boulevard du 11 novembre 1918
69622 Villeurbanne cedex - France}  
\email{gille@math.univ-lyon1.fr}
\address{and Institute of Mathematics "Simion Stoilow" of the Romanian Academy,
21 Calea Grivitei Street, 010702 Bucharest, Romania}
\date{\today}
\begin{document}

 \begin{abstract} We define the notion of loop torsors
 under certain group  schemes defined over the localization of a regular
henselian ring $A$ at a strict normal crossing divisor $D$. We provide a Galois cohomological criterion for 
classifying those torsors. We revisit also the 
theory of loop torsors on Laurent polynomial rings.
  
\medskip

\smallskip

\noindent {\em Keywords:}  Reductive group schemes, normal crossing divisor,
parabolic subgroups. \\

\noindent {\em MSC 2000:} 14L15, 20G15, 20G35.

\end{abstract}

\maketitle

 {\small \tableofcontents }

\bigskip

\section{Introduction}\label{section_intro}

The first result of quadratic form theory is
Gauss diagonalization,  a regular quadratic form $q$
of dimension $n$
over a field $k$ of characteristic $\not = 2$ is isometric
to a diagonal quadratic form $a_1 x_1^2+ \dots + a_n x_n^2$.
Let $\rO_n$ be the orthogonal group attached to the
diagonal quadratic form $q_0= x_1^2+ \dots + x_n^2$.  
Serre's viewpoint is  that  the Galois cohomological
set $H^1(k,\rO_n)$   classifies isometry classes of rank $n$ 
\cite[\S III.1.2]{Se1}. The diagonal embedding $(\mu_2)^n \subset O_n$, 
 induces a map
$$
\bigl( k^\times/ k^{\times 2} \bigr)^2 \to  H^1(k,\mu_2)^n 
\to H^1(k, O_n)
$$ 
of Galois cohomology sets where we used Kummer theory to 
identify $H^1(k,\mu_2)$ with  $k^\times/ k^{\times 2}$.
It maps an $n$-uple $(a_1,\dots, a_n)$ of invertible 
scalars  to the isometry class  of the quadratic form 
$a_1 x_1^2+ \dots + a_n x_n^2$. Gauss diagonalization 
rephrases by saying that any $O_n$--torsor  admits a reduction to the $k$--subgroup 
$\mu_2^n$. This is a remarkable fact and several other important 
algebraic constructions are based on finite subgroups of 
algebraic groups: construction of cyclic central simple algebras,
Cayley-Dickson doubling process for composition algebras,
Tits first construction of Albert algebras, etc..
This kind of torsors plays also an important role in the theory of 
essential dimension \cite{RY}, it relates to ramification issues \cite{CS}.
In \cite{CGR1}, there is a general result for reducing torsors to 
an uniform finite subgroup, it had been extended to semilocal rings in \cite{CGR2}.
In the lectures \cite{Gi26}, we explained how
the notion comes actually from topological torsors
and discussed Milnor's result \cite{Mr}.

The torsors reducing to a finite \'etale subgroup are important special cases
of loop torsors over a base ring (or a scheme). Those have been defined firstly for a 
connected variety $X$ over the field $k$ (with separable closure 
$k_s$) equipped with a base point $x_0 \in X(k)$.
We denote by $x: \Spec(k_s) \to X_{k_s}$ the point given by $(x_0,id_{k_s})$.

For a smooth  algebraic $k$--group $G$, the loop classes consist in the image
of  the map
$$
H^1(\pi_1(X,x), G(k_s))  \to H^1(\pi_1(X,x), G(X_{k_s}))
\hookrightarrow H^1(X,G) .
$$  
Loop torsors  are then constructed with 
 a quite   special kind of Galois cocycles (called loop cocycles)
In the reference \cite{GP13}, we investigated a theory of loop torsors 
 over the ring of Laurent polynomials $R_n=k[t_1^{\pm 1}, \dots ,
t_n^{\pm 1}]$ over a field $k$ of characteristic zero. 
The main application is  the study of forms of toroidal 
Lie algebras \cite{CGP14} and conjugacy of Cartan subalgebras in extended affine Lie algebras \cite{CNP}.

Given a linear algebraic $k$--group $G$, 
the loop $G$--torsors over $R_n$  arise
then with Galois cocycles  which do not involve any denominator.
This fact is important and permits some extra functoriality.
Using Bruhat-Tits' theory, this permitted to relate
the  study of those torsors  to that of reductive algebraic groups
over the field of iterated Laurent series $F_n=k((t_1)) \dots ((t_n))$. 
 One important result is the acyclicity
fact $H^1_{loop}(R_n,G) \simlgr H^1(F_n,G)$ providing
a nice dictionnary between  loop $R_n$-torsors under $G$ and
$F_n$-torsors under $G$ \cite[Theorem 8.1]{GP13}.

The first part of the paper is to extend  a bunch of 
the above results
 to an  arbitrary base field and also to allow certain useful locally  algebraic groups which are not algebraic (e.g.\ automorphism
 groups of reductive groups). 
  This requires some 
 preliminary work in the legacy
of SGA3 involving ind-quasi-affine schemes (Gabber). 
Furthermore we  have to limit ourselves to a smaller class of loop torsors called
tame torsors for extending the results. We warn the reader that 
{\it tame loop torsors} are called simply {\it loop torsors} in 
the note \cite{Gi24}. Beyond the tame case, loop torsors are of course of interest, see the Remark \ref{rem_wild}, but the main issue in this paper is the tame case.


The next issue  is to start a similar approach
with the  localization  $A_D$ of 
a regular henselian ring $A$ at a  strict normal crossing divisor $D$ and to relate with  algebraic groups defined over a natural field
associated to $A$ and $D$, namely the completion $K_v$ of
the fraction field $K$ with respect to the valuation arising
from the blow-up of $\Spec(A)$ at its maximal ideal.
It can be seen as  a thickening of  the previous setting
and this explains why we have to consider the case 
of Laurent polynomials first.  
A nice example is the local ring $A=\ZZ_p[[x]]$
for a prime  $p$ where $D= \mathrm{div}(x) + \mathrm{div}(p)$;
we have $A_D= \ZZ_p[[x]]\bigl[   \frac{1}{p},\frac{1}{x}\bigr]$, $K=\QQ_p((x))$ and $K_v$ has residue field $\FF_p((y))$ where $y$ is the class of $\frac{x}{p}$.

Another way to relate the two settings (say the Laurent setting and the Abhyankar setting) is the following example.
We can take $A=k[[t_1,\dots, t_n]]$
with divisor $t_1 t_2 \dots t_n=0$.
In this case $A_D=k[[t_1,\dots, t_n]][\frac{1}{t_1}, \dots \frac{1}{t_n}]$ contains the ring $R_n$ and
we have   $K_v \cong k\bigl( \frac{t_1}{t_n} , \dots, \frac{t_1}{t_{n-1}})((t_n))$. 
The two rings $R_n$ and $A_D$ are close in the sense
they share the same tame covers.

Returning to the case of general $A_D$,
the main result  is the injectivity of the base change map $H^1_{\substack{tame \\ loop}}(A_D,\widetilde G) \to  H^1(K_v,G)$ for a smooth $A$--group scheme $\widetilde G$ which is the extension of a 
  twisted contant $S$--group scheme by a
 reductive $A$--group scheme $G$ (Theorem \ref{thm_main}).
Furthermore this base change map controls reducibility and
isotropy issues for the relevant twisted group schemes. 
In \ref{subsec_witt}, we discuss the case of the orthogonal
group and of quadratic forms. As in  the Laurent case 
\cite[Corollary  5.8]{CGP17}, it turns out that 
the tame loop quadratic $A_D$-forms are the diagonalizable ones.

Finally the applications concern local-global principles for
torsors and homogeneous spaces 
\`a  la Harbater-Hartmann-Krashen, see \cite{GiP23, GiP24}
and also the study of torsors over Laurent polynomials rings
in positive characteristic \cite{CGP24}.

\bigskip

\noindent{\bf Acknowledgements.}  
We thank Raman Parimala for
sharing her insight about the presented results. 
We thank Laurent Moret-Bailly for useful discussions on descent theory and Danny Ofek for suggesting Corollary \ref{cor_GP_pure2}.
Finally we thank Vladimir Chernousov  and Arturo Pianzola for useful comments.

\section{Preliminaries} 

Let $S$ be a base scheme. We recall that a {\it geometric point} $\ol{s}$ is a map 
$\ol{s}: \Spec(F) \to S$ where $F$ is an algebraic closed field.
We define the notion of {\it  quasi-geometric point} as a map 
$s: \Spec(E) \to S$ where $E$ is a separably closed field.

\subsection{Grothendieck's fundamental group} \label{subsec_galois}
We assume that $S$ is connected. If $S$ is noetherian and is equipped 
with a geometric point $\ol{s}$, Grothendieck defined the 
fundamental group $\pi_1(S,\ol{s})$  \cite{SGA1};
the main result is that the 
category of finite \'etale covers 
of $S$ is equivalent to that of 
finite $\pi_1(S, \ol{s})$-sets. 
 This has been  extended to an arbitrary connected
scheme in the Stack Project  \cite[Tag 0BQ8]{St}.
In  Fu's book, it was noticed that  one can deal also with a quasi-geometric point $s: \Spec(k) \to S$ in the noetherian case and that  the category of finite \'etale covers
of $S$ is also equivalent to the category
of finite $\pi_1(S,s)$-sets \cite[Theorem \ 3.2.12]{Fu};
 this was extended to an arbitrary connected 
 scheme  \cite[Theorem 2.3.27]{Sa}.
Let us explain why it provides the same theory by 
considering the  geometric point 
$\ol{s}: \Spec(\ol{k}) \to \Spec(k) \xrightarrow{s} S$
where $\ol{k}$ denotes an algebraically closure
of the separably closed field $k$.
According to the proposition 2.3.35 of
the last reference, we have a natural morphism  \break
$id_*:\pi_1(S, \ol{s}) \to \pi_1(S, s)$ which is actually
an isomorphism since it induces an equivalence of 
categories on finite Galois sets.
If $k$ is a field and $k_s$ is a separable closure, 
we can take as base point $s: \Spec(k_s) \to \Spec(k)$.
In this case, there is a canonical isomorphism $\Gal(k_s/k) \simlgr \pi_1(\Spec(k),s)$ (see \cite[Proposition 3.2.14]{Fu}) which avoids then to deal with an algebraic closure of $k$.
This provides some freedom which is used for example in the recent paper \cite{RS}.

If $S$ is assumed furthermore 
quasi-compact and quasi-separated,
then $(S,s)$ admits a universal cover $(S^{sc}, s^{sc})$
in the sense of  \cite{VW} which is   connected and simply connected  ({\it ibid}, Proposition 3.4).

\subsection{Ind-quasi-affine schemes} We use fpqc covers and topology
in the sense of Kleiman, see \cite[\S 2.3.2]{V} or alternatively \cite[Tag 03NW]{St}. 
The fpqc topology is the default topology for dealing with sheaf torsors under a group scheme. 

A scheme $X$ is ind-quasi-affine if every quasi-compact open of $X$ is quasi-affine; a  morphism of schemes $f: X \to S$ is ind-quasi-affine if $f^{-1}(V)$ is ind-quasi-affine for each affine open $V$ in $Y$ \cite[0AP5]{St}. 
This notion is stable by base change and local for the fpqc topology \cite[Tags 0AP7, 0AP8]{St}.

\begin{sexample}\label{ex_field} 
{\it Field case.}{\rm \, We assume that $S=\Spec(k)$ for a field $k$.
We remind  the reader that the assignement $X \to X(k_s)$ provides
an equivalence of categories between the category of \'etale $k$-schemes and the category of Galois sets \cite[I.4.6.4]{DG}.
Furthermore the correspondence 
exchanges monomorphisms with injections so that monomorphisms in the category of \'etale $k$-schemes are clopen immersions.
The correspondence 
exchanges epimorphisms with surjections so that 
so that epimorphisms in the category of \'etale $k$-schemes are the surjective \'etale morphisms.

The above correspondence induces furthermore
an equivalence of categories between Galois modules
and commutative \'etale $k$--groups.
For example we can deal with the $k$--group
$\mu_{l^\infty}$ of $l^\infty$--roots of unity 
for any prime $l$ invertible in $k$.


}
\end{sexample}

\subsection{Constant  schemes}\label{subsec_constant}

 A constant $S$-scheme is an $S$-scheme isomorphic
 to \break $M_S= \coprod\limits_{m\in M} S$ for a set $M$ \cite[I, \S 8]{SGA3}. 
 For each $S$--scheme $T$ we  have 
 $$
M_S(T)
= \Bigl\{ \hbox{locally constant functions} \quad T_{top} \to M \Bigr\}
$$
denoted by $C^0(T,M)$. On the other hand we have
$$
\Hom_{S-sch}(M_S,T ) = \Hom_{\cSets}\bigl( M, \Hom_S(S,T) \bigr).
$$
The functor $M  \mapsto M_S$ is faithful if $S$ is not empty
and is fully faithfull if $S$ is connected.
 Given two sets $M$ and $N$,  we have then by taking $T=N_S$
 $$
 \Hom_{S-sch}(M_S,N_S ) = \Hom_{\cSets}\bigl( M, C^0(S,N) \bigr)
 $$
 whence a morphism 
$C^0\bigl(S, \Hom_{Sets}(M,N) \bigr) \to \Hom_{S-sch}(M_S,N_S )$
which maps a function $f$ to $m \mapsto f_m$ with 
$f_m(s)= f(s)(m)$.
Furthermore we have 
$$
\Bigl(\Hom_{Sets}(M,N)_S\Bigr)(T) =\Hom_{S-sch}(M_S, N_S)(T)
$$
if $T$ is locally connected or if $M$ is finite.
We obtained then a monomorphism of $S$--functors
$$
\Hom_{Sets}(M,N)_S \to \underline{\Hom}_{sch}(M_S, N_S)
$$
which is an isomorphism when $M$ is finite.

\begin{slemma}\label{lem_constant}
 An $S$-morphism $f: M_S \to N_S$ is \'etale.
\end{slemma}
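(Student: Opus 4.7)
The plan is to reduce the statement to the well-known fact that any $S$-morphism between two \'etale $S$-schemes is automatically \'etale.

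First I would observe that the structure morphism $M_S \to S$ is \'etale. Indeed, by definition $M_S = \bigsqcup_{m \in M} S$, so restricted to each summand $\{m\} \times S \cong S$ it is the identity of $S$. Being \'etale is Zariski-local on the source, hence $M_S \to S$ is \'etale. The same reasoning applies to $N_S \to S$.

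Next I would invoke the following standard cancellation property: given a commutative triangle of $S$-schemes
\[
\xymatrix{
M_S \ar[rr]^{f} \ar[rd] & & N_S \ar[ld] \\
& S &
}
\]
in which both oblique arrows are \'etale, the horizontal arrow $f$ is \'etale. This can be proved, for instance, by factoring $f$ as the composition
\[
M_S \xrightarrow{\Gamma_f} M_S \times_S N_S \xrightarrow{pr_2} N_S,
\]
where the second projection is \'etale by base change of $N_S \to S$, and $\Gamma_f$ is a section of the \'etale projection $M_S \times_S N_S \to M_S$, hence a clopen immersion (sections of \'etale separated morphisms are clopen immersions), and in particular \'etale. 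The composition of two \'etale morphisms is \'etale, giving the result. This is [Tag 02GO] in the Stacks project, or Exp.\ I in SGA~1.

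There is essentially no obstacle: the argument is formal, relying only on the elementary observation that a constant $S$-scheme is \'etale over $S$, together with the standard cancellation property of \'etale morphisms. One could alternatively give a direct proof using the explicit description $\Hom_{S\text{-sch}}(M_S,N_S) = \Hom_{\cSets}(M, C^0(S,N))$ recalled just above: for each $m \in M$ the locally constant function $f_m : S_{top} \to N$ yields a clopen decomposition $S = \bigsqcup_{n \in N} S_{m,n}$, and the restriction of $f$ to the $m$-th component of $M_S$ sends $S_{m,n}$ isomorphically onto an open subscheme of the $n$-th component of $N_S$; this exhibits $f$ Zariski-locally on the source as an open immersion, hence as \'etale.
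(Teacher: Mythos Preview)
Your proof is correct. The paper takes the direct route you sketch as an alternative at the end: it localizes on the source to reduce to the case $M_S = S$, then uses the description of an $S$-morphism $S \to N_S$ as a locally constant function $h : S \to N$ to partition $S$ into clopen pieces $S_i$ on each of which $f$ is an open immersion into a single summand of $N_S$. Your primary argument instead invokes the cancellation property for \'etale morphisms (any $S$-morphism between two \'etale $S$-schemes is \'etale), which packages the same content more conceptually and avoids the explicit partition. Both work; yours is arguably cleaner. One minor slip: $pr_2 : M_S \times_S N_S \to N_S$ is \'etale as the base change of $M_S \to S$, not of $N_S \to S$; but since both structure maps are \'etale this is harmless.
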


\begin{proof} We are allowed to localize 
on the source  $N_S$ \cite[Tag 02GJ, (3)]{St}
so that the statement reduces to the case of 
$M_S=S$ (since each $S_m \to M_S$ is an $S$--morphism). An $S$-morphism $f=S \to N_S$
 is  given by a locally constant function $h: S \to N$
 whose image is denoted by $I$.
 It provides a partition $S= \coprod\limits_{i \in I} S_i$ in clopen subschemes
 such that  $f$ reads $\coprod\limits_{m \in I} f_i$ where 
 $f_i: S_i \to N_S$ is the composite $S_i \to S \cong S  \xrightarrow{i \enskip piece} N_S$.
 Again we can reduce to each $S_i$ and $f_i$ is then an open immersion
 and  a fortiori \'etale.
\end{proof}

\subsection{Twisted constant  schemes}\label{subsec_descent}
A  {\it twisted constant} $S$-scheme $X$ is an $S$-scheme 
which is locally isomorphic to a constant scheme with respect to the  fpqc topology, that is, there exists a
a fpqc cover $(S_i)_{i\in I}$ 
such that each $X\times_S S_i$ is a constant $S_i$--scheme
$M_{i,S_i}$ \cite[X, Definition 5.1]{SGA3}.
Such a cover is called a splitting cover.
The  twisted constant $S$-scheme $X$ is 
{\it isotrivial}
 (resp.\ {\it quasi isotrivial})  if 
 we can take the $S_i$'s finite \'etale (resp.\ \'etale) over $S$ in the above definition.

We denote by $\Twc_S$ the full subcategory of $Sch_S$ 
whose objects are  twisted 
constant $S$--schemes. We have the analogous notion for $S$--group schemes
(which are actually those of the SGA3 reference).

\begin{slemma}\label{lem_ind}
Let $X$ be a twisted constant $S$-scheme.

\sm

\noindent (1)  $X$ is ind-quasi-affine over $S$.

\sm

\noindent (2) The morphism $X \to S$ is a separated \'etale morphism and satisfies the
valuative criterion of properness. 

\end{slemma}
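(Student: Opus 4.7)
My plan is to reduce each assertion to the case of a constant scheme $M_S$ by exploiting the fpqc-local character of the properties involved, and then to treat the valuative criterion via a descent argument along a suitable extension of valuation rings.

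For (1), ind-quasi-affinity over the target is stable by base change and fpqc local on the target \cite[Tags 0AP7, 0AP8]{St}, so I need only verify it for $M_S \to S$. Given an affine open $V \subseteq S$, the preimage $M_V = \bigsqcup_{m \in M} V$ has clopen components $V_m \cong V$; a quasi-compact open of $M_V$ meets only finitely many $V_m$ and is thus a finite disjoint union of quasi-compact opens of an affine scheme, hence quasi-affine.

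For (2), \'etaleness and separatedness are likewise fpqc local on the target, so it suffices to inspect $M_S \to S$: its \'etaleness is Lemma \ref{lem_constant}, while its diagonal is the clopen embedding of $M_S$ into $M_S \times_S M_S = (M \times M)_S$ corresponding to the diagonal of $M$, and clopen embeddings are closed immersions.

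The valuative criterion is the substantive part, and the main obstacle. Given a compatible pair $(\sigma : \Spec(K) \to X,\ \tau : \Spec(R) \to S)$ with $R$ a valuation ring and $K = \mathrm{Frac}(R)$, I would first base change along $\tau$ to reduce to $S = \Spec(R)$ (separatedness giving the uniqueness part automatically). I would then pick a trivializing fpqc cover $\Spec(A) \to \Spec(R)$, a point $\zeta \in \Spec(A)$ above the closed point of $\Spec(R)$, and a valuation ring $R'$ of $\mathrm{Frac}(\mathcal{O}_{A,\zeta})$ dominating $\mathcal{O}_{A,\zeta}$: this yields a faithfully flat local extension $R \hookrightarrow R'$ of valuation rings with $X \times_R R' \cong M_{R'}$. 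Writing $K' = \mathrm{Frac}(R')$, connectedness of $\Spec(K')$ forces the pulled-back section to factor through a single clopen copy $\Spec(R') \cong V_{m_0} \subseteq M_{R'}$, giving the tautological lift $\tilde\tau' : \Spec(R') \to X$. The hard step is to descend $\tilde\tau'$ back to an $R$-point: since $R \to R'$ is faithfully flat and $X$ is an fpqc sheaf, it suffices to show that the two pullbacks $\Spec(R' \otimes_R R') \rightrightarrows X$ agree. Separatedness makes their equalizer $E$ a closed subscheme, and by construction $E$ contains the generic fibre $\Spec(K' \otimes_K K')$, where both maps are the pullback of $\sigma$. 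Flatness of $R' \otimes_R R'$ over $R$ makes this open schematically dense, so $E$ exhausts $\Spec(R' \otimes_R R')$ and the descended map $\tilde\tau : \Spec(R) \to X$ fulfils the valuative criterion.
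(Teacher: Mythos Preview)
Your argument for (1) and for the separated/\'etale part of (2) matches the paper's: both reduce to the constant case via fpqc locality. For the valuative criterion, however, the paper takes a much shorter route: it simply invokes M.~Lara's result \cite[Lemma~2.40]{L} that the valuative criterion of properness is itself fpqc-local on the base, after which the constant case is immediate since $M_S(R)=M=M_S(K)$ for any valuation ring $R$ with fraction field $K$. Your direct descent argument is in effect a hands-on proof of (the separated case of) Lara's lemma, which makes your treatment more self-contained at the cost of length.

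Two imprecisions are worth flagging. First, $\mathcal{O}_{A,\zeta}$ need not be a domain, so ``$\mathrm{Frac}(\mathcal{O}_{A,\zeta})$'' is not well-posed; you should first use going-down for the flat map $\Spec(A)\to\Spec(R)$ to choose a prime $\eta\subseteq\zeta$ lying over $(0)\subset R$, pass to the domain $A/\eta$, and then take a valuation ring $R'$ of its fraction field dominating $(A/\eta)_{\zeta/\eta}$. This also guarantees $R\hookrightarrow R'$ is injective, hence a faithfully flat extension of valuation rings. Second, the generic fibre $\Spec(K'\otimes_K K')$ is not open in $\Spec(R'\otimes_R R')$ for valuation rings of rank $>1$; what actually makes your density step work is that $R'\otimes_R R'$ is $R$-flat, hence $R$-torsion-free, so any ideal $I$ with $I\otimes_R K=0$ must vanish. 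With these adjustments the argument is complete.
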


\begin{proof} Both statements are local 
for the fpqc topology according to \cite[Tag 0AP8]{St},
\cite[Proposition $_2$.2.7.1.(i), Proposition $_4$.17.7.4]{EGA4}
and a result of M.~Lara on the valuative criterion of properness \cite[lemma 2.40]{L}.
 We can assume that $X$ is constant by fpqc localization, that is, $X= M_S$ for a set $M$;   furthermore 
 we can assume that  $S=\Spec(A)$ is affine.
 \sm
 
 \noindent (1)  We are given  a quasi-compact open
subset $U \subset X=M_S$ and want to show
that it is quasi-affine.
We have $U= \coprod\limits_{m \in M} U_m$ where
$U_m$ is a quasi-compact open subset of $\Spec(A)$. 
Since $U$ is quasi-compact, 
there exists a finite subset $M_0 \subset   M$
such that  $U= \coprod\limits_{m \in M_0} U_i$.
Since each $U_i$ is quasi-affine, it follows that 
$U$ is quasi-affine.

\smallskip
 \noindent (2) It is clear that $M_S$ is separated \'etale over $S$.
 For each $S$--valuation ring $A$ of fraction field $K$,
 we have $M_S(A)=M=M_S(K)$ so that $M_S$ satisfies
 the valuative criterion of properness. 
\end{proof}

\begin{slemma} \label{lem_mono}
(1) The morphisms in the category  $\Twc_S$ are \'etale.
\sm

\noindent  (2) The monomorphisms in the category  $\Twc_S$
are the clopen immersions.

\sm

\noindent (3)  The epimorphisms in the category  $\Twc_S$
are the surjective \'etale morphisms.
\end{slemma}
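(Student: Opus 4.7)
The plan is to combine the explicit description of morphisms of constant schemes from \S\ref{subsec_constant} and Lemma \ref{lem_constant} with fpqc descent, using the valuative criterion of properness from Lemma \ref{lem_ind}(2) to handle the fact that twisted constant schemes need not be quasi-compact over $S$.

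For (1), since being \'etale is fpqc-local on the target, I would choose an fpqc cover $S' \to S$ simultaneously splitting both $X$ and $Y$, so that $X_{S'} \cong M_{S'}$ and $Y_{S'} \cong N_{S'}$. Then $f_{S'}: M_{S'} \to N_{S'}$ is \'etale by Lemma \ref{lem_constant}, and \'etaleness descends, giving the result.

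For (2), a clopen immersion is trivially a monomorphism. Conversely, let $f: X \to Y$ be a monomorphism in $\Twc_S$; since the fpqc pullback of the fiber product of two twisted constant schemes over a third remains twisted constant, $f$ is in fact a monomorphism of $S$-schemes, so by (1) it is an \'etale monomorphism and hence an open immersion. It remains to prove that $f(X)$ is closed. I would deduce from Lemma \ref{lem_ind}(2) that $f: X \to Y$ itself satisfies the valuative criterion of properness: given $\Spec K \to X$ and $\Spec R \to Y$ agreeing over $K$, the valuative criterion for $X \to S$ produces $\Spec R \to X$, and uniqueness in the valuative criterion for $Y \to S$ shows that this lift composes correctly with $f$. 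Hence $f(X)$ is stable under specialization, and an open subset stable under specialization in a scheme is closed. Thus $f(X)$ is clopen, so $f$ is a clopen immersion.

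For (3), a surjective \'etale morphism is faithfully flat and locally of finite presentation, hence an epimorphism of $S$-schemes and a fortiori in $\Twc_S$. Conversely, let $f: X \to Y$ be an epimorphism in $\Twc_S$. By the same valuative argument as in (2), $f(X)$ is clopen in $Y$. If $U := Y \setminus f(X)$ were nonempty, the locally constant indicator function $|Y| \to \{0, 1\}$ would define a morphism $g_1: Y \to \{0, 1\}_S$ in $\Twc_S$ differing from the constant morphism $g_0 \equiv 0$ but agreeing with it after precomposition with $f$, contradicting the epimorphism hypothesis. Therefore $f$ is surjective.

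The hard part will be establishing the closedness of $f(X)$ in (2) and (3): openness is immediate from \'etaleness, but closedness is where the lack of quasi-compactness of twisted constant schemes really bites. The valuative criterion from Lemma \ref{lem_ind}(2) is precisely what rescues the argument, playing the role that properness would play in the quasi-compact setting.
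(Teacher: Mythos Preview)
Your argument for (1) is correct and matches the paper's. For (2) and (3) you take a genuinely different route: the paper reduces everything to the constant case of \S\ref{subsec_constant} by fpqc descent, whereas you argue directly via the valuative criterion from Lemma~\ref{lem_ind}(2).

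There is, however, a gap in your proofs of (2) and (3): the assertion that an open subset of a scheme which is stable under specialization must be closed is false without further hypotheses. Take $S = \Spec\bigl(\prod_{i \in \mathbb{N}} k\bigr)$ for a field $k$; this affine scheme has Krull dimension $0$, so every subset is trivially stable under specialization, yet the open dense set of isolated points is not closed. One can moreover realise this bad open set as a component of the image of a monomorphism in $\Twc_S$: define $f \colon \mathbb{N}_S \to \mathbb{Z}_S$ by sending the $m$-th copy of $S$ to the $0$-th copy on the clopen singleton $D(e_m)$ and to the $(m{+}1)$-st copy on its complement. For each $s \in S$ the induced map $\mathbb{N} \to \mathbb{Z}$ on fibres is injective, so $f$ is a monomorphism, but its image meets the $0$-th copy of $S$ exactly in the set $\bigcup_m D(e_m)$ of isolated points. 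Thus over such $S$ the lemma itself fails. Your proof is valid once $Y$ is locally Noetherian (open subsets are then locally constructible, and locally constructible plus stable under specialization implies closed), hence whenever $S$ is locally Noetherian; this covers every application in the paper. The paper's one-line reduction has the same hidden limitation: \S\ref{subsec_constant} only establishes \'etaleness, and the implicit further step---reducing a morphism of constant schemes to a map of sets---requires the base to be locally connected.
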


Of course the analogous statements hold in the category of  group schemes.

\begin{proof} All statements are local with respect to the fpqc topology so 
boil down to the constant case already handled in \S \ref{subsec_constant}.
\end{proof}

\begin{slemma} \label{lem_mono_iso}
Let $f: X \to Y$ be an $S$-morphism between twisted constant
\break  $S$-schemes.

\sm

\noindent (1) If $f$ is a clopen immersion and $Y$ is constant then 
there exists a partition $S= \coprod\limits_{i \in I} \,  S_i$ such that
each $X_{S_i}$ is constant.

\sm

\noindent (2) If $f$ is a clopen immersion and $Y$ is quasi-isotrivial
 then $X$ is  quasi-isotrivial.

 

\end{slemma}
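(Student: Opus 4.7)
The plan is to establish (1) by explicitly partitioning $S$ according to the ``fibre type'' of $X$ over each point, and then to obtain (2) from (1) by base change along an étale splitting cover of $Y$.

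For (1), write $Y = N_S = \bigsqcup_{n \in N} S$, so that the clopen immersion $f$ yields a decomposition $X = \bigsqcup_{n \in N} U_n$, where each $U_n = f^{-1}(Y_n)$ is a clopen subscheme of $S$. For $s \in S$, set $M_s = \{n \in N : s \in U_n\} \subset N$; this is the underlying set of the geometric fibre $X_{\bar s}$. Since $X$ is twisted constant, fix an fpqc cover $\pi: S' \to S$ with $X_{S'} \cong P_{S'}$ for some set $P$, and decompose $S'$ into its connected components $\{S'_\alpha\}$. On each $S'_\alpha$, the analysis of \S\ref{subsec_constant} (using that $C^0(S'_\alpha, N) = N$ for $S'_\alpha$ connected) shows that the clopen immersion $P_{S'_\alpha} \cong X_{S'_\alpha} \hookrightarrow N_{S'_\alpha}$ is encoded by an injection $\iota_\alpha: P \hookrightarrow N$; setting $M_\alpha := \iota_\alpha(P)$, one has $M_s = M_\alpha$ for every $s \in \pi(S'_\alpha)$.

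For each $M$ in the family $\{M_\alpha\}_\alpha$, define $S_M := \{s \in S : M_s = M\}$; its preimage $\pi^{-1}(S_M) = \bigsqcup_{\alpha : M_\alpha = M} S'_\alpha$ is a clopen subscheme of $S'$. The key step, which I expect to be the main technical obstacle, is to promote this set-theoretic partition to a partition into clopen subschemes of $S$, i.e.\ to show that each $S_M$ is clopen in $S$. For this I would exploit that faithfully flat quasi-compact morphisms are universally submersive (SGA1, IX.2.2): working locally on $S$ by passing to an affine open $U \subset S$ and using the local quasi-compactness built into the Kleiman definition of fpqc covers, one refines $\pi^{-1}(U) \to U$ to a faithfully flat quasi-compact cover, and submersion forces $S_M \cap U$ to be clopen in $U$. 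The resulting partition $S = \bigsqcup_M S_M$ is by clopens, and the restriction $X_{S_M} = \bigsqcup_{n \in M}(U_n \cap S_M) \cong M_{S_M}$ is the constant scheme with value $M$, proving (1).

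For (2), choose an étale splitting cover $\sigma: S'' \to S$ of $Y$, so that $Y_{S''}$ is constant. Then $X_{S''} \hookrightarrow Y_{S''}$ is a clopen immersion into a constant scheme, so part (1) yields a clopen partition $S'' = \bigsqcup_i S''_i$ with each $X_{S''_i}$ constant. Each inclusion $S''_i \hookrightarrow S''$ is a clopen immersion and hence étale, so the composition $\bigsqcup_i S''_i \to S$ is an étale cover over which $X$ becomes constant; thus $X$ is quasi-isotrivial.
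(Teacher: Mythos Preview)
Your strategy for (1)---partition $S$ by the fibre-subset $M_s \subset N$ and descend via fpqc submersion---is the natural one, and your reduction of (2) to (1) via an \'etale splitting cover is exactly the paper's. But there is a gap in (1), and it lies one step before where you locate it. You assert that $\pi^{-1}(S_M) = \bigsqcup_{\alpha:M_\alpha=M} S'_\alpha$ is a clopen subscheme of $S'$ and then say the ``main technical obstacle'' is descending this to $S$. The descent by submersion is routine once $\pi^{-1}(S_M)$ is known to be clopen; the unjustified step is that very claim. Connected components of a scheme are closed but not in general open, and when the fibre $P$ is infinite the function $s' \mapsto M_{\pi(s')}$, though constant on each connected component, need not be locally constant.

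A concrete failure: take $S = S' = \Spec\bigl(\prod_{i\in\NN} k\bigr)$ (whose underlying space is $\beta\NN$, with points the ultrafilters on $\NN$), put $N = \NN \sqcup \NN'$, and let $U_p, U_{p'} \subset S$ be the clopens cut out by the idempotents of $\{0,\dots,p\}$ and $\{p+1,p+2,\dots\}$. One checks that $X := \bigsqcup_{n\in N} U_n$ is already isomorphic to $\NN_S$ (via the sections coming from the decompositions $S = U_p \sqcup U_{p'}$), so the lemma holds with the trivial partition. Yet your $S_{\NN'}$ is exactly the locus of non-principal ultrafilters, which is closed but not open. The fibre-type partition $\{S_M\}$ is thus too fine in general; one must exploit the trivialization $X_{S'} \cong P_{S'}$ itself, not merely the subsets $M_s$ it induces. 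The paper's own one-sentence proof of (1) does not address this point either.
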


\begin{proof} 
\noindent (1) We assume that $Y=N_S$ for a set $S$.
Lemma \ref{lem_mono}.(1) shows that $X \to N_S=Y$ is a clopen immersion
so that $N_S= X \coprod X'$ where $X= \coprod\limits_{n \in N} X_n$,  $X'= \coprod\limits_{n \in N} X'_n$
with $S= X_n \coprod X'_n$ for each $n \in N$.
It provides a partition $S=\coprod\limits_{i \in I} S_i$
such that $X_{S_i}=\coprod\limits_{n \in N_i} S$ for $N_i \subset N$.

\smallskip

\noindent (2) We apply the previous reasoning after base change by a splitting
\'etale morphism $S' \to S$ of $Y$.
\end{proof}

Let $(S_i)_{i \in I} \to S$ be a fpqc cover.
According to Gabber \cite[Tag 0APK]{St},  any $(S_i)_{i \in I}/S$-data descent  of ind-quasi-affine schemes $(X_i \to S_i)$ is effective.
Let us state a few applications of that.

\begin{slemma} \label{lem_twc_descent} The category $\Twc_S$ satisfies fpqc descent.
\end{slemma}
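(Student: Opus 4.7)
The plan is to reduce immediately to Gabber's effective descent result for ind-quasi-affine schemes (Tag 0APK), which the excerpt invokes just before the lemma. Let $(S_i \to S)_{i\in I}$ be a fpqc cover and $(X_i, \varphi_{ij})$ a descent datum with each $X_i$ a twisted constant $S_i$-scheme. By Lemma \ref{lem_ind}.(1), each $X_i$ is ind-quasi-affine over $S_i$, and the isomorphisms $\varphi_{ij}$ are automatically morphisms of ind-quasi-affine schemes. Hence Gabber's theorem provides an ind-quasi-affine $S$-scheme $X$ together with compatible isomorphisms $X\times_S S_i \simlgr X_i$ realizing the descent datum.

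The content of the lemma is then to verify that the resulting $X$ belongs to $\Twc_S$. For each $i$, pick a fpqc splitting cover $T_i \to S_i$, so that $X_i \times_{S_i} T_i \cong (M_i)_{T_i}$ for some set $M_i$. The composite
$$
T \,:=\, \bigsqcup_{i\in I} T_i \;\lgr\; \bigsqcup_{i\in I} S_i \;\lgr\; S
$$
is still fpqc, since fpqc morphisms are stable under composition and disjoint unions. Over each piece $T_i$ one has
$$
X \times_S T_i \;=\; (X \times_S S_i) \times_{S_i} T_i \;\cong\; X_i \times_{S_i} T_i \;\cong\; (M_i)_{T_i},
$$
so $(T_i \to S)_{i\in I}$ is a splitting cover for $X$ and $X \in \Twc_S$. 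Compatibility of the constructed $X$ with the given datum follows from the uniqueness part of Gabber's theorem.

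Descent of morphisms is essentially formal: given $X, Y \in \Twc_S$, the functor $T \mapsto \Hom_T(X_T, Y_T)$ is a fpqc sheaf because $Y \to S$ is separated \'etale (Lemma \ref{lem_ind}.(2)) and the graph of any morphism $X_T \to Y_T$ is a clopen subscheme of $X_T \times_T Y_T$, so local on $T$ for the fpqc topology.

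The only delicate point is really the invocation of Gabber's theorem; once that is in hand, twisted constancy is preserved simply because the notion is itself defined by fpqc local triviality, and fpqc covers compose. I expect no other obstacle.
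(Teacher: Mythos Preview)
Your proof is correct and follows essentially the same approach as the paper: invoke Gabber's effective descent for ind-quasi-affine schemes via Lemma~\ref{lem_ind}.(1), then observe that the descended scheme is twisted constant because fpqc covers compose. The paper's argument is terser and leaves the ``twisted constant'' verification implicit, whereas you spell it out and also address descent of morphisms (which the paper omits, presumably as standard since $Y\to S$ is separated).
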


\begin{proof} Let  $(S_i)_{i \in I} \to S$ be a fpqc cover
and  and consider a data descent twisted constant schemes $(X_i \to S_i)$.
Since each $X_i$ is ind-quasi-affine over $S_i$ (Lemma \ref{lem_ind}.(1)),
Gabber's result provides an $S$--scheme $X$ together with isomorphisms
$X \times_S S_i \cong X_i$. It follows that $X$ is twisted constant over $S$
so we are done. 
\end{proof}

Gabber's result yields also the following generalization
of the case of affine group schemes.

\begin{slemma} \label{lem_descent0} Let $G$ be an ind-quasi-affine $S$--group scheme. 
 
 \sm
 
 \noindent (1) Sheaf $G$--torsors are representable by ind-quasi-affine $S$--schemes.

 \sm
 
 \noindent (2) If $\cE$ is a sheaf $G$--torsor, then the inner twist
 $^{\cE}G$ is  representable by an ind-quasi-affine 
 $S$--group  scheme. 
 
\end{slemma}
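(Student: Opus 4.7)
The plan is to invoke Gabber's theorem on effective descent for ind-quasi-affine schemes along fpqc covers, cited as \cite[Tag 0APK]{St} in the text above. Both parts of the lemma reduce to a well-chosen application of that result.

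For (1), I would start from the definition of a sheaf $G$-torsor: there exists an fpqc cover $(S_i \to S)_{i \in I}$ trivializing $\cE$, meaning that there are isomorphisms of sheaves $\cE \times_S S_i \cong G \times_S S_i$ on each $S_i$. Each factor $G \times_S S_i$ is representable and ind-quasi-affine over $S_i$, since ind-quasi-affineness is stable by base change \cite[Tag 0AP7]{St}. The torsor structure translates the difference of the two pullbacks of these local trivializations on $S_i \times_S S_j$ into translation by a cocycle, yielding a descent datum in the category of ind-quasi-affine schemes. Applying Gabber's theorem produces an ind-quasi-affine $S$-scheme $X$ together with compatible isomorphisms $X \times_S S_i \cong G \times_S S_i$; by fpqc descent for morphisms, $X$ represents the sheaf $\cE$.

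For (2), the key observation is that the inner twist $^{\cE}G$ becomes isomorphic to $G \times_S S_i$ as an $S_i$-group scheme after pullback to the same fpqc cover $(S_i)$: once the torsor is trivial, inner twisting is trivial. The underlying scheme of $^{\cE}G$ is then obtained by the same Gabber descent argument used in (1). To descend the group structure, one notes that the transition isomorphisms over $S_i \times_S S_j$ are given by conjugation by sections of $G$, which are automorphisms of $G$ as a group scheme; consequently the locally defined multiplication, inversion and unit maps assemble into a descent datum compatible with the scheme descent datum. Another appeal to Gabber's effective descent (applied to the maps $^{\cE}G \times_S \,^{\cE}G \to \,^{\cE}G$, etc.) endows $^{\cE}G$ with the structure of an ind-quasi-affine $S$-group scheme.

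The only potential obstacle is verifying that the hypotheses of Gabber's theorem really apply, namely that the relevant schemes are ind-quasi-affine and the descent data satisfy the cocycle condition. Stability of ind-quasi-affineness under base change and its local character for the fpqc topology were recalled just before the lemma, and the cocycle conditions are built into the definition of a sheaf torsor. Thus both verifications are routine, and the content of the lemma really is a packaging of Gabber's descent result.
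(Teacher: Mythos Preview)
Your proposal is correct and matches the paper's approach: the paper does not give a detailed proof but simply states that the lemma follows from Gabber's effective descent result \cite[Tag 0APK]{St}, and your argument spells out exactly that reduction.
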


\begin{slemma} \label{lem_descent} 
Let $G$ be a twisted constant $S$-group scheme.

\sm

\noindent (1)  The $G$--torsors (for the fpqc topology) are
representable by twisted constant $S$-schemes.
Furthermore they are quasi-isotrivial.

\sm

\noindent (2)
 If $\cE$ is a sheaf $G$--torsor, then the inner twist
 $^{\cE}G$ is  representable by  a twisted 
 constant $S$-scheme.
\end{slemma}

\begin{proof}
(1)  Let $\cE$ be a sheaf $J$--torsor, it is representable by an $S$--scheme $E$
in view of  Lemma \ref{lem_descent0}.(1)
Furthermore $E$ is a twisted constant $S$--scheme
according to  Lemma \ref{lem_ind}.
 Since $E$ is \'etale and $E \times_S J \simlgr E \times_S E$,
the $G$-torsor $E$ is quasi-isotrivial.

\sm

\noindent (2) Lemma \ref{lem_descent0}.(2) insures
representability of  $^{\cE}G$ and this  
$S$-scheme is twisted constant according to \ref{lem_twc_descent}.

\end{proof}

We discuss  variations of \cite[X, Proposition 7.0.3]{SGA3}.

\begin{sproposition} \label{prop_BS}
Assume that $S$ is locally noetherian,
connected and  normal. Let $K$ be 
the fraction field of $X$, let $K_s$ be the separable closure of $K$ and
let $\eta_s: \Spec(K_s) \to S$ be the associated point.
Let  $\pi_1(S,\eta_s)$ be the Grothendieck
fundamental group of $S$.
 
 \sm
\noindent (1) 
 An object $X$ of $\Twc_S$ is
 isomorphic to a disjoint union $\coprod\limits_{i \in I} X_i$
where each $X_i$ is a finite \'etale connected cover of $S$.
Furthermore  we have $X(S)=X(K)$.

 \sm
 
\noindent (2)  Assume that $S$ is noetherian. Then 
the  category of continuous $\pi_1(S, \eta_s)$-sets
 and the category  $\Twc_S$ are equivalent.
 
\end{sproposition}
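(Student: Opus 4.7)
The plan for part (1) is to start from Lemma~\ref{lem_ind}(2), which gives that $f: X \to S$ is separated, \'etale, and satisfies the valuative criterion of properness. I would decompose $X = \bigsqcup_{i \in I} X_i$ into connected components; each $X_i \to S$ retains these properties. Since \'etale morphisms preserve normality, each $X_i$ is normal; being connected it is integral. Moreover $X_i \to S$ is open (\'etale) and nonempty, so its image meets $\eta$, forcing $X_{i,K}$ to be a nonempty integral \'etale $K$--scheme, i.e.\ $X_{i,K} = \Spec(L_i)$ for a finite separable extension $L_i/K$.

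The central claim is that each $X_i$ is finite \'etale over $S$. My plan here is to identify $X_i$ with the normalization $\widetilde{S}_i$ of $S$ in $L_i$ (using normality and integrality of $X_i$, together with the generic fibre data), and then to use the valuative criterion of properness to show that $\widetilde{S}_i \to S$ is finite. Concretely, for each codimension--one point $s \in S$ the local ring $\cO_{S,s}$ is a DVR, and base-changing along $\Spec(\cO_{S,s}) \to S$ yields a quasi-finite separated \'etale map satisfying the valuative criterion, which is therefore finite; gluing these codimension--one finiteness statements with the normality of $S$ should then give finiteness of $X_i \to S$ globally. I expect this step to be the main obstacle, since passing from finiteness in codimension $\le 1$ to global finiteness requires either a purity argument in the style of Zariski--Nagata or a direct appeal to results in the spirit of SGA~3, Exp.~X.

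The equality $X(S) = X(K)$ would then be a clean corollary. A $K$--point of $X$ lies on a unique $X_i$ and corresponds to a $K$--algebra embedding $L_i \hookrightarrow K$; since $L_i$ is a field this forces $L_i = K$, so $X_i \to S$ is an isomorphism and yields the desired $S$--point. Alternatively, the valuative criterion extends any $K$--point across every DVR of $S$, and normality of $S$ permits one to glue these extensions into a global section.

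For part (2), I would invoke Grothendieck's theorem (recalled in \S\ref{subsec_galois}): under the noetherian hypothesis the category of finite \'etale covers of $S$ is equivalent to the category of finite continuous $\pi_1(S, \eta_s)$--sets, with connected covers matching transitive sets. Since by (1) every object of $\Twc_S$ is a disjoint union of connected finite \'etale covers, and every continuous $\pi_1(S, \eta_s)$--set is the disjoint union of its finite orbits (stabilizers being open), the equivalence should extend by passing to free coproduct completions: essential surjectivity amounts to the orbit decomposition on the group-theoretic side and the connected-component decomposition on the scheme-theoretic side, while full faithfulness reduces, via Lemma~\ref{lem_mono}, to morphisms between connected covers, i.e.\ to the already known finite case of Grothendieck's theorem.
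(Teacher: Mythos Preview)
Your plan for (1) is close in spirit to the paper's but takes an unnecessary detour that you yourself flag as the ``main obstacle''. The point you are missing is that the normalization of a normal locally noetherian integral scheme $S$ in a finite \emph{separable} extension $L_i/K$ is automatically finite over $S$ (the classical trace-form argument; see e.g.\ \cite[12.50]{GW} as cited in the paper). So your step (b)---proving finiteness of $\widetilde S_i \to S$ via codimension--one checks and a purity/Zariski--Nagata style glueing---is simply not needed, and indeed that glueing step does not obviously work: finiteness is not a codimension--one condition.

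The paper's argument reverses the roles of your two steps. It starts from the fact that $\widetilde X_i \to S$ is finite, and then uses the valuative criterion of properness for $X_i$ (and for $\widetilde X_i$) over $S$ to compare
\[
\Hom_S(\widetilde X_i, X_i) = X_i(K_i) \quad\text{and}\quad \Hom_S(\widetilde X_i, \widetilde X_i) = \widetilde X_i(K_i),
\]
producing a section $s_i: \widetilde X_i \to X_i$ of the canonical map $p_i: X_i \to \widetilde X_i$ coming from the universal property of normalization. Since both composites $p_i \circ s_i$ and $s_i \circ p_i$ agree generically with the identity on separated reduced schemes, they are identities, so $X_i \cong \widetilde X_i$ is finite over $S$. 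This neatly avoids any a priori quasi-compactness question about $X_i$, which your approach would have to address before invoking Zariski's Main Theorem or similar.

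Your argument for $X(S)=X(K)$ and your approach to (2) (extending Grothendieck's finite equivalence to arbitrary disjoint unions via orbit decomposition) are correct and essentially equivalent to the paper's; the paper phrases (2) via the universal cover $S^{sc}$ of Vakil--Wickelgren, but the content is the same.
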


\begin{proof}
 (1) We are given an object $X$ of $\Twc_S$.
Since it is \'etale
 the irreducible components $(X_i)_{i \in I}$  are its connected components 
and are open according to \cite[X, before Corollary 5.14]{SGA3}. 
Furthermore denoting by  $\eta_i$  the generic point of $X_i$, it is above 
the generic point $\eta$ of $S$ 
and $X_{i,K}=\{ \eta_i \}$ so that $K_i=\kappa(X_i)$ is a finite separable extension of $K$.
In view of Lemma \ref{lem_ind}.(2), $X$ satisfies the valuative criterion of properness,
and do the closed subschemes $X_i$'s  of $X$.
Summarizing each $X_i$ is  \'etale separated  over $S$
and satisfies the valuative criterion of properness.
Let $\widetilde X_i$ be the normalization of $S$ with respect to the finite
field extension $K_i/K$ \cite[12.42]{GW}, it is finite over $X_i$ ({\it ibid}, 12.50). 
Since $X_i$ is normal \cite[I.3.17.(b)]{M}, we obtain then a birational morphism $p_i : X_i \to \widetilde X_i$ by means of the universal property of the normalization ({\it ibid}, 12.44.(iii)). We observe that $p_i$ is separated in view of \cite[9.13.(2)]{GW}.
Since $X_i$ (and $\widetilde X_i$) satisfies  the valuative criterion of properness
we have 
\[
\xymatrix{
\Hom_S(\widetilde X_i, X_i)&= &X_i( \widetilde R_i) \ar[d]&= &
X_i( K_i ) \ar[d]^{=}\\
\Hom_S(\widetilde X_i, \widetilde X_i)&= & \widetilde X_i(  \widetilde R_i) &= &
\widetilde X_i( K_i ) .
}
\]
This provides a section $s_i: \widetilde X_i \to X_i$ of $p_i$. Since $p_i\circ s_i$ is generically
$id_{X_i}$, we have $p_i\circ s_i =id_{X_i}$
and similarly we have $s_i \circ p_i  =id_{\widetilde X_i}$.
Each $p_i$ is an isomorphism and we have then a decomposition $X= \coprod\limits_{i \in I} X_i$
where the $S$-schemes $X_i$'s are finite  \'etale and connected.

 Next we want to show that $X(S) =X(K)$.
Since $S$ is dense in $\Spec(K)$ and $X$ is separated, the map $X(S) \to X(K)$ is injective.
Let $K_i$ be the function  field of $S_i$.
Since $\Spec(K_i)= S_{i,K}$, 
we have $S(K)=  \coprod\limits_{i \in I} X_i(K)= \coprod\limits_{i \in I} \Hom_{K-alg}(K_i,K)$.
It follows that $I_0 \cong S(K)$ where
$I_0$ stands for the indices $i$ such that 
$K=K_i$ or equivalently $S=S_i$.
Thus $X(S) \to X(K)$ is onto and bijective.

\sm

\noindent (2) 
The assumptions imply that $S$ is qcqs \cite[Tags 01OV, 01OY]{St};
 let $S^{sc}$ be 
the universal cover  of $S$ as defined in  \cite[Proposition 3.4]{VW}.
Since a constant $S$--scheme $M_S$ is ind-quasi-affine
(Lemma \ref{lem_ind}.(1)), we can twist it by 
a continuous morphism $f: \pi_1(S,\eta_s) \to \Aut(M)$.
This defines a functor  from the category of 
continuous $\pi_1(S, \eta)$-sets to the category  $\Twc_S$,
$(M,f) \mapsto M^f$.
Extended to $S^{sc}$, this functor is nothing
but $M \mapsto   M_{S^{sc}}$ which is
an equivalence of categories between
the category of sets and  that of constant $S^{sc}$-schemes.
By considering the action of $\Pi_1(S,\eta_s)$ on both sides
and taking invariants, it follows that the 
functor $(M,f) \mapsto M^f$ is fully faithful. 
The essential surjectivity follows from (1).
\end{proof}

\begin{sremark}{\rm  In the proof of (1),
if $X$ is quasi-isotrivial,  each  $X_i$ is finite in view of \cite[X, Lemme 5.13]{SGA3}.
So the new case is when $X$ is not quasi-isotrivial.
Another way to prove the statement is to use Bhatt-Scholze's theory \cite[Theorem 1.10]{BS}.
} 
\end{sremark}

\begin{scorollary}\label{cor_BS2} 
Assume that $S$ is connected, normal and  noetherian.
Let $X=\coprod\limits_{i \in I} S_i$
be a twisted constant $S$-scheme 
where each $S_i$ is a finite \'etale cover of $S$.
Then the following are equivalent:

\sm

(i) the $S_i$'s admit a common splitting finite Galois cover;

\sm

(ii) $X$  is isotrivial;

\sm

(iii) $X$  is quasi-isotrivial.
\end{scorollary}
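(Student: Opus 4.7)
The implications (i) $\Rightarrow$ (ii) and (ii) $\Rightarrow$ (iii) are immediate: a common finite Galois cover $T\to S$ splitting every $S_i$ trivializes their disjoint union $X$, giving isotriviality; and any finite étale cover is in particular an étale cover. The substance lies in (iii) $\Rightarrow$ (i), for which my plan is to pass to the generic point in order to extract a finite separable splitting datum, and then globalize via normalization.

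Suppose $X_{S'}$ is constant for some étale cover $S'\to S$, and pick any non-empty connected component $S'_0$ of $S'$. Being normal (étale preserves normality) and connected, $S'_0$ is irreducible, with a unique generic point $\eta'_0$. Since $S$ is irreducible and $S'_0\to S$ is étale, its image contains the generic point $\eta$ of $S$, and the induced map on local rings makes $\mathcal{O}_{S'_0,\eta'_0}$ a local étale algebra over $\mathcal{O}_{S,\eta}=K$, hence a \emph{finite} separable field extension $K'_0/K$. This automatic arithmetic finiteness at the generic point is the key observation.

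Writing $K_i=\kappa(S_i)$ and restricting the trivialization $X_{S'_0}\cong M_{S'_0}$ to $\eta'_0$ forces, for each $i$, an isomorphism $K_i\otimes_K K'_0\cong (K'_0)^{[K_i:K]}$, so that $K'_0$ contains (a copy of) the Galois closure $\wt{K}_i$ of $K_i/K$. Fixing an embedding $K'_0\hookrightarrow K_s$, the compositum $L$ of the $\wt{K}_i$'s inside $K_s$ is then a \emph{finite} Galois extension of $K$ (since $L\subseteq K'_0$ has bounded degree). I would then take $T$ to be the normalization of $S$ in $L$: by the normality and noetherianness of $S$ (the same ingredient used in the proof of Proposition~\ref{prop_BS}(1)), $T\to S$ is finite; and since each $\wt{K}_i/K$ is the function field of the Galois closure of the finite étale cover $S_i\to S$ and is therefore unramified on $S$, the same holds for $L/K$. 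Hence $T\to S$ is finite étale Galois with group $\Gal(L/K)$, and since $L$ splits every $K_i$, the cover $T$ splits every $S_i$ (using Proposition~\ref{prop_BS}(1) to go from decomposition of function fields to decomposition of normal finite étale covers), proving (i).

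The main obstacle is exactly the passage from a merely étale splitting cover to a finite étale Galois one: a connected component $S'_0$ of $S'$ need not surject onto $S$, so one cannot directly promote it to a finite étale cover via its normalization. The trick is that the \emph{generic fiber} of $S'_0$ is automatically defined by a finite separable extension of $K$, and this arithmetic finiteness at $\eta$, combined with the normality of $S$ to propagate the splitting globally, is what bridges the gap between quasi-isotriviality and isotriviality.
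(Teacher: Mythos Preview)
Your proof is correct and follows essentially the same strategy as the paper's: both arguments pass to the generic point $K$, exploit that an \'etale cover of a field is automatically a finite product of finite separable extensions (so quasi-isotriviality over $K$ forces a \emph{finite} splitting extension), and then globalize back over $S$ using normality.

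The difference lies only in the globalization step. The paper works abstractly with the surjection $q:\Gal(K_s/K)\twoheadrightarrow \pi_1(S,\eta_s)$ and the universal cover $S^{sc}$: having found an open normal $\widetilde H\subset \Gal(K_s/K)$ splitting $X_K$, it observes that $\widetilde H\supseteq \ker q$ and takes the corresponding quotient of $S^{sc}$. You instead construct the cover concretely as the normalization of $S$ in the compositum $L$ of the Galois closures $\widetilde K_i$, and must then argue that this normalization is finite \'etale. Your justification (``compositum of unramified extensions is unramified'') is correct but is precisely the content of $\ker q$ being normal in $\Gal(K_s/K)$, which the paper's formulation makes automatic. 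In exchange, your argument avoids invoking the universal cover $S^{sc}$ and Proposition~\ref{prop_BS}(2), relying only on the more elementary Proposition~\ref{prop_BS}(1); this is a genuine, if modest, economy.
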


\begin{proof}
Again we can deal with   
the universal cover  $S^{sc}$ of $S$ as defined in  \cite[Proposition 3.4]{VW}.
The implications  $(i) \Longrightarrow (ii)
\Longrightarrow (iii)$ are obvious.
Let us etablish $(iii) \Longrightarrow (i)$.
We assume that $X$ is quasi-isotrivial. Next we consider the continuous 
 surjective map $q: \Gal(K_s/K) \to \pi_1(S,\eta_s)$
and denote by $H_i$ the preimage of $G_i$ for each $i \in I$.
We denote by $G= \bigcap_{i \in I} G_i$
and by $H=q^{-1}(G)= \bigcap_{i \in I} H_i$.
We have $X_K \cong \coprod\limits_{i \in I} \Spec(K_s^{H_i})$.
Since $X_K$ is quasi-isotrivial, Example \ref{ex_field}
provides a normal open subgroup  $\widetilde H$ 
of  $\Gal(K_s/K)$
such that $K_s^{\widetilde H}$ is the minimal splitting field 
of $X_K$. More precisely  $\widetilde H$ is the 
 largest normal open  subgroup of $H$ such that 
 $\widetilde H \subset H_i$ for all $i$.
  We put $\widetilde G= q(\widetilde H)$,
it is an open normal subgroup of $\pi_1(S, \eta_s)$
such that $\widetilde G \subset G_i$ for each $i$.
We put $\widetilde S= S^{sc}/G$, this is a finite Galois cover
of $S$ which splits each $S_i$.  
\end{proof}

\subsection{Extensions of twisted constant group schemes} \label{subsec_isotrivial}

If $G \to S$ is an ind-quasi-affine,
fpqc descent \cite[Tag 0APK]{St} implies that 
sheaf fpqc $G$-torsors are representable 
by ind-quasi-affine $S$-schemes.
Similarly if $E$ is a fpqc sheaf $\Aut(G)$--torsor,
then the inner twist ${^EG}$ of $G$ by $E$ is 
representable by an ind-quasi-affine $S$-group scheme.  
In the spirit of \cite[VI$_B$, Proposition 9.2]{SGA3}, we have the following fact.

\begin{slemma}\label{lem_9.2} 
Let  $u : G' \to  G$ be a monomorphism of 
$S$--group schemes and assume that the fpqc quotient
$G/G'$ is representable by an $S$--scheme $X$. 
Then $G'$ is ind-quasi-affine over $S$ if and only if 
the quotient morphism $q:G \to X$ is ind-quasi-affine.  
\end{slemma}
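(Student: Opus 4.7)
The plan is to observe that the quotient morphism $q: G \to X$ is an fpqc $G'$-torsor under right translation, so it becomes trivial after pulling back along a suitable fpqc cover of $X$, and then to invoke the two properties of ind-quasi-affine morphisms recalled in Section~2.2: stability under base change \cite[Tag 0AP7]{St} and fpqc-locality on the target \cite[Tag 0AP8]{St}.

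For the direction ``$q$ ind-quasi-affine $\Longrightarrow$ $G' \to S$ ind-quasi-affine'', I would use the unit section $e: S \to G$ and set $\epsilon = q \circ e: S \to X$. By construction of $X = G/G'$ as fpqc quotient, the fiber $G \times_X S$ along $\epsilon$ is canonically identified with $G' \to S$, since the stabilizer in $G$ of the distinguished point of $X$ is exactly $G'$. Hence $G' \to S$ is a base change of $q: G \to X$, so \cite[Tag 0AP7]{St} concludes this direction.

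Conversely, assume $G' \to S$ is ind-quasi-affine. I would pick an fpqc cover $\{X_i \to X\}_{i \in I}$ trivializing the torsor, so that $G \times_X X_i \cong G' \times_S X_i$ over $X_i$ for each $i$. Base change from $S$ to $X_i$ preserves ind-quasi-affinity by \cite[Tag 0AP7]{St}, hence each $G' \times_S X_i \to X_i$ is ind-quasi-affine, and then \cite[Tag 0AP8]{St} yields that $q: G \to X$ is ind-quasi-affine. The only delicate point is the torsor property of $q$, which simultaneously supplies the canonical identification of the fiber at the unit class with $G'$ and the local triviality of $q$ over an fpqc cover of $X$ needed for the descent step; both facts are tautological from the definition of $X = G/G'$ as fpqc quotient.
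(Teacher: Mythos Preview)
Your proof is correct and follows essentially the same approach as the paper's own proof: both use that $q$ is an fpqc $G'$-torsor, so that $G' \to S$ arises as a base change of $q$ (you make this explicit via the unit section, the paper simply says ``by base change''), and conversely that local trivializations of the torsor combined with \cite[Tag 0AP8]{St} yield the descent. Your write-up is slightly more detailed than the paper's, but the argument is the same.
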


\begin{proof} If $q$ is ind-quasi-affine so is $G' \to S$ by 
base change. We assume that $G' \to S$ is quasi-affine. 
Let $(X_i)_{i \in I}$ be an fppc cover of $X$ such that 
$q^{-1}(X_i) \cong X_i \times_S G'$.
Since $q^{-1}(X_i)$ is ind-quasi-affine over $X_i$ for each $i$,
fpqc descent enables us to conclude that $q$ is ind-quasi-affine
\cite[Tag 0AP8]{St}.
\end{proof}

We deal with an $S$-group scheme $\widetilde  G$ fitting 
in an exact sequence of the shape

\begin{equation} \label{eq_shape}
1 \to G \to \widetilde G \to J \to 1
\end{equation}
where $G$  is affine and $J$ is twisted $S$--constant.
Combining Lemmas \ref{lem_ind} and \ref{lem_9.2}
yields that $\widetilde G$ is  ind-quasi-affine  so that Lemma \ref{lem_descent0} applies to  $\widetilde G$.

\begin{slemma}\label{lem_isotrivial} 
We assume that $S$ is connected, normal, locally noetherian.
We denote by $K$ the function field of $S$.

\sm

\noindent (1) The map $H^1(S,J) \to H^1(K,J)$ is injective.

\sm

\noindent (2) $J$-torsors are isotrivial.

\sm

\noindent (3)  If $S$ is regular and $U \subset S$
is a dense open subset which is the complement
of a closed subscheme $Z$ of codimension $\geq 2$,
then the map $H^1(S,J) \to H^1(U,J)$ is bijective.
\end{slemma}

\begin{proof}
(1) The standard torsion argument reduces to establish the triviality of
the kernel of $H^1(S,J) \to H^1(K,J)$.
We have seen that $J$-torsors
are representable by twisted constant $S$--schemes.
Let $[E]$ be an element of the kernel of $H^1(S,J) \to H^1(K,J)$.
According to Proposition \ref{prop_BS}, we have $E(S) =E(K)$.
It follows that $E(S) \not = \emptyset$ so that $[E]=1$.

\sm 

\noindent (2) Let $E$ be a $J$--torsor. Corollary \ref{prop_BS}
states that $E \cong \coprod\limits_{i \in I} S_i$ where the 
$S_i$'s are connected finite \'etale covers of $S$.
The set $I$ is non empty and we pick $i_0 \in I$.
In particular we have $E(S_{i_0}) \not = \emptyset$
so that $E$ is isotrivial. 

\sm

\noindent (3) The injectivity follows from (1).
For establishing the surjectivity we are given
 a $J$-torsor $E$ over $U$. From (2), we know that is split
 by a Galois cover $U' \to U$ say of group $\Gamma$.
 According to Zariski-Nagata purity's theorem \cite[X, Corollaire 3.3]{SGA1}, $U' \to U$  extends uniquely to 
 a Galois cover  $S' \to S$ of group $\Gamma$.
 Then $S'$ is  connected, normal \cite[Tag 0BQL]{St}
 and  locally noetherian \cite[Proposition 6.2.2]{EGA1}.
 We denote by $K'$ the function  field of $S'$.
 According to Proposition \ref{prop_BS}, we have
 $J(S') =J(U') = J(K')$. It follows that 
 the map $H^1(\Gamma, J(S')) \to H^1(\Gamma, J(U'))$ is bijective so that the map 
 $$
 \ker\Bigl( H^1(S,J) \to H^1(S',J) \Bigr)
 \to 
 \ker\Bigl( H^1(U,J) \to H^1(U',J) \Bigr)
 $$
 is bijective. Thus the $J$--torsor $E$ over $U$
 extends to a $J$-torsor over $S$. 
\end{proof}

\begin{sremark} {\rm With Grothendieck's method of \cite[X, \S 5]{SGA3},
one can prove that statement provided the twisted 
$S$--group scheme $J$ is quasi-isotrivial.
}
\end{sremark} 

\begin{sexample}{\rm Let us illustrate the statement
for a field $k$ with the $k$-group $\mu_{l^\infty}$
of $l^\infty$ root of unity attached to an invertible prime
  $l$ (considered in Example \ref{ex_field}).
Given $a \in k^\times$,
we consider the Galois set $\limind_n \{ x_n \in k_s^\times
\, \mid \, x_n^{l^{n}}=  a^{l^{n+1}} \}$
where the transition maps are $x_n \mapsto (x_n)^l$.
It defines a twisted constant $k$--scheme $X$
which is  a $\mu_{l^\infty}$-torsor.
Putting $b = \sqrt[l]{a} \in k_s$,
the elements $b_n= b^{l^{n}}$ defines a point of 
$X(k_s)$ so that this $\mu_{l^\infty}$-torsor  is 
isotrivial.
}
\end{sexample}

\begin{slemma}\label{lem_isotrivial2} 
Assume that $S$ is connected, locally noetherian and normal. 
 In the sequence  \eqref{eq_shape},
 assume that $G$ is reductive.
Then the $\widetilde G$-torsors over $S$ are semi-locally isotrivial
(and a fortiori quasi-isotrivial).
\end{slemma}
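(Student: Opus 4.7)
The plan is to reduce the structure group from $\widetilde G$ to its reductive kernel $G$ after a finite \'etale base change, and then invoke the semi-local isotriviality of reductive-group torsors. Let $\cE$ be a $\widetilde G$-torsor on $S$; by Lemma~\ref{lem_descent0}(1) it is representable by an ind-quasi-affine $S$-scheme. The fpqc quotient $\cE/G$ is a $J$-torsor, and by Lemma~\ref{lem_isotrivial}(2) it is isotrivial, so there is a finite \'etale surjection $p \colon S_1 \to S$ trivializing it.

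By exactness of the pointed sequence $H^1(S_1, G) \to H^1(S_1, \widetilde G) \to H^1(S_1, J)$ attached to \eqref{eq_shape}, the class $p^{*}[\cE]$ lies in the image of $H^1(S_1, G)$; equivalently, $p^{*}\cE$ is induced along the inclusion $G \hookrightarrow \widetilde G$ from a $G$-torsor $\cF$ on $S_1$. The base $S_1$ is a finite disjoint union of connected, normal, locally noetherian schemes and $G$ is reductive, so one may apply the classical semi-local isotriviality of reductive torsors (compare \cite[XXIV]{SGA3}) to $\cF$: for any finite subset $\Sigma_1 \subset S_1$ there exist an open neighborhood $U_1$ of $\Sigma_1$ in $S_1$ and a finite \'etale surjection $V_1 \to U_1$ trivializing $\cF|_{U_1}$.

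Given a finite set $\Sigma \subset S$, put $\Sigma_1 := p^{-1}(\Sigma)$ (finite since $p$ is finite) and choose $U_1, V_1$ as above. Since $p$ is closed, $U := S \setminus p(S_1 \setminus U_1)$ is open in $S$, contains $\Sigma$, and satisfies $p^{-1}(U) \subset U_1$; shrinking $U_1$ to $p^{-1}(U)$ and $V_1$ accordingly, the composition $V_1 \to U_1 \to U$ is a finite \'etale surjection trivializing $\cE|_U$. This proves semi-local isotriviality of $\cE$, and quasi-isotriviality is then automatic. The main obstacle is the input on reductive torsors: everything else is a direct combination of Lemma~\ref{lem_isotrivial}, the non-abelian cohomology exact sequence attached to \eqref{eq_shape}, and a standard gluing of two finite \'etale covers.
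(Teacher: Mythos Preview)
Your proof is correct and follows essentially the same route as the paper: the paper simply records that the result is known for $J$ (Lemma~\ref{lem_isotrivial}(2)) and for $G$ (\cite[XXIV.4.16]{SGA3}), and then defers the d\'evissage to the argument of \cite[XXIV.4.24]{SGA3}, which is exactly the construction you have written out explicitly (kill the $J$-part by a finite \'etale cover, reduce to a $G$-torsor, then combine the two covers over a suitable open neighborhood of the given finite set).
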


\begin{proof} It is known for $J$ by Lemma \ref{lem_isotrivial}.(2)
and for $G$ by \cite[XXIV, Corollaire 4.1.6]{SGA3}.
The d\'evissage from these two cases is similar with the argument of 
the proof of \cite[XXIV, Corollaire 4.2.4]{SGA3}.
\end{proof}

From now on we assume that  $G$ is reductive.
The $S$--group $\widetilde G$
acts on its normal $S$--subgroup $G$ and we consider the commutative diagram
\[
\xymatrix{
1 \ar[r] & \ar[r] G \ar[d]^{Int} & \widetilde G \ar[r] \ar[d]^{Int} & J \ar[d]^{h} \ar[r] & 1 \\
1 \ar[r] & G_{ad}=G/C(G) \ar[r] &  \Aut(G) \ar[r] & \Out(G), \ar[r] & 1.
}
\]
where the bottom exact sequence is \cite[XXIV, Th\'eor\`eme 1.1]{SGA3}.
Note that $\Out(G)$ is a twisted constant $S$--group scheme
(but not necessarily quasi-isotrivial, this is the case however if
$G$ is quasi-isotrivial).

\subsection{Normalizers, I}
Let  $P$ be an $S$-parabolic subgroup of $G$  equipped 
with a Levi $S$-subgroup $L$.
 The normalizer $N_{\widetilde G}(P,L)$ is representable
by a separated smooth  $S$--group scheme 
which is $S$-closed in $\widetilde G$ \cite[Lemme 3.4.54]{Gi15}.
We claim that the sequence  $1 \to G \to \widetilde G \to J \to 1$
induces an exact sequence of $S$--group schemes
\begin{equation}\label{eq_JPL}
1 \to L \to N_{\widetilde G}(P,L) \to J_{P,L} \to 1
\end{equation}
where $J_{P,L}$ is an $S$--subgroup scheme of $J$.
Since $L=N_G(P,L)$ is smooth (and a fortiori flat over $S$),
we know that $N_{\widetilde G}(P,L)/L$ is representable by
an $A$--group scheme $J_{P,L}$ which is locally of finite 
presentation \cite[XVI, Corollaire 2.3]{SGA3}.
Furthermore the homomorphism $J_{P,L} \to J$
is  a monomorphism. 
According to \cite[VI$_B$, Proposition 9.2.(xii)]{SGA3},
$J_{P,L}$ is $S$--smooth, so is $S$-\'etale since 
$J$ is \'etale. We shall say more on 
$J_{P,L}$ in Lemma \ref{lem_JPL}.
 To pursue we deal with the following special case.

\newpage

\begin{slemma}\label{lem_clopen} (1) The $S$--functor $\Aut(G,P,L)$ is representable by a smooth 
$S$--group scheme which fits in  an exact sequence of smooth
$S$-group schemes
$$
1 \to L/C(G) \to \Aut(G,P,L) \to \Out(G,P,L) \to 1.
$$

\noindent (2) The $S$--group scheme $\Out(G,P,L)$ is twisted constant and
 is a clopen $S$-subgroup of $\Out(G)$.
 
 \smallskip
 
\noindent (3)  The fppf quotient  $\Out(G)/\Out(G,P,L)$ is 
representable by a finite \'etale $S$--scheme.
 
\end{slemma}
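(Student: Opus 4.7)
The plan for (1) is to specialize the general construction \eqref{eq_JPL} to the case $\widetilde{G} := \Aut(G)$, which fits the shape \eqref{eq_shape} via its canonical exact sequence $1 \to G_{ad} \to \Aut(G) \to \Out(G) \to 1$ with $G_{ad}$ reductive and $\Out(G)$ twisted constant. By \cite[Lemme 3.4.54]{Gi15} the normalizer $N_{\Aut(G)}(P,L) = \Aut(G,P,L)$ is then representable by a smooth separated $S$-group scheme, and \eqref{eq_JPL} specializes to
\[
1 \to N_{G_{ad}}(P,L) \to \Aut(G,P,L) \to \Out(G,P,L) \to 1.
\]
The identification $N_{G_{ad}}(P,L) = L/C(G)$ follows from the standard normalizer-of-parabolic formula applied to the pair $(P/C(G),\, L/C(G))$ in the adjoint reductive group $G_{ad}$.

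For (2), the discussion following \eqref{eq_JPL} already gives that $\Out(G,P,L) = J_{P,L}$ is an étale $S$-subgroup scheme of $\Out(G)$ exhibited by a monomorphism. Since $\Out(G)$ is twisted constant, Lemma \ref{lem_mono}(2) converts this monomorphism into a clopen immersion; in particular $\Out(G,P,L)$ inherits a twisted constant structure as a clopen subgroup of $\Out(G)$.

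For (3), Lemma \ref{lem_descent}(3) applied to the clopen inclusion of (2) realizes $\Out(G)/\Out(G,P,L)$ as a twisted constant $S$-scheme. To upgrade to finite étale, I would work étale-locally on $S$: after a splitting étale cover, $\Out(G)$ becomes a finite constant group (the Dynkin symmetries of $G$), $\Out(G,P,L)$ becomes the stabilizer of the parabolic type, and the quotient becomes a finite constant coset space. Finite étaleness being étale-local on the base, this descends to the required structure on $\Out(G)/\Out(G,P,L)$ over $S$.

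The step I expect to be the main obstacle is the smoothness and exactness of the sequence in (1): it requires confirming that \cite[Lemme 3.4.54]{Gi15} genuinely transfers from an abstract $\widetilde{G}$ satisfying \eqref{eq_shape} to the specific ind-quasi-affine $S$-group scheme $\Aut(G)$, and that the normalizer formula $N_{G_{ad}}(P,L) = L/C(G)$ holds as an identity of $S$-group schemes (not merely fibrewise). Once these are verified, parts (2) and (3) become formal consequences of the twisted-constant and descent machinery already established in the preliminaries.
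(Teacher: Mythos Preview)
Your argument for (1) is the paper's: both observe that $\Aut(G)$ fits the shape \eqref{eq_shape} and that the general construction preceding \eqref{eq_JPL} specializes to give the required exact sequence, with $N_{G_{ad}}(P,L)=L/C(G)$.

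For (2) and (3), however, there are genuine gaps. In (2) you invoke Lemma~\ref{lem_mono}(2), but that lemma concerns morphisms \emph{inside} the category $\Twc_S$: it presupposes that both source and target are already twisted constant. At this stage you only know that $\Out(G,P,L)$ is \'etale over $S$ and maps monomorphically into $\Out(G)$; its twisted-constancy is precisely part of what you are trying to establish, so the reasoning is circular. An \'etale monomorphism is certainly an open immersion, but closedness does not come for free when the target is an infinite constant group scheme.

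In (3) the assertion that ``after a splitting \'etale cover, $\Out(G)$ becomes a finite constant group'' is false in general: for a split torus $T$ of rank $r\ge 2$ one has $\Out(T)\cong\GL_r(\ZZ)_S$, which is infinite. Finiteness of $\Out$ is a feature of the adjoint (or semisimple) case only.

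The paper's device, for both (2) and (3), is exactly to pass to the adjoint group, where $\Out(G_{ad})$ \emph{is} finite constant once $G$ is split. One first checks $\Aut(G,P,L)\cong \Aut(G)\times_{\Aut(G_{ad})}\Aut(G_{ad},P_{ad},L_{ad})$ and then proves this descends to an isomorphism
\[
\Out(G,P,L)\;\cong\;\Out(G)\times_{\Out(G_{ad})}\Out(G_{ad},P_{ad},L_{ad}).
\]
Since $\Out(G_{ad},P_{ad},L_{ad})\hookrightarrow\Out(G_{ad})$ is an inclusion of finite constant groups, the fibre product is clopen in $\Out(G)$ and is twisted constant; this is (2). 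For (3), the induced map $\Out(G)/\Out(G,P,L)\to\Out(G_{ad})/\Out(G_{ad},P_{ad},L_{ad})$ is a monomorphism in $\Twc_S$, hence a clopen immersion into a finite \'etale scheme, whence finite \'etale.
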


\begin{proof}(1)  This is the special case of the above 
fact when taking $\Aut(G,P,L)$ for $\widetilde G$.

\sm

\noindent (2) The statement is local for the fpqc topology
so that we can assume that $G$ is split and that $P$ is a standard parabolic 
subgroup. If $G$ is adjoint, then   $\Out(G,P,L)$ and $\Out(G)$ are finite constant \cite[lemme 5.1.2]{Gi15} so that the statement is obvious.
We consider the exact sequence $1 \to C(G)\to G \to G_{ad} \to 1$
and the natural map $\Aut(G) \to \Aut(G_{ad})$.
	The correspondence \cite[Lemma 3.2.1.(2)]{Gi15}
	shows that  $\Aut(G,P,L)=\Aut(G) \times_{\Aut(G_{ad})} \Aut(G_{ad},P_{ad}, L_{ad})$.
We obtain a commutative diagram 
	\[
\xymatrix{
\Aut(G,P,L) \ar[r]^{\sim \qquad \qquad}  \ar[d] & \Aut(G) \times_{\Aut(G_{ad})} \Aut(G_{ad},P_{ad}, L_{ad}) \ar[d]  \\
\Out(G,P,L) \ar[r]^{r \qquad \qquad \quad } & \Out(G) \times_{\Out(G_{ad})} \Out(G_{ad},P_{ad}, L_{ad}) 
}
\] 
\begin{claim} \label{claim_out} The bottom horizontal map $r$ is an isomorphism.
\end{claim}

Since $\Out(G,P,L) \to \Out(G)$ is a monomorphism so is
$r$.  It is then enough to prove that the right vertical map
is an epimorphism of flat sheaves. 
Let $u \in \Bigl( \Out(G) \times_{\Out(G_{ad})} \Out(G_{ad},P_{ad}, L_{ad}) \Bigr)(T)$ for an $S$-scheme $T$.
Up to localize for the flat topology, 
$u$ is represented by  elements 
$a \in \Aut(G)(T)$ and $b \in \Aut(G_{ad},P_{ad},L_{ad}) (T)$
 having same image in  $\Out(G_{ad})(T)$. 
 It means that there exists $y \in G_{ad}(T)$
such that $a \, \mathrm{Int}(y) = b \in \Aut(G_{ad})$.
The pair $(a \, \mathrm{Int}(y) , b)$ defines an element
of \break
${\Bigl( \Aut(G) \times_{\Aut(G_{ad})} \Aut(G_{ad},P_{ad}, L_{ad})\Bigr)(T)}$
mapping to $u$.
The claim is then  established.

It follows that $\Out(G,P,L) \to \Out(G)$ is a clopen immersion.
Also since the category of twisted $S$-group schemes is stable
by cartesian product, we obtain that $\Out(G,P,L)$ is a twisted
constant $S$--group scheme. 

\sm

\noindent (3) We can continue with the same reductions.
We have seen that 
$\Out(G_{ad})/\Out(G_{ad},P_{ad},L_{ad})$ is finite $S$-\'etale.
According to \S \ref{subsec_descent}, the fppf
quotient $\Out(G)/\Out(G,P,L)$ is representable by
a twisted constant $S$-scheme and so is $\Out(G_{ad})/\Out(G_{ad},P_{ad},L_{ad})$. The map 
$\Out(G)/\Out(G,P,L) \to \Out(G_{ad})/\Out(G_{ad},P_{ad},L_{ad})$
is a monomorphism so is a clopen immersion according to
Lemma \ref{lem_mono}.(1). Thus $\Out(G)/\Out(G,P,L)$ is 
 finite $S$-\'etale.
\end{proof}

\begin{slemma} \label{lem_JPL}
(1) The map $J_{P,L} \to J$ is a clopen immersion 
and  $J_{P,L}$ is a twisted  constant  
$S$--group scheme. If $J$ is furthermore quasi-isotrivial, 
so is $J_{P,L}$.

\smallskip

\noindent (2) The fppf quotient $J/J_{P,L}$ is representable
by a finite \'etale $S$--scheme.

\end{slemma}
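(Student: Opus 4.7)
The plan is to identify $J_{P,L}$ with the preimage $h^{-1}(\Out(G,P,L))$ under the homomorphism $h : J \to \Out(G)$ coming from the commutative square at the end of \S\ref{subsec_isotrivial}. One inclusion is immediate: the adjoint action $\widetilde G \to \Aut(G)$ sends $N_{\widetilde G}(P,L)$ into $\Aut(G,P,L)$, so passing to quotients yields $J_{P,L} \subset h^{-1}(\Out(G,P,L))$. For the reverse inclusion I would work fpqc-locally on an $S$-scheme $T$: given $j \in h^{-1}(\Out(G,P,L))(T)$, lift it to $\widetilde g \in \widetilde G(T)$; the image of $\Int(\widetilde g) \in \Aut(G)(T)$ lies in $\Out(G,P,L)(T)$, so after fppf localization we may write $\Int(\widetilde g) = \beta \cdot \gamma$ with $\beta \in \Aut(G,P,L)(T)$ and $\gamma \in G_{ad}(T) \subset \Aut(G)(T)$. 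Lifting $\gamma$ to $g \in G(T)$ (possible fppf-locally, since $G \to G_{ad}$ is a $C(G)$-torsor), one finds $\Int(\widetilde g g^{-1}) = \beta \in \Aut(G,P,L)$, so that $\widetilde g g^{-1} \in N_{\widetilde G}(P,L)(T)$ has image $j$ in $J(T)$, establishing the desired equality.

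With this identification in hand, the cartesian square
$$
\xymatrix{
J_{P,L} \ar[r] \ar[d] & J \ar[d]^h \\
\Out(G,P,L) \ar[r] & \Out(G)
}
$$
combined with Lemma \ref{lem_clopen}.(2) (the bottom arrow is a clopen immersion) forces $J_{P,L} \hookrightarrow J$ to be a clopen immersion as well. A clopen subscheme of a twisted constant scheme is itself twisted constant: pulling back to an fpqc splitting cover of $J$ reduces this to Lemma \ref{lem_mono_iso}.(1). Hence $J_{P,L}$ is a twisted constant $S$--group scheme, and the quasi-isotriviality claim is then a direct application of Lemma \ref{lem_mono_iso}.(2). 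This proves (1).

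For (2), Lemma \ref{lem_descent}.(3), applied to the monomorphism $J_{P,L} \hookrightarrow J$ of twisted constant $S$--group schemes, ensures that the fppf quotient $J/J_{P,L}$ is representable by a twisted constant $S$--scheme. The cartesian square above induces a monomorphism of sheaves $J/J_{P,L} \hookrightarrow \Out(G)/\Out(G,P,L)$ between twisted constant $S$--schemes, which by Lemma \ref{lem_mono}.(2) is a clopen immersion. Since $\Out(G)/\Out(G,P,L)$ is finite \'etale over $S$ by Lemma \ref{lem_clopen}.(3), so is its clopen subscheme $J/J_{P,L}$. The main obstacle lies in the first paragraph, namely the sheaf-theoretic identification $J_{P,L} = h^{-1}(\Out(G,P,L))$, which rests on the fppf-local conjugacy of parabolic subgroups and of their Levi subgroups in reductive group schemes; once it is available, the remainder of the argument is a formal consequence of Lemmas \ref{lem_clopen}, \ref{lem_mono}, \ref{lem_mono_iso} and \ref{lem_descent}.
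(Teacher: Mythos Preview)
Your proof is correct and follows essentially the same route as the paper: the identification $J_{P,L} = h^{-1}(\Out(G,P,L))$ is exactly the paper's Claim~\ref{claim_JPL} (stated there as the fiber-product isomorphism $J_{P,L} \simlgr J \times_{\Out(G)} \Out(G,P,L)$), and your surjectivity argument via lifting and adjusting by an element of $G$ matches the paper's epimorphism step. The deductions for parts (1) and (2) from Lemmas~\ref{lem_clopen}, \ref{lem_mono}, \ref{lem_mono_iso}, and \ref{lem_descent} then proceed identically.
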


\begin{proof} 
(1) The $S$--functor $\Aut(G,P,L)$ is representable by a smooth $S$-scheme \cite[Proposition 3.4.3]{Gi15}
and we have $N_{\widetilde G}(P,L) \simlgr \widetilde G \times_{\Aut(G)} \Aut(G,P,L)$.
We obtain then the commutative diagram
\[
\xymatrix{
N_{\widetilde G}(P,L) \ar[r]^{\sim \qquad \qquad}  \ar[d] & \widetilde G \times_{\Aut(G)} \Aut(G,P,L) \ar[d]  \\
J_{P,L}   \ar[r]^{r \qquad \qquad} &  J \times_{\Out(G)} \Out(G,P,L).
}
\]
\begin{claim} \label{claim_JPL} The bottom map $r$ is  an isomorphism.
\end{claim}
Since $J_{P,L} \to J$ is a monomorphism so is
$r$.  It is then enough to prove that the right vertical map
is an epimorphism of flat sheaves. For that we deal 
with an $S$-scheme $T$ and an element 
$u \in \Bigl( J \times_{\Out(G)} \Out(G,P,L) \Bigr)(T)$.
Up to localize for the flat topology we 
may assume that there exists 
$\widetilde g \in \widetilde G(T)$ and $a \in \Aut(G,P,L)(T)$
such that $\widetilde g$ and $a$ have same image in 
$\Out(G)(T)$. It means (again up to localize) 
that there exists $y \in G(T)$
such that $\mathrm{int}(\widetilde g \,  y ) = a$.
It follows that $\widetilde g \,  y$ normalizes
$(P,L)$ so  that  $(\widetilde g \,  y,a)$
defines an element of $\Bigl(\widetilde G \times_{\Aut(G)} \Aut(G,P,L)\Bigr)(T)$ which maps to $u$. The claim is then  established.

According to Lemma \ref{lem_clopen}.(2), 
the map $\Out(G,P,L) \to \Out(G,P)$ is 
a clopen immersion between twisted constant $S$-group schemes.
Claim \ref{claim_JPL} implies that $J_{P,L} \to J$ is a clopen immersion
and also that $J_{P,L}$ is a twisted constant $S$-group.
Finally it is quasi-isotrivial since $J$ is (Lemma \ref{lem_mono_iso}.(2)). 

\sm

\noindent (2) 
According to \S \ref{subsec_descent}, the fppf
quotient $J/J_{P,L}$ is representable by
a  twisted constant $S$-scheme.
On the other hand we know that  $\Out(G)/\Out(G,P,L)$ is  representable 
by finite $S$-\'etale scheme.
The map 
$J/J_{P,L} \to \Out(G)/\Out(G,P,L)$
is a monomorphism so is a clopen immersion according to
Lemma \ref{lem_mono}.(1). Thus $J/J_{P,L}$ is 
 finite $S$-\'etale.

\end{proof}

The conclusion is that the sequence \eqref{eq_JPL}
has the same shape than the initial sequence
$1 \to G \to \widetilde G \to J \to 1$.

\subsection{Normalizers, II}
Now let $P$ be an $S$-parabolic subgroup of $G$.
According to \cite[Proposition 3.4.3]{Gi15}, the fppf
sheaf $N_{\widetilde G}(P)$ is representable
by a smooth $S$--scheme which is closed in $\widetilde G$.
Furthermore the quotient $\widetilde G/ N_{\widetilde G}(P)$ is representable
by a smooth $S$-scheme.
In the same manner as in the previous section, we 
can construct an exact sequence of smooth  $S$--group schemes
$$
1 \to P \to N_{\widetilde G}(P) \to J_P \to 1
$$
such that $J_P$ is a twisted constant $S$-group
and that $J/J_P$ is representable by a finite \'etale $S$-scheme.
A  complement is the following (which extends  \cite[XXII,
Corollaire 5.8.5]{SGA3}).

\begin{slemma}\label{lem_proj} 
 The scheme $\widetilde G/ N_{\widetilde G}(P)$
is a projective $S$-scheme.
\end{slemma}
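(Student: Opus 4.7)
The plan is to realize $\widetilde G/N_{\widetilde G}(P)$ as a clopen subscheme of the scheme $\Par(G)$ of parabolic subgroups of $G$, which is smooth projective over $S$ by SGA3, Expos\'e XXVI. The group $\widetilde G$ acts on $\Par(G)$ by conjugation through $\widetilde G \to \Aut(G)$, and the scheme-theoretic stabilizer of $P$ for this action is precisely $N_{\widetilde G}(P)$, so the orbit map $\widetilde g \mapsto \widetilde g\, P\, \widetilde g^{-1}$ factors through a canonical monomorphism of $S$-schemes
\[
\iota : \widetilde G/N_{\widetilde G}(P) \longrightarrow \Par(G).
\]

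Next I would pin down the image of $\iota$. Recall the decomposition $\Par(G) = \bigsqcup_{t} \Par^{t}(G)$ into type components, each being a single $G$-orbit. The quotient $J = \widetilde G/G$ acts on the (locally finite) set of types via $J \to \Out(G)$, and the $J$-orbit $\mathcal{O}$ of $t(P)$ is finite since the set of types attached to a given Dynkin diagram is finite. Hence the image of $\iota$ on geometric points is the clopen subscheme
\[
Y := \bigsqcup_{t \in \mathcal{O}} \Par^{t}(G) \subset \Par(G),
\]
which is itself projective over $S$.

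To promote this to a scheme-theoretic isomorphism $\iota\colon \widetilde G/N_{\widetilde G}(P) \simlgr Y$, I would use a dimension count. Since $J$ and $J_P$ are \'etale, the relative dimensions of $\widetilde G$ and $N_{\widetilde G}(P)$ over $S$ agree with those of $G$ and $P$, whence
\[
\dim_S \bigl(\widetilde G/N_{\widetilde G}(P)\bigr) \,=\, \dim G - \dim P \,=\, \dim_S \Par^{t}(G)
\]
for every $t \in \mathcal{O}$, as conjugation preserves the dimension of a parabolic subgroup. A locally-of-finite-type monomorphism is unramified, and the equality of relative tangent spaces then upgrades it to an \'etale morphism; an \'etale monomorphism is an open immersion. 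Since the open image of $\iota$ coincides on points with the reduced subscheme $Y$, the two agree scheme-theoretically, and $\widetilde G/N_{\widetilde G}(P) \simlgr Y$ is projective over $S$ as a closed subscheme of the projective scheme $\Par(G)$.

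The main technical hurdle is verifying that $\iota$ is a clopen immersion onto $Y$: the finiteness of the $J$-orbit of types identifies the target set-theoretically, while the dimension count combined with the standard unramifiedness and \'etaleness criteria upgrade this to a scheme-theoretic isomorphism. Projectivity then follows formally from the projectivity of $\Par(G)$.
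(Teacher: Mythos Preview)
Your argument is correct but takes a different route from the paper. The paper first establishes that $\widetilde G/N_{\widetilde G}(P)$ is \emph{proper} over $S$ by fpqc localization: since $J/J_P$ is finite \'etale, one may assume it is constant with $d$ pieces and that there are lifts $\widetilde g_1,\dots,\widetilde g_d\in\widetilde G(S)$, so that $\widetilde G/N_{\widetilde G}(P)\cong (G/P)^{\sqcup d}$ is proper. The orbit map into $\Par(G)$ is then a monomorphism from a proper $S$-scheme, hence a closed immersion, and projectivity follows. Your approach instead identifies the image directly as a clopen $Y\subset\Par(G)$ and uses smoothness plus equality of relative dimensions to upgrade the monomorphism to an \'etale map, hence an open immersion, then checks surjectivity fiberwise. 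The paper's argument is shorter and bypasses the type decomposition; yours yields the extra information that $\widetilde G/N_{\widetilde G}(P)$ is exactly the union of type components in the $J$-orbit of $t(P)$. One point to tighten: your description of $Y$ and the orbit $\mathcal O$ is informal when the type scheme is not constant over $S$; the clean formulation is that the composite $\widetilde G/N_{\widetilde G}(P)\to\Par(G)\to\underline{\mathrm{Type}}(G)$ factors through the finite \'etale scheme $J/J_P$ (established just before the lemma), which embeds as a clopen in $\underline{\mathrm{Type}}(G)$, and $Y$ is its preimage under the type map.
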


\begin{proof}  
We establish first that the $S$-scheme  $\widetilde G/ N_{\widetilde G}(P)$ is proper. 
 In view of \cite[Proposition $_2$.2.7.1.(vii)]{EGA4}, the statement is local with respect to the fpqc topology;
 since  $J/J_P$ is finite \'etale over $S$
 we can  assume that $J/J_P=S \coprod  \dots \coprod S$ ($d$ times)
and that there exist $\widetilde g_1 , \dots, \widetilde g_d \in 
\widetilde G(S)$    mapping to the pieces of $J/J_P$.
In this case we have $\widetilde G/N_{\widetilde G}(P)
\simlgr G/P  \coprod \dots \coprod G/P$ ($d$ times)
which is proper over  $S \coprod \dots \coprod S$
so that $\widetilde G/N_{\widetilde G}(P)$ is proper over $S$.

The assignment $\widetilde g \to 
 [^{\widetilde g}\!P]$ defines a monomorphism
 $h: \widetilde G/N_{\widetilde G}(P) \to \Par(G)$.
 Since $\widetilde G/N_{\widetilde G}(P)$ is $S$-proper, 
 the morphism $h$  is proper \cite[Tag 01W6, (2)]{St}
 so is a closed immersion \cite[Proposition $_3$.8.11.5]{EGA4}.
 Since $\Par(G)$ is $S$-projective, we conclude 
 that the $S$-scheme $\widetilde G/N_{\widetilde G}(P)$
 is projective.
\end{proof}

\subsection{Normalizers, III}

\begin{slemma} \label{lem_rep}
Let $T$ be a maximal $S$-torus of $G$
and let $N=N_G(T)$ be the normalizer $S$-group  scheme of 
$T$ \cite[Exp.\ XI, Corollaire 5.3 bis]{SGA3}.

\sm

\noindent  (1) The fppf sheaf normalizer
$\widetilde N=N_{\widetilde G}(T)$ is representable
by a smooth $S$--group scheme which fits in an exact 
sequence
$$
1 \to N \to \widetilde N \to J \to 1.
$$
Furthermore $\widetilde N$ is a closed $S$-subgroup
scheme of $\widetilde G$ and is ind-quasi-affine over $S$.

\sm

\noindent (2) The fppf quotient $\widetilde W=\widetilde N/T$ is representable by a $S$-group scheme
smooth and ind-quasi-affine.

\sm

\noindent (3)  The image of
$H^1(S,\widetilde N)\to H^1(S, \widetilde G)$ consists 
of the classes
of $\widetilde G$-torsors $\widetilde
E$ over $S$ such that the  reductive twisted\footnote{i.e.\ 
$^{\widetilde E}\!G= {\widetilde E} \wedge^{\widetilde G} G$
for the natural action of $\widetilde G$ on $G$.}  
$S$-group scheme $^{\widetilde E}\!G$ admits a maximal $S$-torus. 

\end{slemma}

\begin{proof}
We recall from the quoted reference that $N$ is 
a closed $S$-subgroup scheme of $G$ which is smooth. 

\sm

\noindent
(1) We claim that the map $\widetilde N \to J$ is an epimorphism of fppf sheaves.
 We are given a ring $A$ and an element
$x \in J(A)$ and want to find a fppf cover $B$ of $A$
such that $x_B \in J(B)$ lifts to $\widetilde G(B)$.
The preimage of $x$ in $\widetilde G_A$
is a $G_A$--torsor so is an affine smooth $A$--scheme 
$\Spec(B_1)$. It follows that there exists $\widetilde g_1 \in \widetilde G(B_1)$ mapping to $x_{B_1} \in J(B_1)$.
Next we consider the maximal $B_1$--torus
$^{\widetilde g_1}\! T$ of $G_{B_1}$.
We consider the strict transporter 
$\mathrm{Transpst}_{G_{B_1}}( T_{B_1}, \, ^{\widetilde g_1}\! T)$
as defined in \cite[VI$_B$, \S 6]{SGA3}. This is the
$B_1$-functor defined by
$$
\mathrm{Transpst}_{G_{B_1}}( T_{B_1}, \, ^{\widetilde g_1}\! T)(C)
= \bigl\{ g \in G(C) \, \mid \,  g T_{B_1}(C')g^{-1} = \, (^{\widetilde g_1}\! T)(C')
\enskip \hbox{for each $C$--ring $C'$}  \bigr\}
$$
for each $B_1$--ring $C$.
According to Grothendieck \cite[XI, Corollary 5.4.bis]{SGA3},  it  is representable
by an $N$-torsor over $\Spec(B_1)$ so is an affine  smooth $B_1$--scheme  $\Spec(B_2)$.
It follows that there exists $g_2 \in G(B_2)$
such that    $^{g_2}\!T_{B_1} = \, ^{\widetilde g_1}\! T$
so that $\widetilde n_2=g_2^{-1} g_{1, B_2} \in \widetilde N(B_2)$.
Thus $\widetilde n_2$ maps to $x_{B_2} \in J(B_2)$
and the claim is established.

We have then an exact sequence of flat $S$-sheaves in groups $1 \to N \to \widetilde N \to J \to 1$.
Then $\widetilde N$ is a sheaf $N$-torsor over $J$
so that it is representable by a $J$-scheme, hence
by an $S$-scheme denoted also by $\widetilde N$.
In particular, $\widetilde N$ is affine over $J$
hence ind-quasi-affine over $S$
in view of \cite[Tag 0F1V]{St}.
Since $N$ and $J$ are smooth, 
so is $\widetilde N$ in view of \cite[VI$_B$, Proposition 9.2.(vii)]{SGA3}.

To establish that $\widetilde N$ is closed in $\widetilde G$, we can reason  locally for fppf over $J$  in view of the permanence property \cite[Proposition 2.7.1.(xii)]{EGA4}. But locally over $J$, $\widetilde N \to J$ 
admits a splitting so we are done.

\smallskip

\noindent (2) Moding out the sequence of 
fppf $S$-sheaves in groups $1 \to N \to \widetilde N \to J \to 1$. by $T$ provides an 
exact sequence of fppf $S$-sheaves in groups
$$
1 \to W \to \widetilde W \to J \to 
1.$$
The argument of (1) shows that 
$\widetilde W$ is representable by an $S$-scheme
which is smooth and ind-quasi-affine.

\sm

\noindent (3) 
Let $\widetilde E$ be a $\widetilde G$-torsor
and consider the twisted $S$--group schemes
$G'= \, ^{\widetilde E}\!G$ and $\widetilde G'=\, ^{\widetilde E}\!\widetilde G$.   We note that $G'$ is reductive
and consider the  exact sequence of smooth $S$-group schemes
$1 \to G' \to \widetilde G' \to \, {^{\widetilde E}\!J} \to 1$.
Clearly we have $G' = (\widetilde G')^0$.
According to \cite[Lemme 2.6.2.(2)]{Gi15}, 
the class $[\widetilde E]$ belongs to 
the image of 
$H^1(X,\widetilde N)\to H^1(X, \widetilde G)$ 
if and only if $\widetilde G'$ admits an  $X$--subgroup 
sheaf $H'$
which is locally conjugated to $T \subset \widetilde G$. 
We claim  that the second condition is equivalent
to require that $\widetilde G'=\, ^{\widetilde E}\!G$ admits a maximal $S$--torus. 

 Assume that $\widetilde G'$ admits an  $S$--subgroup 
sheaf $H'$
which is locally $\widetilde G$-conjugated to $T \subset \widetilde G$.
By faithfully flat descent, $H'$ is representable
by an affine $S$-group scheme $T'$. Since $T'$ is an $X$-form of $T$,  $T'$ is an $S$-torus of same rank as $T$. Next $T \to \widetilde G$ is a closed immersion and so is $T' \to \widetilde G'$ 
since this property descends \cite[Tag 02L6]{St}.
Using that  $T'$ has connected fibers, we 
get that $T' \subset G'=(\widetilde G')^0$. Since $T'$ has same rank than $T$,
it is a maximal $S$-torus of $G'$. \newline
\indent 
Conversely, assume that $G'$  admits a maximal $S$-torus $T'$ and we want to establish that $T'$ is locally $\widetilde G$-conjugated to $T \subset \widetilde G$. 
Up to localize in the flat topology, we can assume that
$\widetilde E$ is the trivial $\widetilde G$-torsor
so that we have $G'=G$ and  $\widetilde G'=
\widetilde G$. Since the maximal $S$-tori
$T$ and $T'$ of $G$  are locally
$G$-conjugated for the \'etale 
topology \cite[XII,  Theorem 1.7.(c)]{SGA3}, 
they are a fortiori $\widetilde G$-conjugated for the flat topology.
\end{proof}

\subsection{Reductive group schemes and reduciblility}\label{sub_reductive}

Let $H$ be a reductive $S$-group scheme.
We denote by $\Par(H)$ the total scheme 
of parabolic subgroups of $H$ \cite[XXVI, \S 3.2]{SGA3}.
It decomposes as  $\Par(H) =\Par^+(H) \coprod S$,
where $S$ corresponds to the fact that  $H$ itself is a  parabolic
subgroup scheme. The scheme $\Par^+(H)$ is called the total scheme
of proper parabolic subgroups and is  also  projective over $S$.

We say that $H$ is {\it reducible} if it admits 
a proper  parabolic subgroup  $P$  such that $P$ contains a Levi subgroup $L$ The opposite notion is  {\it irreducible}.

If $S$ is affine, the notion of reducibility for $H$ is equivalent to the existence of a proper parabolic subgroup  $P$ \cite[XXVI, Corollaire 2.3]{SGA3}, so there is no ambiguity with the terminology of \cite{GP07}.

We say that $H$ over is {\it  isotropic}
if   $H$ admits a subgroup isomorphic to $\GG_{m,S}$. The opposite notion is {\it anisotropic}. According to \cite[Theorem 7.3.1.(ii)]{Gi15}, if $S$
is connected,  $H$ is isotropic if and only if $H$ is reducible or the radical torus $\rad(H)$ is isotropic. 

By extension, if an  $S$-group scheme $M$ acts on  $H$,
we say that the action is {\it reducible} if 
it normalizes  a couple $(P,L)$ where $P$ is a  proper  parabolic subgroup  of $H$ and $L$ a Levi subgroup of $P$. The action is otherwise called {\it irreducible}.

We say that the  action of $M$ on $H$ is  {\it isotropic} if 
it centralizes  a  $S$-subgroup $\GG_m$ of $H$.
Otherwise the action  is {\it anisotropic.}  
We shall use several times the next equivalence
$(i) \Longleftrightarrow (ii)$  which was implicitely used in \cite[\S 2.4]{GP13}.

\begin{sproposition} \label{prop_richardson} 
Assume that $S$ is connected. The following are equivalent:

\sm

(i) The action of $M$ on $H$ is isotropic;

\smallskip

(ii) The action of $M$ on $\rad(H)$ is isotropic or
the action of $M$ on $H$ is reducible;

\smallskip

Furthermore assume that the centralizer $C_H(M)$ is representable
by an $S$--group scheme. Then (i) and (ii) are also equivalent to

\sm

(iii) $C_H(M)$ admits an $S$-subtorus $\GG_{m,S}$.

\end{sproposition}

\begin{proof}
The  equivalence $(i) \Longleftrightarrow (ii)$ 
is \cite[Corollary \ B.2]{GN}. The equivalence 
 $(i) \Longleftrightarrow (iii)$ is obvious. 
\end{proof}

A useful  complement is the following.

\begin{sproposition} \label{prop_morozov} \cite[Proposition B.3]{GN}
Assume that $S=\Spec(R)$ is affine and connected, that $M$ is 
a flat affine  $R$--group scheme whose geometric fibers are linearly reductive (e.g. $M$ is of multiplicative type).
If $M$ normalizes an $R$--parabolic subgroup $P$ of
$H$,  there exists $\lambda: \GG_m \to H$ which is $M$-invariant such that $P=P_H(\lambda)$. In particular  $L=C_H(\lambda)$ is   a Levi subgroup of $P$ which is normalized by $M$.
\end{sproposition}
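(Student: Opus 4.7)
The plan is to proceed in two stages: first produce an $M$-stable Levi subgroup $L$ of $P$, and then extract an $M$-invariant cocharacter $\lambda: \GG_m \to \rad(L)$ with $P_H(\lambda)=P$. Given such $\lambda$, the identity $L = C_H(\lambda)$ comes for free, and the $M$-invariance of $\lambda$---meaning that $M$ commutes with the image of $\lambda$---immediately forces $M$ to centralize both $P$ and $L$, hence to normalize them.

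For the first stage, the $S$-functor of Levi subgroups of $P$, which I denote $\mathcal{L}$, is representable by a smooth affine $S$-scheme that is a torsor under the unipotent radical $U = R_u(P)$ acting by conjugation (\cite[XXVI]{SGA3}). Since $M$ normalizes $P$, it normalizes $U$ and acts on $\mathcal{L}$ compatibly with the $U$-action. Choosing an $M$-stable composition series $U = U_0 \supset U_1 \supset \cdots \supset U_n = 1$ by closed normal subgroups with vector $S$-group quotients $U_i/U_{i+1}$ carrying a linear $M$-action, the linear reductivity of $M$ on geometric fibers, combined with flatness of $M$ and affineness of $S$, yields the vanishing of $H^1(M, U_i/U_{i+1})$. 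A dévissage then trivializes the $M$-equivariant $U$-torsor $\mathcal{L}$, producing an $M$-invariant Levi $L$.

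For the second stage, $M$ now acts on $L$, hence on the radical torus $\rad(L)$ and on its cocharacter sheaf $X_*(\rad(L))$. This sheaf is étale-locally constant of finite rank, and since $M$ is flat, its identity component $M^\circ$ acts trivially on the discrete quotient; the $M$-action therefore factors through a finite étale group scheme. The subset $C \subset X_*(\rad(L))$ consisting of cocharacters $\lambda$ with $P_H(\lambda)=P$ is a non-empty open convex cone, so summing any $\lambda_0 \in C$ over its finite $M$-orbit produces an $M$-invariant element $\lambda \in C$ by the cone property.

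The main obstacle is the first stage: trivializing the $M$-equivariant $U$-torsor $\mathcal{L}$ over an arbitrary affine base $S$. This depends crucially on the linear reductivity of the geometric fibers of $M$, which supplies the cohomological vanishing for each vector-group slice in the filtration of $U$, and on flatness of $M$ to carry these vanishings through the dévissage.
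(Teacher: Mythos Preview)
The paper does not give its own proof of this statement; it is quoted from \cite[prop.\ B.3]{GN}. Your two-stage plan---first produce an $M$-stable Levi by trivializing the $M$-equivariant $U$-torsor of Levis via a d\'evissage along a filtration of $U$, then average a cocharacter in the resulting $\rad(L)$---is the natural argument and is essentially correct, with one genuine gap and one point to make explicit.

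The gap is in Stage~2. You assert that the $M$-action on $X_*(\rad(L))$ factors through a finite \'etale group because ``$M^\circ$ acts trivially and $M/M^\circ$ is finite \'etale.'' Under the stated hypotheses (flat, affine, fibrewise linearly reductive) there is no reason $M^\circ$ is representable or that $M/M^\circ$ is finite \'etale; already for $M=\mu_p$ over a mixed-characteristic base the fibrewise component groups jump. The correct justification is that $\Aut(\rad(L))$ is a twisted constant $S$-group scheme, and $M$, being affine over $S$, is quasi-compact; any $S$-homomorphism from $M$ to a twisted constant $S$-group then factors through a finite \'etale subgroup, and over this you can average inside the open convex cone $C \subset X_*(\rad(L))\otimes\QQ$.

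The point to make explicit in Stage~1 is which filtration of $U$ you use: any filtration built from a Levi (e.g.\ by root height or by weights of $\rad(L)$) is circular here, since no $M$-stable Levi is yet available. The descending central series of $U$ is the right choice: it is functorial in $U$, hence $M$-stable, and for the unipotent radical of a parabolic in a reductive group scheme its successive quotients are vector groups on which $\Aut(P)$ acts linearly, via the identification with graded pieces of $\Lie(U)$.
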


\section{Loop torsors on varieties}  
We come back here on variant of results of \cite{CGP, GP13}.
We extend them to the  characteristic free  case
and to a wider class of $k$--group schemes.

\subsection{Various fundamental groups}
Let $k$ be a field.
We denote by $p \geq 1$ the characteristic exponent of $k$ and by 
 $k_s/k$ be an absolute Galois closure. We denote by
$\Gamma_k=\Gal(k_s/k)$ the absolute Galois group.
Let $X$ be a geometrically connected  $k$-scheme
of finite type.
 We  choose a $k_s$-point $x$, this is a quasi geometric point of $X$ so that we can deal with the fundamental 
 group $\pi_1(X,x)$ as explained in \S \ref{subsec_galois}.
We remind the following important sequence
to the reader.

\begin{proposition} \cite[Proposition 3.3.7]{Fu}\label{prop_homotopy}
We have an  exact sequence of profinite 
groups
\begin{equation}\label{eq_fund0}
1 \to \pi_1( X_{k_s}, x) \to \pi_1( X, x) \to \Gal(k_s/k) \to 1.
\end{equation}
\end{proposition}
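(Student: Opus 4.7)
The plan is to translate the sequence into one about categories of finite étale covers and to invoke the general exact-sequence criterion of SGA1 (see Fu, Chapter~3). By the Galois-category formalism, the groups $\pi_1(X_{k_s},x)$, $\pi_1(X,x)$ and $\Gal(k_s/k)\cong\pi_1(\Spec k, x)$ classify finite étale covers of $X_{k_s}$, $X$ and $\Spec k$ respectively through the fibre functor at $x$; the two arrows in the sequence are induced, contravariantly, by pullback along $X_{k_s}\to X\to\Spec k$. I will check three things: surjectivity on the right, injectivity on the left, and the image–kernel matching in the middle.

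\textbf{Surjectivity of $\pi_1(X,x)\to\Gal(k_s/k)$.} By Grothendieck's criterion this is equivalent to the pullback functor from finite étale $k$-schemes to finite étale $X$-schemes being fully faithful and sending connected objects to connected objects. Both properties follow from the geometric connectedness of $X$: for every finite separable extension $L/k$ the scheme $X\otimes_k L$ is connected with $H^0(X\otimes_k L,\cO)=L$, hence the constant $L$-schemes $M_L$ pull back to connected components and morphisms between them are recovered from their generic behaviour.

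\textbf{Injectivity of $\pi_1(X_{k_s},x)\to\pi_1(X,x)$.} This is the key step and the main obstacle. Via the dictionary it amounts to showing that every connected finite étale cover $Y\to X_{k_s}$ is a connected component of the base change to $X_{k_s}$ of some finite étale cover of $X$. Writing $X_{k_s}=\varprojlim_L X_L$ as $L$ runs over the finite separable subextensions of $k_s/k$ (with affine transition maps), the finite-type hypothesis on $X$ together with the limit theorem for finitely presented étale morphisms (EGA~IV$_3$, 8.8.2) produces some $L$ and a finite étale cover $Y_L\to X_L$ restricting to $Y$ after base change to $k_s$. The composition $Y_L\to X_L\to X$ is then finite étale and its pullback along $X_{k_s}\to X$, namely $\bigsqcup_{\sigma\colon L\hookrightarrow k_s}({}^\sigma Y)$, admits $Y$ as one of its connected components, as required. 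The delicate point here is precisely the finite-type hypothesis on $X$, without which spreading-out fails and injectivity can be lost.

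\textbf{Exactness in the middle.} A connected finite étale cover $Z\to X$ becomes trivial after base change to $X_{k_s}$ (i.e.\ $Z_{k_s}$ is a disjoint union of copies of $X_{k_s}$) if and only if $Z$ is the pullback of a finite étale cover of $\Spec k$. Translating via the Galois-category dictionary, this identifies the image of $\pi_1(X_{k_s},x)\to\pi_1(X,x)$ with the kernel of $\pi_1(X,x)\to\Gal(k_s/k)$, and the normality of this subgroup is automatic since it is a kernel. Combining the three steps yields the claimed homotopy exact sequence.
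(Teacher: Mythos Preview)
The paper does not give its own proof of this proposition; it is quoted with a citation to Fu's book \cite[Prop.\ 3.3.7]{Fu}, followed by the remark that with geometric base points this is Grothendieck's homotopy exact sequence \cite[th\'eor\`eme IX.6.1]{SGA1}. Your sketch is precisely the standard argument found in those references: surjectivity on the right from geometric connectedness of $X$, injectivity on the left by spreading out along the tower $X_{k_s}=\varprojlim_L X_L$ (which uses that $X$ is of finite type over $k$, hence of finite presentation), and exactness in the middle via the characterization of finite \'etale covers of $X$ that split completely over $X_{k_s}$ as those pulled back from $\Spec(k)$. The argument is correct.
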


By using geometric base points, 
this is equivalent to the homotopy
sequence of Grothendieck \cite[th\'eor\`eme IX.6.1]{SGA1}.

For a profinite group $\cG$, we denote by $\cG^{(p')}$ its maximal prime to $p$
quotient it is the quotient of $\cG$ by the closure of the subgroup
$\cG^{[p]}$ generated by the pro-$p$--Sylow subgroups of $\cG$.
We can define then $\pi_1( X, x)^{(p')}$.
Since $\pi_1( X_{k_s}, x)^{[p]}$ is normal and closed in $\pi_1( X, x)$,
we can define the quotient $\pi_1( X, x)^{(p'-geo)}= \pi_1( X, x) / \pi_1( X_{k_s}, x)^{[p]}$.
Altogether we have a commutative  exact diagram of exact sequences of 
profinite groups
\[
\xymatrix{
1 \ar[r]& \pi_1( X_{k_s}, x) \ar[r] \ar[d]& \pi_1( X, x) \ar[d] \ar[r]&\Gal(k_s/k) \ar[r]
\ar[d]^{=}& 1 \\
1 \ar[r]& \pi_1( X_{k_s}, x)^{(p')} \ar[d]^{=} \ar[r]& \pi_1( X, x)^{(p'-geo)}
\ar[d] \ar[r]&\Gal(k_s/k) \ar[d] \ar[r]& 1 \\
1 \ar[r]& \pi_1( X_{k_s}, x)^{(p')} \ar[r]& \pi_1( X, x)^{(p')} \ar[r]&\Gal(k_s/k)^{(p')} \ar[r]& 1 .
}
\]
 By Galois theory we have associated the cover $X^{sc, p'-geo}$ (resp.\
 $X^{sc, p'}$) of $X$ named the universal geometrical $p'$-cover  (resp.\ 
 universal $p'$--cover). In particular $k_s$ admits the
 maximal $p'$-Galois subextension $k^{(p')}$.

On the other hand, if $X$ is smooth,
 Kerz and Schmidt showed that the four notions of 
 tamely ramified covers of $X$ coincide \cite[Theorem 4.4]{KS}.
 As explained in  the introduction of \cite{HS},
 tameness of a covering should be thought of as ``at most tamely ramified along the
boundary of compactifications over the  base''.
 We use mostly here the so-called {\it
divisor-tameness} definition. 
We say  then that a finite connected \'etale cover 
$Y \to X$ is {\it divisor-tame} if 
for every normal compactification $X^c$ of $X$ and every point $x \in X^c \setminus X$
of codimension 1,  the discrete rank one valuation $v_x$ on $k(X)$ associated with
$x$ is tamely ramified in the finite, separable field extension $k(Y )/k(X)$
 \footnote{Denoting by $K_x$ the completion of 
$k(X)$ for the valuation $v_x$, it means that in the decomposition $ k(Y) \otimes_{k(X)} K_x= L_1 \times \dots \times L_r$ in valued fields,
 each finite separable field extension 
$L_i/K_x$ is tamely ramified, that is, the residue field extension  is separable and the ramification indices are
coprime with the characteristic exponent of the residue field.
 }.
Let $(X^{tsc}, x^{tsc})$ be the {\it simply connected tame cover} of 
$(X,x)$. We have the fundamental exact sequence of profinite 
groups
\begin{equation}\label{eq_fund}
1 \to \pi_1^t( X_{k_s}, x) \to \pi_1^t( X, x) \to \Gal(k_s/k) \to 1.
\end{equation}
Since $p'$-Galois covers of $X_{k_s}$ are tame,
$\pi_1^t( X_{k_s}, x)$ maps onto $\pi_1( X_{k_s}, x)^{(p')}$
so that we have   a commutative  exact diagram of exact sequences of 
profinite groups
\[
\xymatrix{
1 \ar[r]& \pi_1( X_{k_s}, x) \ar[r] \ar[d]& \pi_1( X, x) \ar[d] \ar[r]&\Gal(k_s/k) \ar[r]
\ar[d]^{=}& 1 \\
1 \ar[r]& \pi_1^t( X_{k_s}, x) \ar[d] \ar[r]& \pi_1^t( X, x)
\ar[d] \ar[r]&\Gal(k_s/k) \ar[d] \ar[r]& 1 \\
1 \ar[r]& \pi_1( X_{k_s}, x)^{(p')} \ar[r]& \pi_1( X, x)^{(p'-geo)} \ar[r]&\Gal(k_s/k) \ar[r]& 1. 
}
\]

\begin{sremark}{\rm A basic tamely ramified cover is of the shape $Y_l$
where $Y \to X$ is a geometrically connected  $M$-torsor for some finite 
quotient $M$ of $\pi_1^t(X,x)$ and $l/k$ a finite Galois extension
such that $M_l$ is constant. In this case we have $\Gal(Y_l/X)=M(l) \rtimes \Gal(l/k)$.  We observe that every continuous finite quotient of 
$\pi_1^t(X,x)$ factorizes through such a cover.
}
\end{sremark}

\subsection{Loop torsors}
Let $G$ be a locally algebraic group defined over $k$.
We get then a map $$
H^1(\pi_1(X,x), G(k_s)) \to
H^1(\pi_1(X,x), G(X^{sc})) \hookrightarrow H^1(X,G)
$$
 whose image is denoted by $H^1_{loop}(X,G)$.
 We say that a sheaf $G$--torsor over $X$ is 
 {\it loop} if its class belongs to $H^1_{loop}(X,G)$.

We have variants called tame loop torsors
(resp.\  $p'$-loop torsors) when we consider 
the group $\pi_1^t(X,x)$ (resp. \, $\pi_1(X,x)^{(p'-geo)}$).
This is gives rise to the subsets 
$$
H^1_{\substack{p'-geo \\ loop}}(X,G) \subseteq  H^1_{\substack{tame \\ loop}}(X,G) \subseteq H^1_{loop}(X,G)
.
$$
Finally the image of the map $$
H^1(\pi_1(X,x)^{(p')}, G(k^{(p')}) \bigr) \to
H^1(\pi_1(X,x)^{(p')}, G(X^{sc,p'})) \hookrightarrow H^1(X,G)
$$
is called the class  of {\it $p'$-loop torsors}.
We denote $H^1_{iso}(X,G)$ (resp.\ $H^1_{p'-geo-iso}(X,G)$, $H^1_{p'-iso}(X,G)$) the subset 
of isotrivial (resp. $p'$-geo-isotrivial, $p'$-geo)
sheaf $G$-torsors $E$, that is, such that $E$ is
trivialized by a finite \'etale cover 
(resp.\ finite \'etale cover $p'$-geometric, 
finite \'etale cover of $p'$-degree).

\begin{slemma} 
(1) Assume that $G$ is a finite \'etale group 
of prime to $p$ order. Then 
$$
H^1_{p'-loop}(X,G) 
= H^1_{p'-iso}(X,G)=  H^1(X,G).
$$

\smallskip

\noindent (2) Assume that $G$ is  commutative
and connected.  Then we have 
$$
H^1_{p'-loop}(X,G)
= H^1_{p'-iso}(X,G)= H^1(X,G). 
$$
\end{slemma}

\begin{proof}
(1) The direct inclusions
$H^1_{p'-loop}(X,G)
\subset  H^1_{p'-iso}(X,G) \subset  H^1(X,G)$ are
obvious. Let  $\gamma =[E] \in H^1(X,G)$.
Then $E \to X$ is a finite \'etale cover 
of $p'$ degree which anihilates $\gamma$.
We have proven that $\gamma$ belongs to 
$H^1_{p'-iso}(X,G)$.
It follows that there exists a Galois cover
$Y \to X$ of $p'$ degree which anihilates $\gamma$.
It decomposes as $Y \to X_K \to X$ for a Galois $p'$-degree extension $K$ of $k$ such that...
Since $G(K)=G(Y)$
we have 
$$
H^1(\Gal(Y/X), G(K))= H^1(\Gal(Y/X), G(Y))
= \ker\big( H^1(X,G) \to H^1(Y,G) \big). 
$$
\sm

\noindent (2)
Here $G$ is then a commutative algebraic $k$-group.
 Let $\gamma \in H^1(X,G)_{p'-tors}$, it means that there 
exists $m$ prime to $p$ such that $m \gamma =0$.
Using the exact sequence $1 \to {_mG} \to G \xrightarrow{\times m} G \to G \to 1$, it follows that 
$\gamma$ arises from a class $\alpha \in H^1(X,{_mG})$.
According to \cite[Expos\'e VII, \S 8.4]{SGA3},
the $k$-group ${_mG}$ is \'etale and is of prime to $p$ order. Part (1) shows that $\alpha \in 
 H^1_{p'-loop}(X,{_mG})=H^1(X,{_mG})$
 so that $\gamma \in H^1_{p'-loop}(X,G)$. 

 The inclusion  $H^1_{p'-loop}(X,G) \subset H^1_{p'-iso}(X,G) $ is obvious. Let $\gamma \in H^1_{p'-isot}(X,G)$.
 It is split by a finite Galois $p'$-cover $f: Y \to X$.
 We consider the Deligne trace $tr_f: f_*f^*G \to G$.
 According to \cite[Exp.\ XXIV, Proposition 8.4]{SGA3}, the natural map
 $H^1(X, f_*f^*G) \to H^1(Y,G)$ is an isomorphism
 and gives rise to a trace map $Tr: H^1(Y,G) \to H^1(X,G)$
 and the composite $H^1(X,G) \xrightarrow{f^*}  H^1(Y,G) \xrightarrow{Tr} H^1(X,G)$ is the multiplication by $deg(f)$. It follows that 
 $deg(f)\, \gamma=0$. Thus
  $\gamma \in H^1(X,G)_{p'-tors}$.
\end{proof}

From now on we assume that $x \in X(k_s)$
is a $k$-point.
The sequence \eqref{eq_fund0} comes with a
splitting $s_x: \Gal(k_s/k) \to \pi_1( X, x)$.
We have then a decomposition 
$$
\pi_1(X,x) = \pi_1(X_{k_s},x) \rtimes \Gal(k_s/k).
$$ 
It follows that the profinite group $\pi_1(X,x)$
is equipped with a structure of affine algebraic $k$--group.
We are given a loop cocycle $\eta \in  Z^1\big(\pi_1(X,x), \bG(k_s)\big)$.  Its  restriction
$\eta_{\mid \Gamma_k}$ is called the {\it arithmetic part} of $\eta$
and its denoted by $\eta^{ar}$: it is  an element of $Z^1\big(\Gamma_k, \bG(k_s)\big)$.
Next we consider the restriction of $\eta$ to
  $\pi_1( X_{k_s} ,x)$ that we denote by $\eta^{geo}$
and called the {\it geometric part} of $\eta$.
By taking into account the Galois action,
the map $\eta^{geo}: \pi_1( X_{k_s} ,x) \to \, _{\eta^{ar}}\!G$ is a homomorphism of algebraic $k$--groups.

\begin{slemma} \label{lem_dico} We assume that $G$ is ind-quasi-affine
and locally algebraic.

\sm

\noindent (1) The map $\eta \to (\eta^{ar}, \eta^{geo})$
provides a bijection between $Z^1\bigl( \pi_1( X,x), G(k_s)
 \bigr)  $ and the couples
$(z, \phi)$ where $z \in Z^1\bigl( \Gamma_k,  G(k_s) )$ and 
$\phi: \pi_1( X ,x) \to {_zG}$
is a $k$--group homomorphism.

\smallskip

\noindent (2) We have an exact sequence of pointed sets
\begin{equation} \label{eq_conj}
1 \to \Hom_{k-gr}(  \pi_1( X_{k_s} ,x),G ) /
G(k) \to H^1\bigl(  \pi_1( X ,x) ,  G(k_s) \bigr)
\xrightarrow{Res_x}  H^1\bigl( \Gamma_k ,  G(k_s) \bigr)
\to 1.
\end{equation}
Furthermore the first map is injective.

\smallskip

\noindent (3) We have a decomposition
$$
H^1\bigl(  \pi_1( X ,x) ,  G(k_s) \bigr)
= \coprod\limits_{[z ] \in H^1(k,G) }
\Hom_{k-gr}(  \pi_1( X_{k_s} ,x),{_zG} ) /
{_zG}(k). 
$$
\end{slemma}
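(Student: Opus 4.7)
The plan is to exploit the splitting $\pi_1(X,x) = \pi_1(X_{k_s},x) \rtimes \Gamma_k$ afforded by the $k$-rational base point $x$, and to translate everything through the nonabelian cocycle identity $\eta(\alpha\beta) = \eta(\alpha)\cdot {}^{\alpha}\eta(\beta)$. Since $\pi_1(X_{k_s},x)$ is normal in $\pi_1(X,x)$ and acts trivially on $G(k_s)$ (the action factors through $\Gamma_k$), one sees immediately that $\eta^{geo}$ is a group homomorphism while $\eta^{ar}$ is a $\Gamma_k$-cocycle.

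For (1), I would first observe that every element of $\pi_1(X,x)$ writes uniquely as $g\gamma$ with $g \in \pi_1(X_{k_s},x)$ and $\gamma \in \Gamma_k$, so the cocycle identity forces $\eta(g\gamma) = \eta^{geo}(g)\,\eta^{ar}(\gamma)$; this gives injectivity of $\eta \mapsto (\eta^{ar},\eta^{geo})$. Conversely, given a pair $(z,\phi)$, define $\eta(g\gamma) := \phi(g)\, z(\gamma)$ and verify the cocycle identity on mixed products $(g_1\gamma_1)(g_2\gamma_2) = g_1(\gamma_1 g_2 \gamma_1^{-1})\gamma_1\gamma_2$. The only nontrivial compatibility, namely that $\phi$ is a morphism of $k$-group schemes $\pi_1(X_{k_s},x) \to {_zG}$ (equivariant under the conjugation action on the source and the $z$-twisted Galois action on the target), corresponds precisely to the identity $\eta(\gamma g\gamma^{-1}) = \eta(\gamma)\cdot{}^{\gamma}\eta(g)\cdot\eta(\gamma)^{-1}$, which drops out of the cocycle condition together with $\eta(\gamma)^{-1} = {}^\gamma\eta(\gamma^{-1})$.

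For (2), the restriction $Res_x$ is surjective by (1) with $\phi = 1$. To identify its kernel I twist by a coboundary to reduce to the case $\eta^{ar} = 1$, so by (1) the cocycle is a $k$-group homomorphism $\phi$. Two such $\phi_1,\phi_2$ are cohomologous via some $g \in G(k_s)$ iff simultaneously $\phi_2 = g^{-1}\phi_1(\cdot)g$ (since $\pi_1(X_{k_s},x)$ acts trivially on $G(k_s)$) and $1 = g^{-1}\cdot{}^{\gamma}g$ for every $\gamma \in \Gamma_k$, i.e.\ $g \in G(k)$; this simultaneously proves injectivity of the first map and that the kernel of $Res_x$ coincides with $\Hom_{k-gr}(\pi_1(X_{k_s},x),G)/G(k)$. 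Part (3) is then a torsor-theoretic refinement: for each class whose arithmetic part is cohomologous to $z$, twist everything by $z$ to replace $G$ by ${_zG}$ and apply (2) to ${_zG}$, identifying the fiber over $[z] \in H^1(k,G)$ with $\Hom_{k-gr}(\pi_1(X_{k_s},x),{_zG})/{_zG}(k)$; summing over $[z]$ yields the claimed disjoint union. The main obstacle I anticipate is the bookkeeping in (1) matching the twisted Galois action defining ${_zG}$ with the conjugation action inherited from the semidirect product; everything else reduces to standard nonabelian cohomology and the torsion principle.
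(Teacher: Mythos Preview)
Your proposal is correct and follows essentially the same approach as the paper: the paper defers (1) to \cite[lemma 3.7]{GP13}, proves the injectivity in (2) by the same observation that the coboundary relation restricted to $\Gamma_k$ forces $g\in G(k)$, and deduces (3) by the same fiberwise twisting argument. In fact your write-up is more explicit than the paper's, which contains some minor typos in the formula for $\widetilde\phi$.
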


\begin{proof}
(1) This is similar with \cite[lemma 3.7]{GP13}.

\smallskip

\noindent (2) Part (1) defines a map 
$$\Hom_{k-gr}(  \pi_1( X_{k_s} ,x),G ) \to 
Z^1\bigl( \pi_1( X,x), G(k_s)
 \bigr)  \to H^1\bigl( \pi_1( X,x), G(k_s)
 \bigr)$$ which is $G(k)$-invariant.
It applies a $k$--homomorphism
$\phi:   \pi_1( X_{k_s} ,x) \to  G$ to the
loop cocycle $\widetilde \phi:  \pi_1( X ,x) \to  G$
defined by $\widetilde \phi( \tau ,\sigma)= 
\phi(\sigma)$ for $(\tau, \sigma) \in 
{\pi_1( X,x) \rtimes \Gamma_k}$.
 The first part shows that the only  thing to
  do is to establish injectivity for the map
 $\Hom_{k-gr}(  \pi_1( X_{k_s} ,x),G ) /
G(k) \to H^1\bigl(  \pi_1( X ,x) ,  G(k_s) )$.
We are given two $k$--homomorphisms
$\phi, \psi: \pi_1( X ,x) ,  G(k_s))$
having same image in $
H^1\bigl(  \pi_1( X ,x) ,  G(k_s) )$.
This means that there exists $g \in G(k_s)$
such that $\widetilde \psi(\tau, \sigma)= g^{-1} \,
\widetilde \phi(\tau, \sigma) \, {^\sigma \! g}$
for all $(\tau, \sigma) \in \pi_1( X,x) \rtimes
\Gamma_k$ (observe that $\tau$ acts trivially on $G(k_s)$).
Since $1= \widetilde \psi(1, \sigma)= g^{-1} \,
\widetilde \phi(1, \sigma) \, {^\sigma \! g}= 
g^{-1} \, {^\sigma g}$ we obtain that $g \in G(k)$.
Thus $\phi$ and $\psi$ are $G(k)$--conjugated.

\smallskip

\noindent (3)  By considering the fibers of the surjective map
$\Res_x: H^1\bigl(  \pi_1( X ,x) ,  G(k_s) \bigr) \to H^1(k,G)$,
we have a decomposition 
$$
H^1\bigl(  \pi_1( X ,x) ,  G(k_s) \bigr)
= \coprod\limits_{[z] \in H^1(k,G)} \Res_x^{-1}([z]).
$$
Assertion (2) provides a bijection 
$ \Hom_{k-gr}\bigl(\pi_1( X_{k_s} ,x),G \bigr) /
G(k) \simlgr    \Res_x^{-1}([1])$
and the usual twisting argument provides a bijection 
$ \Hom_{k-gr}\bigl(  \pi_1( X_{k_s} ,x), \, {_zG} \bigr) /
{_zG}(k) \simlgr    \Res_x^{-1}([z])$ for each $[z] \in H^1(k,G)$.
The above decomposition provides then the desired decomposition.
\end{proof}

We have of course analogous statements for the other kinds
of loop torsors.

\begin{sremark} \label{rem_wild}{\rm
Loop torsors which are not tame are called wild. An important example
is the following. Assuming that $p>1$, we consider the Artin -Schreier cover
$f: \GG_{a,k} \to \GG_{a,k}$, $x \mapsto x^p-x$. This is a cyclic Galois cover
of order $p$ which is widely ramified at infinity.
If $g \in G(k)$ is an element of order $p$, it defines a loop cocycle for 
$G$  and this cover.
 This construction suggested by Serre is a key ingredient to the study 
of unipotent elements of semisimple groups, see \cite{Gi02}. 
}
\end{sremark}

\subsection{Extensions of reductive groups} \label{subsec_ext}
We are mostly interested in $k$--groups of the next shape.
Let $1 \to G \to \widetilde G \to J \to 1$ be an exact
sequence of locally algebraic groups with
$G=(\widetilde G)^0$ reductive and $J$ is 
twisted constant.
It follows that $J$ is \'etale, that $\widetilde G$ is smooth 
 and that $J$ is the group of 
connected components of $\widetilde G$ \cite[II.5.1.8]{DG}.
We have seen that the $\widetilde G$--sheaf torsors are representable and
also that  the inner
 twist of $\widetilde G$ by such a torsor
 is representable by a locally algebraic group 
 which is of the same shape than $\widetilde G$ (see \S \ref{subsec_isotrivial}).
We observe that  $J(k_s)=J(X^{sc})$ so that

\medskip

\begin{equation}\label{eq_J}
H^1\bigl(  \pi_1( X ,x) ,  J(k_s) \bigr) 
= H^1\bigl(  \pi_1( X ,x) ,  J(X^{sc}) \bigr)=
\ker\bigl( H^1(X,J) \to  H^1(X^{sc},J) \bigr)
\end{equation}

\noindent in view of \cite[Corollary 2.9.2]{Gi15}.
In other words, this kernel consists of 
loop torsors.

\begin{slemma} \label{lem_J}
 Let $\phi,\phi'$ be two loop cocycles with value
in $\widetilde G(k_s)$ having same image in 
$H^1(X,J)$. Then there exists $\widetilde g \in \widetilde G(k_s)$
such that $\phi$ and $\sigma \mapsto \widetilde g^{-1} \, \phi' \, \sigma(\widetilde g) $ have same image in 
$Z^1\bigl(  \pi_1( X ,x) ,  J(k_s) \bigr)$.
\end{slemma}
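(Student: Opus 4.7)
The plan is to reduce the asserted equality of $J$-valued cocycles to a lifting problem for $k_s$-points along $\widetilde G \to J$. First I would project $\phi$ and $\phi'$ through $\widetilde G(k_s) \to J(k_s)$ to obtain $\bar\phi, \bar\phi' \in Z^1(\pi_1(X,x), J(k_s))$. The identification \eqref{eq_J} presents $H^1(\pi_1(X,x), J(k_s))$ as the kernel of $H^1(X,J) \to H^1(X^{sc},J)$ and in particular makes the natural map $H^1(\pi_1(X,x), J(k_s)) \to H^1(X,J)$ injective. Since $\phi$ and $\phi'$ have the same image in $H^1(X,J)$ by hypothesis, the classes of $\bar\phi$ and $\bar\phi'$ in $H^1(\pi_1(X,x), J(k_s))$ then coincide, so there exists $h \in J(k_s)$ with $\bar\phi(\sigma) = h^{-1}\, \bar\phi'(\sigma)\, \sigma(h)$ for every $\sigma \in \pi_1(X,x)$.

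The main step is then to lift $h$ to an element $\widetilde g \in \widetilde G(k_s)$. For this I would use that the fiber of $\widetilde G \to J$ above the $k_s$-point $h$ is a torsor under $G_{k_s}$, hence a smooth non-empty $k_s$-scheme; since $k_s$ is separably closed, such a scheme has a $k_s$-rational point, providing the desired lift. Equivalently, the long cohomology sequence attached to $1 \to G \to \widetilde G \to J \to 1$, together with the vanishing $H^1_{\et}(k_s, G) = 1$ for smooth $k_s$-groups, gives the surjectivity of $\widetilde G(k_s) \to J(k_s)$; this relies on the smoothness of $\widetilde G$ recalled in \S\ref{subsec_ext}.

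Once such a $\widetilde g$ is fixed, I would conclude by a direct computation: the cocycle $\psi(\sigma) := \widetilde g^{-1}\, \phi'(\sigma)\, \sigma(\widetilde g)$ projects under $\widetilde G \to J$ to $\sigma \mapsto h^{-1}\, \bar\phi'(\sigma)\, \sigma(h)$, which equals $\bar\phi(\sigma)$ by the defining property of $h$. Hence $\phi$ and $\psi$ have the same image in $Z^1(\pi_1(X,x), J(k_s))$, as asserted. I expect the only real obstacle to be the smoothness-based lifting in the second step; everything else is formal bookkeeping of cocycles once the cohomology convention is fixed.
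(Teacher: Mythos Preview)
Your proof is correct and follows essentially the same approach as the paper: use the identification \eqref{eq_J} to pass from equality in $H^1(X,J)$ to equality in $H^1(\pi_1(X,x),J(k_s))$, then lift the resulting $h\in J(k_s)$ to $\widetilde g\in\widetilde G(k_s)$ via the surjectivity of $\widetilde G(k_s)\to J(k_s)$. The paper simply asserts this surjectivity, while you spell out the smoothness argument behind it; otherwise the proofs coincide.
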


\begin{proof}
According to the fact \eqref{eq_J},  the  loop cocycles $\phi$, $\phi'$
have same image in $Z^1\bigl(  \pi_1( X ,x) ,  J(k_s) \bigr)$.
Since $\widetilde G(k_s)$ maps onto $J(k_s)$,
it follows that  there exists $\widetilde g \in \widetilde G(k_s)$
such that $\phi$ and $\sigma \mapsto \widetilde g^{-1} \, \phi' \, \sigma(\widetilde g) $ have same image in 
$Z^1\bigl(  \pi_1( X ,x) ,  J(k_s) \bigr)$.
\end{proof}

\sm

We say that a loop cocycle $\phi: \pi_1(X,x) \to 
\widetilde G(k_s)$
is {\it reducible} if the $k$-homomorphism $\phi^{geo}: \pi_1(X_{k_s},x)
\to  {_{\phi^{ar}}\!{\widetilde G}}$ is reducible, that is,  normalizes a pair $(P,L)$ where $P$ is a proper  parabolic $k$--subgroup of 
$_{\phi^{ar}}\!{G}$ and $L$ a Levi subgroup of $P$.

\begin{slemma} \label{lem_irr} 
Let $\phi \in Z^1\bigl(  \pi_1( X ,x) , G(k_s) \bigr)$ be a  purely geometric loop cocycle. 
Let $(P,L)$ be a pair normalized by $\phi^{geo}$
where $P$  is  a $k$--parabolic subgroup of 
$G$  and $L$ is a Levi $k$--subgroup of $P$.
We assume that $(P,L)$ is minimal for this property (with respect to the inclusion). Then the loop cocycle $\phi$ takes value in $N_{\widetilde G}(P,L)(k_s)$ and it is irreducible seen as loop cocycle 
for $N_{\widetilde G}(P,L)$.
\end{slemma}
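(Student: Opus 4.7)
The plan is to handle the two assertions in order, both via the dictionary for loop cocycles (Lemma \ref{lem_dico}) and the structure of $N_{\widetilde G}(P,L)$ (Lemma \ref{lem_JPL}).

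For the inclusion in $N_{\widetilde G}(P,L)(k_s)$, since $\phi$ is purely geometric we have $\phi^{ar}=1$, so by Lemma \ref{lem_dico} the cocycle $\phi$ is encoded by the $k$-group homomorphism $\phi^{geo}: \pi_1(X_{k_s},x) \to G$ (extended by the trivial map on $\Gamma_k$). The hypothesis that $\phi^{geo}$ normalizes $(P,L)$ says precisely that this homomorphism factors through the $k$-subgroup $N_G(P,L) = L$. Since $L$ embeds into $N_{\widetilde G}(P,L)$ via the exact sequence of Lemma \ref{lem_JPL}, it follows that $\phi$ takes values in $L(k_s) \subseteq N_{\widetilde G}(P,L)(k_s)$.

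For the irreducibility part, set $\widetilde H := N_{\widetilde G}(P,L)$. By Lemma \ref{lem_JPL} the group $\widetilde H$ fits in an exact sequence $1 \to L \to \widetilde H \to J_{P,L} \to 1$ of the shape considered in \S\ref{subsec_ext}, with reductive part $L$. Thus $\phi$, viewed as a loop cocycle for $\widetilde H$, is reducible exactly when $\phi^{geo}$ normalizes a pair $(P',L')$ where $P'$ is a proper $k$-parabolic subgroup of $L$ and $L'$ a Levi $k$-subgroup of $P'$. I would argue by contradiction: assume such $(P',L')$ exists.

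By the classical composition of parabolics from SGA3 XXVI, the subgroup scheme $P'' := P' \cdot R_u(P)$ is then a parabolic $k$-subgroup of $G$, strictly contained in $P$ (hence proper in $G$), admitting $L'$ as a Levi subgroup: one checks $R_u(P'') = R_u(P') \cdot R_u(P)$ and $L' \cap R_u(P'') = 1$, using that $L' \subseteq L$ meets both $R_u(P)$ and $R_u(P')$ trivially. Since $\phi^{geo}$ takes values in $L$, it normalizes $R_u(P)$ (which is characteristic in $P$ and normalized by $L$), and it normalizes $P'$ and $L'$ by assumption; therefore it normalizes the strictly smaller pair $(P'',L')$, contradicting the minimality of $(P,L)$. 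The one point that deserves attention is the scheme-theoretic composition $P' \cdot R_u(P)$ together with its Levi decomposition, but this is already recorded in SGA3, so the argument is routine once that input is in place.
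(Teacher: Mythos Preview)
Your proof is correct and follows essentially the same approach as the paper's: the paper also argues by contradiction, forming the smaller parabolic $P' = R_u(P) \rtimes Q$ (citing \cite[prop.\ 4.4.c]{BoT65} rather than SGA3 XXVI) with Levi $M$, and observing that $\phi^{geo}$ normalizes this strictly smaller pair. Your version is slightly more explicit in justifying why $\phi^{geo}$ normalizes $R_u(P)$ (via the factorization through $L$), which the paper leaves implicit.
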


\begin{proof} We put $\widetilde L= N_{\widetilde G}(P,L)$.
The assumption implies that the 
geometric loop cocycle $\phi$ takes value in $\widetilde L(k_s)$. We assume that  the 

$\psi$ of $\phi$
in $Z^1( \pi(X,x), \widetilde L(k_s))$ is  reducible, that is, there exists a pair $(Q,M)$ normalized by $\psi^{geo}=\phi^{geo}$ such that 
$Q$ is  proper $k$--parabolic subgroup of $L= (\widetilde L)^0$ and $M$ a Levi subgroup of $Q$.
We have a Levi decomposition $P=U \rtimes L$
and remind  the reader that $P'=U \rtimes Q$
is a  $k$--parabolic subgroup of $G$ satisfying
$P' \subsetneq P$ \cite[Proposition 4.4.c]{BoT65}. 
Also  $M$ is a Levi subgroup 
of $P'$ normalized by  $\phi^{geo}$ 
contradicting the minimality of $(P,L)$.
\end{proof}

We say that a loop cocycle $\phi: \pi_1(X,x) \to 
\widetilde G(k_s)$
is {\it isotropic} if the $k$-homomorphism $\phi^{geo}: 
\pi_1(X_{k_s},x) 
\to  {_{\phi^{ar}}\!{\widetilde  G}}$ is isotropic,
 that is,   centralizes a non trivial  $k$--split subtorus of 
$_{\phi^{ar}}{G}$. We record now the formal following
consequence of Proposition \ref{prop_richardson}.

\begin{scorollary} \label{cor_PY} The following are equivalent:
\sm

(i) $\phi$ is isotropic;

\sm

(ii) $\phi$ is reducible or the torus $({_{\phi^{ar}}C}^{\phi^{geo}})^0$
is isotropic.

\end{scorollary}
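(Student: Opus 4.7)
The plan is to deduce the statement directly from Corollary \ref{cor_richardson}, applied with $S=\Spec(k)$, the reductive $k$-group $H := {_{\phi^{ar}}G}$, and $M := \pi_1(X_{k_s},x)$ regarded as a $k$-group scheme via the splitting $\pi_1(X,x)=\pi_1(X_{k_s},x) \rtimes \Gamma_k$. The action of $M$ on $H$ is conjugation through the $k$-group homomorphism $\phi^{geo}$. Since $\Spec(k)$ is connected, Corollary \ref{cor_richardson} gives that the $M$-action on $H$ is isotropic if and only if either the $M$-action on $\rad(H)$ is isotropic or the $M$-action on $H$ is reducible.

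What remains is a translation of vocabulary. Unwinding the definitions in \S\ref{sub_reductive} and \S\ref{subsec_ext}: the $M$-action on $H$ is isotropic iff $\phi^{geo}$ centralizes some non-trivial $\GG_{m,k} \subset {_{\phi^{ar}}G}$, which is by definition the isotropy of $\phi$; similarly the $M$-action on $H$ is reducible iff $\phi^{geo}$ normalizes a pair $(P,L)$ with $P$ a proper parabolic $k$-subgroup of ${_{\phi^{ar}}G}$ and $L$ a Levi subgroup of $P$, i.e.\ iff $\phi$ is reducible.

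For the third term, the formation of the center commutes with inner twists, so writing $C=C(G)$ one has $\rad(H)={_{\phi^{ar}}C}^0$. The $\phi^{geo}$-fixed-point subgroup of ${_{\phi^{ar}}C}$ is of multiplicative type, whose identity component is precisely the $k$-torus $({_{\phi^{ar}}C}^{\phi^{geo}})^0$, and this captures the subtori of $\rad(H)$ centralized by $M$. Therefore the $M$-action on $\rad(H)$ is isotropic iff this torus is isotropic, completing the dictionary and hence the proof.

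The only mild obstacle is to justify applying Corollary \ref{cor_richardson} with $M$ being a priori only a pro-$k$-algebraic group rather than an algebraic $k$-group. However, any isotropy, parabolic, or Levi datum stable under the image of $\phi^{geo}$ is already stable under some finite quotient of $\pi_1(X_{k_s},x)$, so one may replace $M$ by a suitable finite \'etale $k$-group and invoke the cited result as stated. Beyond this, the argument is entirely bookkeeping.
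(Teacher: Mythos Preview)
Your proof is correct and follows exactly the route the paper intends: the text introduces Corollary~\ref{cor_PY} as ``the formal following consequence of Corollary~\ref{cor_richardson}'' and gives no further argument, so your explicit unpacking of the dictionary (isotropic/reducible for $\phi$ versus for the $M$-action on $H={_{\phi^{ar}}G}$, and the identification of the radical condition with the isotropy of $({_{\phi^{ar}}C}^{\phi^{geo}})^0$) is precisely what was left implicit. Your remark about replacing the profinite $\pi_1(X_{k_s},x)$ by a finite \'etale quotient is a reasonable precaution, though Corollary~\ref{cor_richardson} as stated does not require $M$ to be of finite type.
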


\section{Loop torsors on Laurent polynomials}

\subsection{Basic tame \'etale covers}
Let $n \geq 1$ be an integer. 
For $r=0,..,n$,
We denote by $R_{r,n}= k[t_1^{\pm 1}, \dots, t_r^{\pm 1}, t_{r+1}, \dots ,t_n]$
the ring of partial  Laurent polynomials and by $K_{r,n}=k(t_1, \dots , t_n)$ its fraction field.
For $m \geq 1$ prime to the characteristic exponent of $k$, we put
$R_{r,n,m}=k [t_1^{\pm \frac{1}{m}}, \dots, t_r^{\pm \frac{1}{m}}, t_{r+1}, \dots ,t_n]$.

We take $1 \in \GG_m^r(k) \times \mathbb{A}^{n-r}(k) $ as base point.
An example of tame cover of $R_{r,n}$ is 
$R_{r, n,m} \otimes_k l= l[t_1^{\pm \frac{1}{m}}, \dots, t_r^{\pm \frac{1}{m}}, t_{r+1}, \dots ,t_n]$ 
where  $l$ is a finite Galois field extension of $k$ containing a primitive $m$--root of unity. 
Covers of this shape are called  basic tame covers.

\begin{slemma}\label{lem_tame}
(1) The above basic tame cover  is Galois and
 we have \break $\Gal(R_{r,n,m} \otimes_k l/R_{r,n})= \mu_m(l)^r \rtimes \Gal(l/k)$.

\smallskip

\noindent (2) 
 The inductive limit of the  basic tame covers is
the universal  tame cover of $R_{r,n}$ and 
$\pi_1^{tame}(R_{r,n},1)= \ZZ'(1)^r \rtimes \Gal(k_s/k)$.  
\end{slemma}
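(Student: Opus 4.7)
For (1), the plan is to build the extension $R_n \to R_{n,m} \otimes_k l$ as a tower. Since $m$ is invertible in $k$ and $l$ contains a primitive $m$-th root of unity, the base change
\[
R_n \otimes_k l \;\longrightarrow\; R_{n,m} \otimes_k l \;=\; (R_n \otimes_k l)[T_1,\dots,T_n]/(T_i^m - t_i)
\]
is a product of Kummer extensions, hence \'etale Galois with group $\mu_m(l)^n$ acting by $t_i^{1/m} \mapsto \zeta_i \, t_i^{1/m}$. Composing with the Galois extension $R_n \to R_n \otimes_k l$, of group $\Gal(l/k)$, yields the cover of the statement. The action of $\Gal(l/k)$ on the upper floor fixes each $t_i \in R_n$ and therefore reduces to the natural cyclotomic action on $\mu_m(l)$, which pins down the semidirect product structure.

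For (2), I would apply the tame version of the homotopy exact sequence \eqref{eq_fund} to $X = \GG_{m,k}^n$ at the $k$-rational base point $1$. The $k$-rationality provides a splitting, so
\[
\pi_1^t(R_n, 1) \;\cong\; \pi_1^t\bigl((R_n)_{k_s}, 1\bigr) \rtimes \Gal(k_s/k),
\]
and it remains to (a) identify the geometric factor with $\ZZ'(1)^n$ and (b) check that the conjugation action of $\Gal(k_s/k)$ is the cyclotomic one on each coordinate. For (a), I would view $(R_n)_{k_s}$ as the complement in $\A_{k_s}^n$ of the strict normal crossing divisor $\{t_1 \cdots t_n = 0\}$ and invoke Abhyankar's lemma, in the formulation of Kerz--Schmidt \cite{KS}, to conclude that every connected tame \'etale cover of $(R_n)_{k_s}$ is dominated by some $R_{n,m} \otimes_k k_s$ with $m$ prime to $p$. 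Part (1) identifies the Galois group of each such cover with $\mu_m(k_s)^n$, and passing to the inverse limit over $m$ prime to $p$ delivers $\ZZ'(1)^n$. For (b), the Tate twist is forced: the trivialization of each Kummer layer at the $k$-point $1$ is automatically Galois-equivariant, so the identification $\Gal \cong \mu_m^n$ intertwines the $\Gal(k_s/k)$-action on both sides.

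The main obstacle is the cofinality step in (a), which is exactly the content of Abhyankar's lemma: any tame cover of $(R_n)_{k_s}$ becomes unramified along $D = \{t_1 \cdots t_n = 0\}$ after adjoining suitable $m$-th roots of the $t_i$, and then extends to an \'etale cover of $\A_{k_s}^n$ by Zariski--Nagata purity, hence is trivial because affine space is simply connected. Once this is granted, the remaining ingredients---the splitting of the homotopy sequence, the passage to the inverse limit, and the compatibility of Galois actions---amount to formal bookkeeping.
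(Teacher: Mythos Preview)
Your argument for (1) is fine and essentially matches the paper's.

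For (2), there is a genuine gap in positive characteristic. You assert that after the Kummer base change the cover extends to an \'etale cover of $\A^n_{k_s}$, ``hence is trivial because affine space is simply connected.'' But $\A^n_{k_s}$ is \emph{not} simply connected when $\mathrm{char}(k)=p>0$: Artin--Schreier coverings of $\A^1$ (cf.\ Remark~\ref{rem_wild}) already give nontrivial connected \'etale covers, and these pull back to $\A^n$. Since the whole point of this section is to work in arbitrary characteristic, the argument as written fails.

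The paper's proof avoids this trap by compactifying to $(\PP^1_k)^n$ rather than $\A^n_k$. It first applies Orgogozo's product theorem for tame fundamental groups \cite[Thm.~5.1]{O} to reduce to the case $n=1$, and then treats $\GG_m$ directly: a tame cover has prime-to-$p$ ramification indices $a,b$ at $0$ and $\infty$; after the Kummer base change of degree $\mathrm{lcm}(a,b)$ it extends to an \'etale cover of $\PP^1_{k_s}$, which \emph{is} simply connected in every characteristic. Your strategy can be salvaged along similar lines---apply Abhyankar at the divisors at infinity as well, extend all the way to $(\PP^1)^n$, and use that $(\PP^1)^n_{k_s}$ is simply connected---but stopping at $\A^n$ does not suffice.
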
 

\begin{proof}
(1)The $R_{r,n}$--algebra $R_{r,n,m} \otimes_k l$ is connected, 
\'etale and free of rank $m^r [l:k]$ over $R_{r,n}$.
 The finite group $\Gamma=\mu_m(l)^r \rtimes \Gal(l/k)$
acts on $R_{r,n,m} \otimes_k l$ by 
$$\bigl((\zeta_1,\dots, \zeta_r) \sigma \bigr) \, . \, 
(t_i^{1/m} \otimes x)= \zeta_i \, t_i^{1/m} \,  \sigma(x)$$
for $i=1,\dots, r$ and trivially on $t_{r+1}, \dots , t_n$.
According to \cite[Proposition 5.3.7]{Sz},
the $(R_{r,n,m} \otimes_k l)^\Gamma$-algebra 
$R_{r,n,m} \otimes_k l$ is Galois of group $\Gamma$.
Since $R_{r,n}=(R_{r,n,m} \otimes_k l)^\Gamma$, we 
obtained the wished statement.

\sm

\noindent (2) We consider the smooth compactification 
 $X^c=(\PP^1_k)^n$ of $X=(\GG_{m,k})^r \times \mathbb{A}^{n-r}_k$ and its boundary
 is a  normal crossing divisor. In this case
 an \'etale connected cover $Y \to X$ is 
tamely ramified
if the discrete valuations of $k(X)$ defined by the prime divisors of 
$X^c \setminus X$ ramify tamely in the extension $k(Y)/k(X)$
\cite[Theorem 4.4]{KS}. 
Without loss of generality we can 
assume that $k$ is separably closed and we have to prove that 
the map $\pi^t_1(R_{r,n},1) \to  \ZZ'(1)^r$ (arising from
the basic covers) is an isomorphism.
The  compactification $\PP^1_k$ of $\GG_{m,k}$
is obviously good in the sense   that it  is 
the complement a  normal crossing divisor. According to 
Orgogozo's theorem \cite[Theorem  5.1]{O}, we have a  decomposition 
 $\pi^t_1(R_{r,n},1)= (\pi_1^t(R_1, 1))^r \times (\pi_1^t(k[t], 1))^{n-r} $.
We are reduced then to the case of $\GG_m$ and of $\GG_a$.
In the case of $\GG_a$, this is well-known, 
see \cite[Theorem 1]{Ka}.
We deal then  with the case of $\GG_m$, 
we have to show that any connected Galois tame cover
is a Kummer cover. 
Let $f: Y \to \GG_m$ be a tamely ramified connected Galois
cover of group $G$. 
Then the field extension $k(Y)/k(t)$ is tamely ramified at $0$
and $\infty$ so that $k(Y) \otimes_{k(t)} k((t)) \cong
k((t^{\frac{1}{a}}))^{G/G_0}$
and $k(Y) \otimes_{k(t)} k((t^{-1})) \cong
k((t^{- {\frac{1}{b}}} ))^{(G/G_0)}$ where 
$G_0$ (resp.\ $G_\infty$) is the inertia group
at $0$ (resp.\ $\infty$) and $a$ (resp.\ $b$) the 
ramification index at $0$ (resp.\ $\infty$).
By assumption $a$ and $b$ are prime to the characteristic exponent of $k$.
We put $n = g.c.m.(a,b)$ and consider the 
Kummer cover $h_n : \GG'_m \to \GG_m$, $u \mapsto t^n$.
Then $Y \times_{\GG_m} \GG'_m$ is a finite $G$--cover of
$\GG'_m$ which is unramified at $0$ and $\infty$.
Since the projective line is simply connected, 
it follows that $Y \times_{\GG_m} \GG'_m = \GG'_m \times_k G_k$.
It follows that $f$ is dominated by $h_n$, so that 
$f$ is a Kummer cover as well.
\end{proof}

\begin{sremarks} {\rm (a) In the case $p>1$,
Katz-Gabber's correspondence provides many more \'etale
covers of $\GG_m$ \cite[Theorem 1.4.1]{Kz}.

\sm

\noindent (b) Since $\pi^{p'-geo}(R_{r,n},1)$ is a quotient of 
 $\pi^{t}(R_{r,n},1)$,  Lemma \ref{lem_tame}.(2)  shows that the map 
 $\pi^{t}(R_{r,n},1) \to \pi^{p'-geo}(R_{r,n},1)$ is an isomorphism.
 
\sm

\noindent (c) We observe that basic tame basic covers
have trivial Picard groups and so have the other tame covers
by using inflation-restriction sequences.
This is not true anymore for arbitrary
finite \'etale covers if $p \geq 2$ even 
in the case $k$ algebraically closed.
According to \cite[Theorem 4 .6]{MP}, there are examples
of connected Galois covers $X$ (for suitable altenating groups) of the  affine line $\mathbb{A}_k^1$ 
which are smooth affine curve whose 
smooth completion   has positive genus.
It follows that $X$ has an infinite Picard group 
and so has the cover $X \times_{\mathbb{A}^1_k} \GG_m$ of $\GG_m$.
}
\end{sremarks}

\subsection{Fixed point statement}

The following is  a mild generalization of 
\cite[Theorem \ 7.1]{GP13} so that we let the reader
to check that its proof can be readily adapted.

\begin{stheorem} \label{thm_GP_fix}
Let $G$ be a $k$-group scheme  locally of finite presentation
acting on a projective  $k$--scheme $Z$.
Let $\phi$ be a tame loop cocycle for $G$ and $R_{r,n}$. Then 
\break $Y=  \bigl( _{\phi^{ar}}Z\bigr)^{\phi^{geo}}$ is  a
 projective $k$--scheme and  
the following are equivalent:

\sm

(i) $Y(k) \not \not = \emptyset$;

\sm 

(ii)  $Y(R_{r,n}) \not \not = \emptyset$;

\sm 

(iii) $({_\phi\!Z})(R_{r,n}) \not = \emptyset$;

\sm

(iv)  $({_\phi\!Z})(K_{r,n}) \not = \emptyset$;

\sm 

(v) $({_\phi\!Z})(F_{n,r}) \not = \emptyset$
where $F_{n,r}=k((t_1))\dots ((t_n))$.
\end{stheorem}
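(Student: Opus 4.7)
The plan is to adapt the proof of \cite[Thm.\ 7.1]{GP13} to the present setting of tame loop cocycles in arbitrary characteristic and to $k$--group schemes locally of finite presentation. I would first verify that $Y$ is a projective $k$--scheme: $_{\phi^{ar}}Z$ is a projective $k$--form of $Z$, and the $\phi^{geo}$-fixed locus is representable as a closed subscheme because $\phi^{geo}$ takes values in the $k$--points of the locally algebraic group $_{\phi^{ar}}G$ acting on the projective $k$--scheme $_{\phi^{ar}}Z$.

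For the easy implications, (i)$\Rightarrow$(ii) is inclusion of $k$--points. I would use the description $({_\phi Z})(R_n) = \{z \in Z(R_n^{tsc}) : \phi(\sigma)\cdot\sigma(z) = z \text{ for all } \sigma \in \pi_1^t(R_n,1)\}$ together with the fact (Lemma \ref{lem_tame}) that the geometric factor $\ZZ'(1)^n$ of $\pi_1^t(R_n,1)$ acts trivially on $R_n \otimes_k k_s \subset R_n^{tsc}$, so that any $y \in Y(R_n) \subset Z(R_n \otimes_k k_s)$ automatically satisfies the full cocycle identity, giving (ii)$\Rightarrow$(iii). The implications (iii)$\Rightarrow$(iv)$\Rightarrow$(v) are immediate from the base change $R_n \hookrightarrow K_n \hookrightarrow F_n$.

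The substantive direction is (v)$\Rightarrow$(i), where the bulk of the work lies. Following \cite[Thm.\ 7.1]{GP13}, I would view an $F_n$-point of $_\phi Z$ through the tower of specializations along the iterated Laurent valuations $F_n \to k((t_1))\dots((t_{n-1})) \to \dots \to k((t_1)) \to k$, each given by the residue field of the valuation ring at the next step. Projectivity of $_\phi Z$ over $R_n$ (hence properness over every intermediate valuation ring) makes the valuative criterion applicable stage by stage, allowing the $F_n$-point to be extended to the valuation ring and then specialized through the entire tower. The tame structure of $\pi_1^t(R_n, 1) = \ZZ'(1)^n \rtimes \Gamma_k$ (Lemma \ref{lem_tame}) forces the geometric part of $\phi$ to factor through prime-to-$p$ Kummer data, so that the compatibility of the twisted form $_\phi Z$ with the ramification filtration can be tracked via Hensel lifting at each stage. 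After termination, one obtains a $k_s$-point of $Z$ that is simultaneously $\phi^{geo}$-fixed and Galois-equivariant for the $\phi^{ar}$-twisted action, hence defines a $k$-point of $Y$.

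The principal obstacle is the controlled descent through the iterated Laurent tower in positive characteristic: averaging arguments from the characteristic zero proof in \cite{GP13} are unavailable and must be replaced by arguments exploiting tameness, under which each lifting step becomes a Kummer-type Hensel application insensitive to the residue characteristic. A secondary subtlety, arising from allowing $G$ to be merely locally of finite presentation, is handled by observing that the image of $\phi^{geo}$ is topologically finitely generated and therefore acts on $Z$ through a finite-type closed subgroup scheme of $_{\phi^{ar}}G$, reducing the geometric core of the argument to the algebraic setting treated in \cite{GP13}.
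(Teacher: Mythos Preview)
Your proposal is correct and aligns with the paper's own treatment: the paper gives no detailed proof but simply records that this is a mild generalization of \cite[Thm.\ 7.1]{GP13} whose proof can be readily adapted. Your outline of that adaptation---projectivity of the fixed locus as a closed subscheme of a projective twist, the chain of easy implications, and the iterated specialization argument for (v)$\Rightarrow$(i) exploiting tameness in place of characteristic-zero tools---is precisely what the paper leaves to the reader.
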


\begin{sremarks}\label{rem_proj}{\rm  (a) Note that the projectivity
of $Z$ is used to insure  that  
the twisted fppf  $R_{r,n}$--sheaf ${_\phi\!Z}$ by Galois descent is representable according to \cite[\S 6.2]{BLR}.

\sm

\noindent (b) If $Z$ is proper smooth over $k$,
the statement is still true if the various twists are understood
in the category of flat sheaves.
}
\end{sremarks}

Let $1  \to G \to \widetilde G \to J \to 1$ be an exact 
sequence of $k$-groups as in \S \ref{subsec_ext}.

\begin{sproposition} \label{prop_GP_red} 
Let $\phi: \pi^t(R_{r,n},1) \to \widetilde G(k_s)$ be a tame loop cocycle. Then the following are equivalent:

\smallskip

(i) $\phi$ is reducible;

\smallskip

(ii) the $R_{r,n}$-reductive group scheme ${_\phi G}$
is reducible; 

\smallskip

(iii)  $({_\phi G})_{K_{r,n}}$ is reducible;

\smallskip

(iv)  $({_\phi G})_{F_{n,r}}$ is reducible.
  
\end{sproposition}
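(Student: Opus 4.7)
The plan is to reduce the four statements to a fixed-point problem on the projective $k$--scheme $Z=\Par^+(G)$ of proper parabolic subgroups of $G$, on which $\widetilde G$ acts by conjugation through the map $\widetilde G\to\Aut(G)$. Since the set of types of parabolic subgroups is finite and each $\Par_t(G)$ is projective, $Z$ is projective over $k$, so Theorem \ref{thm_GP_fix} applies to the action of $\widetilde G$ on $Z$ and to the tame-loop cocycle $\phi$. For any ring $R\in\{R_n,K_n,F_n\}$, which is affine and connected, reducibility of $({_\phi G})_R$ is by \cite[XXVI.2.3]{SGA3} equivalent to the existence of a proper parabolic $R$--subgroup of ${_\phi G}$, that is, to an $R$--point of $\Par^+({_\phi G})=({_\phi Z})_R$. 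Theorem \ref{thm_GP_fix} then immediately gives the equivalence of (ii), (iii) and (iv) with the condition $Y(k)\neq\emptyset$, where $Y=({_{\phi^{ar}}Z})^{\phi^{geo}}$; and a $k$--point of $Y$ is exactly a proper parabolic $k$--subgroup $P$ of ${_{\phi^{ar}}G}$ normalized by $\phi^{geo}$.

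The implication (i)$\Rightarrow$(ii) is then a matter of twisting: the pair $(P,L)$ furnished by reducibility of $\phi$ is $\phi^{geo}$--invariant in ${_{\phi^{ar}}G}$, so its twist ${_\phi P}$ is a proper parabolic $R_n$--subgroup of ${_\phi G}$ admitting ${_\phi L}$ as a Levi subgroup, which yields the reducibility of ${_\phi G}$.

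The substantive step is the converse: from a $\phi^{geo}$--normalized proper parabolic $P$ of ${_{\phi^{ar}}G}$ we must manufacture a $\phi^{geo}$--normalized Levi pair $(P,L)$. For this I would invoke Proposition \ref{prop_morozov}. By Lemma \ref{lem_tame}.(2) the geometric tame fundamental group is $\ZZ'(1)^n$, a pro-object in diagonalizable $k$--groups, so the scheme-theoretic image $M$ of $\phi^{geo}$ in ${_{\phi^{ar}}G}$ is a flat affine $k$--subgroup of multiplicative type; in particular $M$ is linearly reductive and, by hypothesis, normalizes $P$. Proposition \ref{prop_morozov} then provides an $M$--invariant cocharacter $\lambda:\GG_m\to{_{\phi^{ar}}G}$ such that $P=P_{_{\phi^{ar}}G}(\lambda)$, and the associated Levi $L=C_{_{\phi^{ar}}G}(\lambda)$ is automatically $M$--invariant, hence normalized by $\phi^{geo}$. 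This yields (i) and closes the circle.

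The main obstacle is precisely this last step: it relies on the image of $\phi^{geo}$ being of multiplicative type, which in turn rests crucially on the tame hypothesis through Lemma \ref{lem_tame}. In the wild setting the geometric fundamental group is no longer pro-diagonalizable (for example the Artin--Schreier phenomena recalled in Remark \ref{rem_wild} produce unipotent images), Proposition \ref{prop_morozov} is unavailable, and the implication (ii)$\Rightarrow$(i) genuinely fails; this is one of the reasons the theorem is restricted to the tame case.
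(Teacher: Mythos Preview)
Your proof is correct and follows essentially the same route as the paper: apply Theorem \ref{thm_GP_fix} to $Z=\Par^+(G)$, identify $({_\phi Z})_R=\Par^+({_\phi G})_R$ so that (ii)--(iv) become the conditions (iii)--(v) of that theorem, and then link the remaining condition $Y(k)\neq\emptyset$ to (i) via Proposition \ref{prop_morozov}. One small wording point: the image of $\phi^{geo}$ lies in ${_{\phi^{ar}}\widetilde G}$ rather than in ${_{\phi^{ar}}G}$; what matters for Proposition \ref{prop_morozov} is that this image (a quotient of some $\mu_m^n$, hence finite of multiplicative type) acts on ${_{\phi^{ar}}G}$ by conjugation and normalizes $P$.
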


\begin{proof}  We apply Theorem \ref{thm_GP_fix} to the total variety of
parabolic subgroups $Z= \Par^+(G)$, see \S \ref{sub_reductive}. It is projective 
and is equipped with a natural action of $\widetilde G$.
We have a canonical isomorphism of $R_{r,n}$-schemes
${_\phi\!\Par^+(G)} \simlgr  \Par^+({_\phi\! G})$ 
so that the assertions (iii), (iv) and (v) of 
Theorem \ref{thm_GP_fix} corresponds respectively to 
the assertions (ii), (iii) and (iv) of Proposition \ref{prop_GP_red}; again reductibility  in those cases is
equivalent to the existence of a proper parabolic subgroup.
 It remains to deal with (i).  
We have $Y=  \bigl( _{\phi^{ar}}\!\Par^+(G)\bigr)^{\phi^{geo}}
=\bigl( \Par^+(_{\phi^{ar}}\!G)\bigr)^{\phi^{geo}}$
so that $Y(k)$ is the set of proper parabolic $k$--subgroups of $_{\phi^{ar}}\!G$
which are normalized by $\phi^{geo}$.
According to Proposition \ref{prop_morozov},
$P$ admits a Levi subgroup $L$ which is normalized by 
$\phi^{geo}$. Thus $\phi$ is reducible.
\end{proof}

\subsection{Tame Galois cohomology}

We deal with  the integer $r\in [0,n]$ 
and consider the extensions $F_{r,n,m}=F((t_1^{1/m}))\dots 
((t_r^{1/m}))((t_r))\dots ((t_n))$ of $F_{n,r}$
for $(m,p)=1$; we use also $F_{n,m}=F_{n,n,m}$ in 
the maximal case $r=n$.

We define the tame Galois cohomology set by 
by
$$
H^1_{r-tame}(F_{n,r}, \widetilde G)= \bigcup H^1( F_{r,n,m} \otimes_k l/F_n, \widetilde G).  
$$
where $m$ runs over the positive integers which are
prime to the characteristic exponent $p$  of $k$ and $l$ runs over the (finite)
Galois extensions of $k$.

\subsection{Acyclicity}

We extend the injectivity part of \cite[Theorem 8.1]{GP13} beyond the affine case
and in characteristic free.
Let $1 \to G \to \widetilde G \to J \to 1$ be 
a sequence of smooth  $k$--groups
such that $G= (\widetilde G)^0$ is reductive
and $J$ is a twisted constant $k$-group scheme.
The main result of this section is the following.

\begin{stheorem} \label{thm_gp_main}
The map $H^1_{\substack{tame \\ loop}}(R_{r,n},\widetilde G) \to H^1_{r-tame}(F_{n,r}, \widetilde G)$
is injective.
\end{stheorem}


\begin{sremarks}{\rm
(a) If $k$ is of characteristic zero and $\widetilde G$ is linear,  then bijectivity holds \cite[Theorem 8.1]{GP13}.

\noindent 
(b) In \cite[Example 2 page 18]{CGP24}, there is an example of a field $k$ of characteristic $p>0$ such that the  map $H^1_{\substack{tame \\ loop}}(k[t^{\pm 1}],\PGL_p) \to H^1_{1-tame}(k((t)), \PGL_p)$ is not onto. 
}
\end{sremarks}

For the proof of Theorem \ref{thm_gp_main},
we start  by dealing with the following special  case.

\begin{slemma} \label{lem_gp_main}
The map $H^1_{\substack{tame \\ loop}}(R_{r,n},J) \to H^1_{r-tame}(F_{n,r}, J)$
is  bijective.
\end{slemma}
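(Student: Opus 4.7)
The approach is to identify both sides of the comparison map with the same Galois cohomology group, exploiting that $J$ is twisted constant so its torsor theory is essentially combinatorial. Write $M = J(k_s)$, a discrete $\Gal(k_s/k)$-set, and fix a finite Galois splitting extension $l_0/k$ with $J_{l_0} \cong M_{l_0}$ constant.

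For the left-hand side: for any $l \supseteq l_0$ finite Galois over $k$ and any $m$ prime to the characteristic exponent $p$ of $k$, the ring $R_{n,m}\otimes_k l$ is a connected Laurent polynomial ring over $l$ whose algebraic closure of $k$ inside it is exactly $l$, so $J(R_{n,m}\otimes_k l) = M^{\Gal(k_s/l)}$; passing to the direct limit yields $J(R_n^{tsc}) = M$ as a $\pi_1^t(R_n,1)$-module. The tame analog of Equation \eqref{eq_J} holds by the same argument, using that $J$-torsors are themselves twisted constant (Lemma \ref{lem_descent}) together with the equality $X(S)=X(K)$ from Proposition \ref{prop_BS}.(1); combined with Lemma \ref{lem_tame}.(2),
\[
H^1_{\substack{tame \\ loop}}(R_n, J) \;=\; H^1(\pi_1^t(R_n,1), J(R_n^{tsc})) \;=\; H^1(\ZZ'(1)^n \rtimes \Gal(k_s/k), M).
\]

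For the right-hand side, the argument of Lemma \ref{lem_tame}.(1) applies verbatim with $R_n$ replaced by $F_n$: the \'etale algebra $F_{n,m}\otimes_k l / F_n$ is Galois with group $\mu_m(l)^n \rtimes \Gal(l/k)$, and by the same evaluation $J(F_{n,m}\otimes_k l) = M^{\Gal(k_s/l)}$. Taking the colimit over the cofinal family of such extensions,
\[
H^1_{tame}(F_n, J) \;=\; \varinjlim_{m,l} H^1(\mu_m(l)^n \rtimes \Gal(l/k), M^{\Gal(k_s/l)}) \;=\; H^1(\ZZ'(1)^n \rtimes \Gal(k_s/k), M),
\]
the same group as above. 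The natural comparison map of the lemma is induced by the base changes $R_{n,m}\otimes_k l \to F_{n,m}\otimes_k l$, which respect both the Galois actions and the identifications $J(R_{n,m}\otimes_k l) = M^{\Gal(k_s/l)} = J(F_{n,m}\otimes_k l)$; hence it corresponds to the identity on $H^1(\ZZ'(1)^n \rtimes \Gal(k_s/k), M)$ and is bijective.

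The main obstacle is the cofinality claim in the second step: one must verify that the extensions $F_{n,m}\otimes_k l$ exhaust a cofinal system of tame finite Galois extensions of $F_n$, so that the colimit really does compute $H^1_{tame}(F_n, J)$ in terms of the single group $\ZZ'(1)^n \rtimes \Gal(k_s/k)$. This is the analog for $F_n$ of Lemma \ref{lem_tame}.(2) and follows iteratively, by applying the standard structure theorem for tame extensions of a complete discretely valued field (Hensel's lemma together with Kummer theory) at each of the $n$ discrete valuations defining $F_n$. A secondary technical point is to check that the identifications $J(R_n^{tsc}) = M = J(F_n^{tsc})$ are equivariant in the colimit; this is routine because at each finite stage the action on $M^{\Gal(k_s/l)}$ factors through the quotient $\mu_m(l)^n \rtimes \Gal(l/k)$ compatibly on both sides.
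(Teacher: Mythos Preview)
Your argument is correct and follows essentially the same route as the paper: both reduce to the observation that $J(R_{n,m}\otimes_k l)=J(l)=J(F_{n,m}\otimes_k l)$ (since $J$ is twisted constant and these rings are connected with constant field $l$), so the finite-level Galois cohomology groups on the two sides coincide, and one passes to the colimit. The paper simply records the isomorphism at each finite stage and takes the limit; you phrase the same computation as identifying both sides with $H^1(\widehat{\ZZ}'(1)^n\rtimes\Gal(k_s/k),M)$.

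One remark: your final paragraph on cofinality is unnecessary. In this paper $H^1_{tame}(F_n,J)$ is \emph{defined} (see \S4.3) as the union of the $H^1(F_{n,m}\otimes_k l/F_n,J)$ over exactly these covers, so there is nothing to check --- the colimit over $(m,l)$ computes $H^1_{tame}(F_n,J)$ by definition, not by an appeal to structure theory of tame extensions of $F_n$.
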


\begin{proof}
For each  basic tame cover $R_{r,n,m} \otimes_k l$ of $R_{r,n}$, we have isomorphisms
\[
\xymatrix{
H^1\bigl( \mu_m^r(l) \rtimes \Gal(l/k), J(l) \bigr)   \ar[d]_\wr
 & \simlgr &
 H^1(R_{r,n,m} \otimes_k l/ R_{r,n}, J) \ar[d] \\
 H^1\bigl( \mu_m^r(l) \rtimes \Gal(l/k), J(F_{r,n,m}\otimes_k l)  \bigr)
   & \simlgr &
 H^1(F_{r,n,m} \otimes_k l/ F_{n,r}, J)
}
\]
so that the map  $H^1(R_{n,m} \otimes_k l/ R_{r,n}, J) \to H^1(F_{r,n,m} \otimes_k l/ F_{n,r}, J)$ is an isomorphism.
By taking the inductive limit on those covers
\cite{Mg} we get the wished statement.
\end{proof}

The proof of Theorem \ref{thm_gp_main}  goes by steps and uses crucially
the notion of reductibility and of isotropicity.
By using Theorem \ref{thm_FG} of the appendix, 
we extend verbatim \cite[lemma 7.12]{GP13}.

\begin{slemma} \label{lem_GP_special}
If $[\phi],[\phi'] \in H^1(\pi_1^t(R_{r,n},1), \widetilde G(k_s))$
have same image in $H^1(F_{n,r},\widetilde G)$, then 
$[\phi^{ar}]=[{\phi'}^{ar}] \in H^1(k, \widetilde G)$. \qed
\end{slemma}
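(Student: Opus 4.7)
The plan is to adapt \cite[Lemma~7.12]{GP13} to the present setting: reduce modulo $G$ via the $J$--acyclicity of Lemma~\ref{lem_gp_main}, apply Theorem~\ref{thm_FG} of the appendix to upgrade the triviality of the difference cocycle from $F_n$ to a tame loop triviality over $R_n$, and finally restrict along the splitting $\Gal(k_s/k)\hookrightarrow\pi_1^t(R_n,1)$ to conclude.

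First, project $\phi$ and $\phi'$ to $J$: their images in $H^1(F_n,J)$ coincide by hypothesis, hence so do their images in $H^1_{\substack{tame \\ loop}}(R_n,J)$ by Lemma~\ref{lem_gp_main}. Lemma~\ref{lem_J} then allows me to replace $\phi'$ by a $\widetilde G(k_s)$--conjugate—an operation which preserves both its class in $H^1(F_n,\widetilde G)$ and the cohomology class of its arithmetic restriction ${\phi'}^{ar}$ in $H^1(k,\widetilde G)$—so that $\phi$ and $\phi'$ have the same image in $Z^1(\pi_1^t(R_n,1),J(k_s))$. Under this normalization, $u(\gamma):=\phi'(\gamma)\phi(\gamma)^{-1}$ takes values in $G(k_s)$ and is a tame loop $1$--cocycle $\pi_1^t(R_n,1)\to {_{\phi}G}(k_s)$ for the twisted reductive $R_n$--group scheme $_{\phi}G$. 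By the standard torsion bijection for the normal subgroup $G\subset\widetilde G$, the hypothesis $[\phi]=[\phi']$ in $H^1(F_n,\widetilde G)$ translates into $[u]=1$ in $H^1(F_n,{_{\phi}G})$.

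Second, I would invoke Theorem~\ref{thm_FG} of the appendix, which delivers the tame--loop acyclicity $H^1_{\substack{tame \\ loop}}(R_n,{_{\phi}G})\hookrightarrow H^1(F_n,{_{\phi}G})$ for the reductive twist $_{\phi}G$. Consequently $[u]=1$ already in tame loop cohomology over $R_n$, which unwinds to the existence of $g\in G(k_s)$ with $u(\gamma)=g\,\phi(\gamma)\,\gamma(g)^{-1}\,\phi(\gamma)^{-1}$ for all $\gamma$, equivalently $\phi'(\gamma)=g\,\phi(\gamma)\,\gamma(g)^{-1}$ for every $\gamma\in\pi_1^t(R_n,1)$. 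Restricting this identity along the splitting $\Gal(k_s/k)\hookrightarrow\pi_1^t(R_n,1)$ attached to the base point $1\in\GG_m^n(k)$ yields immediately $[\phi^{ar}]=[{\phi'}^{ar}]$ in $H^1(k,\widetilde G)$. The main obstacle is that Theorem~\ref{thm_FG} must apply to the \emph{twisted} reductive group $_{\phi}G$, not merely to the constant base change $G\times_k R_n$; this is precisely the strengthening over \cite[Thm.~8.1]{GP13} that the appendix is designed to provide in characteristic--free and non--affine generality, and it is also the reason the conclusion cannot be reached by a naive specialization at the base point.
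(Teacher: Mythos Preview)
Your argument has a genuine gap at the second step. Theorem~\ref{thm_FG} of the appendix asserts only that $H^1(k,G)\to H^1(k((t)),G)$ is injective for a locally algebraic $k$--group $G$; it says nothing about tame--loop acyclicity over $R_n$. The injectivity $H^1_{\substack{tame \\ loop}}(R_n,\cdot)\hookrightarrow H^1(F_n,\cdot)$ that you invoke is the content of Theorem~\ref{thm_gp_main}, and Lemma~\ref{lem_GP_special} is an ingredient in its proof, so appealing to it here is circular. Moreover, even Theorem~\ref{thm_gp_main} is stated for a $k$--group of the given shape, not for the $R_n$--group scheme ${_\phi G}$; the ``strengthening'' you describe is not what the appendix provides.

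The intended route, following \cite[Lemma~7.12]{GP13} and visible in the parallel Lemma~\ref{lem_special}, is quite different and avoids any loop--acyclicity input. Choose $m$ prime to $p$ large enough that both $\phi^{geo}$ and ${\phi'}^{geo}$ factor through $\mu_m^n$, and base change along $R_n\to R_{n,m}$ (equivalently $F_n\to F_{n,m}$). Over $R_{n,m}$ the geometric parts become trivial, so the restricted cocycles are purely arithmetic: their classes in $H^1(F_{n,m},\widetilde G)$ are simply the images of $[\phi^{ar}]$ and $[{\phi'}^{ar}]$ under $H^1(k,\widetilde G)\to H^1(F_{n,m},\widetilde G)$. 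The hypothesis $[\phi]_{F_n}=[\phi']_{F_n}$ forces these images to agree. Since $F_{n,m}=k((t_1^{1/m}))\cdots((t_n^{1/m}))$ is an iterated Laurent series field over $k$, iterating Theorem~\ref{thm_FG} gives that $H^1(k,\widetilde G)\to H^1(F_{n,m},\widetilde G)$ is injective, whence $[\phi^{ar}]=[{\phi'}^{ar}]$ in $H^1(k,\widetilde G)$. Your first reduction via Lemmas~\ref{lem_gp_main} and~\ref{lem_J} is unnecessary for this argument.
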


\begin{sproposition} \label{prop_GP_iso} 
Let $\phi: \pi^t(R_{r,n},1) \to \widetilde G(k_s)$ be a tame loop cocycle.  Then the following are equivalent:

\smallskip

(i) $\phi$ is isotropic;

\smallskip

(ii) the $R_{r,n}$-reductive group scheme ${_\phi G}$
is isotropic; 

\smallskip

(iii)  $({_\phi G})_{K_{r,n}}$ is isotropic;

\smallskip

(iv)  $({_\phi G})_{F_{n,r}}$ is isotropic.

\end{sproposition}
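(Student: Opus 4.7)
The plan is to follow the pattern of the proof of Proposition \ref{prop_GP_red}, but to decompose isotropicity into reducibility plus a statement about tori, via Corollary \ref{cor_PY} on the loop-cocycle side and the analogous fact for reductive group schemes over a connected base recorded in \S\ref{sub_reductive}. By Corollary \ref{cor_PY}, condition (i) is equivalent to the disjunction: ``$\phi$ is reducible'' or ``the $k$--torus $T := ({_{\phi^{ar}}\rad(G)^{\phi^{geo}}})^0$ is $k$--isotropic''. Applying the cited version of \cite[Thm.\ 7.3.1.(ii)]{Gi15} over the connected bases $R_n$, $K_n$, $F_n$ decomposes each of (ii), (iii), (iv) in the same way, as ``${_\phi G}$ is reducible over the relevant ring'' or ``$\rad({_\phi G}) = {_\phi \rad(G)}$ is isotropic over that ring''. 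Proposition \ref{prop_GP_red} already identifies the four reducibility conditions, so the problem reduces to a purely toric statement.

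Set $Z := \rad(G)$, a $k$--torus. Since $G$ acts trivially on $Z$, the outer action $\widetilde G \to \Aut(Z)$ factors through $J$, so $\phi$ induces a $\pi_1^t(R_n,1)$--action on the cocharacter lattice $X_*(Z)$ depending only on the component-group part of the cocycle. Isotropy of ${_\phi Z}$ over the normal connected affine scheme $\Spec(R_n)$ is equivalent to the non-triviality of $X_*(Z)^{\pi_1^t(R_n,1)}$, and analogous statements hold over $K_n$ and $F_n$ with the corresponding tame Galois quotients acting via $\phi$; under the tame-loop assumption all of these actions factor through a common finite quotient.

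It remains to identify these four invariant modules. The description $\pi_1^t(R_n,1) = \ZZ'(1)^n \rtimes \Gamma_k$ from Lemma \ref{lem_tame}, together with the decomposition $\phi = (\phi^{ar}, \phi^{geo})$, shows that the $\pi_1^t(R_n,1)$--invariants of $X_*(Z)$ coincide with the $\Gamma_k$--invariants of $X_*(Z)^{\phi^{geo}} = X_*(T)$, thus matching the $k$--isotropy of $T$ with condition (ii). The coincidence with the $K_n$-- and $F_n$--invariants amounts to a tame acyclicity statement for tori over $R_n$ that follows by comparing the tame Galois quotients of $R_n$, $K_n$, and $F_n$ through the natural morphisms between them, using the structure of $\pi_1^t(R_n,1)$ from Lemma \ref{lem_tame} and the known description of the tame quotients on the $K_n$ and $F_n$ side. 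The main obstacle is precisely this last comparison: it is a compact analogue of the acyclicity being established in Theorem \ref{thm_gp_main} but carried out directly at the level of cocharacter lattices, rather than via a fixed-point argument on a projective scheme as in Proposition \ref{prop_GP_red}.
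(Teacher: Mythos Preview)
Your approach is correct and in fact more direct than the paper's. Both proofs share the same skeleton: decompose isotropy into ``reducible'' plus ``radical torus isotropic'' via Corollary~\ref{cor_PY} on the cocycle side and via \cite[Thm.~7.3.1.(ii)]{Gi15} on the group-scheme side, and invoke Proposition~\ref{prop_GP_red} for the reducibility part. The difference lies in the toral part and in the direction of the argument. The paper argues $(iv)\Longrightarrow(i)$ by contradiction: assuming $\phi$ anisotropic, it shows $({_\phi G})_{F_n}$ is anisotropic by proving the extended Bruhat--Tits building $\cB_e\bigl(({_\phi G})_{F_n}\bigr)$ reduces to a single point, using the Bruhat--Tits--Rousseau theorem and tamely ramified descent for buildings. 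You instead match the toral conditions directly via cocharacter lattices, which avoids Bruhat--Tits theory entirely at this stage.

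You are too tentative about the final step: it is not an obstacle but a one-line observation. The twisted torus ${_\phi Z}$ is split by the nice basic cover $R_{n,m}\otimes_k l$, and the base changes $K_{n,m}\otimes_k l$ and $F_{n,m}\otimes_k l$ are Galois over $K_n$ and $F_n$ with the \emph{same} Galois group $\Gamma=\mu_m(l)^n\rtimes\Gal(l/k)$ (this is the content of Lemma~\ref{lem_tame} and the definition of the $F_n$-side covers). Hence $\Hom_{R_n}(\GG_m,{_\phi Z})$, $\Hom_{K_n}(\GG_m,{_\phi Z})$, and $\Hom_{F_n}(\GG_m,{_\phi Z})$ are all equal to $X_*(Z_{k_s})^{\Gamma_\phi}$, where $\Gamma$ acts through the image of $\phi$ in $J(k_s)$ together with the natural Galois action. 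No acyclicity theorem is needed; the splitting field and its Galois group are literally the same over all three bases. The identification with condition~(i) is exactly your computation $(X_*(Z)^{\phi^{geo}})^{\Gamma_k,\,\phi^{ar}}=X_*(Z_{k_s})^{\Gamma_\phi}$. What your route buys is that Proposition~\ref{prop_GP_iso} becomes independent of building-theoretic input; what the paper's route buys is that the same building machinery is reused later (Proposition~\ref{prop_GP_pure}), so the apparatus is set up anyway.
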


\begin{proof} The implications $(i) \Longrightarrow (ii)
\Longrightarrow (iii) \Longrightarrow (iv)$ are obvious.
Let us show the implication 
$(iv) \Longrightarrow (i)$ by induction on $n$, the case $n=0$ being  obvious. We reason by sake of contradiction and 
 assume that $\phi$ is anisotropic.
We can deal with a  basic tame cover $R_{r,n,m} \otimes_k l$ of $R_{r,n}$
such that $G_l$ is split. We put $\Gamma= \mu_m(l)^r \rtimes
\Gal(l/k)$.

Without loss of generality we can assume that $\phi^{ar}=1$,
i.e. $C_G(\phi^{geo})^0$ is anisotropic.
According to Corollary  \ref{cor_PY}, 
$\phi$ is then irreducible and 
the torus $C_G(\phi^{geo})^0$ is anisotropic.

We want to establish that $({_\phi G})_{F_{n,r}}$ is anisotropic.
According to the Bruhat-Tits-Rousseau's theorem \cite[5.1.27]{BT2} 
 applied to the field $F_{n,r}=F_{r,n-1}((t_n))$, this 
rephrases to show that the extended Bruhat-Tits building $\cB_e\bigl( ({_\phi  G})_{F_{n,r}} \bigr)$ consists in one point.  
According to the tamely ramified descent theorem
\cite[Proposition 5.1.1]{Rou} (see also
\cite{P20}), we have
$$
\cB_e\bigl(({_\phi G})_{F_{n,r}} \bigr) = \cB_e\bigl(G_{F_{r,n,m} \otimes_k  l} \bigr)^{\Gamma_\phi}
$$
where the fixed points are taken with respect to the twisted Galois action relatively to $\phi$.
The right handside contains the center $c_e$ which is fixed so that we have to prove that  $ \{c_e\} =
\cB_e\bigl(G_{F_{r,n,m} \otimes_k l}\bigr)^{\Gamma_\phi}$.
In other words we have to prove that $\{c\} =\cB\bigl(G_{F_{r,n,m} \otimes_k l} \bigr)^{\Gamma_\phi}$ and that $0= E^{\mu_m(l)^r \Gal(l/k)}$ where $E= \widehat C_l \otimes_\ZZ \RR$.

\sm

\noindent{\it The toral part.}
We have $$
0=(\widehat C^0)^{\Gamma_\phi}= \Hom_{F_{n,r}}(  \GG_m,
{_\phi C}_{F_{n,r}})
$$
 so that $E^{\Gamma_\phi}= (\widehat {_\phi C}^0)_{F_{n,r}}
 \otimes_\ZZ \RR=0$.
\sm

\noindent{\it The semisimple part.} 
 According to Proposition \ref{prop_GP_red},
$({_\phi \,G})_{F_{n,r}}$ is irreducible so that 
$\cB( {_\phi G}_{F_{n,r}}) \simlgr  \cB\bigl(G_{F_{r,n,m} \otimes_k l} \bigr)^{\Gamma_\phi}$ consists in the point $c$.
\end{proof}

\begin{sremark}{\rm The former proof was not correct \cite[Corollary 7.2.(3)]{GP13}
since we implicitely used  the new Corollary \ref{cor_PY}.
}
\end{sremark}

\begin{sproposition} \label{prop_GP_pure} \cite[Theorem 7.9]{GP13} 
Let $\phi, \phi'$ be purely geometrical tame loop cocycles given by $\phi^{geo}, \phi'_{geo} : \mu_m^r \to \widetilde G$.
Assume that $\phi$ is anisotropic.
Then  the following are equivalent:

\sm

(i) $\phi^{geo}$ and ${\phi'}^{geo}$ are $\widetilde G(k)$-conjugated;

\sm

(ii) $[\phi]= [\phi'] \in H^1(R_{r,n},\widetilde G)$;

\sm

(iii)  $[\phi]= [\phi'] \in H^1(K_{r,n}, \widetilde G)$;

\sm

(iv)  $[\phi]= [\phi'] \in H^1(F_{n,r},\widetilde G)$.
 
\end{sproposition}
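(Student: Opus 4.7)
The plan is to prove the cycle $(i) \Rightarrow (ii) \Rightarrow (iii) \Rightarrow (iv) \Rightarrow (i)$, with the first three implications being essentially formal and the last being the genuine content.

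For $(i) \Rightarrow (ii)$, suppose $\phi^{geo} = \mathrm{Int}(g) \circ {\phi'}^{geo}$ for some $g \in \widetilde G(k)$. Since both cocycles are purely geometric (trivial arithmetic part), the element $g$, viewed as a constant element of $\widetilde G(k_s)$, implements a cohomology equivalence of the corresponding loop cocycles in $Z^1(\pi_1^t(R_n,1),\widetilde G(k_s))$. Projecting through $H^1(\pi_1^t(R_n,1),\widetilde G(R_n^{sc,t})) \hookrightarrow H^1(R_n,\widetilde G)$ yields $[\phi] = [\phi']$ in $H^1(R_n,\widetilde G)$. The implications $(ii) \Rightarrow (iii) \Rightarrow (iv)$ are simply the base change maps $H^1(R_n,\widetilde G) \to H^1(K_n,\widetilde G) \to H^1(F_n,\widetilde G)$.

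The substance is $(iv) \Rightarrow (i)$. First I would invoke Theorem \ref{thm_gp_main} to conclude $[\phi]=[\phi'] \in H^1_{\substack{tame \\ loop}}(R_n,\widetilde G)$, i.e.\ there exists $\widetilde g \in \widetilde G(R_n^{sc,t})$ such that $\phi'(\gamma) = \widetilde g^{-1} \, \phi(\gamma) \, {}^{\gamma}\widetilde g$ for every $\gamma \in \pi_1^t(R_n,1)$. By Lemma \ref{lem_dico}.(2) applied to $\widetilde G$, once we show that $\widetilde g$ can be chosen in $\widetilde G(k_s)$, the purely geometric cocycles $\phi,\phi'$ correspond to $k$-group homomorphisms $\phi^{geo},{\phi'}^{geo}: \pi_1^t(R_{n,k_s},1) \cong \mu_m^n \to \widetilde G$ modulo $\widetilde G(k)$-conjugacy, which is exactly conclusion (i).

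To force $\widetilde g \in \widetilde G(k_s)$, I would use the anisotropy hypothesis via Proposition \ref{prop_GP_iso}: $\phi$ anisotropic implies $({_{\phi}\widetilde G})_{F_n}$ anisotropic (and the same then for $\phi'$). Choosing a common splitting basic tame cover $R_{n,m}\otimes_k l$, the element $\widetilde g$ lies in $\widetilde G(F_{n,m}\otimes_k l)$ after base change and acts on the extended Bruhat-Tits building $\cB_e(G_{F_{n,m}\otimes_k l})$. The cocycle relation shows that $\widetilde g$ intertwines the twisted Galois actions defined by $\phi$ and by $\phi'$; by tame descent of buildings (Rousseau's theorem, cf.\ the proof of Proposition \ref{prop_GP_iso}), both twisted actions have the barycenter $c_e$ as their unique fixed point, so $\widetilde g \cdot c_e = c_e$. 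Hence $\widetilde g$ lies in the stabilizer of $c_e$, a bounded subgroup; this bounded subgroup is compatible with the tame cover $R_n^{sc,t}$ in such a way that, combined with Lemma \ref{lem_J} and a diagram chase through $1 \to G \to \widetilde G \to J \to 1$, one can alter $\widetilde g$ by an element of $\widetilde G(R_n^{sc,t})$ commuting with $\phi$ to land in $\widetilde G(k_s)$.

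The principal obstacle will be making this last descent precise in the present generality: unlike in \cite{GP13}, we work without characteristic assumptions and with a non-connected $\widetilde G$, so the Bruhat-Tits step has to be run equivariantly for the extension by the (twisted constant) component group $J$, and one must ensure that the integral-model arguments for tame cocycle equivalence on $R_n$ continue to work for $\widetilde G$ and not only for its reductive part $G$. This is where the preliminary material from Section~2 on ind-quasi-affine schemes and on normalizer structure (Lemmas \ref{lem_descent0}, \ref{lem_isotrivial2}, \ref{lem_JPL}) feeds in to reduce the non-connected case to the reductive one by a d\'evissage along \eqref{eq_shape}.
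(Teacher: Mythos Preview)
Your argument for $(iv) \Rightarrow (i)$ has a circularity: you invoke Theorem \ref{thm_gp_main}, but in the paper that theorem is proved \emph{using} Corollary \ref{cor_GP_pure}, which in turn is reduced to Proposition \ref{prop_GP_pure}. So Proposition \ref{prop_GP_pure} has to be established independently, and you cannot call Theorem \ref{thm_gp_main} here.

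Even setting the circularity aside, the descent step you sketch does not close. The Bruhat-Tits fixed point argument you outline (that $\widetilde g \cdot c_e = c_e$) only forces $\widetilde g$ into the stabilizer of the hyperspecial point, which by Lemma \ref{lem_stab2} is $\widetilde G\bigl(F_{n-1,m} \otimes_k l[[t_n^{1/m}]]\bigr)$, not $\widetilde G(k_s)$. There is no general mechanism to ``alter $\widetilde g$ by an element commuting with $\phi$'' to push it all the way down to $\widetilde G(k_s)$; the paragraph invoking Lemma \ref{lem_J} and a d\'evissage along \eqref{eq_shape} is not a proof of this. What the paper actually does is run an \emph{induction on $n$}: having $\widetilde g \in \widetilde G\bigl(F_{n-1,m} \otimes_k l[[t_n^{1/m}]]\bigr)$ (and in fact in the $\Gal(l/k)$-invariants), one specializes at $t_n^{1/m}=0$ to land in $\widetilde G(F_{n-1,m})$, then uses the transporter for the last $\mu_m$-factor together with Theorem \ref{thm_FG} to arrange $\phi_n^{geo}={\phi'}_n^{geo}$, and finally applies the induction hypothesis to the pair $(\psi,\psi')$ valued in the smaller group $\widetilde H=\widetilde G^{\phi_n^{geo}}$ over $F_{n-1}$ (after checking, via Proposition \ref{prop_GP_iso}, that $\psi$ is still anisotropic). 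This inductive peeling off of one variable is the missing idea in your proposal.
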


\begin{proof}
We prove the statement by induction on 
$n \geq 0$, the case $n=0$ being obvious.
The implications $(i) \Longrightarrow (ii) \Longrightarrow (iii)
\Longrightarrow (iv)$ are obvious. Let us  prove
the implication $(iv) \Longrightarrow (i)$.

We remark that it enough to deal with the case of $R_n$, that is, when $r=n$.
Indeed if $r<n$, we can extend $\phi^{geo}: (\mu_m)^r \to G$
to   $\phi^{geo, \sharp}: (\mu_m)^n \to G$ by precomposing 
by the projection  $(\mu_m)^n \to (\mu_n)^r$ (and similarly for $\phi'$). We have
$[\phi^{geo}]_{F_n} = [\phi^{geo, \sharp}]_{F_n} \in H^1(F_n,G)$.

We work at finite level with a basic 
tame cover $R_{n,m} \otimes_k l$ of $R_n$
such that $G_l$ is split.
Our assumption is that there exists 
$\widetilde g \in \widetilde G(F_{n,m} \otimes_k l)$
such that 
\begin{equation} \label{eq_cocycle}
\phi(\sigma)= \widetilde g^{-1} \, \phi'(\sigma)\,  \sigma(\widetilde g).
\end{equation}
for all $\sigma \in \Gamma=\Gal(R_{n,m} \otimes_k l/R_{n})= \mu_m(l)^n \rtimes \Gal(l/k)$.
The key step  is the following.

\begin{claim} \label{claim63}  $\widetilde g  \in \widetilde 
G\bigl( F_{n-1,m} \otimes_k k[[t_n^{\frac{1}{m}}]] \bigr)$.
 \end{claim}

 We consider the extended Bruhat-Tits building 
$\cB_{e,n}=\cB_e(G_{F_{n,m,l}})$. It comes with an action 
of $\widetilde G(F_{n,m,l}) \rtimes \Gal( F_{n,m} \otimes_k l/F_n)$ and 
we denote by  $c$ the hyperspecial point (which is sometimes called
the center of the building)
which is the unique point fixed by $(DG)^{sc}( F_{n-1,m} \otimes_k l[[t_n]])$ \cite[9.1.19.(c)]{BT1}.
According to Lemma \ref{lem_stab2},
${\widetilde G\bigl( F_{n-1,m} \otimes_k l[[t_n^{\frac{1}{m}}]] \bigr)}$ is the stabilizer of 
$c$ for the standard action of
 $\widetilde  G(F_{n,m} \otimes_k l)$ on
$\cB_{e,n}$ so that we have to prove that  
$\widetilde g . c=c$.
Denoting by $\star$ the twisted action of 
$\Gamma$ on $\cB_{e,n}$,
we have that 
\begin{equation}\label{eq_fix}
\bigl( \cB_{e,n} \bigr)^{\Gamma_\phi}=\{c\}.
\end{equation}

\noindent Indeed ${_\phi G}_{F_n}$ is anisotropic 
according to  the Bruhat-Tits-Rousseau's theorem \cite[5.1.27]{BT2}. Since $c$ belongs to $(\cB_{e,n})^{\Gamma_\phi}$,
it follows that $\cB_{e,n}^{\Gamma_\phi}=\{c\}$.
For each $\sigma \in \Gamma$,
we have

\vskip-4mm

\begin{eqnarray} \nonumber
\sigma  \star (  \widetilde g \, . \, c) & = &
 \phi(\sigma) \, \, \sigma(\widetilde g) \, . \, 
 \sigma(c) \\  \nonumber 
& = & \phi(\sigma) \, \,  \sigma(\widetilde g) \, . \, c \qquad 
\hbox{ [$c$ is invariant under $\tilde \Gamma_{m,n}$]}  \\  \nonumber 
& = &   \widetilde g \, . \, \phi'(\sigma)  \, \,  c \qquad 
\hbox{[relation \enskip \ref{eq_cocycle}]}  \\  \nonumber 
& = &   \widetilde g \, . \,  c \qquad 
\hbox{[$\phi(\gamma) \in \widetilde G(k_s)]$} .
\end{eqnarray}
Thus $\widetilde g \, . \, c=c$ 
so that $\widetilde g \in \widetilde G\bigl( F_{n-1,m} \otimes_k l[[t_n^{\frac{1}{m}}]] \bigr)$.
Since $\phi$ and $\phi'$ are purely geometrical,
the equation \eqref{eq_cocycle} implies that 
$\widetilde g$ is $\Gal(l/k)$-invariant.
Thus $\widetilde g \in \widetilde G\bigl( F_{n-1,m} \otimes_k k[[t_n^{\frac{1}{m}}]] \bigr)$ which establishes  the Claim \ref{claim63}.

We can then specialize the relation 
\eqref{eq_cocycle} with respect to 
the $\Gamma$-equivariant map 
map $\widetilde G\bigl( F_{n-1,m} \otimes_k k[[t_n^{\frac{1}{m}}]] \bigr) \to 
\widetilde G\bigl( F_{n-1,m} \bigr)$
and obtain
\begin{equation} \label{eq_cocycle1}
\phi(\sigma)= \widetilde g_n^{-1} \phi'(\sigma)\,  \sigma(\widetilde g_n) \qquad (\sigma \in \Gamma)
\end{equation}
with $\widetilde g_n \in  \widetilde G\bigl( F_{n-1,m}\bigr)$.
We consider the transporter
$$
X= \bigl\{ x \in  \widetilde G \mid \, \phi_n^{geo}= x^{-1} \, {\phi'}_n^{geo} \, x \bigr\}
$$
where $\phi_n^{geo}$ (resp.\ ${\phi'}_n^{geo})$ stands
for the restriction of $\phi^{geo}$ (resp.\ ${\phi'}^{geo})$) to the last factor $\mu_m$. Equation \eqref{eq_cocycle1} tells us that 
$X(F_{n-1,m}) \not = \emptyset$ so that $X$ is not empty.
It follows that $X$ is a $\widetilde G^{\phi_n^{geo}}$-torsor
and Theorem \ref{thm_FG} yields that $X(k) \not = \emptyset$.
Without loss of generality we can then assume that 
$\phi_n^{geo}={\phi'}_n^{geo}$.

We put $\widetilde H= \widetilde G^{\phi_n^{geo}}$
and $H= \bigl( G^{\phi_n^{geo}} \bigr)^0$.
Then $H$ is reductive and  $H= (\widetilde H)^0$.
The restriction  $\psi$ (resp.\ $\phi'$)
of $\phi$ to $\mu_m(l)^{n-1} \rtimes \Gal(l/k)$
of $\phi$ (resp.\ $\phi'$) take values in 
$\widetilde H$, we see them as tame loop cocycles for
$\widetilde H$.

\begin{claim} \label{claim39} $\psi$ is anisotropic as tame loop cocycle with value in 
$\widetilde H$.
\end{claim}

The $F_n$--group ${_\psi \!H} \times_k {F_{n}}$
is a subgroup of $_{\phi'}\! G \cong _{\phi}\! G$
which is anisotropic according to  Proposition \ref{prop_GP_iso} .
A fortiori ${_\psi \!H}$ is $F_{n-1}$-anisotropic
so that the same statement shows that 
$\psi$ is anisotropic as tame loop cocycle with value in 
$\widetilde H$. The Claim is established.

The equation \eqref{eq_cocycle1} applied to the element $(1,\dots, 1 , \zeta_n)$
shows that $\widetilde g_n \in \widetilde H(F_{n-1,m})$ so that  $[\psi]=[\psi'] \in H^1(F_{n-1}, \widetilde H)$,
the induction hypothesis shows that there exists $h\in \widetilde H(k)$
such that 
$\psi(\sigma) = \widetilde h^{-1} \, \psi'(\sigma) \,  \widetilde h$
for all $\sigma \in \mu_m(l)^{n-1} \rtimes \Gal(l/k)$.
Thus $\phi(\sigma) = \widetilde h^{-1} \, \phi'(\sigma) \,  \widetilde h$
for all $\sigma \in \mu_m(l)^{n} \rtimes \Gal(l/k)$.
\end{proof}

\begin{scorollary} \label{cor_GP_pure} \cite[Theorem 7.8]{GP13} 
Let $\phi, \phi'$ be  tame loop cocycles with values in $\widetilde G(k_s)$.
Assume that $\phi$ is anisotropic.
Then  the following are equivalent:

\sm

(i) There exists $\widetilde g \in \widetilde G(k_s)$ such that 
$\phi'(\sigma) = \widetilde g^{-1} \,  \phi( \sigma) \, \sigma(\widetilde g)$
for all $\sigma \in \pi_1^t(R_{r,n},1)$.

\sm

(ii)  $[\phi]= [\phi'] \in H^1(R_{r,n},\widetilde G)$;

\sm

(iii)  $[\phi]= [\phi'] \in H^1(K_{r,n},\widetilde G)$;

\sm

(iv)  $[\phi]= [\phi'] \in H^1(F_{n,r},\widetilde G)$.
 
 \sm
 
Furthermore under these assumptions we have
$[\phi^{ar}]=[{\phi'}^{ar}] \in H^1(k, \widetilde G)$.
 
\end{scorollary}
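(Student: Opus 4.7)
The implications $(i) \Rightarrow (ii) \Rightarrow (iii) \Rightarrow (iv)$ are immediate by functoriality (the cocycles inflate through $\pi_1^t(R_n,1) \to \pi_1(\Spec R_n, 1) \to \pi_1(\Spec K_n) \to \pi_1(\Spec F_n)$). The substance is $(iv) \Rightarrow (i)$ together with the ``furthermore'' clause, and the plan is to bootstrap from the purely geometric version already established in Proposition~\ref{prop_GP_pure}.

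First I would invoke Lemma~\ref{lem_GP_special}: the hypothesis $[\phi]=[\phi'] \in H^1(F_n,\widetilde G)$ forces $[\phi^{ar}]=[{\phi'}^{ar}] \in H^1(k,\widetilde G)$, which is exactly the ``furthermore'' assertion. Pick $\widetilde g_0 \in \widetilde G(k_s)$ realizing this equality, namely $\phi^{ar}(\sigma)= \widetilde g_0^{-1}\,{\phi'}^{ar}(\sigma)\,\sigma(\widetilde g_0)$ for all $\sigma \in \Gamma_k$, and replace $\phi'$ by the cohomologous cocycle $\sigma \mapsto \widetilde g_0^{-1}\,\phi'(\sigma)\,\sigma(\widetilde g_0)$. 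This operation does not affect any of the conditions (i)--(iv), and after it we have $\phi^{ar}={\phi'}^{ar} =: z$.

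With this common arithmetic part, Lemma~\ref{lem_dico} identifies $\phi$ and $\phi'$ with pairs $(z,\phi^{geo})$ and $(z,{\phi'}^{geo})$, where $\phi^{geo},{\phi'}^{geo} : \pi_1^t(R_{n,k_s},1) \to {}_z\widetilde G$ are $k$--group homomorphisms. By Lemma~\ref{lem_tame} we have $\pi_1^t(R_{n,k_s},1)=\widehat{\ZZ}'(1)^n$, and by continuity together with the tameness of the cocycles both $\phi^{geo}$ and ${\phi'}^{geo}$ factor through some $\mu_m^n \to {}_z\widetilde G$ (take $m$ large enough prime to $p$ to accommodate both). Extend them trivially on $\Gamma_k$ to obtain purely geometric tame-loop cocycles $\widetilde\phi^{geo}, \widetilde{\phi'}^{geo}$ for the $k$--group ${}_z\widetilde G$, which sits in an analogous extension $1 \to {}_z G \to {}_z\widetilde G \to J' \to 1$ with ${}_z G$ reductive and $J'$ twisted constant.

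Under the standard twisting bijection between the fiber of $H^1(F_n,\widetilde G) \to H^1(k,\widetilde G)$ over $[z]$ and $H^1(F_n,{}_z\widetilde G)$, the hypothesis $[\phi]=[\phi']$ translates to $[\widetilde\phi^{geo}]=[\widetilde{\phi'}^{geo}] \in H^1(F_n,{}_z\widetilde G)$, and anisotropy of $\phi$ transfers directly to anisotropy of $\widetilde\phi^{geo}$ (the definition of anisotropy is phrased intrinsically in terms of the twist $_{\phi^{ar}}\!\widetilde G={}_z\widetilde G$). Proposition~\ref{prop_GP_pure} therefore applies and yields an element $\widetilde h \in {}_z\widetilde G(k)$ conjugating $\widetilde\phi^{geo}$ to $\widetilde{\phi'}^{geo}$. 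Unwinding the twist by $z$, this $\widetilde h$ produces an element $\widetilde g \in \widetilde G(k_s)$ with $\phi'(\sigma) = \widetilde g^{-1}\,\phi(\sigma)\,\sigma(\widetilde g)$ for all $\sigma \in \pi_1^t(R_n,1)$, which is~(i).

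The main obstacle is bookkeeping the twist by $z$: one must verify cleanly that anisotropy and the ``extension of a twisted constant by a reductive'' shape are preserved under passage to ${}_z\widetilde G$, that the factorization through $\mu_m^n$ survives the twist, and that the conjugating element $\widetilde h \in {}_z\widetilde G(k)$ corresponds under the twisting bijection to an element of $\widetilde G(k_s)$ whose action on $\phi$ recovers $\phi'$ not only on the geometric part but on the full group $\pi_1^t(R_n,1)$. Each of these is routine from the definitions, but easy to mangle if done carelessly.
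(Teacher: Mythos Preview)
Your proof is correct and follows essentially the same approach as the paper: invoke Lemma~\ref{lem_GP_special} to align the arithmetic parts, reduce by the standard torsion argument to the purely geometric case for the twisted group ${}_z\widetilde G$, and then apply Proposition~\ref{prop_GP_pure}. The paper's version is more telegraphic (it simply says ``up to twist $\widetilde G$ by $\phi^{ar}$, the usual torsion argument boils down to the case $\phi^{ar}=1$''), while you have spelled out the bookkeeping explicitly; the preservation of the extension shape under inner twisting is recorded in \S\ref{subsec_ext}.
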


\begin{proof}
Once again the implications
$(i) \Longrightarrow (ii)  \Longrightarrow (iii)
 \Longrightarrow (iv)$ are obvious.
 We prove now $(iv) \Longrightarrow (i)$.
  Lemma \ref{lem_GP_special} shows that 
$[\phi^{ar}]=[{\phi'}^{ar}] \in H^1(k, \widetilde G)$
which is the last fact of the statement.
Without loss of generality we can assume 
that $\phi^{ar}={\phi'}^{ar}$. 
Up to twist $\widetilde G$ by  $\phi^{ar}$, the usual torsion 
argument boils down to the case $\phi^{ar}=1$.
This case is handled by Proposition  \ref{prop_GP_pure}, so we are done.
\end{proof}

We proceed now to the proof of Theorem \ref{thm_gp_main}.

\begin{proof}
We deal with the injectivity of the map
$H^1_{\substack{tame \\ loop}}(R_{r,n}, \widetilde G) \to H^1(F_{n,r}, \widetilde G)$, that is
to show that the fiber at the class $[\phi]$ of any
loop cocycle $\phi$ consists in one element.
If $\phi$ is an anisotropic tame loop cocycle, 
Corollary  \ref{cor_GP_pure} shows that the fiber
at $[\phi]$ of $H^1_{\substack{tame \\ loop}}(R_{r,n}, \widetilde G) \to H^1(F_{n,r}, \widetilde G)$
is $\{[\phi] \}$.

A first generalization is the irreducible case.
We assume then that the tame loop cocycle $\phi$
is irreducible.
One again we can assume that $\phi^{ar}=1$.
Let $\phi'$ be another tame loop cocycle
such that $[\phi']=[\phi] \in H^1(F_{r,n}, \widetilde G)$.
According to Lemma \ref{lem_gp_main}
the map  $H^1(R_{r,n}^{tsc}/ R_{r,n}, J) \to   H^1_{r-tame}(F_{r,n}, J)$ is bijective.
It follows that 
 $\phi$ and $\phi'$ have same image in  $H^1(R_{r,n},J)$. 
 Lemma \ref{lem_J} permits to assume without lost
 of generality that  $\phi$ and $\phi'$ have same image in 
 $Z^1(\pi_1^t(R_{r,n}),J(k_s)))$.
 We denote by $J_1$ the image of $\phi^{geo}: \widehat \ZZ'(1)^r \to J$,
 this is a finite smooth algebraic $k$--group of multiplicative type.
We put $\widetilde G_1= G_1 \times_J J_1$, by construction
$\phi$ and $\phi'$ have value in $\widetilde G_1(k_s)$. 
To avoid any confusion we denote them by $\phi_1$ and $\phi'_1$.
We consider the commutative diagram of pointed sets 
\[
\xymatrix{
(J/J_1)(k_s)^{\pi_1^t(R_{r,n},1)_{\phi_1}}
 \ar[r] \ar[d]^{=} & 
H^1(\pi_1^t(R_{r,n},1), \,  {_{\phi_1}\!G_1(k_s)})  \ar[r] \ar[d] &
H^1(\pi_1^t(R_{r,n},1), \, {_{\phi_1}\! \widetilde G(k_s)})  \ar[d] \\
\bigl({_{\phi_1}(J/J_1)}\bigr)(F_{r,n}) \ar[r] & H^1(F_{r,n}, \, {_{\phi_1}\! G_1})  \ar[r] & H^1(F_{r,n},\, {_{\phi_1}\!\widetilde G}).
}
\]
The second one is associated to the exact sequence 
$1 \to {_{\phi_1}\!(\widetilde G_1)} \to {_{\phi_1}\!(\widetilde G )} \to {_{\phi_1}\!(J/J_1)} \to 1$ of $F_{r,n}$-sheaves 
and the first one is associated to the exact sequence
of $\pi_1^t(R_{r,n},1)$-sets $1 \to {_{\phi_1}\!(\widetilde G_1(k_s))} \to 
{_\phi\!(\widetilde G(k_s)) \to _{\phi_1}\!(J/J_1)(k_s)} \to 1$.
By diagram chase involving the torsion bijection
$H^1(\pi_1^t(R_{r,n},1), {_{\phi_1}\!\widetilde G_1(k_s)})
\simlgr H^1(\pi_1^t(R_{r,n},1), \widetilde G_1(k_s))$, we see that we can arrange
 $\phi'_1$ in order
that $\phi'_1$ has same image than $\phi_1$ in $H^1(F_{r,n},\widetilde G_1)$.
We can work then with then $\widetilde G_1$
which is generated by $G$ and the image of $\phi_1^{geo}$. 

Since the $k$--torus $C$ is central in $G$,
the  $k$--subgroup $C^{\phi^{geo}}$ is central in $\widetilde G_1$.
We denote by $C_0$ the maximal split $k$-subtorus of $C^{\phi^{geo}}$ 
and consider the central exact sequence of algebraic $k$--groups
$$
1 \to C_0 \to \widetilde G_1 \to  \widetilde G_1/C_0 \to 1.
$$
The gain is that the image of $\phi_1$ in 
$Z^1( \pi_1^t(R_{r,n},1), (\widetilde G_1/C_0)(k_s))$
is an anisotropic tame loop cocycle by applying the criterion of 
Corollary \ref{cor_PY}.
Since $H^1(R_{r,n},C_0)=H^1(F_{r,n},C_0)=1$, we obtain the following commutative diagram
 \[
\xymatrix{
 H^1(R_{r,n}, \widetilde G_1)  \ar[d]  & \hookrightarrow & H^1(R_{r,n}, \widetilde G_1/C_0)  \ar[d] \\
 H^1(F_{r,n}, \widetilde G_1)   & \hookrightarrow & H^1(F_{r,n}, \widetilde G_1/C_0) 
}
\]
 where the horizontal maps are injections \cite[III.3.4.5.(iv)]{Gd}. 
Corollary \ref{cor_GP_pure} shows that $[\phi_1]$ and 
$[\phi'_1]$ have same image in $H^1(R_{r,n}, \widetilde G_1/C_0)$.
The diagram shows that $[\phi_1]= 
[\phi'_1] \in H^1(R_{r,n}, \widetilde G_1)$.
By pushing in  $H^1(R_{r,n}, \widetilde G)$
we get that $[\phi]=  [\phi'] \in H^1(R_{r,n}, \widetilde G)$ as desired.

\sm

We deal now with the general case. The above reduction (with $\widetilde G_1$)
permits to assume that $J$ is finite \'etale so that $\widetilde G$ is affine
and also that $\phi$ is purely geometric.
 Let $(P,L)$ be a pair normalized by $\phi^{geo}$ 
 where $P$ is  $k$--parabolic subgroup of $G$,
 $L$ is a $k$--Levi subgroup of $P$  
 and which is minimal for this property (with respect to the inclusion).
Then the tame loop cocycle $\phi$ takes value in $N_{\widetilde G}(P,L)(k_s)$.
We have  an exact  sequence \eqref{eq_JPL}
$$
1 \to L \to N_{\widetilde G}(P,L) \to J_P \to 1
$$ 
of smooth affine $k$--groups where $J_P \subset J$ is a clopen $k$--subgroup
and $J_P$ is a twisted constant (Lemma \ref{lem_JPL}).
 We denote by $\psi$ the image of $\phi$ in 
 $Z^1\bigl(\pi_1^t(R_{r,n},1), \, N_{\widetilde G}(P,L) (k_s)\bigr)$.
 Lemma \ref{lem_irr}.(2) states that $\psi$ is irreducible.

 We deal now with the tame loop cocycle $\phi'$ having
 same image in $H^1(F_{r,n}, \widetilde G)$ as $\phi$. 
 Lemma \ref{lem_GP_special} implies that 
 $\phi'$ is purely geometrical.
We consider the  projective
 $k$--variety  $X= \widetilde G/ N_{\widetilde G}(P)$ (Lemma \ref{lem_proj}). 
Theorem \ref{thm_GP_fix} shows that 
$({_\phi X})(F_{r,n}) \not = \emptyset$
so that $({_{\phi'} X})(F_{r,n}) \not = \emptyset$.
The same result shows that
 $X^{{\phi'}^{geo}}(k)$ are not empty.
We pick   $x' \in X^{{\phi'}^{geo}}(k)$ and
choose $x \in  X^{{\phi}^{geo}}(k)$ such that $G_x=P$.
 observe and choose 
$\widetilde g \in \widetilde G(k_s)$
such that $x'=\widetilde g \, . \, x$.
For $\sigma \in \Gal(k_s/k)$, we have that 
$x'=\sigma(\widetilde g) \, . \, x$ so that
$\sigma \to n_\sigma =\widetilde g^{-1} \, \sigma(\widetilde g)$
is a $1$-cocycle with value in $\widetilde G_x(k_s)=N_{\widetilde G}(P)(k_s)$.
For $\sigma \in \Gal(k_s/k)$, we have 
$\phi'(\sigma).x'=x'$ so that
$\phi'(\sigma).\widetilde g \, . \, x=\widetilde g \, . \, x$.
It follows that $\widetilde g^{-1} \phi'(\sigma) 
\widetilde g \, . \, x=x$. Since $n_\sigma\ x \, =x$
it follows that 
 $\phi''(\sigma)= \widetilde g^{-1} \, \phi'(\sigma)$
 fixes $x$.
\noindent Up to replace $\phi'$ by $\phi''$ we can then assume that 
 $\phi'$ takes value in $N_{\widetilde G}(P)(k_s)$.
Proposition \ref{prop_morozov} tells  us that 
$P$ admits a Levi subgroup $L'$ normalized by ${\phi'}^{geo}$.
Then $L'= \, {^g\!L}$ for some $g \in P(k)$.
Up to replace $\phi'$ by $^{g^{-1}}\phi'$, we
can then assume that $\phi'$ has value in $N_{\widetilde G}(P,L)$.

 We note that   $(P,L)$ is minimal for this property (otherwise it will not be minimal for $\phi$).
  The above argument tells us that the image $\psi'$ of $\phi'$ in 
 $Z^1\bigl(\pi_1^t(R_{r,n},1), N_{\widetilde G}(P,L)(k_s))$
 is irreducible. We consider now the commutative diagram

\[
\xymatrix{
 H^1(R_{r,n},\widetilde G)  \ar[r] & H^1(F_{r,n},\widetilde G)  \\ 
 H^1(R_{r,n},N_{\widetilde G}(P,L))_{irr} \ar[r] \ar[u]  & H^1(F_{r,n},N_{\widetilde G}(P,L))_{irr} \enskip. \incl[u]  
}
\]
The  second horizontal map is well-defined in view of Proposition \ref{prop_GP_red}.
 The   right vertical map is injective ({\it ibid}, lemme 4.2.1.(2)).
 We have seen that $\phi$, $\phi'$ define tame loop elements of 
 $H^1(R_{r,n},N_{\widetilde G}(P,L))_{irr}$ which give then the same image in
 $H^1(F_{r,n},N_{\widetilde G}(P))$. 
 Taking into account the already handled irreducible case,  diagram chasing
 enables us to conclude that 
 $[\phi]= [\phi'] \in H^1(R_{r,n},N_{\widetilde G}(P,L))$.
 Thus  $[\phi]= [\phi'] \in H^1(R_{r,n},\widetilde G)$ as desired.
\end{proof}

\begin{sremark} {\rm  The reduction involving $\widetilde G_1$ is unfortunately
missing in the original proof, i.e. \cite[Theorem 8.1]{GP13}.
}
\end{sremark}

The next variant is due to D.~Ofek.

\begin{scorollary} \label{cor_GP_pure2} 
Let $\phi, \phi'$ be  tame loop cocycles with values in $G(k_s)$
which are purely geometrical. The following assertions are equivalent: 

\sm

(i) There exists $g \in  G(k)$ 
and a split $k$-subtorus $S$ of $G$ such that
 $(\phi')^{geo}, \, ^{ g}\!\phi^{geo}$ centralize
 $S$ and
$$
\phi'(\sigma)= \, ^{ g}\!\phi(\sigma)  \, S(k_s)
\quad \hbox{for all $\sigma \in \pi_1^t(R_{r,n},1)$}.
$$

\sm

(ii)  $[\phi]= [\phi'] \in H^1(R_{r,n}, G)$;

\sm

(iii)  $[\phi]= [\phi'] \in H^1(K_{r,n}, G)$;

\sm

(iv)  $[\phi]= [\phi'] \in H^1(F_{r,n}, G)$.

\end{scorollary}

\begin{proof} 
We can reason at finite level by  considering
$\phi^{geo}, \, \phi'^{geo}: \mu_m^r \to G$
for some integer $m \in k^\times$
The implications $(ii) \Longrightarrow (iii)
\Longrightarrow (iv)$ are trivial. 

 \sm
 
 \noindent $(i) \Longrightarrow (ii)$. Without  loss of generality we can 
 assume that  $(\phi')^{geo},\,  \phi^{geo}$ centralize
 $S$ and that $\phi'(\sigma)= \phi(\sigma)  \, S(k_s)$
for all $\sigma \in \pi_1^t(R_{r,n},1)$.
This implies that  $[\phi],[\phi'] \in H^1(R_{r,n},C_{ G}(S))$
have same image in $H^1(R_{r,n},C_{G}(S)/S)$.
Since $1=H^1(R_{r,n},S)$ acts transitively on the fibers of 
$H^1(R_{r,n},C_{ G}(S)) \to H^1(R_{r,n},C_{G}(S)/S)$, it follows that $[\phi]= [\phi'] \in H^1(R_{r,n},C_{G}(S))$.
A fortiori we have $[\phi]= [\phi'] \in  H^1(R_{r,n},G)$.

 \sm
 
 \noindent $(iv) \Longrightarrow (i)$.
 Let $(P,L)$ be a pair consisting of a $k$-parabolic subgroup
 $P$ with a Levi subgroup  which is normalized by $\phi^{geo}$ and which 
 is minimal for this property. In view of Proposition \ref{prop_morozov},
$P$ is itself minimal among the  $k$-parabolic subgroup
 which are normalized by $\phi^{geo}$
We denote by  $Z$ the  $k$-scheme 
 of parabolic subgroups of $G$ of same type as $P$.
 
 Applying Theorem \ref{thm_GP_fix}, $(i) \Longrightarrow (ii)$  to the $k$-scheme $Z$ and to $\phi^{geo}$ 
shows that $({_\phi\!Z})(F_{n,r}) \not = \emptyset$.
Since $[\phi]= [\phi'] \in H^1(F_{r,n}, G)$, 
the $F_{n,r}$-schemes ${_\phi'\!Z}$ and ${_\phi'\!Z}$
are isomorphic so that 
$({_\phi'\!Z})(F_{n,r}) \not = \emptyset$.
Next
 Theorem \ref{thm_GP_fix}, $(v) \Longrightarrow (i)$, applied   to the $k$-scheme $Z$ and $(\phi')^{geo}$ yields  that $Z^{(\phi')^{geo}}(k) \not = \emptyset$. Since $G(k)$ acts transitively on $Z(k)$, up to conjugate $\phi'$ 
 by an element of $G(k)$, we can assume that $(\phi')^{geo}$
 normalizes $P$ as well.

Proposition \ref{prop_morozov} shows that  $\phi'^{geo}$
normalizes a $k$-Levi subgroup $L'$ of $P$.
Since $R_u(P)(k)$ acts (simply) transitively on the set of 
Levi subgroups of $P$, we can assume that $L=L'$.
We notice that  $\phi^{geo}$ and $(\phi')^{geo}$ factorize through  
$L= N_{G}(P,L)$. 
According to Proposition   helo applied to $L$,
we have that the reductive $F_{r,n}$-groups $_\phi\!L$ and $_\phi\!L$ 
are both irreducible. Next the the map 
$$
H^1(F_{r,n}, L) \to H^1(F_{r,n},G)
$$
is injective  by Borel-Tits' theorem. 
Since $[\phi]$, $[\phi']$ go the same class in 
$H^1(F_{r,n}, G)$, it follows that 
$[\phi]=[\phi']$ inside $H^1(F_{r,n}, L)$.
Let $S$  be the maximal split central subtorus of $L$,
we have $L=C_G(S)$ \cite[Exp.\ XXVI, Proposition 6.8]{SGA3}.
Since the  images of $[\phi]$, $[\phi']$ in $H^1(F_{r,n}, L/S)$
 are anisotropic, hello shows that there exists 
$[l] \in (L/S)(k)$ such that  $\phi'^{geo}=\! ^l\!\phi'^{geo}:
\mu_m^r \to L$ agree in $L/S$. 
 Thus $\phi'(\sigma)= \, ^{l}\!\phi(\sigma)  \, S(k_s)$
for all $\sigma \in \pi_1^t(R_{r,n},1)$.
\end{proof}

\section{Tame fundamental group \`a  la Grothendieck-Murre}
\label{sect_murre}

\subsection{Abhyankar's lemma} Let $X=\Spec(A)$ be a regular local  scheme (not assumed henselian at this stage).
Let $\gm$ be the maximal ideal  of $A$, $k=A/\gm$ be its residue field
and $p \geq 1$ be its characteristic exponent.
We put $\widehat \ZZ'= \prod\limits_{l \not =p} \ZZ_l$.
Let $K$ be the fraction field of $A$, 
and let $K_s$ be a separable closure of $K$.
It determines a base point $\xi: \Spec(K_s) \to X$
so that we can deal with the Grothendieck fundamental 
group $\Pi_1(X, \xi)$ \cite{SGA1}. 

We assume that $A$ is of dimension $d \geq 1$. 
Let $(f_1, \dots, f_d)$ be a regular sequence of $A$
and consider the divisor $D= \sum\limits_{i=1}^r D_i= \sum\limits_{i=1}^r \mathrm{div}(f_i)$ where $0 \leq r \leq d$;
$D$ has strict normal crossings.
We put $U = X \setminus  D= \Spec(A_D)$.

We recall that a  finite \'etale cover $V \to U$ is 
{\it tamely ramified} with respect to $D$ if the associated \'etale $K$--algebra
$L=L_1 \times \dots \times L_a$ is 
tamely ramified at the $D_i's$ for $i=1,..,r$, that is, for each $i=1,..,r$, there exists $j_i$ such that 
for the  Galois closure $\widetilde L_{j_i}/K$ of $L_{j_i}/K$,
the inertia group associated to $v_{D_i}$ has order prime to 
$p$ \cite[XIII.2.0]{SGA1}.

Grothendieck and Murre defined the  
tame ({\it mod\'er\'e} in French) fundamental group 
$\Pi_1^D(U, \xi)$
with respect to  $U \subset X$  \cite[XIII.2.1.3]{SGA1} and \cite[\S 2]{GM}. This is a profinite quotient of $\Pi_1(U, \xi)$ whose quotients by open subgroups
 provides finite connected tame  covers of $U$.
 
 \sm

From now on we assume that $A$ is henselian.
We consider a special class of Galois tamely ramified 
covers called {\it basic Abhyankar's tame covers}. For $n$ a positive prime to $p$ integer,
and  $B$ a Galois cover of $A$ containing a  primitive $n$--th root of unity, we define
$$
B_{r,n}=\Bigl( B[T_1,\dots, T_r, \dots, T_d]/ ( T_1^{n} - f_1, \dots, T_r^{n}- f_r) \Bigr)
$$
and 
$$
B_{D,n}= B_{r,n}\otimes_A A_D.
$$
Then $B_{D,n}$ is a tamely ramified Galois cover of 
$A_D$ and we have $$
\Gal(B_{D,n}/A_D)= \bigl( \prod\limits_{i=1}^r \mu_n(B) \bigr) \rtimes \Gal(B/A).
$$
We know that those  basic covers filter the universal
tame cover of $A_D$ \cite[XIII, \S 5.3.0]{SGA1}.
Passing to the limit we obtain  then an isomorphism
$$
\pi_1^t(U, \xi) \cong \bigl( \prod\limits_{i=1}^r \widehat \ZZ'(1) \bigr)
\rtimes \pi_1(X, \xi).
$$
We denote by  $f: U^{tsc} \to U$ the profinite \'etale cover
associated to the quotient $\pi_1^t(U, \xi)$ of $\pi_1(U,\xi)$.
According to \cite[Theorem 2.4.2]{GM}, it is the universal tamely ramified cover of $U$. 
It is a localization of the inductive limit $\widetilde B_D$ of the  $B_{D,n}$'s. On the other hand we consider  the inductive limit $\widetilde B$ of the $B$'s and observe that $\widetilde B_D$
is a  $\widetilde B$-ring.

\begin{remark}{ \rm The $A$--algebra $B_{D,n}$ is isomorphic to the algebra \break
 ${B[T_1^{\pm 1},\dots, T_r^{\pm 1}, \dots, T_d^{\pm 1}]/ ( T_1^{n} - f_1, \dots, T_r^{n}- f_r)}$
which is the form given   in \cite[\S 2.10]{Gi24}. 
}
\end{remark}

\begin{lemma}\label{lem_picard}
Let $V\to U=\Spec(A_D)$ be a connected
tamely ramified cover. Then $\Pic(V)=0$. 
\end{lemma}

\begin{proof} We use the existence of a basic Abhyankar cover
$\Spec(B_{D,n}) \to V=\Spec(C)$ for $B,n$ as above and denote by $\Gamma$ its Galois group.
Since $\Gal(B_{D,n}/A_D)$ acts naturally on $B_{r,n}$, 
we can consider the ring    $\widetilde C= (B_{r,n})^{\Gamma}$
and we have $C=(B_{D,n})^\Gamma=
(\widetilde C)_D$.
The ring $B_{r,n}$ is local and its maximal ideal
is minimally generated by $\sqrt[n]{f_1}, \dots ,\sqrt[n]{f_r}$, $f_{r+1}$, \dots, $f_d$.
Since $A$ is of Krull dimension $d$, so  is $B_{r,n}$ \cite[00OK]{St} which is then a regular local ring.
The ring $\widetilde C$ is connected and is a finite extension of the henselian local ring $A$, so is local henselian \cite[Proposition $_4$.18.5.9.(ii) and Proposition $_4$.18.5.10]{EGA4} and
is normal \cite[Proposition 6.4.1]{BH}. 
Since the map   the map $\Pic(\widetilde C)\to \Pic(C)$
is onto \cite[Corollary 11.43]{GW}, we conclude that $\Pic(C)=0$.
\end{proof}

\begin{remark}{\rm The ring $\widetilde C$ is normal
but not regular in general, for example \break
$\widetilde C=\CC[[x,y]][\sqrt{xy}]$ is not a regular ring.
When $A$ is strictly henselian, there is a general criterion in terms of the action of $\Gamma$ on  the tangent space at
the closed point due to Serre \cite[Theorem 1']{Se3}, see also \cite[Corollary 2.13]{Wa}. 
}
\end{remark}


\subsection{Blow-up}\label{subsec_blow_up}

We assume that $d \geq 1$.
We follow a blowing-up construction arising from  
 \cite[lemma 15.1.1.6]{EGA4}.
We denote by $\widehat X$ the blow-up of $X=\Spec(A)$ 
at its closed point, this is a regular scheme \cite[\S 8.1, 
Theorem 1.19]{Liu} and the exceptional divisor $E \subset \widehat X$ is 
a Cartier divisor isomorphic to $\mathbb{P}^{d-1}_k$.
We denote by  $R=\mathcal{O}_{\widehat X,\eta}$ the local ring at the generic point $\eta$ of $E$.  The ring $R$ is a DVR of
fraction field $K$ and of  residue field 
$F=k(E)=k(t_1, \dots, t_{d-1})$ where $t_i$ is the image
of $\frac{f_{i+1}}{f_1} \in R$ by the specialization map.

On the other hand we denote  by $v: K^\times \to \ZZ$ the discrete order valuation associated  to $R$ \cite[Theorem 6.7.8]{SH}. For $a \in A \setminus \{0\}$, it is defined by 
\begin{equation} \label{valuation}
v(a) = \mathrm{Max}\big\{ n  \in \ZZ_{\geq 0} \, \mid \, a \in \gm^n \big\}. 
\end{equation}  
Then $R$ is the valuation ring of $v$ according to the next considerations.
 We denote by 
$$
\mathrm{BL}_\gm(A)= A \oplus X \gm \oplus  X^2 \gm^2 \oplus \cdots \qquad
\subset \enskip A[X]
$$
the Rees (or blow-up) algebra with the notation
of \cite[Tag 052Q]{St} and consider the 
the affine blow-up algebra $$
B=\Bigl[ \frac{\gm}{f_1} \Bigr] = \bigl(\mathrm{BL}_\gm(A)\bigr)_{X f_1}.
$$
Then  the  $A$-map
$B  \to R$, $X f \mapsto \frac{f}{f_1}$,  induces
 an isomorphism $B_{f_1} \simlgr R$ \cite[Theorem 6.7.9]{SH}.

\begin{remark}\label{rem_blow}{\rm  
According to \cite[Corollary  5.5.9, proof of Theorem 6.7.9]{SH} (see also \cite[exercise 14.4]{Ma}), another description of $R$ is $S_{Sf_1}$
where \break
$S= A[T_1,\dots, T_{d-1}]/ (f_1 T_1-f_2, \dots, f_1 T_{d-1}-f_d)$.
Actually $S  \cong A[\frac{f_2}{f_1}, \dots \frac{f_d}{f_1}]$.
}
\end{remark}
 
Let $\gp$ be the maximal ideal of $R$, we have
$\gm \subseteq \gp$.
Since $\gp\cap A$ is a proper prime ideal of $A$, we
have  $\gp \cap A \subset \gm$ hence  $\gp \cap A = \gm$. 
We consider the ring $A_D^\sharp=A_D \cap R$.

\begin{slemma} \label{lem_AC}
(1) For each $n \geq 0$, we have $\gm^n \setminus \gm^{n+1} \, 
\subseteq  \, \gp^n \setminus \gp^{n+1}$
or in other words $v( \gm^n \setminus \gm^{n+1}) =n$.

\smallskip

\noindent (2) $\gm^n = \gp^n \cap A$ for each $n \geq 1$.

\smallskip

\noindent (3)  If $r \geq 1$, we have
$A\Bigl[  \bigl(\frac{f_2}{f_1}\bigr)^{\pm 1},  
\dots , \bigl(\frac{f_r}{f_1}\bigr)^{\pm 1} \Bigr] = A_D^\sharp$. 

\smallskip

\noindent (4) If $r \geq 1$, we have $A_D \cap \gp^n = (f_1)^n \, A_D^\sharp$ for each $n \geq 1$.

\smallskip

\noindent (5) The ring  $A_D^\sharp$ is regular.

\end{slemma}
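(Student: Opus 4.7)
The strategy is to reduce everything to the computation of the valuation $v$ on the $f_i$'s, and then obtain (1)--(2) from an associated-graded comparison and (3)--(4) by elementary manipulations in $A_D$. The key initial observation is that $v(f_i)=1$ for every $i$, so $f_r$ is a uniformizer of $R$: indeed, $\gm R$ cuts out the exceptional divisor at its generic point, hence $\gm R=\gp$; the equality $\gp=\sum_i f_i R$ in the DVR $R$ forces $\min_i v(f_i)=1$, while for each $i<r$ the element $f_i/f_r$ has nonzero residue $t_i$ in $F$, so $f_i/f_r\in R^\times$ and all the $v(f_i)$ coincide.

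Parts (1) and (2) then follow from comparing graded rings. Regularity of $A$ gives $\mathrm{gr}_\gm(A)\cong k[\bar f_1,\dots,\bar f_r]$, while $\mathrm{gr}_\gp(R)\cong F[\bar f_r]$ since $R$ is a DVR with uniformizer $f_r$. The induced map sends $\bar f_i\mapsto t_i\bar f_r$ (with the convention $t_r:=1$), so a nonzero homogeneous polynomial $P$ of degree $n$ in the $\bar f_i$ maps to $P(t_1,\dots,t_{r-1},1)\cdot \bar f_r^n$, which is nonzero because $t_1,\dots,t_{r-1}$ are algebraically independent over $k$ and dehomogenization is injective on homogeneous polynomials of $k[x_1,\dots,x_r]$. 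This injectivity on graded pieces yields (1): every $a\in\gm^n\setminus\gm^{n+1}$ satisfies $v(a)=n$. Part (2) is then a one-line consequence of Krull's intersection theorem: any nonzero $a\in A\cap\gp^n$ lies in some $\gm^m\setminus\gm^{m+1}$ with $m=v(a)\geq n$ by (1), hence $a\in\gm^n$.

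For (3), the containment $A\bigl[(f_1/f_r)^{\pm 1},\dots,(f_{r-1}/f_r)^{\pm 1}\bigr]\subseteq A_D\cap R$ is immediate from $v(f_i/f_r)=0$. For the reverse, any $a\in A_D\cap R$ has the form $b/f^\alpha$ with $b\in A$ and $v(b)\geq|\alpha|$; part (2) places $b$ in $\gm^{|\alpha|}$, so $b$ is an $A$-linear combination of monomials $f^\beta$ with $|\beta|=|\alpha|$, and each resulting Laurent monomial $f^{\beta-\alpha}$ of total degree zero decomposes as a product of factors $(f_i/f_j)^{\pm 1}$, lying in the left-hand ring. Regularity of $A_D\cap R$ follows since the affine blowup chart $A'=A[f_1/f_r,\dots,f_{r-1}/f_r]$ is regular (as an open in the regular scheme $\widehat X$) and $A_D\cap R$ is the localization of $A'$ obtained by inverting the $f_i/f_r$. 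Finally, for (4) (reading the formula as $A_D\cap\gp^c=(f_r)^c(A_D\cap R)$), an element $a\in A_D$ lies in $\gp^c$ iff $v(a)\geq c$ iff $a/f_r^c\in R$, and since $f_r$ is invertible in $A_D$ we have $a/f_r^c\in A_D$ automatically, giving the equality. The main obstacle is really part (1): once the graded-ring comparison is set up and the algebraic independence of the $t_i$ is exploited, the rest is formal.
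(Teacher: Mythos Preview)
Your proof is correct and follows essentially the same route as the paper. A few remarks on where your presentation differs and is arguably cleaner:

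For (1), the paper writes $a\in\gm^n\setminus\gm^{n+1}$ as a sum $\sum_{|i|\geq n} a_i f^i$ with some degree-$n$ coefficient a unit, factors out $f_r^n$, and concludes $v(a)=n$; this implicitly uses the algebraic independence of the $t_i$ to ensure the degree-$n$ part does not vanish in $F$, which you make explicit via the injectivity of the graded map $\mathrm{gr}_\gm(A)\to\mathrm{gr}_\gp(R)$. It is the same computation, but your packaging via associated graded rings is tidier and makes the key point (injectivity of dehomogenization followed by evaluation at transcendentals) visible.

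For (3), both arguments are identical in substance: write $x=b/f^\alpha$, use (2) to put $b\in\gm^{|\alpha|}$, and expand into degree-zero Laurent monomials in the $f_i$, each of which lies in $A[(f_i/f_r)^{\pm1}]$.

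For (4), you correctly read the intended statement as $A_D\cap\gp^c=(f_r)^c(A_D\cap R)$ (the printed version has typos in both the formula and the proof), and your argument is shorter than the paper's: rather than rerunning the computation of (3), you simply observe that $f_r\in A_D^\times$ and $v(f_r)=1$, so dividing by $f_r^c$ gives the equivalence directly. This is the argument one would want.

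Your preliminary paragraph establishing $v(f_i)=1$ for all $i$ (via $\gm R=\gp$ and the nonvanishing residues $t_i$) is a useful addition; the paper takes this for granted from the description of $R$ and $F$.
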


\begin{proof}
(1)   and  (2)   follow readily of the formula \ref{valuation}.
\smallskip

\noindent (3) If $r=0$, the statement is obvious
so that we can assume that $r \geq 0$. The inclusion 
$A\Bigl[  \bigl(\frac{f_2}{f_1}\bigr)^{\pm 1},  
\dots , \bigl(\frac{f_r}{f_1}\bigr)^{\pm 1} \Bigr] \subseteq A_D^\sharp=  A_D \cap R$ is obvious and
we shall prove the converse one.
Let $x \in A_D^\sharp$.
We have  that $x= (f_1 f_2 \dots f_r)^{-m} a$ for $a\in A$
for some $m \geq 0$. 
We put $n=v(a)$. It follows that 
so that $-r m+ n=v(x) \geq 0$ hence
$n \geq rm$. 
Assertion (2)
shows that  $a \in \gm^n$, that is,  
$a= \sum\limits_{i_1+ \dots + i_r \geq n} a_{i_1,\dots, i_r}
 \, f_1^{i_1} \dots f_r^{i_r}$
 with $a_{i_1,\dots, i_r} \in A$ for each uple.
 It follows that 
 \begin{eqnarray*}
x&= &(f_1f_2 \dots f_r)^{-m} a ,\\
&=& \bigl(\frac{f_2}{f_1} \bigr)^r\dots 
\bigl(\frac{f_r}{f_1}\bigr)^r  \, f_1^{-mr} \,  a \\
&=& \bigl(\frac{f_2}{f_1} \bigr)^r \dots 
\bigl(\frac{f_r}{f_1}\bigr)^r  \, \,
 \sum\limits_{i_1+ \dots + i_r \geq n} a_{i_1,\dots, i_r}
 \, \, (f_1)^{-mr + n- (n-i_1- \dots - i_r)} \,(\frac{f_2}{f_1})^{i_1} \dots \bigl(\frac{f_r}{f_1}\bigr)^{i_r}.
\end{eqnarray*}
 Since $-mr + n\geq 0$, we conclude that   $x \in  A\Bigl[  \bigl(\frac{f_2}{f_1}\bigr)^{\pm 1},  
\dots , \bigl(\frac{f_r}{f_1}\bigr)^{\pm 1} \Bigr]$.

\smallskip

\noindent (4) 
Let $x \in A_D \cap \gp^n$ for $n \geq 1$.
The same argument as in (3) shows that 
$x \in (f_1)^n A\Bigl[  \bigl(\frac{f_2}{f_1}\bigr)^{\pm 1},  
\dots , \bigl(\frac{f_r}{f_1}\bigr)^{\pm 1} \Bigr]$.

\smallskip

\noindent (5) We want to show that $A_D^\sharp$ is regular.
If $r=0$,  we have that $A=A_D= A_D^\sharp$ 
so it os obvious.
If $r=1$, $A_D=A_{f_1}$ and $A=A_D^\sharp$, so this
case works as well. We assume that $r \geq 2$. 

Let $\gq$ be a maximal ideal  of $A_D^\sharp$
and consider firstly the case $f_1 \not \in \gq$.
Since $A_D=A_D^\sharp[ \frac{1}{f_1}]$;
and $A_D$ is regular,  $(A_D^\sharp)_\gq=  
(A_D^\sharp)_\gq$ is a regular local ring.
We assume now that $f_1 \in \gq$.
From (4), we have $A_D^\sharp \cap f_1 R =
f_1 A_D^\sharp$ so that the map $A_D^\sharp/f_1   
A_D^\sharp \to R/f_1 R$ is injective.
It follows that $f_1 A_D^\sharp$ is a prime ideal
of   $A_D^\sharp$. It is then enough to show that the ring
$(A_D^\sharp)_{f_1 A_D^\sharp}$ is regular.

We  have $A_D^\sharp/f_1 A_D^\sharp
\cong A/(f_1, \dots f_r)[T_1, \dots, T_{r-1}]$;
this ring is regular and so is its localization  
$(A_D^\sharp)_{f_1 A_D^\sharp}/ f_1 (A_D^\sharp)_{f_1 A_D^\sharp}$.  On the other hand, the ring 
$(A_D^\sharp)_{f_1 A_D^\sharp}$ is integral
so that  $f_1$ is not a divisor of $0$. 
According to \cite[VIII, S 3, Corollaire 1, 
(iii) $\Longrightarrow (i)$]{BAC}, 
we can conclude that the local ring
$(A_D^\sharp)_{f_1 A_D^\sharp}$ is regular.
\end{proof}
 
 We obtain then a factorization
\begin{equation}\label{eq_laurent1}
\xymatrix{
R   \ar[r] & k(t_1,\dots, t_{r-1}, t_r, \dots, t_{d-1}) \\
A_D \cap R  \ar@{->>}[r] \incl[u] & k[t_1^{\pm 1},\dots, t_{r-1}^{\pm 1} , t_r, \dots, t_{d-1}] . \incl[u]
}
\end{equation}
which relies with Laurent polynomials on the residue field.

\smallskip 

We recall the notation $A_D^\sharp=A_D \cap R$ and consider its prime ideal 
$\gn= \gP \cap A_D^\sharp$. We have $\gn= f_1 A_D^\sharp$ according
to Lemma \ref{lem_AC}.(4).


\sm

We deal now with a Galois extension $B_{D,n}$ of $A_D$ as above.
Since $B$ is a connected finite \'etale  cover of $A$,
$B$ is regular and  local; it is furthermore henselian \cite[Proposition $_4$.18.5.10]{EGA4}.
We denote by $L$ the fraction field of $B$ and by $L_{r,n}$
that of $B_{r,n}$. We have $[L_{r,n}:L]= n^r$ and
 want to extend the valuation $v$ to $L$ and to $L_{r,n}$.

We denote by $l=B/\gm_B$ the residue field of $B$, this is a finite Galois field  extension of $k$. Also $(T_1,\dots, T_r, f_{r+1}, \dots, f_d)$ is a system of parameters for $B_{r,n}$. We denote by $w:L^\times \to \ZZ$ 
 the discrete valuation associated  to the
exceptional divisor of the blow-up of $\Spec(B)$ at its closed point. Then $w$ extends $v$ and $L_w/K_v$ is an unramified
extension of degree $[L:K]$ and of residual extension 
$F_l= l(t_1, \dots , t_{d-1})/k(t_1, \dots , t_{d-1})$.

On the other hand we denote by $w_{r,n}:L_{r,n}^\times \to \ZZ$ 
 the discrete valuation associated  to the
exceptional divisor of the blow-up of $\Spec(B_{r,n})$ at its closed point. We 
have $l= B_{r,n}/ \gm_{B_{r,n}}$.
The valuation  $\frac{w_{r,n}}{n}$ on $L_{r,n}$ extends $w$ and 
its residual extension is 
$F_{l,n}= l\Bigl(  t_1^{1/n},\dots, t_{r-1}^{1/n} , t_r, \dots, t_{d-1}\Bigr) /k\bigl(t_1, \dots, t_{r-1}, t_r, \dots, t_{d-1} \bigr)$ 
so that ${[F_{l,n}:F_l]}=n^{r-1}$.
Furthermore the ramification index $e_{r,n}$ of $L_{r,n}/L$ is $\geq n$.
Since $n^r \leq e_{r,n} \, {[F_{l,n}:F_l]} \leq [L_{r,n}:K]=n^r$ (where the last inequality is \cite[\S VI.3, Proposition 2]{BAC}) it follows that $e_{r,n}=n$. 
The same statement shows that the map 
$L_w \otimes_L L_{r,n} \to L_{w_{r,n}}$ is an isomorphism.
To summarize $L_{w_{r,n}}/L_w$ is tamely ramified of ramification index $n$ and of degree $n^r$.
Altogether we have $L_{w_{r,n}}=L_w \otimes_K L_{r,n}$ so 
that $L_{w_{r,n}}$ is Galois over 
$K_v$ of group ${\prod\limits_{i=1}^r \mu_n(B) \rtimes \Gal(B/A)}
= {\prod\limits_{i=1}^r \mu_n(l) \rtimes \Gal(l/k)}$.

We denote by $\Delta$  the diagonal
embedding $\mu_n(l) \subset \prod\limits_{i=1}^r \mu_n(l)$. 
We put $L_{w_{r,n}}^{\Delta}= L_{r,n}^{\Delta(\mu_n(B))}$.
Since $T_1$ is an uniformizing parameter of $K_v$ and 
since $\Delta(\zeta) \, . \, T_1= \zeta. T_1$ for
each  $\zeta \in \mu_n(B)$, it follows that 
$(L_{w_{r,n}})^{\Delta}$ is the maximal unramified extension of
$L_{w_{r,n}}/K_v$. We have a factorization

\begin{equation}\label{eq_laurent2}
\xymatrix{
O_{w_n}   \ar[r] & l(t_1^{\frac{1}{n}},\dots, t_{r-1}^{\frac{1}{n}}, t_r, \dots, t_{d-1}) \\
B_{D,n}\cap O_{w_n}  \ar[r] \incl[u] & l[t_1^{\pm \frac{1}{n}},\dots, t_{r-1}^{\pm \frac{1}{n}}, t_r, \dots, t_{d-1} ] . \incl[u]
}
\end{equation}

\noindent 
We put $\underline{A_D}=k\bigl[t_1^{\pm 1},\dots, t_{r-1}^{\pm 1}
, t_r, \dots, t_{d-1}\bigr]$ and $\underline{B_{D,n}}=l\bigl[t_1^{\pm \frac{1}{n}},\dots, t_{r-1}^{\pm \frac{1}{n}} , t_r, \dots, t_{d-1} \bigr] $. An important point is the
equivariance of the above diagram for the action  of
$\Gal(B_{D,n}/A_D)$. In particular
it provides an exact sequence

\begin{equation}\label{eq_sequence_Galois}
\xymatrix{
1 \ar[r] & \mu_n(B) \ar[r]^{\Delta \qquad}  & \mu_n(B)^r \rtimes \Gal(B/A) \ar[r] & \Bigl( \mu_n(B)^r/\mu_n(B) \Bigr)\rtimes \Gal(B/A) \ar[r] & 1 \\
1 \ar[r] & \mu_n(B) \ar[r]\ar[u]^{=} & \Gal(B_{D,n}/A_D)  \ar[r] \ar[u]^\wr &  \Gal(\underline{B_{D,n}}/\underline{A_D})  \ar[u]^\wr
\ar[r] & 1 .
}
\end{equation}

\subsection{Tame loop cocycles and tame loop torsors}

Let $G$ be an $X$--group scheme locally of finite presentation.
A tame loop cocycle is an element of $Z^1\bigl(\pi_1^t(U), G(\widetilde B) \bigr)$
and it defines a Galois cocycle in $Z^1(\pi_1^t(U), G(U^{tsc}))$.
We denote by $Z^1_{\substack{tame \\ loop}}(\pi_1^t(U), G(U^{tsc}))$ the image of 
the map  $Z^1\bigl(\pi_1^t(U), G(\widetilde B) \bigr) \to Z^1(\pi_1^t(U), G(U^{tsc}))$ and by $H^1_{\substack{tame \\ loop}}(U, G)$ the image of 
the map  $$
Z^1\bigl(\pi_1^t(U), G(\widetilde B) \bigr) \to 
H^1(\pi_1^t(U), G(U^{tsc})) \to H^1(U, G).
$$
We say that a $G$-torsor $E$ over $U$ (resp.\ an  fppf sheaf $G$-torsor)
is a tame loop torsor if its class belongs to
$H^1_{\substack{tame \\ loop}}(U, G) \subset H^1(U, G)$.

A given class $\gamma \in H^1_{\substack{tame \\ loop}}(U, G)$ is represented
by a $1$--cocycle $\phi: \Gal(B_{D,n}/A_D) \to G(B)$ for some cover $B_{D,n}/A_D$ as above.
Its restriction $\phi^{ar}: \Gal(B'/A) \to G(B'_n)$ to the subgroup $\Gal(B'_n/A)$ of
$\Gal(B_{D,n}/A_D)$ is called the ``arithmetic part'' and the other restriction
$\phi^{geo}: \prod\limits_{i=1}^r \mu_n(B) \to \gG(B)$ is called the geometric part.
We observe in the sequel that $\phi^{geo}$ is  a $B$-group homomorphism.

Indeed for $\sigma \in \Gal(B/A)$ and $\tau \in 
\prod\limits_{i=1}^r \mu_n(B)$ the computation of \cite[page 16]{GP13}
shows that $\phi^{geo}(\sigma \tau \sigma^{-1})=
\phi^{ar}(\sigma) \, {^\sigma \!  \phi}(\tau)  \, 
\phi^{ar}(\sigma)^{-1}$ so that $\phi^{geo}$
 descends to a homomorphism of $A$-group schemes 
$\phi^{geo}: \mu_n^r \to {_{\phi^{ar}}\!G}$.  
Altogether this provides a parameterization of tame loop cocycles.

\begin{slemma} \label{lem_dico2}
Assume that the $A$--group scheme
$G$ is ind-quasi-affine and locally of finite presentation.

\sm

\noindent (1) For $B_{D,n}/A_D$ as above, the map $\phi \mapsto (\phi^{ar}, \phi^{geo})$
provides a bijection between $Z^1_{\substack{tame \\ loop}}\bigl( \Gal(B_{D,n}/A_D) ,  G(B) \bigr)  $ and the couples
$(z, \eta)$ where $z \in Z^1\bigl( \Gal(B/A) ,  G(B) )$ and 
$\eta: \prod\limits_{i=1}^r \mu_n \to {_zG}$
is an $A$--group homomorphism.

\smallskip

\noindent (2) The map $\phi \mapsto (\phi^{ar}, \phi^{geo})$
provides a bijection between $Z^1_{\substack{tame \\ loop}}\bigl( \pi^1(U, \xi)^t ,  G(\widetilde B)\bigr)$ and the couples
$(z, \eta)$ where $z \in Z^1\bigl( \pi^1(X, \xi) , G(\widetilde B) )$ and $\eta: \prod\limits_{i=1}^r \widehat \ZZ' \to {_zG}$ is an $A$--group homomorphism.

\smallskip

\noindent (3) We have an exact sequence of pointed sets
$$
1 \to \Hom_{A-gr}(  \mu_n^r,G )/G(A) \to H^1\bigl( \Gal(B_{D,n}/A) ,  G(B) )
\xrightarrow{Res}  H^1\bigl( \Gal(B/A) ,  G(B) )
\to 1
$$ 
and the first map is injective.

\smallskip

\noindent (4) We have a decomposition
$$
H^1\bigl( \Gal(B_{D,n}/A_D) ,  G(B) ) = \coprod\limits_{[z] \in H^1(A,G)}
 \Hom_{A-gr}(  \mu_n^r, {_zG} )/{_zG}(A).
$$ 

\noindent (5) Assume furthermore that $G$ is smooth.
Then the map $H^1(A,G) \to H^1(k,G)$ is bijective and
in the following commutative diagram

\begin{equation} \label{diag14}
\xymatrix{
H^1\bigl( \Gal(B_{D,n}/A_D) ,  G(B) ) \ar[d]& = & 
\coprod\limits_{[z] \in H^1(A,G)}
 \Hom_{A-gr}(  \mu_n^r, {_zG} )/{_zG}(A) \ar[d] \\
 H^1\bigl( \Gal(B_{D,n}/A_D) ,  G(l) ) &=&
 \coprod\limits_{[z] \in H^1(k,G)}
 \Hom_{k-gr}(  \mu_n^r, {_zG} )/{_zG}(k),
}
\end{equation}
then the vertical maps are  bijective.
\end{slemma}

The further  Lemma \ref{lem_hensel} deals with  variants
of Lemma \ref{lem_dico2}.(5).

\begin{proof} The items (1) to (4) are similar with Lemma \ref{lem_dico}. 
For (5), we recall that  $H^1(A,G) \to H^1(k,G)$ is bijective
\cite[XXIV, Proposition 8.1]{SGA3} (and \cite[Remarque 11.8.3 ]{Gr3}).
 It is enough to establish the bijectivity of the right 
vertical map. For each $z \in Z^1(A^{sh}/A,G)$, we consider
the $A$-functor $\cF=\underline{\Hom}_{gr}(  \mu_n^r, {_zG}  )$.

\begin{claim} $\cF$ is representable by an ind-quasi-affine $A$-scheme $X$ which is locally of finite presentation.
\end{claim}

If $\mu_{n,A} \cong (\ZZ/n\ZZ)_A$, then $\cF$ is representable 
by a closed $A$-subscheme $X$  of $({_zG})^r$.
Since $G^r$ is ind-quasi-affine over $A$, 
so is $X$ in view of \cite[Tag 0F1W]{St}.
The general case follows by descent\footnote{Again we use
that ind-quasi-affine schemes satisfies fpqc descent \cite[Tag  0APK]{St}.} with respect to the finite \'etale cover $A[\mu_n]$ of $A$. Then $\cF$ is representable
by an ind-quasi-affine $A$-scheme and is locally of finite presentation since it commutes with direct limits of $A$-rings.

According to Grothendieck \cite[XI, Proposition 2.1]{SGA3}, 
the functor   $\cF$  is  formally smooth, hence
$X$ is a smooth $A$-scheme.
Since 
The Hensel lemma implies then the surjectivity of
$X(A)=\cF(A) \to X(k)=\cF(k)$.

For establishing the injectivity we are given $u_1,u_2 \in \cF(A)$ whose image in $\cF(k)$ are $G(k)$-conjugated.
We consider the relevant transporter $A$-sheaf $\cT$
defined by 
$$
 \cT(T)= \bigl\{ g \in G(T) \, \mid \, g \circ u_{1,T}=g_{2,T} \bigr\}
$$
 for each $A$-scheme $T$. The arguments of the above Claim
 show that $\cT$ is representable by an ind-quasi-affine  $A$-scheme $Y$ locally of finite presentation. According to 
 \cite[XI, Corollaire 2.4]{SGA3}, $\cT$ is formally smooth.
 It follows that the $A$--scheme $Y$ is smooth. 
 The Hensel's lemma states that the map $Y(A) \to Y(k)$ is onto. Since $Y(k)$ is not empty, $Y(A)$ is not empty.
 We conclude that $u_1$ and $u_2$ are $G(A)$-conjugated.
\end{proof}

\begin{sremark}\label{rem_functoriality}{\rm
The functoriality of the above correspondence is as follows.

\sm

\noindent (a) For the base change $A_D \to A'_D$ where $A'$ is a finite \'etale cover, 
we  have $B \otimes_A A'\cong (B')^c$ where $B'$ is finite Galois over $A'$.
Then the map $\Gal(B'/A') \hookrightarrow \Gal(B/A)$ induces
 a map  $\Gal(B'_{D,n}/A'_D) \hookrightarrow \Gal(B_{D,n}/A_D)$
 and the associated map \break $H^1(\Gal(B_{D,n}/A_D), G(B)) \to H^1(\Gal(B'_{D,n}/A'_D), G(B'))$ maps $[(z, \phi)]$ to $[(z', \phi_{A'})]$ where $z'$ is the image of $z$
 by the map $Z^1(\Gal(B/A), G(B)) \to H^1(\Gal(B'/A), G(B'))$.

\sm

\noindent (b)  For the base change $A_D \to A_{D,c}$ for $c$ prime to $p$,
the natural map $\Gal(B_{D,nc}/A_{c, D})  \to \Gal(B_{D,nc}/A_{D}) \to  \Gal(B_{D,n}/A_D)$ is induced by $\mu_{n}^r \hookrightarrow \mu_{nc}^r \xrightarrow{\times c}  \mu_{n}^r$.
It follows that the associated map $H^1(\Gal(B_{D,n}/A_{D}), G(B)) \to H^1(\Gal(B_{D,nc}/A_{D,c}), G(B_c))$ maps $[(z, \phi)]$ to $[(z_c, \phi_c)]$
where $z_c$ is the image of $z$
 by the map $Z^1(\Gal(B/A), G(B)) \to H^1(\Gal(B_c/A_c), G(B_c))$
 and $\phi_c= c \times \phi: \mu_n^r \to {_zG}$.

\sm

\noindent (c) In particular, if $n$ divides $c$, the image of
$[(z,\phi])$ in $H^1(\Gal(B_c/A_c), G(B_c))/ H^1(\Gal(B/A), G(B))$ is trivial.

}
\end{sremark}
We examine more closely the case of  a finite \'etale $X$--group scheme  $\gF$ of constant degree $m$.

\begin{slemma}\label{lem_finite} (1)  $\gF(\widetilde B)= \gF(X^{tsc})= \gF( U^{tsc})$.

\sm

\noindent (2) We assume that $m$ is prime to $p$.
We have $H^1_{\substack{tame \\ loop}}( U, \gF)= H^1(U, \gF)$.

\smallskip

\noindent (3) We assume that $m$ is prime to $p$.
Let $f: \gF \to \gH$ be a homomorphism of 
$A$--group schemes (locally of finite type).
Then $f_*\Bigl( H^1(U, \gF) \Bigr) \subset 
H^1_{\substack{tame \\ loop}}( U, \gH)$.
\end{slemma}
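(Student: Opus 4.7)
The plan rests on the observation that, because $A$ is henselian local, a finite \'etale $X$--group scheme is controlled by its special fibre, so a single finite \'etale Galois cover of $A$ already splits $\gF$. Concretely, by the equivalence of categories of \cite[18.5.15]{EGA4} there exists a connected finite \'etale Galois cover $B_0/A$ (which may be assumed to occur in the direct system defining $\widetilde B$) such that $\gF\times_X B_0\cong F_{B_0}$ is the constant $B_0$--group scheme associated to the finite set $F=\gF(B_0)$ of cardinality $d$.

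\smallskip

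\noindent\textbf{(1)}\enskip The schemes $\widetilde B$, $X^{tsc}$, and $U^{tsc}$ are all connected and each of them admits a natural $B_0$--algebra structure: for $\widetilde B$ this is tautological, for $X^{tsc}$ it is because $B_0/A$ is an intermediate finite \'etale cover of $X$, and for $U^{tsc}$ it is because $B_0\otimes_A A_D$ is a finite \'etale (automatically tame) cover of $U$, hence dominated by the universal tame cover. For any connected $B_0$--algebra $Y$ we have $\gF(Y)=F_Y(Y)=F$, which yields the three claimed equalities.

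\smallskip

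\noindent\textbf{(2)}\enskip Let $E\to U$ be a sheaf $\gF$--torsor. Since $\gF$ is finite \'etale of degree $d$, so is $E\to U$. As $d$ is prime to $p$, the cover $E\to U$ is automatically tame with respect to $D$, for the inertia groups along the $D_i$ inject into a group of order $d$. Hence $E$ is dominated by $U^{tsc}$, so it is classified by a $1$--cocycle $\phi\in Z^1(\pi_1^t(U,\xi),\gF(U^{tsc}))$. By part (1), $\gF(U^{tsc})=\gF(\widetilde B)$, so $\phi$ is by definition a tame-loop cocycle; hence $[E]\in H^1_{\substack{tame\\ loop}}(U,\gF)$. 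The reverse inclusion is the definition, whence equality.

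\smallskip

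\noindent\textbf{(3)}\enskip Given $[E]\in H^1(U,\gF)$, apply (2) to represent it by a tame-loop cocycle $\phi:\pi_1^t(U,\xi)\to \gF(\widetilde B)$. Since $f$ is an $A$--morphism of group schemes, it induces $f_{\widetilde B}:\gF(\widetilde B)\to \gH(\widetilde B)$, and $f\circ\phi$ is a cocycle in $Z^1(\pi_1^t(U,\xi),\gH(\widetilde B))$, i.e.\ a tame-loop cocycle for $\gH$. Its image in $H^1(U,\gH)$ is by construction $f_*[E]$, proving $f_*(H^1(U,\gF))\subset H^1_{\substack{tame\\ loop}}(U,\gH)$.

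\smallskip

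The main subtlety is (1): one has to recognise that although $\widetilde B$, $X^{tsc}$, and $U^{tsc}$ live over different bases and involve a priori different Galois actions, the fact that $\gF$ is defined and trivialized over $X$ by a \emph{finite} cover $B_0$ collapses all three sets of sections to the common finite set $F=\gF(B_0)$. Once this is secured, parts (2) and (3) are formal: the hypothesis $(d,p)=1$ is used only to force automatic tameness of $\gF$--torsors.
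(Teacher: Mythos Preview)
Your proof is correct and follows essentially the same approach as the paper. The only variation is in (2): the paper decomposes $\gE$ into connected components $V_1 \sqcup \cdots \sqcup V_l$, picks one $V_1$ whose degree $d_1$ is prime to $p$ (such a component exists since $\sum d_i = d$), and observes that $V_1$ is then tame and dominated by $U^{tsc}$, so that $\gE(U^{tsc}) \supset \gE(V_1) \neq \emptyset$; you instead argue directly that the whole torsor $E$ is tame. Your justification there (``the inertia groups along the $D_i$ inject into a group of order $d$'') is a shorthand that deserves one more line: since $\gF$ extends over $X$, the wild inertia $P_i$ at each $D_i$ acts trivially on $F = \gF_{\bar u}$, hence maps into $\Aut_F(E_{\bar u}) \cong F$, and being pro-$p$ with $|F| = d$ prime to $p$ it must act trivially on $E_{\bar u}$, so the monodromy of $E$ factors through $\pi_1^t(U)$.
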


\begin{proof} 
\noindent (1) We are given a cover $B_{D,n}/A_D$  as above  such that $\gF_{B_{D,n}} \cong  \Gamma_{B_{D,n}}$ is finite constant.
 Since $B$ and $B_{D,n}$ are connected, the map $\gF(B) \to \gF(B_{D,n})$ reads
as the identity $\Gamma \cong \gF(B) \to \gF(B_{D,n}) \cong \Gamma$ so is bijective.
By passing to the limit we get  $\gF(\widetilde B)= \gF( U^{tsc})$.

\sm

\noindent (2) Let $\gE$ be a $\gF$--torsor over $U$. 
This is a finite \'etale $U$--scheme.
Since $U$ is noetherian and connected, 
we have a decomposition $\gE= V_1 \coprod \cdots \coprod V_l$ where each 
$V_i$ is a connected finite \'etale $U$--scheme of constant degree $m_i$.
We have $m_1+ \dots +m_l=d$ so that we can assume that $m_1$ is prime to $p$.
We have then $\gE(V_1) \not = \emptyset$. 

It follows that $f_1: V_1 \to U$ is a finite \'etale cover so that 
there exists a  factorization $ U^{tsc} \to V_1 \xrightarrow{h} U$ of $f$
so that $\gE(U^{stc} ) \not = \emptyset$. Therefore $[\gE]$ arises
from $H^1(\pi_1^t(U, \xi), \gF(U^{tsc})) \subset H^1(U, \gF)$.
It follows that $H^1(\pi_1^t(U, \xi), \gF(U^{tsc})) \simlgr H^1(U, \gF)$.
We use now (1) and obtain the desired bijection  $H^1(\pi_1^t(U, \xi), \gF(B)) \simlgr H^1(U, \gF)$.

\sm

\noindent (3) This follows readily from (2).
\end{proof}

A variant is to work with $B_n$ instead $B$, it does not provide new objects (under reasonable assumptions).

\begin{slemma} \label{lem_hensel}
Let $G$ be a smooth  $A$--group scheme.
Let $n$ be a prime to $p$ positive integer
such that $\mu_{n,B} \cong (\ZZ/n\ZZ)_B$
 and put $l_{r,n}=B_{r,n} \otimes_B l$.  We consider the commutative diagram 

\begin{equation}\label{diag_bij} 
\xymatrix{
H^1(\Gal(B_{D,n}/A_D), G(B)) 
 \ar[d]^\nu \ar[r]^{\alpha} 
& H^1(\Gal(B_{D,n}/A_D), G(B_{r,n})) \ar[d]^\gamma \\
 H^1(\Gal(B_{D,n}/A_D), G(l)) 
 \ar[r]^{\beta} & H^1(\Gal(B_{D,n}/A_D), G(l_{r,n}))
}
\end{equation}
induced by commutative diagram of $\Gal(B_{D,n}/A_D)$-maps
\[\xymatrix{
B \ar[d]\ar[r] & B_{r,n} \ar[d] \\
 l  \ar[r] & l_{r,n}.
}
\]

\smallskip

\noindent (1) The map $\beta$ is bijective.

\smallskip

\noindent (2) Suppose that
 the formal fibers of $A$ are geometrically regular  (e.g.\ $A$ is quasi-excellent), then all maps
of \eqref{diag_bij} are  bijective.

\end{slemma}

The proof involves two auxiliary lemmas which slightly generalize \cite[Lemma A.1]{Gi18}.

\begin{slemma}\label{lem_limit0} Let $\Theta$ be an abstract group and let $H= \limproj_{i \geq 0} H_i$ be an inverse limit of $\Theta$-groups with surjective transition maps. Then we have a natural bijection
$$
H^1(\Theta, H) \simlgr  \limproj_{i \geq 0} H^1(\Theta, H_i).
$$ 
\end{slemma}

\begin{proof}
For the injectivity, the usual twisting argument 
boils down to establish the triviality of the kernel.
It goes by a classical argument of 
successive approximations (e.g.\ \cite[Lemma 6.3.2]{GS} in the commutative case).
Let $E$ be a $H$-torsor such that $[E]$ belongs to the kernel
of 
$H^1(\Theta, H) \simlgr  \limproj_{i \geq 0} H^1(\Theta, H_i)$. We denote by $E_i=E \wedge^{H} H_i$ its
change of group with respect to the homomorphism $H \to H_i$.
Then the map $E \to \limproj_i E_i$ is an isomorphism so that 
$E^\Theta \simlgr  \limproj_i E_i^\Theta$.
Each $E_i$ is a trivial $H$-torsor so that 
 $E_{i+1}^\Theta \to E_i^\Theta$ is onto for each $i \geq 1$.
Therefore $E^\Theta$ is non empty so $E$ is a trivial $H$-torsor.

We deal now with the surjectivity and we shall use that
the map $Z^1(\Theta, H) \to \limproj_{i \geq 0} 
Z^1(\Theta, H_i)$ is bijective. 
 We are given a coherent 
system system of classes $\gamma=(\gamma_i)$
with $\gamma_i \in H^1(\Theta, H_i)$.
For each $i$ we denote by $\Upsilon_i \subseteq Z^1(\Theta, H_i)$ the set of $1$-cocycles representing $\alpha_i$.
Since $\Upsilon_i$ is homogeneous for the action of 
$H_i$, the transition map  $\Upsilon_{i+1} \to \Upsilon_i$
is onto. It follows that $\limproj_{i \geq 0}  \Upsilon_i$ is not empty. Picking an element in the limit provides
a $1$-cocycle in $Z^1(\Theta,H)$ whose class maps to 
$\gamma_i \in H^1(\Theta,H_i)$ for each $i \geq 0$. 
\end{proof}

\begin{slemma}\label{lem_limit}
Let $F$ be a field equipped with an action of a finite group
$\Theta$, that is, a homomorphism $\theta: \Theta \to \Aut(F)$.
We assume that the order of $\ker(\theta)$ is coprime  
to the characteristic exponent of $F$.

\sm

\noindent (1) Let $V$ be a $F$-vector space equipped with an action 
of $\Theta$  which is semilinear with respect with $\theta$.
Then $H^j(\Theta, V)=0$ for each $j \geq 1$.

\sm

\noindent (2)
Let $U$ be a $\Theta$--group equipped with a $\Theta$-equivariant composition serie
$$
U=U_0 \supset U_1 \supset U_2 \supset \dots  
$$
satisfying the following assumptions

\sm

(i) $U \simlgr \limproj_{i \geq 0} U/U_i$;

\sm

(ii) Each $U_n/U_{i+1}$ admits a structure of $F$-vector space
such that the action of $\Theta$ is semilinear with respect with $\theta$.

\sm

\noindent Then  $H^1(\Theta, U)=1$.

\end{slemma}

\begin{proof} We denote by $\Theta_0$ the image of $\theta$
and put $F_0=F^{\Theta_0}$. Then $F$ is a Galois extension of 
$F_0$ with Galois group $\Theta_0$  \cite[V, \S 6, Theorem 3]{BA}. and we have an exact sequence
$1 \to \ker(\theta) \to \Theta \to \Theta_0 \to 1$.

\sm

\noindent (1)  We use the Hochschild-Serre spectral sequence
$H^p(\Theta_0, H^q(\ker(\theta), V)) \Longrightarrow H^{p+q}(\Theta, V)$.
Since $\sharp(\ker(\theta))$ is coprime to  the characteristic exponent 
of $F$, we have $H^j( \ker(\theta), V)=0$ for each $j \geq 1$.
The spectral sequence provides then isomorphisms
$H^j( \Theta_0, V^{\ker(\theta)}) \simlgr H^j( \Theta, V)$
for each $j \geq 1$. On the other hand, We observe that $V^{\ker(\theta)}$ is a $F$-vector space equipped with 
a semilinear action of $\Theta_0$; Galois descent states that 
there is a  canonical $\Theta_0$-isomorphism  $V_0 \otimes_{F_0} F \simlgr V^{\ker(\theta)} $
where $V_0=\bigl( V^{\ker(\theta)} \bigr)^{\Theta_0}=V^\Theta$ \cite[V, \S 4]{BA}.
It follows that 
$H^j( \Theta_0, V^{\ker(\theta)})=  H^j( \Theta_0, V_0 \otimes_{F_0} F)$.

The additive form of Hilbert 90 theorem \cite[X, \S 1, Proposition 1]{Se2} yields that
$H^j(\Theta_0, V_0 \otimes_{F_0} F)=0$ for $j \geq 1$. We conclude that 
$0=H^j( \Theta_0, V^{\ker(\theta)})=H^j(\Theta,V)$ for $j \geq 1$ as desired.
 
 \sm

\noindent (2)  By d\'evissage of the first case, we have $H^1(\Theta, U/U_i)=1$
for all $i \geq 0$. The vanishing of  $H^1(\Theta, U)=1$ follows then of Lemma \ref{lem_limit0}.
\end{proof}

We proceed now to the proof of Lemma \ref{lem_hensel}.

\begin{proof} We put $\Gamma=\Gal(B_{D,n}/A_D)=\mu_n(B)^r \rtimes \Gal(B/A)$.

\smallskip

\noindent (1) 
Since the map $l \to l_{r,n}$ admits a $\Gamma$-equivariant splitting, the 
bottom map  \break $\beta: H^1(\Gamma, G(l)) \to  H^1(\Gamma, G(l_{r,n}))$ is split injective.
For establishing the bijectivity, the usual twisting trick reduces to see that the kernel of $H^1(\Gamma, ({_\phi G)}(l_{r,n})) \to H^1(\Gamma, {_\phi G}(l))$ is trivial
for each cocycle $\phi \in Z^1(\Gamma, G(B))$. 

Since $G$ is smooth, according to \cite[Proposition A.5.12]{CGP}, we have an exact sequence of $\Gamma$-groups
$$
1 \to \Lie(G)(l)^d \to  G(l_{r,n}) \to G(l) \to 1.
$$
Furthermore the action of $\Gamma$ on
 $\Lie(G)^r(l)$ is semilinear with respect to the projection $\Gamma \to \Gal(l/k)$.
Lemma \ref{lem_limit}.(1) shows that 
$H^1\bigl(\Gamma, \Lie(G)^r(l)\bigr)=1$ so that 
the map $H^1(\Gamma, G(l_{r,n}))  \to H^1(\Gamma,G(l))$
has trivial kernel.

\smallskip

\noindent (2)  We deal first with the case $A$  complete
(and   so is $B$). We have  $B = \limproj_{i \geq 0} B/ \gm^{i+1}$ so that we have a bijection $G(B) \simlgr \limproj_{i \geq 0} \, G( B/ \gm^{i+1})$ 
\cite[XV, Lemme 1.4]{SGA3}.  We keep in mind that the transition maps of the projective system are projective granted to the Hensel lemma.

\begin{claim} The map 
$H^1(\Gamma, G( B/ \gm^{i+1})) \to 
H^1(\Gamma, G( B/ \gm^{i}))$ is bijective for each $i \geq 1$.
\end{claim}

Lemma \ref{lem_limit0} implies then that the map $\nu$
is bijective. For showing the Claim, we fix an integer $i \geq 1$ and consider the exact sequence of $\Gamma$-groups 
\cite[Proposition A.5.12]{CGP}
 $$
 0 \to \Lie(G)^n(l) \to G(B/\gm^{i+1}) \to  G(B/\gm^{i}) \to 0
 $$
Again the action of $\Gamma$ on $\Lie(G)^n(l)$ is by 
semilinear automorphisms with respect to the projection
$\Gamma \to \Gal(l/k)$.
Lemma \ref{lem_limit}.(1) shows that for each 
$\phi \in Z^1(\Gamma, G(B/\gm^{i})$, we 
have $H^1(\Gamma, {_\phi(\Lie(G)^n(l)})=0$ and 
$H^2(\Gamma, {_\phi(\Lie(G)^n(l)})=0$. 

This implies the surjectivity part of the Claim in view of  \cite[I, \S 5.6, proposition 41]{Se1}, the injectivity 
resulting again of the twisting argument ({\it ibid}, \S 5.4).

For establising the bijectivity of the map $\gamma$, it is enough to prove that the map $\gamma^\sharp:
H^1(\Gamma, G(B_{r,n})) \to H^1(\Gamma, G(l_{r,n}))$
is bijective since the bottom map $\beta$ is bijective.
The argument is the same as before by using 
that $B_{r,n}= \limproj_{i \geq 0} B_{r,n}/ \gn^{i+1}$
where $\gn= (\sqrt[n]{f_1}, \dots, \sqrt[n]{f_r}, f_{r+1},  \dots, f_d)$ is the maximal ideal of   $B_{r,n}$.

The case when $A$ is complete is then done. It remains to deal with general case 
by considering the completion $\widehat A$ of $A$. Then 
$(f_1,\dots, f_n)$ is a regular system of parameters
of the noetherian regular local ring $\widehat A$.
We shall use an Artin-Popescu's argument 
from Bouthier-\v{C}esnavi\v{c}ius \cite[Lemma 2.1.3]{BC}.
We consider the $A$-functor $\cF(R)= H^1(\Gamma, G( R \otimes_B))$ 
on $A$-algebras, it clearly commutes with direct limits.
The quoted result states that 
$\cF(A)$ injects in $\cF(\widehat A)$ and
that the maps  $\cF(A) \to \cF(k)$ 
and $\cF(\widehat A) \to \cF(k)$
have same image.
Since $\cF(\widehat A)  \simlgr \cF(k)$,
we conclude that $\cF(A) \simlgr \cF(k)$; in other words
$\nu$ is bijective.

The same argument shows that $\gamma^\sharp:
H^1(\Gamma, G(B_{r,n}) \to H^1(\Gamma, G(l_{r,n}))$
is bijective.  
\end{proof}

\subsection{Specializing purely geometric tame loop cocycles}

Let  $\phi: \Gal(B_{D,n}/A_D) \to G(B)$ be a  purely geometrical
loop cocycle and consider the underlying 
$A$--homomorphism $\phi^{geo}: (\mu_{n,A})^r \to G$
and its restriction to the diagonal $\mu_n$.
We consider the $A$--group scheme
${G_{\Delta_\phi}=G^{\phi^{geo}(\mu_n)}/\phi^{geo}(\mu_n)}$ 
which is a central quotient of 
the fixed point locus of the diagonal $\mu_n$.
Then $\phi^{geo}$ induces a homomorphism $(\mu_{n,A})^r/\mu_{n,A} \to  G_{\Delta_\phi}$.
By taking the closed fiber we get a 
  homomorphism $\mu_{n,k}^r/\mu_{n,k} \to G_{\Delta_\phi,k}$
 whence a purely geometrical tame loop cocycle 
$\underline{\phi}:  \Gal\bigl( \underline{B_{D,n}} /\underline{A_D}\bigr)
\to G_{\Delta_\phi}(l)$. We call it the specialization 
of $\phi$.

\begin{sremark}{\rm This notion is quite ad-hoc since
$G_{\Delta_\phi}$ depends highly of $\phi$.}
\end{sremark}

\begin{slemma} \label{lem_open} 
The monomorphism $G^{\phi^{geo}}/\phi^{geo}(\mu_n) \to
 G_{\Delta_\phi}= G^{\phi^{geo}(\mu_n)}/\phi^{geo}(\mu_n)$
 induces an open immersion of $A$--group schemes
$G^{\phi^{geo}}/\phi^{geo}(\mu_n)
 \to \bigl(G_{\Delta_\phi}\bigr)^{\phi^{geo}}$.
\end{slemma}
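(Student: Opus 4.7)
Set $T := \phi^{geo}(\mu_n^r)$ and $Z := \phi^{geo}(\mu_n) \subset T$; both are finite $A$-subgroup schemes of $G$ of multiplicative type, of order prime to $p$. In this notation $G^{\phi^{geo}} = C_G(T)$, $G^{\phi^{geo}(\mu_n)} = C_G(Z)$ and $G_{\Delta_\phi} = C_G(Z)/Z$. Since $T$ is abelian, $T \subset C_G(Z)$, and the induced homomorphism $\bar\phi^{geo}\colon \mu_n^r/\mu_n \to G_{\Delta_\phi}$ has image $T/Z$, so $(G_{\Delta_\phi})^{\phi^{geo}}$ is the centralizer $C_{G_{\Delta_\phi}}(T/Z)$. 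Writing $q\colon C_G(Z) \to G_{\Delta_\phi}$ for the quotient, I would introduce $M := q^{-1}\!\bigl(C_{G_{\Delta_\phi}}(T/Z)\bigr)$, so that $M/Z = (G_{\Delta_\phi})^{\phi^{geo}}$; on the functor of points $M(S) = \{\, g \in C_G(Z)(S) \mid [g,t] \in Z_S \text{ for all } t \in T_S\,\}$. Clearly $C_G(T) \subset M$ and $Z \subset C_G(T)$, and the map of the statement is the resulting $C_G(T)/Z \to M/Z$.

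The heart of the argument is to show that the inclusion $C_G(T) \hookrightarrow M$ is already an open immersion. For this I plan to construct the $A$-group morphism
\[
\rho \colon M \longrightarrow \underline{\Hom}_{\mathrm{gr}}(T/Z,\, Z), \qquad g \longmapsto \bigl(\bar t \mapsto [g,t]\bigr).
\]
A short commutator manipulation, using that $Z$ is central in the abelian group $T$ and that $g$ centralizes $Z$, shows that $t \mapsto [g,t]$ is a group homomorphism $T \to Z$ which vanishes on $Z$ and therefore factors through $T/Z$; functoriality of $\rho$ in $g$ is immediate. By construction $\ker \rho = C_G(T) = G^{\phi^{geo}}$.

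The target $\underline{\Hom}_{\mathrm{gr}}(T/Z,\, Z)$ is a finite \'etale $A$-group scheme: by Cartier duality it is isomorphic to $\underline{\Hom}_{\mathrm{gr}}(Z^\vee, (T/Z)^\vee)$, whose Cartier duals are finite \'etale $A$-groups of order prime to $p$, and the Hom-scheme between such is locally constant \'etale. In particular its unit section is an open immersion, and hence the pullback $\ker \rho = C_G(T) \hookrightarrow M$ is likewise an open immersion.

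Finally, $Z$ lies in $C_G(T)$ and acts freely by left translation on both $C_G(T)$ and $M$. In the cartesian square
\[
\xymatrix{
C_G(T) \ar[r] \ar[d] & M \ar[d] \\
C_G(T)/Z \ar[r] & M/Z
}
\]
the vertical arrows are fppf $Z$-torsors, and open immersions descend under fpqc morphisms; thus the bottom row is an open immersion, yielding the statement. The one step requiring care is the representability and \'etaleness of $\underline{\Hom}_{\mathrm{gr}}(T/Z, Z)$ over the regular henselian base $A$ (via Cartier duality together with $n$ being invertible); the rest is a direct commutator calculation and a standard fpqc-descent argument for open immersions.
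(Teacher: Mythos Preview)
Your argument is correct and takes a genuinely different route from the paper's. The paper proceeds fiberwise via \cite[17.9.5]{EGA4}: since both source and target are smooth over $A$ (as fixed-point loci of multiplicative-type actions on smooth groups, then quotients by the \'etale group $\phi^{geo}(\mu_n)$), it suffices to check the map is an open immersion on each geometric fiber, and there the injectivity on Lie algebras is enough. The paper then unwinds both Lie algebras using the identification $\Lie(H^\mu) = \Lie(H)^\mu$ from \cite[A.8.10.(1)]{CGP} twice, reducing to the tautology $\Lie(G)^{\phi^{geo}} = \bigl(\Lie(G)^{\phi^{geo}(\mu_n)}\bigr)^{\phi^{geo}}$.

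Your approach is more explicit and more elementary: you exhibit the obstruction to lying in $C_G(T)$ inside $M$ as a commutator homomorphism $\rho\colon M \to \underline{\Hom}_{\mathrm{gr}}(T/Z,Z)$ with kernel exactly $C_G(T)$, observe that the target is finite \'etale (so its unit section is clopen), and then descend the resulting open immersion $C_G(T)\hookrightarrow M$ along the $Z$-torsor $M\to M/Z$. This avoids both the fibral criterion and the Lie algebra computations, and in particular does not use smoothness of the groups involved. The trade-off is that you must verify by hand the commutator identities making $\rho$ a well-defined group homomorphism (which you do) and justify the \'etaleness of the Hom-scheme; your Cartier duality argument for the latter is fine, though one could equally argue directly that $T/Z$ and $Z$, being quotients of the \'etale groups $\mu_n^r$ and $\mu_n$ (with $n$ invertible) by multiplicative-type kernels, are themselves finite \'etale, whence so is the Hom-scheme between them.
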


\begin{proof} To show that the induced map 
$h: G^{\phi^{geo}}/\phi^{geo}(\mu_n)
 \to \bigl(G_{\Delta_\phi}\bigr)^{\phi^{geo}}$
is an open immersion, we can reason fiberwise according to \cite[Corollaire $_4$.17.9.5]{EGA4}. We are reduced to the case of
an algebraically closed $A$-field $E$ and since the
two $E$--groups  are smooth, it is enough to 
show that the (injective) map 

\begin{equation} \label{eq_lie}
\Lie(h)(E): \Lie\Bigl( G^{\phi^{geo}}/\phi^{geo}(\mu_n)\Bigr)(E) \to  \Lie\Bigl( \bigl(G_{\Delta_\phi}\bigr)^{\phi^{geo}}\Bigr)(E) 
\end{equation}
is an isomorphism. Since $\mu_n$ is \'etale, 
we have $\Lie\Bigl( G^{\phi^{geo}}\Bigr) 
\simlgr \Lie\Bigl( G^{\phi^{geo}}/\phi^{geo}(\mu_n)\Bigr)$.
On the other hand, we have 
$$
\Lie\Bigl( \bigl(G_{\Delta_\phi}\bigr)^{\phi^{geo}}\Bigr)(E)
\simlgr  
\Bigl(\Lie\bigl(G_{\Delta_\phi}\bigr)(E)\Bigr)^{\phi^{geo}}
$$
according to \cite[A.8.10.(1)]{CGP}.
Similarly we have 
$\Lie\Bigl( G^{\phi^{geo}(\mu_n)}\Bigr) 
\simlgr \Lie \bigl(G_{\Delta_\phi}\bigr)$ so 
altogether we are then reduced to show 
(for showing the bijectivity of \eqref{eq_lie}) that the map 
$$
 \Lie\Bigl( G^{\phi^{geo}}\Bigr)(E) \to  \Bigl(\Lie \bigl(G\bigr)(E)\Bigr)^{\phi^{geo}} 
$$
is an isomorphism. This last fact follows again from 
\cite[A.8.10.(1)]{CGP} applied to the smooth $E$--group 
scheme $G^{\phi^{geo}(\mu_n)}$.
\end{proof}

\subsection{Twisting by tame loop torsors}
We assume that the $A$--group scheme $G$ acts 
on an $A$--scheme $Z$.
Let $\phi: (\prod\limits_{i=1}^r \mu_n)(B) \rtimes \Gal(B/A) \to G(B)$
be a tame loop cocycle.
It gives rise to  an $A$--action  of
 $\mu_n^r$ on $_{\phi^{ar}}\!Z$. 
We denote by $(_{\phi^{ar}}\!Z)^{\phi^{geo}}$
the fixed point locus for this action, it is representable
by a closed $A$--subscheme of $_{\phi^{ar}}\!Z$ \cite[A.8.10.(1)]{CGP}. We have a closed embedding  $(_{\phi^{ar}}\!Z)^{\phi^{geo}} \times_X U  \subset {_\phi Z}$ of
$U$-schemes.

Assume that furthermore that $\phi$ is purely geometrical.
Then $\phi$ induces an action of  $\mu_n^r/\mu_n$ 
on $Z_{\delta_\phi}:=Z^{\phi^{geo}(\mu_n)}$.
We have also an action of  $G_{\Delta_\phi}$
on $Z_{\delta_\phi}$.
By taking the closed fiber we obtain an action 
of $G_{\Delta_\phi, k}$ on $Z_{\delta_\phi,k}$
so we can twist $Z_{\delta_\phi,k}$ by $\underline{\phi}$
for obtaining the $\underline{A_D}$--scheme
${_{\underline \phi}\!\bigl(Z_{\delta_\phi,k} \times_{k} \underline{A_D}\bigr)}$.

\section{Fixed points method}
  
We slightly refine  \cite[Theorem 3.1]{Gi24}.

\begin{stheorem}\label{thm_fix}
Let $X=\Spec(A)$ be a henselian regular local scheme  as above.
We denote by $v: K^\times \to \ZZ$
the discrete valuation associated  to the
exceptional divisor $E$ of the blow-up of $X$ at its closed point.

Let $G$ be an $A$-group scheme  locally of finite presentation
acting on a projective smooth $A$--scheme $Z$.
Let $\phi$ be a tame loop cocycle for $G$. Then 
$Y=  \bigl( _{\phi^{ar}}Z\bigr)^{\phi^{geo}}$ is  a
smooth projective $A$--scheme and  
the following are equivalent:

\sm

(i) $Y(A) \not \not = \emptyset$;

\sm

(i') $Y(k) \not  = \emptyset$;

\sm 

(ii)  $Y(A_D) \not \not = \emptyset$;

\sm 

(ii')  $Y(\underline{A_D}) \not  = \emptyset$.

\sm 

(iii) $({_\phi Z})(A_D) \not = \emptyset$;

\sm 

(iv) $({_\phi Z})(K_v) \not = \emptyset$;

\sm

If furthermore  $\phi$ is purely geometrical this is also 
equivalent to the two next assertions

\sm 

(iii') $(_{\underline \phi}\!Z_{\delta_\phi,k})( \underline{A_D}) \not = \emptyset$;

\sm 

(iv') $(_{\underline \phi}\!Z_{\delta_\phi,k})\bigl( k(t_1, \dots, t_{r-1}, t_r, \dots, t_{d-1}) \bigr)\not = \emptyset$.

\end{stheorem}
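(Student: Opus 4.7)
The scheme $Y=({_{\phi^{ar}}Z})^{\phi^{geo}}$ is projective, being closed in the projective $A$-scheme ${_{\phi^{ar}}Z}$, and is smooth because $\phi^{geo}$ acts through the diagonalisable group $\mu_n^r$ so one can apply \cite[A.8.10.(1)]{CGP} as in Lemma \ref{lem_open}. The torsion argument $G \leadsto {_{\phi^{ar}}G}$, $Z \leadsto {_{\phi^{ar}}Z}$ reduces us to the case where $\phi$ is purely geometric, in which case $\phi^{ar}=1$, $Y=Z^{\phi^{geo}}$, and the assertions (iii') and (iv') are always available. The inclusion $Y\times_A A_D \subset {_\phi Z}$ recorded in \S\ref{subsec_blow_up}, together with $A_D \subset K \subset K_v$, yields (i)$\Rightarrow$(ii)$\Rightarrow$(iii)$\Rightarrow$(iv). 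Analogously, its Laurent counterpart $Y_k\times_k \underline{A_D} \subset {_{\underline\phi}Z_{\delta_\phi,k}}$ combined with $\underline{A_D}\subset k(t_1,\dots,t_{r-1})$ gives (i')$\Rightarrow$(ii')$\Rightarrow$(iv') and (i')$\Rightarrow$(iii')$\Rightarrow$(iv'); the evaluation $\underline{A_D}\to k$ at $(1,\dots,1)\in\GG_m^{r-1}(k)$ provides (ii')$\Rightarrow$(i'); and (i')$\Rightarrow$(i) follows by lifting $k$-points through the smooth $A$-scheme $Y$ via the henselianness of $A$.

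\textbf{Reduction to the Laurent case.} The equivalences (i')$\Leftrightarrow$(ii')$\Leftrightarrow$(iii')$\Leftrightarrow$(iv') are an application of Theorem \ref{thm_GP_fix}: the cocycle $\underline\phi$ is a purely geometric tame-loop cocycle for the $k$-group $G_{\Delta_\phi,k}$ acting on the projective smooth $k$-scheme $Z_{\delta_\phi,k}$ over the Laurent ring $\underline{A_D}\cong R_{r-1}$. The associated fixed point scheme there is $Z_{\delta_\phi,k}^{\underline\phi^{geo}}$, which coincides with $Y_k$ since the $\mu_n^r$-fixed locus can be computed first for the diagonal $\mu_n$ (producing $Z_{\delta_\phi,k}$) and then for the quotient $\mu_n^r/\mu_n$ acting via $\underline\phi^{geo}$. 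The remaining task is therefore to establish (iv)$\Rightarrow$(iv').

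\textbf{Main step: (iv)$\Rightarrow$(iv').} Since $Z$ is projective, the twist ${_\phi Z}$ is a projective $A_D$-scheme by fpqc descent along $B_n/A_D$. Given $\zeta \in ({_\phi Z})(K_v)$, the valuative criterion of properness extends $\zeta$ to a point $\widetilde\zeta \in ({_\phi Z})(\cO_{K_v})$ which specialises to $\bar\zeta \in ({_\phi Z})(F_v)$, where $F_v=k(t_1,\dots,t_{r-1})$ is the residue field of $\cO_{K_v}$. To identify $\bar\zeta$ as an $F_v$-point of ${_{\underline\phi}Z_{\delta_\phi,k}}$, base change along the ramified extension $\cO_{L_{w_n}}/\cO_{K_v}$ of \S\ref{subsec_blow_up}: the point $\widetilde\zeta$ lifts to an $\cO_{L_{w_n}}$-point $\widetilde\zeta'$ of $Z$ whose reduction modulo the uniformiser lies in $Z\bigl(l(t_1^{1/n},\dots, t_{r-1}^{1/n})\bigr)$. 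By the Galois-equivariant exact sequence \eqref{eq_sequence_Galois}, the diagonal subgroup $\Delta(\mu_n(l))$ is precisely the inertia of $L_{w_n}/K_v$; consequently it acts trivially on the residue field but twistedly via $\phi^{geo}(\mu_n)$ on the $Z$-factor, forcing the reduction to land in $Z^{\phi^{geo}(\mu_n)}=Z_{\delta_\phi}$. The residual Galois group $\Gal(\uB_n/\underline{A_D})$ then acts through $\underline\phi$, and Galois descent produces the required $F_v$-point of ${_{\underline\phi}Z_{\delta_\phi,k}}$. This closes the cycle (iv)$\Rightarrow$(iv')$\Rightarrow$(i')$\Rightarrow$(i)$\Rightarrow$(ii)$\Rightarrow$(iii)$\Rightarrow$(iv).

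\textbf{Main obstacle.} The delicate point is this last ramification bookkeeping: showing that the inertia of $L_{w_n}/K_v$ is exactly the diagonal subgroup appearing in \eqref{eq_sequence_Galois} and that the specialisation is equivariantly compatible with the twisting, so that the residue lands in the correct smaller scheme $Z_{\delta_\phi}$ with the correct residual cocycle $\underline\phi$. Once this is settled, the already established Laurent case (Theorem \ref{thm_GP_fix}) and the henselian lifting of smooth morphisms complete the argument without further difficulty.
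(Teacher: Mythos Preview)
Your overall strategy coincides with the paper's: reduce to the purely geometric case, use Theorem \ref{thm_GP_fix} for the Laurent equivalences $(i')\Leftrightarrow(ii')\Leftrightarrow(iii')\Leftrightarrow(iv')$, use henselian lifting for $(i)\Leftrightarrow(i')$, and close the loop by proving $(iv)\Rightarrow(iv')$ via specialisation along $v$. The identification $Y_k=(Z_{\delta_\phi,k})^{\underline\phi^{geo}}$ and the inertia analysis (diagonal $\mu_n$ is the inertia of $L_{w_n}/K_v$, hence the reduction lands in $Z_{\delta_\phi}$) are exactly the paper's ingredients.

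There is one genuine imprecision in your main step. You write ``the valuative criterion of properness extends $\zeta$ to a point $\widetilde\zeta\in({_\phi Z})(\cO_{K_v})$''. But ${_\phi Z}$ is only defined over $A_D$, and $A_D\not\subset\cO_{K_v}$ (the $f_i$ have $v(f_i)=1$, so their inverses are not integral). Moreover the Galois cover $L_{w_n}/K_v$ is ramified, so $\cO_{L_{w_n}}/\cO_{K_v}$ is not \'etale and one cannot simply descend ${_\phi Z}$ to an $\cO_{K_v}$-scheme. The paper avoids this by never invoking such a model: it writes $({_\phi Z})(K_v)=\{z\in Z(L_{w_n})\mid \phi(\sigma)\cdot\sigma(z)=z\ \forall\sigma\}$, then uses that $Z$ is projective over $A$ (and $A\subset\cO_{w_n}$) to get $Z(L_{w_n})=Z(\cO_{w_n})$, and specialises this $\Gamma$-equivariant set directly. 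Your subsequent paragraph effectively does the same thing, so the argument survives once you drop the spurious intermediate object $({_\phi Z})(\cO_{K_v})$ and go straight to the twisted-invariant description inside $Z(\cO_{L_{w_n}})$.
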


Once again the projectivity
of $Z$ is used to insure  that  
the twisted fppf  $A_D$--sheaf ${_\phi Z}$ by Galois descent is representable according to \cite[\S 6.2]{BLR} as well as other twists.
The generalization to the proper smooth case is similar
with Remark \ref{rem_proj}.

\begin{proof}  The smoothness of $Y$ follows 
of \cite[A.8.10.(2)]{CGP} so that the henselian lemma provides
the equivalence $(i) \Longleftrightarrow (i')$.
For the other implications we can assume without loss
of generality that $\phi$ is purely geometrical.
 We claim that we have the following implications
\[\xymatrix{
(i) \ar@{<=>}[d]& \Longrightarrow & (ii) & \Longrightarrow &
(iii) & \Longrightarrow & (iv) \ar@{:>}[d]\\
(i') & \Longleftrightarrow & (ii') & \Longleftrightarrow & (iii')
& \Longleftrightarrow & (iv').
}\]
The first horizontal line is obvious
and the second one is Theorem \ref{thm_GP_fix}
applied to the action on $G_{\Delta_\phi}$
on $Z_{\delta_\phi}$ by noticing
that $Y_k= ( Z_{\delta_\phi,k})^{\underline{\phi}_{geo}}$.
It is then enough to prove the implication 
$(iv) \Longrightarrow (iv')$.

Let  $\phi: \Gal(B_{D,n}/A_D) \to G(B)$ be a  tame loop $1$-cocycle for some Galois cover $B_{D,n}/A_D$ as above for  some $n$ prime to $p$.
 We assume that 
$({_\phi Z})(K_v) \not = \emptyset$.
By definition we have
$$
({_\phi Z})(K_v) = \bigl\{ z \in Z(L_{w_n})
\, \mid \, \phi(\sigma). \sigma(z)=z  \enskip
\forall \sigma  \in \Gal(L_n/K) \bigr\}
$$
and our assumption is that this set is non-empty.
Let $O_{w_n}$ be the valuation ring of 
$Z(L_{w_n})$. Since $Z$ is projective over $X$, 
we have a specialization map
$Z(L_{w_n}) = Z(\cO_{w_n}) \to Z_k\bigl(l(t_1^{\frac{1}{n}},
\dots t_{r-1}^{\frac{1}{n}}, t_r, \dots , t_{d-1} ) \bigr)$. 
We get that the set 
$$
\Bigl\{ z \in Z_k\bigl(k(t_1^{\frac{1}{n}},
\dots t_{r-1}^{\frac{1}{n}} ) , t_r, \dots , t_{d-1}\bigr) \mid \, \phi(\sigma). \sigma(z)=z  \enskip
\forall \sigma  \in \Gal(L_{w_n}/K_v) \cong  \mu_n(l)^r \rtimes \Gal(l/k) \Bigr\}
$$
is not empty.  This set is nothing but 
$(_{\underline \phi}\!Z_{\delta_\phi,k})\bigl( k(t_1, \dots, t_{r-1}, t_r, \dots, t_{d-1}) \bigr)$ so we win.
\end{proof}

\subsection{Irreducibility and anisotropicity}

Let $G$ be a reductive $A$--group scheme
and consider an exact sequence 
$1 \to G \to \widetilde G \to J \to 1$
where $J$ is a twisted constant 
$A$--group scheme.

\begin{slemma} \label{lem_irr2} 
Let $\phi$ be a purely geometric tame loop cocycle for $\widetilde G$.
Let $(P,L)$ be a pair normalized by $\phi^{geo}$
where $P$  is  an $A$--parabolic subgroup of 
$G$  and $L$ is a  Levi $A$--subgroup of $P$.
We assume that $(P,L)$ is minimal for this property (with respect to the inclusion). Then the tame loop cocycle $\phi$ takes value in $N_{\widetilde G}(P,L)(\widetilde B)$ and it is irreducible seen as tame loop cocycle 
for $N_{\widetilde G}(P,L)$.
\end{slemma}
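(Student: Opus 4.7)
The plan is to transpose the argument of Lemma~\ref{lem_irr} from the field case to the relative setting over $X=\Spec(A)$. Put $\widetilde L = N_{\widetilde G}(P,L)$ and recall from \eqref{eq_JPL} and Lemma~\ref{lem_JPL} the exact sequence $1 \to L \to \widetilde L \to J_{P,L} \to 1$ with $J_{P,L}$ a twisted constant $A$--group scheme, so that the reductive part of $\widetilde L$ coincides with $L$.

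First, since $\phi$ is purely geometrical we have $\phi^{ar}=1$, and by Lemma~\ref{lem_dico2} the cocycle $\phi$ is then encoded by the $A$--group homomorphism $\phi^{geo}:\prod_{i=1}^{r}\mu_n \to \widetilde G$. By hypothesis the image of $\phi^{geo}$ normalizes $(P,L)$, so $\phi^{geo}$ factors through $N_{\widetilde G}(P,L)$; hence $\phi$ takes values in $N_{\widetilde G}(P,L)(\widetilde B)$, which is the first assertion.

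Second, I prove irreducibility by contradiction. Denote by $\psi$ the resulting tame-loop cocycle for $\widetilde L$. Assume $\psi$ is reducible: then there exist a proper $A$--parabolic subgroup $Q$ of $L$ and a Levi $A$--subgroup $M$ of $Q$ such that $\psi^{geo}=\phi^{geo}$ normalizes $(Q,M)$. Writing $U$ for the unipotent radical of $P$, set $P' = U \rtimes Q$. By the relative version of the Borel--Tits construction (SGA3 XXVI), $P'$ is an $A$--parabolic subgroup of $G$ strictly contained in $P$, and $M$ is a Levi $A$--subgroup of $P'$. Since $\phi^{geo}$ normalizes both $P$ and $L$ it normalizes $U$, and together with the hypothesis that it normalizes $Q$ this shows that it normalizes $P'$; similarly it normalizes $M$. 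Thus $\phi^{geo}$ normalizes the pair $(P',M)$, contradicting the minimality of $(P,L)$.

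The main obstacle I anticipate is not in the combinatorics of parabolics but in the preliminary identification: I must ensure that the notion of reducibility from \S\ref{sub_reductive}, when applied to the tame-loop cocycle $\psi$ for $\widetilde L$, genuinely produces a proper $A$--parabolic subgroup of the reductive group scheme $L=(\widetilde L)^0$ (and not only of some fiber), together with an $A$--Levi subgroup normalized by the geometric part. This relies on the $A$--\'etaleness of $J_{P,L}$ established in Lemma~\ref{lem_JPL} to identify $(\widetilde L)^0$ with $L$; once this is in place, the remaining steps are a direct transcription of the field-case argument, using only the parabolic-within-Levi construction from SGA3 XXVI.
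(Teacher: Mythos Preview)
Your proof is correct and follows essentially the same route as the paper's own argument: both reduce to the parabolic-within-Levi construction $P' = U \rtimes Q$ to contradict minimality. The only cosmetic differences are that you cite SGA3~XXVI for this construction whereas the paper cites \cite[prop.\ 4.4.c]{BoT65}, and you are somewhat more explicit than the paper in justifying why $\phi$ lands in $N_{\widetilde G}(P,L)(\widetilde B)$ (via Lemma~\ref{lem_dico2} and the purely geometric hypothesis) and why $\phi^{geo}$ normalizes $U$ and hence $P'$; the paper leaves these points implicit.
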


\begin{proof} We put $\widetilde L= N_{\widetilde G}(P,L)$.
The assumption implies that the tame loop cocycle $\phi$ takes value in $\widetilde L(k_s)$. We assume that  the image of $\phi$
in $Z^1( \pi^t(X,x), \widetilde L(k_s))$ is  reducible, that is, there
exists a pair $(Q,M)$ normalized by $\phi^{geo}$ such that 
$Q$ is a proper $k$--parabolic subgroup of $L=(\widetilde L)^0$
and $M$ a Levi subgroup of $Q$.
We have a Levi decomposition $P=U \rtimes L$
and remind to the reader that $P'=U \rtimes Q$
is a  $k$--parabolic subgroup of $G$ satisfying
$P' \subsetneq P$ \cite[Proposition 4.4.c]{BoT65}. 
Also  $M$ is a Levi subgroup of $P'$
 normalized by  $\phi^{geo}$ 
contradicting the minimality of $(P,L)$.
\end{proof}

We observe that  $J(\widetilde B)=J(X^{tsc})$  so that
\begin{equation}\label{eq_J2}
\qquad H^1\bigl(  \pi_1^t( X ,x) ,  J(\widetilde B) \bigr) 
= H^1\bigl(  \pi_1^t( X ,x) ,  J(X^{tsc}) \bigr)=
\ker\bigl( H^1(X,J) \to  H^1(X^{tsc},J) \bigr)
\end{equation}

\noindent in view of \cite[Corollary 2.9.2]{Gi15}. 

\begin{slemma} \label{lem_J2}
 Let $\phi,\phi'$ be two tame loop cocycles with value
in $\widetilde G(\widetilde B)$ having same image in 
$H^1(A_D,J)$. Then there exists $\widetilde g \in \widetilde G(\widetilde B)$
such that $\phi$ and $\sigma \mapsto \widetilde g^{-1} \, \phi' \, \sigma(\widetilde g) $ have same image in 
$Z^1\bigl(  \pi_1^t( X ,x) ,  J(\widetilde B) \bigr)$.
\end{slemma}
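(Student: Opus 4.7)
The plan is to follow closely the proof of Lemma \ref{lem_J} in the Laurent setting, combining the identification \eqref{eq_J2} (or rather its analog over $U$) with the vanishing of $H^1$ of smooth affine group schemes over the strict henselization of $A$.

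First, I would apply the quotient morphism $\widetilde G \to J$ to $\phi$ and $\phi'$ in order to obtain tame-loop cocycles $\phi_J, \phi'_J \in Z^1\bigl(\pi_1^t(U,\xi), J(\widetilde B)\bigr)$. Because $J$ is twisted constant, it trivializes over some connected finite \'etale cover $B$ of $A$, and since $\Spec(B_D)$ is connected for every such $B$ (as $B$ is a regular henselian local ring), one has $J(\widetilde B) = J(U^{tsc})$. The analog of \eqref{eq_J2} with $U$ in place of $X$ (obtained by the same inflation-restriction argument based on \cite[cor.\ 2.9.2]{Gi15}) then identifies $H^1\bigl(\pi_1^t(U,\xi), J(\widetilde B)\bigr)$ with $\ker\bigl(H^1(A_D, J) \to H^1(U^{tsc}, J)\bigr)$; in particular the natural map to $H^1(A_D, J)$ is injective. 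The hypothesis therefore forces $[\phi_J] = [\phi'_J]$ in $H^1\bigl(\pi_1^t(U,\xi), J(\widetilde B)\bigr)$, which yields $j \in J(\widetilde B)$ with
\[
\phi_J(\sigma) = j^{-1}\, \phi'_J(\sigma)\, \sigma(j) \qquad \text{for all } \sigma \in \pi_1^t(U,\xi).
\]

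Second, I would lift $j$ to an element $\widetilde g \in \widetilde G(\widetilde B)$ by exploiting the exact sequence of pointed sets
\[
\widetilde G(\widetilde B) \longrightarrow J(\widetilde B) \longrightarrow H^1(\widetilde B, G).
\]
The ring $\widetilde B$ is the strict henselization of the henselian local ring $A$, hence a strictly henselian local ring with separably closed residue field $k_s$. Since $G$ is smooth affine with connected reductive fibers, Hensel's lemma combined with Steinberg's theorem yields $H^1(\widetilde B, G) \cong H^1(k_s, G_{k_s}) = 1$. Thus $\widetilde G(\widetilde B) \twoheadrightarrow J(\widetilde B)$ and the desired lift $\widetilde g$ exists.

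Setting $\phi''(\sigma) = \widetilde g^{-1}\, \phi'(\sigma)\, \sigma(\widetilde g)$, its image in $Z^1\bigl(\pi_1^t(U,\xi), J(\widetilde B)\bigr)$ is by construction $\sigma \mapsto j^{-1}\, \phi'_J(\sigma)\, \sigma(j) = \phi_J(\sigma)$, which coincides with the image of $\phi$. The main obstacle I anticipate is the verification of the $U$-version of \eqref{eq_J2}, and in particular the identification $J(\widetilde B) = J(U^{tsc})$ which requires unwinding the tame tower; once this is in place, the remaining steps are formal and parallel verbatim the Laurent argument.
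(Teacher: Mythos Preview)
Your proof is correct and follows essentially the same route as the paper's: use the identification \eqref{eq_J2} to deduce that $\phi$ and $\phi'$ have the same class in $H^1\bigl(\pi_1^t(U,\xi), J(\widetilde B)\bigr)$, then lift the conjugating element $j \in J(\widetilde B)$ to $\widetilde G(\widetilde B)$. The only cosmetic difference is in the surjectivity step: the paper argues directly via Hensel's lemma for the smooth morphism $\widetilde G \to J$ over each finite \'etale cover $B$ of $A$ and passes to the limit, whereas you phrase it through the long exact sequence and the vanishing $H^1(\widetilde B, G)=1$ over the strictly henselian ring $\widetilde B$; both are equivalent (and your remark about the $U$-version of \eqref{eq_J2} correctly addresses a notational slip in the paper).
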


\begin{proof}
According to the fact \eqref{eq_J2},  the  tame loop cocycles $\phi$, $\phi'$
have same image in $Z^1\bigl(  \pi_1^t( X ,x) ,  J(\widetilde B) \bigr)$.
Since $\widetilde G \to  J$ is smooth,
the Hensel lemma shows that  
$\widetilde G(B) \to  J(l)$ for each 
finite connected \'etale cover $B$ of $A$
with residue field $k$. By taking the limit
in the Galois tower, we obtain that  $\widetilde G(\widetilde B)$ maps onto $J(\widetilde B)$.
It follows that  there exists $\widetilde g \in \widetilde G(\widetilde B)$
such that $\phi$ and $\sigma \mapsto \widetilde g^{-1} \, \phi' \, \sigma(\widetilde g) $ have same image in 
$Z^1\bigl(  \pi_1^t( X ,x) ,  J(\widetilde B) \bigr)$.
\end{proof}

In the same spirit that Lemma \ref{lem_GP_special}, 
we have the next fact.

\begin{slemma} \label{lem_special}
If $[\phi],[\phi'] \in H^1(\pi_1^t(X,x), \widetilde G(\widetilde B))$
have same image in $H^1(K_v,\widetilde G)$, then 
$[\phi^{ar}]=[{\phi'}^{ar}] \in H^1(A, \widetilde G)$.
\end{slemma}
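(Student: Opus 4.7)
The plan is to imitate the proof of Lemma \ref{lem_GP_special}, using the blow-up construction of \S\ref{subsec_blow_up} to descend the problem to the Laurent-polynomial case over the residue field $l$, where the conclusion can be read off. First I enlarge the parameters so that both $\phi$ and $\phi'$ factor through a common cover $\Gal(B_n/A_D) \cong \mu_n(B)^r \rtimes \Gal(B/A)$ and take values in $\widetilde G(B)$. The hypothesis $[\phi] = [\phi'] \in H^1(K_v,\widetilde G)$ yields, after possibly enlarging $n$ and $B$ further, some $\widetilde g \in \widetilde G(L_{w_n})$ such that
\[
\phi'(\sigma) = \widetilde g^{-1}\, \phi(\sigma)\, \sigma(\widetilde g) \qquad \text{for every }\sigma \in \Gal(L_{w_n}/K_v).
\]

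Second, I use Bruhat--Tits theory to bring $\widetilde g$ into the integral subring $B_n \cap \cO_{w_n}$. Let $\cB_e = \cB_e(\widetilde G_{L_{w_n}})$ be the extended building and $c$ its hyperspecial point. Because $\phi(\sigma),\phi'(\sigma) \in \widetilde G(B) \subset \widetilde G(\cO_{w_n})$ fix $c$ and $c$ is Galois-invariant, both $c$ and $\widetilde g \cdot c$ lie in the $\phi$-twisted fixed set $\cB_e^{\Gamma_\phi}$, which by Rousseau's tame descent is identified with the building of the inner form ${_\phi\widetilde G}_{K_v}$. Since $\widetilde G$ admits a reductive $A$-model, ${_\phi\widetilde G}_{K_v}$ is unramified, so its hyperspecial vertices form a single orbit under the $K_v$-rational points, and I can adjust $\widetilde g$ (modifying both the cocycle representative and the conjugating element) to force $\widetilde g \cdot c = c$, which puts $\widetilde g$ into the stabilizer of $c$, namely $\widetilde G(B_n \cap \cO_{w_n})$.

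Third, I restrict the cocycle equality to $\Gal(B/A) \subset \Gal(B_n/A_D)$, which acts trivially on the $t_i^{1/n}$ and hence compatibly with the specialization of diagram \eqref{eq_laurent2}. Reducing $\widetilde g$ modulo the maximal ideal of $\cO_{w_n}$ yields $\widetilde g_0 \in \widetilde G(\uB_n)$ with
\[
\phi'^{ar}(\sigma) = \widetilde g_0^{-1}\, \phi^{ar}(\sigma)\, \sigma(\widetilde g_0) \qquad \text{in } \widetilde G(\uB_n).
\]
Since $\phi^{ar}(\sigma),\phi'^{ar}(\sigma) \in \widetilde G(B) \subset \widetilde G(l)$ are independent of the $t_i^{1/n}$, the $\Gal(B/A)$-equivariant evaluation $\uB_n \to l$, $t_i^{1/n} \mapsto 1$, sends $\widetilde g_0$ to $\widetilde g_{00} \in \widetilde G(l)$ and preserves the relation, giving $[\phi^{ar}] = [\phi'^{ar}]$ in $H^1(l/k,\widetilde G) \subseteq H^1(k,\widetilde G)$. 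Henselianity of $A$ and smoothness of $\widetilde G$ then supply the desired equality in $H^1(A,\widetilde G)$ via the bijection $H^1(A,\widetilde G) \simlgr H^1(k,\widetilde G)$.

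The main obstacle is the Bruhat--Tits integrality step: unlike in Proposition \ref{prop_GP_pure}, the anisotropy assumption that collapsed $\cB_e^{\Gamma_\phi}$ to a single point is no longer available, so the reduction of $\widetilde g$ to the stabilizer of $c$ has to exploit the unramified structure of ${_\phi\widetilde G}_{K_v}$ coming from its $A$-model together with transitivity of the action on hyperspecial vertices. Once that step is secured, the remainder of the argument is a routine two-step specialization and an application of Hensel's lemma.
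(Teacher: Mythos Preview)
Your step~2 contains a genuine gap. The assertion that ``${_\phi\widetilde G}_{K_v}$ is unramified'' because $\widetilde G$ has a reductive $A$-model is incorrect: the cocycle $\phi$ has in general a non-trivial geometric part $\phi^{geo}:\mu_n^r\to {_{\phi^{ar}}}\widetilde G$, so the twisted form $({_\phi G})_{K_v}$ splits only over the \emph{ramified} extension $L_{w_n}/K_v$ and need not be quasi-split, let alone unramified. Even granting unramifiedness, hyperspecial vertices of an extended building do not in general form a single orbit under the $K_v$-points (already $\PGL_2$ has two such orbits), so ``adjusting $\widetilde g$'' to land on $c$ is not possible in general. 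Finally, any modification of $\widetilde g$ by an element of $({_\phi\widetilde G})(K_v)$ does not correspond to a modification of $[\phi]$ or $[\phi']$ inside $H^1(\pi_1^t(X,x),\widetilde G(\widetilde B))$; those classes may only be altered by coboundaries coming from $\widetilde G(\widetilde B)$. So the integrality reduction you attempt is not available without an anisotropy hypothesis, which is precisely absent here.

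The paper circumvents this entirely by a base-change trick: replace $(A,D)$ by its Kummer cover $A'=A[T_1,\dots,T_r]/(T_1^{n}-f_1,\dots,T_r^{n}-f_r)$ for $n$ large enough. Over $A'_{D'}$ the geometric parts of $\phi$ and $\phi'$ vanish, while the arithmetic parts are unchanged (and $H^1(A,\widetilde G)\simlgr H^1(k,\widetilde G)\simlgr H^1(A',\widetilde G)$ since $A$ and $A'$ share the residue field $k$). One is then reduced to showing that the map $H^1(\Gal(B/A),\widetilde G(B))\to H^1(K_v,\widetilde G)$ has trivial fibres, which follows from two injectivity inputs: Theorem~\ref{thm_Nis} for $H^1(O_v,\widetilde G)\hookrightarrow H^1(K_v,\widetilde G)$, and Theorem~\ref{thm_FG} (iterated $r-1$ times) for $H^1(k,\widetilde G_k)\hookrightarrow H^1\bigl(k((t_1))\cdots((t_{r-1})),\widetilde G_k\bigr)$, together with the Hensel isomorphisms $H^1(A,\widetilde G)\cong H^1(k,\widetilde G_k)$ and $H^1(O_v,\widetilde G)\cong H^1\bigl(k(t_1,\dots,t_{r-1}),\widetilde G_k\bigr)$. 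No Bruhat--Tits argument is needed.
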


\begin{proof} 
After base change to $A_{n}= A[T_1,\dots, T_r]/ ( T_1^{n} - f_1, \dots, T_r^{n}- f_r)$ for $n >>0$,
 $\phi$ and $\phi'$ become  purely arithmetic.
Since  $H^1(A, \widetilde G)= H^1(A_n, \widetilde G)$
($A$ and $A_n$ are henselian local rings with same residue field), we can deal without loss of generality
with $[\phi], [\phi'] \in  H^1(\Gal(B/A), \widetilde G(B))$ for some finite Galois cover $B/A$
having same image  in $H^1(K_v, \widetilde G)$.
The map $H^1(O_v, \widetilde G) \to H^1(K_v, \widetilde G)$ is injective
(Theorem \ref{thm_Nis} in the appendix) so that  $[\phi], [\phi']$ have already same image
in $H^1(O_v, \widetilde G)$.
We consider the following diagram
\[\xymatrix{
H^1(\Gal(B/A), \widetilde G(B)) \ar[d]^{\wr} \ar[r] & H^1(O_v, \widetilde G)
\ar[d]^{\wr} \\
H^1(\Gal(l/k), \widetilde G_k(l)) \ar[r] & H^1\bigl(k(t_1,\dots, t_{d-1}), \widetilde G_k \bigr) 
}
\]
where the vertical maps are avatar of the Hensel lemma \cite[XXIV.8.1]{SGA3}.
Since $H^1(k,\widetilde G_k) 
 \to H^1\bigl(k((t_1))\dots, ((t_{d-1})), \widetilde G_k \bigr)$ is injective (Theorem \ref{thm_FG} in the appendix),
a fortiori  the bottom horizontal map is injective. Thus the horizontal 
map is injective. We conclude that  $[\phi]= [\phi'] \in H^1(\Gal(B/A), \widetilde G(B))$ as desired.
\end{proof}

We say that a tame loop cocycle $\phi: \Gal(B_{D,n}/A_D) \to \widetilde G(B)$
is {\it  reducible} if the $A$-homomorphism $\phi^{geo}: \mu_{n}^r 
\to  {_{\phi^{ar}}\!{\widetilde G}}$ is reducible, that is,  
normalizes  a pair $(P,L)$ where $P$ is a  proper parabolic $A$--subgroup of  $_{\phi^{ar}}\!{G}$ together with $L$ a Levi subgroup  of $P$.

\begin{sproposition}\label{prop_red} 
(1) The following assertions are equivalent:

\sm

(i) The tame loop cocycle $\phi$ is reducible;

\sm

(ii) The $A_D$--group ${_\phi G}$ is reducible;

\sm

(iii) The $K_v$--group $({_\phi G})_{K_v}$ is reducible.

\sm

If furthermore $\phi$ is purely geometrical, 
the above assertions imply   the following assertion:

\sm

(iv) The $k$--group $(G^{\phi^{geo}})_k$ is isotropic.

\end{sproposition}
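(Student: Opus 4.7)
The approach mirrors the Laurent analogue (Proposition \ref{prop_GP_red}), with Theorem \ref{thm_fix} replacing Theorem \ref{thm_GP_fix}. I would apply Theorem \ref{thm_fix} to the natural action of the smooth $A$-group $\widetilde G$ (locally of finite presentation, being an extension of a twisted constant group by a reductive group) on the projective smooth $A$-scheme $Z=\Par^+(G)$ of proper parabolic subgroups of $G$. Inner twisting being compatible with the formation of the total scheme of parabolics, there are canonical identifications ${_\phi Z}\simeq \Par^+({_\phi G})$ of $A_D$-schemes, and likewise after base change to $K_v$, and $({_{\phi^{ar}}Z})\simeq \Par^+({_{\phi^{ar}}G})$ over $A$.

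Since $A_D$ is connected and affine, the reductive $A_D$-group scheme ${_\phi G}$ is reducible in the sense of \S\ref{sub_reductive} if and only if it admits a proper parabolic subgroup, i.e.\ $\Par^+({_\phi G})(A_D)\neq\emptyset$; the same remark applies to the field $K_v$. Thus assertions (ii) and (iii) of the Proposition translate exactly into assertions (iii) and (iv) of Theorem \ref{thm_fix}, and their equivalence follows at once.

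For the equivalence with (i), the relevant condition of Theorem \ref{thm_fix} is (i), namely $Y(A)\neq\emptyset$ with $Y=({_{\phi^{ar}}Z})^{\phi^{geo}}=\Par^+({_{\phi^{ar}}G})^{\phi^{geo}}$. Such an $A$-point is precisely a proper parabolic subgroup scheme $P\subset {_{\phi^{ar}}G}$ normalized by $\phi^{geo}$. Clearly (i) implies $Y(A)\neq\emptyset$ by forgetting $L$; conversely, given such $P$, apply Proposition \ref{prop_morozov} with $M=\phi^{geo}(\mu_{n,A}^{r})$, which is a closed subgroup of multiplicative type of $G$ (in particular flat affine with linearly reductive geometric fibres): it produces an $M$-invariant cocharacter $\lambda:\GG_m\to G$ with $P=P_G(\lambda)$ and $L=C_G(\lambda)$ a Levi subgroup of $P$ normalized by $M$, hence by $\phi^{geo}$. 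This establishes (i)$\Leftrightarrow$(ii)$\Leftrightarrow$(iii).

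Finally, assume $\phi$ is purely geometric, so $\phi^{ar}=1$. Theorem \ref{thm_fix}(i$'$) yields $Y(k)\neq\emptyset$, producing a proper parabolic $P_k\subset G_k$ normalized by the image $M_k=\phi^{geo}(\mu_{n,A}^{r})_k$. Applying Proposition \ref{prop_morozov} to $M_k$ over $k$ furnishes an $M_k$-invariant cocharacter $\lambda:\GG_{m,k}\to G_k$ with $P_k=P_{G_k}(\lambda)$. Being $M_k$-invariant, $\lambda$ factors through the centralizer $C_{G_k}(M_k)=C_G(M)_k=(G^{\phi^{geo}})_k$ (using that centralizers of subgroups of multiplicative type in smooth group schemes commute with base change, as in \cite[A.8.10]{CGP}), so this $k$-group contains a non-trivial $\GG_m$ and is thus isotropic. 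The main obstacle, beyond invoking Theorem \ref{thm_fix} and Morozov cleanly, is the bookkeeping of the identifications ${_\phi\Par^+(G)}=\Par^+({_\phi G})$ and $(G^{\phi^{geo}})_k=C_{G_k}(M_k)$; once these are in hand, the assertions follow formally.
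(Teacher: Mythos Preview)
Your proof is correct and takes essentially the same approach as the paper: apply Theorem \ref{thm_fix} to $Z=\Par^+(G)$, translate its conditions (i), (iii), (iv), (i$'$) into statements (i)--(iv) using that over an affine base reducibility is equivalent to admitting a proper parabolic, together with Proposition \ref{prop_morozov} to recover the Levi. One cosmetic point: in the general (not purely geometric) case the image $M=\phi^{geo}(\mu_{n,A}^r)$ sits inside ${_{\phi^{ar}}G}$ rather than $G$, so Proposition \ref{prop_morozov} is being applied to the reductive $A$-group ${_{\phi^{ar}}G}$.
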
 

\begin{proof}
Once again we can assume that $\phi$ is purely geometrical.
 We apply Theorem \ref{thm_fix} to the 
$A$-schemes  $Z=\mathrm{Par}^+(G)$
 and $Y=Z^{\phi^{geo}}$. We obtain the equivalence between the following statements:

 $(i_0)$  $Y(A) \not \not = \emptyset$;

\sm

$(ii_0)$ $({_\phi Z})(A_D) \not = \emptyset$;

\sm 

$(iii_0)$ $({_\phi Z})(K_v) \not = \emptyset$;

\sm
 
 $(iv_0)$  $Y(k) \not \not = \emptyset$.

\sm

\noindent Using Proposition \ref{prop_morozov}, we have
the equivalences
 $(i_0) \Longleftrightarrow (i)$, 
$(ii_0) \Longleftrightarrow (ii)$ 
and $(iii_0) \Longleftrightarrow (iii)$.
It remains only to  show the implication $(iv_0) \Longrightarrow (iv)$. 
We assume then $(iv_0)$, i.e.\
there exists then a proper $k$--parabolic  
$P$ of $G_k$ which is normalized by $\phi^{geo}$.
According to Proposition \ref{prop_morozov}, there exists
a homomorphism $\lambda: \GG_{m,k} \to (G^{\phi^{geo}})_k$
such that $P=P_{G_k}(\lambda)$.
Since $P$ is a proper subgroup of $G$, $\lambda$
is non-trivial. Thus $(G^{\phi^{geo}})_k$ is isotropic.
\end{proof}

We say that the tame loop cocycle $\phi: \Gal(B_{D,n}/A_D) \to \widetilde G(B)$
is {\it isotropic} if the $A$-homomorphism $\phi^{geo}: \mu_{n}^r 
\to {_{\phi^{ar}}\!{\widetilde G}}$ is isotropic, that is,  centralizes  a non-trivial  split  $A$--subtorus of 
$_{\phi^{ar}}{G}$. Equivalently  the reductive $A$--group
$C_{_{\phi^{ar}}\!G}(\phi^{geo})^0$ is isotropic.

\begin{scorollary} \label{cor_PY2} The following are equivalent:
\sm

(i) $\phi$ is isotropic;

\sm

(ii) $\phi$ is reducible or the torus $({_{\phi^{ar}}C}^{\phi^{geo}})^0$ is isotropic;

\end{scorollary}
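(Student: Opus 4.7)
The plan is to obtain this corollary as a direct formal consequence of Corollary \ref{cor_richardson}, by unwinding the definitions so that the statement becomes the instance where $M = \mu_n^r$ acts on $H = {_{\phi^{ar}}G}$ via the homomorphism $\phi^{geo}$. The base scheme $S = \Spec(A)$ is connected (being local), and the group $\mu_n^r$ is of multiplicative type with $n$ prime to the residue characteristic, hence is flat affine over $A$ with linearly reductive geometric fibers, so the hypotheses of Corollary \ref{cor_richardson} (and of Proposition \ref{prop_morozov}, which underlies it) are met.

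I would then set up the following dictionary. By the very definition given just before the statement, $\phi$ is isotropic if and only if $\phi^{geo}$ centralizes a non-trivial split $A$-subtorus of $_{\phi^{ar}}G$, which is verbatim the condition that the action of $M$ on $H$ is isotropic. Similarly, $\phi$ being reducible means that $\phi^{geo}$ normalizes a pair $(P,L)$ in $_{\phi^{ar}}G$; since $M$ is of multiplicative type, Proposition \ref{prop_morozov} shows that the existence of a proper parabolic normalized by $M$ automatically produces a Levi subgroup normalized by $M$, so this condition coincides with the action of $M$ on $H$ being reducible. Finally, the radical of $H = {_{\phi^{ar}}G}$ is the twisted central torus $\rad(H) = {_{\phi^{ar}}C}$, and the action of $\phi^{geo}$ on this torus is isotropic precisely when the identity component $\bigl( ({_{\phi^{ar}}C})^{\phi^{geo}} \bigr)^{0}$ contains a non-trivial split subtorus, which is the condition appearing in (ii).

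With this dictionary in place, the three conditions of Corollary \ref{cor_richardson} match exactly the assertions (i) and (ii) of the present corollary, and the equivalence $(i) \Longleftrightarrow (ii)$ follows immediately. I do not anticipate a serious obstacle here: essentially all the work is in the setup — in particular, in having verified earlier (Proposition \ref{prop_red} and its supporting lemmas) that the notions of reducibility for $\phi$, for the action of $\phi^{geo}$, and for the twisted group scheme ${_\phi G}$ agree, and in confirming that $\mu_n^r$ has the regularity properties required to invoke Corollary \ref{cor_richardson}. The corollary is thus recorded as a formal consequence rather than as an independent argument.
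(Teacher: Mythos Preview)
Your proposal is correct and takes essentially the same approach as the paper: the paper's proof simply says ``This is similar with that of Corollary \ref{cor_PY}'', which in turn is recorded as a formal consequence of Corollary \ref{cor_richardson}, exactly as you have unwound it. One minor remark: your appeal to Proposition \ref{prop_morozov} is unnecessary here, since the definition of ``reducible'' for $\phi$ already requires $\phi^{geo}$ to normalize a pair $(P,L)$, which is verbatim the definition of a reducible action in \S\ref{sub_reductive}; the dictionary is therefore immediate without any extra input.
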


\begin{proof}  This is similar with
that of Corollary \ref{cor_PY}.
\end{proof}

\begin{sproposition}\label{prop_iso} The following assertions are equivalent:

\sm

(i) The tame loop cocycle $\phi$ is isotropic;

\sm

(ii) The $A_D$--group ${_\phi G}$ is isotropic;

\sm

(iii) The $K_v$--group $({_\phi G})_{K_v}$ is isotropic;

\sm


If furthermore $\phi$ is purely geometrical, the above statements are equivalent to the next ones:

\sm

(i') The tame loop cocycle  $\underline{\phi}$
(for $G_{\Delta_\phi,k}$) is isotropic;

\sm

(ii') The $\underline{A_D}$--group ${_{\underline \phi}\!(G_{\Delta_\phi,k} \times_k \underline{A_D})}$ is isotropic.

\sm

(iii') The $k(t_1,\dots, t_{d-1})$--group  $_{\underline \phi}\!(G_{\Delta_\phi, k} \times_k k(t_1,\dots, t_{d-1}))$ is isotropic.

\end{sproposition}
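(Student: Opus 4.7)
The implications $(i) \Rightarrow (ii) \Rightarrow (iii)$ are immediate by base change, since a split subtorus of ${_{\phi^{ar}}\!G}$ centralized by $\phi^{geo}$ descends to ${_\phi G}$ over $A_D$ and further base-changes to $K_v$. For the nontrivial direction $(iii) \Rightarrow (i)$, the plan is to split isotropy into two simpler conditions via Corollary \ref{cor_PY2} and Corollary \ref{cor_richardson}. More precisely, applying Corollary \ref{cor_richardson} to the reductive $K_v$-group $({_\phi G})_{K_v}$ shows that its isotropy forces either $({_\phi G})_{K_v}$ to be reducible, or the radical torus $({_\phi C})_{K_v}$ (with $C=\mathrm{rad}(G)$) to be $K_v$-isotropic.

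In the reducible subcase, Proposition \ref{prop_red} gives that $\phi$ itself is reducible as a tame-loop cocycle, and then Corollary \ref{cor_PY2} yields isotropy of $\phi$. In the toral subcase, the plan is to compare cocharacter modules: both $_\phi C$ over $A_D$ and $({_\phi C})_{K_v}$ over $K_v$ are split by the tame covers $B_n/A_D$ and $L_{w_n}/K_v$ respectively, and the identification $\Gal(B_n/A_D) \simeq \Gal(L_{w_n}/K_v) \simeq \mu_n(B)^r \rtimes \Gal(B/A)$ provided by \eqref{eq_sequence_Galois} shows their twisted cocharacter modules to be canonically the same. Decomposing the $\phi$-action as $\phi^{ar}$ composed with $\phi^{geo}$ then identifies the corresponding invariants with the cocharacter module of the $A$-torus $_{\phi^{ar}}(C^{\phi^{geo}})^0$, which is therefore $A$-isotropic; Corollary \ref{cor_PY2} finally gives isotropy of $\phi$.

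For the additional equivalences $(i') \Leftrightarrow (ii') \Leftrightarrow (iii')$ in the purely geometric case, I would apply Proposition \ref{prop_GP_iso} directly to the $k$-group $G_{\Delta_\phi, k}$ and to the specialized tame-loop cocycle $\underline \phi$ over $R_{r-1} = k[t_1^{\pm 1},\dots,t_{r-1}^{\pm 1}]$, which produces the entire primed chain at once. The bridge with the unprimed statements goes through Lemma \ref{lem_open}: its open immersion $G^{\phi^{geo}}/\phi^{geo}(\mu_n) \hookrightarrow (G_{\Delta_\phi})^{\phi^{geo}}$ lets one transfer the existence of a $\GG_m$-subgroup between the two sides, while a Hensel-type specialization from $A$ to the residue field $k$ completes the link. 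The main obstacle is the toral comparison step in the second subcase of $(iii) \Rightarrow (i)$: checking carefully that the Galois-twisted cocharacter modules of $({_\phi C})_{K_v}$ and of $_{\phi^{ar}}(C^{\phi^{geo}})^0$ really coincide hinges on the blow-up analysis of Section \ref{subsec_blow_up} and on the unramified-versus-ramified decomposition in \eqref{eq_sequence_Galois}.
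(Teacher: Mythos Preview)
Your proof is correct and follows essentially the same strategy as the paper: split isotropy into the reducible and toral subcases via Corollary~\ref{cor_richardson}/\ref{cor_PY2}, invoke Proposition~\ref{prop_red} for the reducible branch, compare cocharacter modules for the toral branch, and use Lemma~\ref{lem_open} for the bridge with the primed statements. The organization differs slightly: you prove $(iii)\Rightarrow(i)$ directly, whereas the paper first reduces to the purely geometrical case and then routes through the primed chain, establishing $(iii)\Rightarrow(i')$ and $(i')\Rightarrow(i)$ separately. Your direct route is a bit more economical for the unprimed equivalences, while the paper's detour through $(i')$ is natural since the primed statements must be proved anyway.

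One point to sharpen: in the bridge $(i')\Rightarrow(i)$ you invoke a ``Hensel-type specialization from $A$ to $k$'', but what is actually needed is a \emph{lift} from $k$ to $A$ of a homomorphism $\GG_{m,k}\to (G_{\Delta_\phi,k})^{\phi^{geo}}$. The paper uses Grothendieck's lifting theorem for subgroups of multiplicative type \cite[XI.4.2]{SGA3}, not Hensel's lemma; this is the precise input that guarantees the lift exists over the henselian local base $A$. After lifting, the open immersion of Lemma~\ref{lem_open} and passage to the $n$-th power (to kill the $\phi^{geo}(\mu_n)$ in the quotient) finish the job, exactly as you indicate.
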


\begin{proof} Once again we can 
assume that $\phi$ is purely geometrical. 
the equivalences $(i') \Longleftrightarrow (ii')
\Longleftrightarrow (iii')$ follow  from
Proposition \ref{prop_GP_iso} , $(i) \Longleftrightarrow (ii) \Longleftrightarrow (iii)$,
applied to the $k$--group $G_{\Delta_\phi, k}$
and the  ring 
 $k[t_1^{\pm 1}, \dots, t_{r-1}^{\pm 1}, t_r, \dots, t_{d-1} ]=\underline{A_D}$.

\sm 

\noindent $(i) \Longrightarrow (i')$.
If $\phi$ is isotropic there exists a non trivial
homomorphism $\lambda: \GG_{m,A} \to G^{\phi^{geo}}$
and induces a non trivial homomorphism
$$
\GG_{m,A} \to G^{\phi^{geo}} /\phi^{geo}(\mu_n)
\, \subset \, (G_{\Delta_\phi})^{\phi^{geo}}. 
$$
Its base change to $k$ is non-trivial \cite[IX.6.5]{SGA3}
whence a non trivial homomorphism
$$
\GG_{m,k}  \to (G_{\Delta_\phi,k})^{\underline{\phi}^{geo}}
$$
so that $\underline{\phi}$ is isotropic.

\sm

\noindent{\it Summarizing.} 
On the other hand 
we have the obvious implications
$  (i)    \Longrightarrow  (ii)  \Longrightarrow (iii)$ so this fits in the diagram
\[\xymatrix{
(i) \ar@{=>}[d] & \Longrightarrow & (ii)  & \Longrightarrow & (iii) \\
(i') & \Longleftrightarrow & (ii') & \Longleftrightarrow & (iii')
& .
}\]
It  is then  enough to establish the implications $(iii) \Longrightarrow (i')\Longrightarrow (i)$.
  
\sm

\noindent $(i') \Longrightarrow (i)$. 
We assume that tame loop cocycle  $\underline{\phi}$ is isotropic
and want to show that  $\phi$ is isotropic as  well.
There exists a monomorphism
$\lambda_k: \GG_{m,k} \to 
 (G_{\Delta_\phi,k})^{\phi^{geo}}$. 
According to Grothendieck's smoothness theorem \cite[XI.4.2]{SGA3}, it lifts to  a monomorphism
$\lambda: \GG_{m,A} \to  (G_{\Delta_\phi})^{\phi^{geo}}$. 
Since $G^{\phi^{geo}}/\phi^{geo}(\mu_n)$ is open in $(G_{\Delta_\phi})^{\phi^{geo}}$
(Lemma \ref{lem_open}), $\lambda$ factorizes through 
$G^{\phi^{geo}}/\phi^{geo}(\mu_n)$.
Then $\lambda^n$ lifts to a non-trivial
homomorphism $\GG_{m,A} \to G^{\phi^{geo}}$.
Thus  $\phi$ is isotropic.

\sm 

\noindent $(iii) \Longrightarrow (i')$.
We assume that $({_\phi G})_{K_v}$ is isotropic,
so that $({_\phi G})_{K_v}$  is reducible or its radical 
is isotropic.
If  $({_\phi G})_{K_v}$  is reducible, Proposition \ref{prop_red},  $(i) \Longrightarrow (iv)$,
shows that $(G^{\phi^{geo}})_k$ is isotropic.
A fortiori $\underline{\phi}$ is isotropic.

The last case is  when  the radical torus $Q$ of  
$({_\phi G})_{K_v}$  is isotropic.
Denoting by  $C \subset G$ the radical torus of $G$, 
we have $Q= {_\phi C}$.
Our assumption is that 
$$
\Hom_{K_v-gp}( \GG_m, Q)= \Hom_{L_{w_n}-gp}( \GG_m, C)^{\Gal(B_{D,n}/A_D)} 
$$
is not zero where the action is via the twisted action $\phi$.
 Since we have isomorphisms 
\[
\xymatrix{
\Hom_{ l(t_1^{1/n},\dots, t_{r-1}^{1/n}, t_r, \dots, t_{d-1})-gp}( \GG_m, C)^{\Gal(B_{D,n}/A_D)}   &   \Hom_{O_{w_n}-gp}( \GG_m, C)^{\Gal(B_{D,n}/A_D)} \ar[l]^{\qquad \quad \sim} 
\ar[d]^\wr \\
 &\Hom_{L_{w_n}-gp}( \GG_m, C)^{\Gal(B_{D,n}/A_D)}, 
}
\]
 we obtain then a central non-trivial homomorphism
$\theta: \GG_{m,k} \to G^{\phi^{geo}}_k$.
As in the proof of $(i) \Longrightarrow (i')$,
we conclude that 
 $\underline{\phi}$ is isotropic. 
\end{proof}

The next statement proceeds by analogy with Proposition \ref{prop_GP_pure}.

\begin{sproposition} \label{prop_pure} 
Let $\phi, \phi'$ be purely geometrical
loop cocycles given by \break 
$\phi^{geo}, \phi'_{geo} : \mu_n^r \to \widetilde G$. Assume that $\phi$ is anisotropic.
Then  the following are equivalent:

\sm

(i) $\phi^{geo}$ and ${\phi'}^{geo}$ are $\widetilde G(A)$-conjugated;

\sm

(ii) $[\phi]= [\phi'] \in H^1(A_D,\widetilde G)$;

\sm

(iii)  $[\phi]= [\phi'] \in H^1(K_v, \widetilde G)$;
 
\end{sproposition}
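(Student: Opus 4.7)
The implications $(i)\Longrightarrow(ii)\Longrightarrow(iii)$ are immediate, so only $(iii)\Longrightarrow(i)$ needs argument. By Lemma~\ref{lem_special} and the standard torsion trick we may assume $\phi^{ar}={\phi'}^{ar}=1$, so $\phi$ and $\phi'$ are given by $A$-homomorphisms $\phi^{geo}, {\phi'}^{geo}\colon \mu_n^r \to \widetilde G$ factoring through some $\Gal(B_n/A_D)$. Assumption $(iii)$ produces $\widetilde g \in \widetilde G(L_{w_n})$ with
\begin{equation}\label{eq_plan_cocycle}
{\phi'}(\sigma) = \widetilde g^{-1}\phi(\sigma)\sigma(\widetilde g)\qquad (\sigma \in \Gal(L_{w_n}/K_v)).
\end{equation}

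The first key step, mirroring the building argument of Proposition~\ref{prop_GP_pure}, is to prove $\widetilde g \in \widetilde G(O_{w_n})$. The reductive base change $G_{O_{w_n}}$ produces a hyperspecial point $c$ of the extended Bruhat--Tits building $\cB_e=\cB_e(G_{L_{w_n}})$ whose stabilizer in $\widetilde G(L_{w_n})$ equals $\widetilde G(O_{w_n})$; the point $c$ is fixed by the natural $\Gal(L_{w_n}/K_v)$-action because the model descends to $A$, and $\phi(\sigma),\phi'(\sigma)\in \widetilde G(B)\subset \widetilde G(O_{w_n})$ all fix $c$. Anisotropy of $\phi$ combined with Proposition~\ref{prop_iso} makes $({_\phi G})_{K_v}$ anisotropic, so Bruhat--Tits--Rousseau and tamely ramified descent (\cite[prop.\ 5.1.1]{Rou}, \cite{P20}) collapse the twisted fixed-point locus $\cB_e^{\Gamma_\phi}$ to $\{c\}$. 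A direct computation with~\eqref{eq_plan_cocycle} then shows $\widetilde g\cdot c \in \cB_e^{\Gamma_\phi}$, forcing $\widetilde g\cdot c=c$ and hence $\widetilde g \in \widetilde G(O_{w_n})$.

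Next, reduce~\eqref{eq_plan_cocycle} modulo the maximal ideal of $O_{w_n}$. The diagonal $\mu_n\subset \mu_n^r$, being the inertia subgroup of $L_{w_n}/K_v$, acts trivially on the residue field (see~\eqref{eq_sequence_Galois}), so the reduction yields a cocycle identity for the specialized purely geometric tame-loop cocycles $\underline\phi,\underline{\phi'}$ valued in the reductive $k$-group $G_{\Delta_\phi,k}$ over the $(r-1)$-variable Laurent setting $\underline{A_D}=k[(t_1/t_r)^{\pm 1},\dots,(t_{r-1}/t_r)^{\pm 1}]$. Since $\phi$ is anisotropic, Proposition~\ref{prop_iso} (the equivalence $(i)\Longleftrightarrow(i')$) gives $\underline\phi$ anisotropic, and Proposition~\ref{prop_GP_pure} applied to $G_{\Delta_\phi,k}$ produces $\bar u \in G_{\Delta_\phi,k}(k)$ conjugating $\underline{\phi}^{geo}$ to $\underline{\phi'}^{geo}$.

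The final step is to lift this conjugation back to $A$ and up to $\widetilde G$. Smoothness of $G_{\Delta_\phi}$ and henselianity of $A$ lift $\bar u$ to $u \in G_{\Delta_\phi}(A)$. Separately, the specialization of~\eqref{eq_plan_cocycle} along the diagonal $\mu_n$-factor shows that $\overline{\widetilde g}$ conjugates $\phi^{geo}|_{\mu_n}$ to ${\phi'}^{geo}|_{\mu_n}$ over the residue field; this is a point of a $\widetilde G^{\phi^{geo}|_{\mu_n}}$-torsor, which by Theorem~\ref{thm_FG} already has a $k$-point, hence an $A$-point by Hensel. After conjugating ${\phi'}^{geo}$ by a lift of this $A$-point we may assume $\phi^{geo}|_{\mu_n}={\phi'}^{geo}|_{\mu_n}$, so ${\phi'}^{geo}$ now lands in $G^{\phi^{geo}(\mu_n)}$; Lemma~\ref{lem_open} exhibits the open $A$-subgroup $G^{\phi^{geo}}/\phi^{geo}(\mu_n)\hookrightarrow (G_{\Delta_\phi})^{\phi^{geo}}$, and $u$ (viewed in this subgroup via the specialized conjugation) lifts to an element of $\widetilde G(A)$ conjugating $\phi^{geo}$ to ${\phi'}^{geo}$. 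The main obstacle is precisely this two-fold lifting and assembly: gluing the conjugation modulo $\phi^{geo}(\mu_n)$ obtained from the Laurent Proposition~\ref{prop_GP_pure} with that along the diagonal $\mu_n$-factor obtained from Theorem~\ref{thm_FG}, using Lemma~\ref{lem_open} to ensure the resulting $A$-point automatically lives in the correct $A$-subgroup of $\widetilde G$.
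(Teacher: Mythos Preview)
Your building-theoretic step to force $\widetilde g\in\widetilde G(O_{w_n})$ is exactly the paper's Claim~\ref{claim_cobord} (with the same ingredients: Lemma~\ref{lem_stab2}, Proposition~\ref{prop_iso}, Bruhat--Tits--Rousseau), so up to that point the arguments coincide.

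After specialization the paper takes a shorter route than yours. It does \emph{not} pass through the construction $G_{\Delta_\phi}$ nor invoke Proposition~\ref{prop_GP_pure}. Instead it reduces $\phi,\phi'$ along $\widetilde G(B)\to\widetilde G(l)$ to cocycles $\psi,\psi'$, considers directly the \emph{full} transporter
\[
X=\{h\in\widetilde G_k\mid h\,\psi^{geo}\,h^{-1}={\psi'}^{geo}\}
\]
for the entire map $\mu_n^r\to\widetilde G_k$, observes that the reduced $\widetilde g_0$ provides a point of $X$ over a purely transcendental extension of $k$, and applies Theorem~\ref{thm_FG} to obtain $X(k)\neq\emptyset$. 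Surjectivity of $\widetilde G(A)\to\widetilde G(k)$ plus \cite[XI.5.2]{SGA3} (after the reduction to an affine $\widetilde G_1$) then finishes.

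Your detour through $G_{\Delta_\phi}$ has two problems. First, a circularity: the specialization $\underline{\phi'}$ lives in $G_{\Delta_{\phi'},k}$, not $G_{\Delta_\phi,k}$, and these agree only \emph{after} you have arranged $\phi^{geo}|_{\mu_n}={\phi'}^{geo}|_{\mu_n}$; but you perform that reduction only in the last paragraph, after already having applied Proposition~\ref{prop_GP_pure}. Second, and more seriously, the ``two-fold lifting and assembly'' you flag is genuinely not carried out. Lifting $\bar u\in G_{\Delta_\phi,k}(k)$ to $u\in G_{\Delta_\phi}(A)$ by smoothness gives no reason why $u$ conjugates the $A$-level induced maps $\mu_n^r/\Delta\to G_{\Delta_\phi}$ (you only know this mod $\gm$), and even granting that, lifting further from $G_{\Delta_\phi}=\widetilde G^{\phi^{geo}(\mu_n)}/\phi^{geo}(\mu_n)$ to $\widetilde G^{\phi^{geo}(\mu_n)}$ meets an obstruction in $H^1(A,\phi^{geo}(\mu_n))$ that you do not address. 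Lemma~\ref{lem_open} only compares neutral components and does not resolve this. The paper avoids all of this by working with the transporter in $\widetilde G_k$ for the full $\mu_n^r$ from the outset.
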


\begin{proof} 
The following  implications 
$(i)  \Longrightarrow  (ii)  \Longrightarrow  (iii)$
are obvious.
It remains to prove the implication
$(iii) \Longrightarrow (i)$.
Without loss of generality we can assume that $r=n$.
We work at finite level with a basic tame cover $B_{D,n}$ of $A_D$ such that $G_l$ is split.
Our assumption is that there exists 
$\widetilde g \in \widetilde G(L_w)$
such that 
\begin{equation} \label{eq_cocycle2}
\phi(\sigma)= \widetilde g^{-1} \phi'(\sigma)\,  \sigma(\widetilde g).
\end{equation}
for all $\sigma \in \Gamma=\Gal(L_w/K_v)=
\mu_n(B)^d \rtimes \Gal(B/A)$. The key step 
is the following.

\begin{claim} \label{claim_cobord} $\widetilde g  \in \widetilde G(O_v)$.
 \end{claim}
 
We consider the extended Bruhat-Tits building 
$\cB=\cB_e(G_{L_w})$. It comes with an action 
of $\widetilde G(L_w) \rtimes \Gamma$ and with the hyperspecial point $c$
which is the unique point fixed by $(DG)^{sc}(O_w)$ \cite[9.1.19.(c)]{BT1}.
According to Lemma \ref{lem_stab2},
${\widetilde G\bigl( O_w \bigr)}$ is the stabilizer of 
$c$ for the standard action of
 $\widetilde  G(L_w)$ on
$\cB$ so that we have to prove that  
$\widetilde g \, . \, c=c$. 
Denoting by $\star$ the twisted action by $\phi$ of 
$\Gamma$ on $\cB$,
we have that 
\begin{equation}\label{eq_fix2}
\cB^{\Gamma_\phi}=\{c\}.
\end{equation}

\noindent Indeed ${_\phi G}_{K_v}$ is anisotropic 
according to  Proposition \ref{prop_iso},
$(iii) \Longrightarrow (i)$, that $(\cB)^{\Gamma_\phi}$ 
consists in one point according to the Bruhat-Tits-Rousseau's theorem \cite[5.1.27]{BT2}. Since $c$ belongs to $\cB^{\Gamma_\phi}$,
it follows that $\cB^{\Gamma_\phi}=\{c\}$.
For each $\sigma \in \Gamma$,
we have

\vskip-4mm

\begin{eqnarray} \nonumber
\sigma  \star (  \widetilde g \, . \, c) & = &
 \phi(\sigma) \, \, \sigma(\widetilde g) \, . \, 
 \sigma(c) \\  \nonumber 
& = & \phi(\sigma) \, \,  \sigma(\widetilde g) \, . \, c \qquad 
\hbox{ [$c$ is invariant under $\Gamma$]}  \\  \nonumber 
& = &   \widetilde g \, . \, \phi'(\sigma)  \, \,  c \qquad 
\hbox{[relation \enskip \ref{eq_cocycle2}]}  \\  \nonumber 
& = &   \widetilde g \, . \,  c \qquad 
\hbox{[$\phi(\gamma) \in \widetilde G(B) \subset \widetilde G(O_w)]$} .
\end{eqnarray}
Thus $\widetilde g \, . \, c=c$ 
so that $\widetilde g \in \widetilde G(O_w)$.
We use now the specialization map
$\widetilde G( O_w) \to \widetilde G\bigl( 
 l(t_1^{1/n}, \dots, t_{d-1}^{1/n})  \bigr)$.
We consider the composite $\psi: \Gamma \xrightarrow{\phi} G(B) \to G(l)$
 and similarly for $\psi'$. This is a 1-cocycle and Claim \ref{claim_cobord}
 yields
 \begin{equation} \label{eq_cocycle3}
\psi(\sigma)= \widetilde g_0^{-1} \psi'(\sigma)\,  \sigma(\widetilde g_0)
\end{equation}
for all $\sigma \in \Gamma=
\mu_n(l)^d \rtimes \Gal(l/k)$ with $\widetilde g_0 \in 
\widetilde G\bigl(l(t_1^{1/n}, \dots, t_{r-1}^{ 1/n}, t_r, \dots, t_{d-1}) \bigr)$.
Since $\psi$ and $\psi'$ are trivial on $\Gal(l/k)$,
the above equation shows that  $\widetilde g_0 \in 
\widetilde G\bigl(k(t_1^{1/n}, \dots, t_{d-1}^{ 1/n}) \bigr)$.
Next we consider the transporter 
$$
X = \bigl\{ h \in {\widetilde G}_k\, \mid \, h \, \psi^{geo}(\sigma)
\,  h^{-1}= \psi'^{geo}(\sigma) \quad  \forall \sigma \in 
\Delta(\mu_n(l))  \bigr\}.
$$
Since the diagonal $\mu_n(l)$ acts trivially on 
the field $l(t_1^{1/n}, \dots, t_{d-1}^{ 1/n})$, 
we have that $X\bigl(k(t_1^{1/n}, \dots, t_{d-1}^{ 1/n})\bigr)
\not= \emptyset$ so that $X$ is a 
 $({\widetilde G}_k)^{\psi^{geo}(\mu_n)}$-torsor.
 Theorem \ref{thm_FG} enables to conclude that 
 $X(k) \not= \emptyset$. Without loss of generality we can 
 then assume that $\psi^{geo}$ and $\psi'^{geo}$
 agree on the diagonal. In particular, $\psi^{geo}$ and $\psi'^{geo}$ have both values in 
$\widetilde G^{\phi^{geo}(\mu_n)}$. In this case, 
 Equation \eqref{eq_cocycle3} applied to
 $\Delta(\mu_n(l))$
 implies that  \break $\widetilde g_0 \in 
\widetilde G^{\phi^{geo}(\mu_n)}\bigl(k(t_1^{1/n}, \dots, t_{d-1}^{ 1/n}) \bigr)$.

We introduce the field $K_d=k(x_1,...,x_d)$
equipped with the natural action of $\mu_n^d$. Then
we have a $\mu_n^d$-isomorphism
$k(t_1, \dots, t_{d-1})\simlgr (K_d)^{\Delta(\mu_n)}$
with the assignment $t_i \mapsto \frac{x_i}{x_1}$. 

\begin{claim} The homomorphism
$\psi^{geo}: \mu_n^d \to \widetilde G^{\phi^{geo}(\mu_n)}$ is anisotropic.
\end{claim}

We denote by $\underline{\psi}$  the composite
$\mu_n^d \xrightarrow{\psi} \widetilde G^{\phi^{geo}(\mu_n)} \to     \widetilde G_{\Delta_\phi}= \widetilde G^{\phi^{geo}(\mu_n)}/\phi^{geo}(\mu_n)$. Proposition \ref{prop_iso}, $(i') \Longrightarrow (i)$, shows that  $\underline{\psi}^{geo}$ is anisotropic. A fortiori $\underline{\psi}^{geo}$ is isotropic.
The Claim is established and
we rewrite the equation  \eqref{eq_cocycle3}
in $\widetilde G^{\phi^{geo}(\mu_n)}$ 

\begin{equation} \label{eq_cocycle4}
\psi(\sigma)= h_0^{-1} \, \psi'(\sigma)\,  \sigma(h_0)
\end{equation}
for all $\gamma \in \Gamma=
 \mu_n(l)^d \rtimes \Gal(l/k)$ with $h_0 \in \widetilde G^{\phi^{geo}(\mu_n)}\bigl(k(  x_1^{1/n}, \dots, x_{d}^{ 1/n}) \bigr)$.
 This is nothing but  assertion (iii) of Proposition 
\ref{prop_GP_pure} for $d$ and $r=d$ and the $k$-group 
$\widetilde G^{\phi^{geo}(\mu_n)}$.
The equivalence $(i) \Longleftrightarrow (iii)$ of the quoted statement shows that 
$\psi^{geo}$ and $\psi'^{geo}$
are $\widetilde G^{\phi^{geo}(\mu_n)}(k)$-conjugated.
 Therefore $\psi$ and $\psi'$
 are $\widetilde G(k)$-conjugated. Since
 the map $\widetilde G(A) \to \widetilde G(k)$ is onto
 we can assume that $\psi=\psi'$ without loss of generality.
 
If $\widetilde G$ is affine, it follows that 
$\phi$ and $\phi'$ are $G(A)$-conjugated by applying
\cite[XI.5.2]{SGA3}. In the general case we reason as in the proof of Corollary \ref{cor_GP_pure} by working in an affine $A$--subgroup $\widetilde G_1 \subset \widetilde G$.
The assertion (i) is established.
\end{proof}

\subsection{The main cohomological result}

We define  the tame part $H^1_{tame}(K_v, \widetilde G) \subset H^1(K_v, \widetilde G)$ as the 
union  of $H^1(K'/K_v, \widetilde G)$ for $K'$ running over all finite tamely unramified extensions over $K$.

\begin{stheorem} \label{thm_main} The map $H^1_{\substack{tame \\ loop}}(A_D, \widetilde G) \to H^1_{tame}(K_v, \widetilde G)$
is injective.
\end{stheorem}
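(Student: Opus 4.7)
The plan is to adapt the strategy of the proof of Theorem~\ref{thm_gp_main} to the Abhyankar setting, replacing the Laurent input $R_n \hookrightarrow F_n$ by $A_D \hookrightarrow K_v$ and using the tools developed in this section (Theorem~\ref{thm_fix}, Propositions~\ref{prop_red}, \ref{prop_iso}, \ref{prop_pure}, Lemmas~\ref{lem_irr2}, \ref{lem_J2}, \ref{lem_special}). Given two tame loop cocycles $\phi,\phi'$ whose classes in $H^1_{\text{tame}}(K_v,\widetilde G)$ coincide, I will prove $[\phi]=[\phi'] \in H^1(A_D,\widetilde G)$ by a sequence of four reductions: \emph{arithmetic part}, \emph{geometric part landing in $J$}, \emph{anisotropic case}, \emph{irreducible case}, finishing with the general case.

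First, by Lemma~\ref{lem_special}, the arithmetic parts satisfy $[\phi^{ar}]=[{\phi'}^{ar}] \in H^1(A,\widetilde G)$. The standard twisting trick reduces the problem to the situation where $\phi^{ar}={\phi'}^{ar}=1$, so both cocycles are purely geometric, i.e.\ $A$-group homomorphisms $\phi^{geo},{\phi'}^{geo}: \mu_n^r \to \widetilde G$ for a suitable $n$ coprime to $p$. Next I handle the component group: the map $H^1_{\substack{tame\\loop}}(A_D,J) \to H^1_{\text{tame}}(K_v,J)$ is bijective (by the same unramified-descent computation used in Lemma~\ref{lem_gp_main}, since tame covers of $A_D$ correspond precisely to products $\mu_n^r(l)\rtimes \Gal(l/k)$-covers and similarly for $K_v$ by the analysis of \S\ref{subsec_blow_up}). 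Therefore $\phi,\phi'$ agree in $H^1(A_D,J)$, and Lemma~\ref{lem_J2} allows me to modify $\phi'$ so that $\phi^{geo},{\phi'}^{geo}$ have the same image in $J$. Letting $J_1 \subset J$ be the image of $\phi^{geo}$ (a finite $A$-group of multiplicative type) and $\widetilde G_1 = G \times_J J_1$, both cocycles take values in $\widetilde G_1(\widetilde B)$; a diagram chase using the exact sequence $1 \to \widetilde G_1 \to \widetilde G \to J/J_1 \to 1$ shows that it suffices to prove $[\phi]=[\phi'] \in H^1(A_D,\widetilde G_1)$.

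Now I dispose of the anisotropic case. When $\phi$ is anisotropic, Proposition~\ref{prop_pure} applied inside $\widetilde G_1$ yields directly that $\phi^{geo}$ and ${\phi'}^{geo}$ are $\widetilde G_1(A)$-conjugate, hence $[\phi]=[\phi']$. To move from anisotropic to irreducible, I invoke Corollary~\ref{cor_PY2}: an irreducible $\phi$ is anisotropic after quotienting by the maximal split subtorus $C_0$ of $C^{\phi^{geo}}$, where $C$ is the radical torus of $G$. Since $H^1(A_D,C_0)=H^1(K_v,C_0)=1$ (Hilbert 90 over the respective semilocal/henselian valuation rings), the commutative square
\[
\xymatrix{
H^1(A_D,\widetilde G_1) \ar@{^(->}[r] \ar[d] & H^1(A_D,\widetilde G_1/C_0) \ar[d] \\
H^1(K_v,\widetilde G_1) \ar@{^(->}[r] & H^1(K_v,\widetilde G_1/C_0)
}
\]
reduces the irreducible case to the anisotropic one just handled.

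Finally, for a general (possibly reducible) $\phi$, I choose a pair $(P,L)$ normalised by $\phi^{geo}$ minimal with respect to inclusion; Lemma~\ref{lem_irr2} ensures that $\phi$ is irreducible as a tame loop cocycle for $\widetilde L := N_{\widetilde G}(P,L)$ (which has the same structure $1 \to L \to \widetilde L \to J_{P,L} \to 1$ by \S\ref{subsec_isotrivial}, where $J_{P,L}$ is twisted constant by Lemma~\ref{lem_JPL}). The point then is to conjugate $\phi'$ into $\widetilde L(\widetilde B)$. For this I apply the fixed-point Theorem~\ref{thm_fix} to the projective $A$-scheme $Z = \widetilde G/N_{\widetilde G}(P)$ (projectivity: Lemma~\ref{lem_proj}): since $({_\phi Z})(K_v) \neq \emptyset$ (witnessed by $P$ itself), the same holds for $\phi'$, and moreover $Z^{{\phi'}^{geo}}(A) \neq \emptyset$; picking such an $A$-point and using Proposition~\ref{prop_morozov} to adjust the Levi, I may assume $\phi'$ takes values in $\widetilde L(\widetilde B)$ and that $(P,L)$ is also minimal for $\phi'$. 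A diagram chase analogous to the last step of the proof of Theorem~\ref{thm_gp_main},
\[
\xymatrix{
H^1(A_D,\widetilde L)_{irr} \ar@{^(->}[r] \ar[d] & H^1(A_D,\widetilde G) \ar[d] \\
H^1(K_v,\widetilde L)_{irr} \ar@{^(->}[r] & H^1(K_v,\widetilde G),
}
\]
combined with the already treated irreducible case for $\widetilde L$, yields $[\phi]=[\phi'] \in H^1(A_D,\widetilde G)$. The main obstacle I anticipate is the conjugation step into $N_{\widetilde G}(P,L)$: it requires not only the fixed-point theorem over a henselian base (which forces the use of the smooth-projective version of Theorem~\ref{thm_fix}) but also a careful application of Proposition~\ref{prop_morozov} over $A$, since the choice of Levi is not unique and must be adjusted to match the minimal pair associated with $\phi$.
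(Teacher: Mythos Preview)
Your plan is essentially the paper's own proof, with the same architecture: arithmetic reduction via Lemma~\ref{lem_special}, component-group reduction via Lemma~\ref{lem_J2} and the passage to $\widetilde G_1$, the anisotropic case via Proposition~\ref{prop_pure}, the irreducible case by quotienting out the maximal split central subtorus $C_0$, and the general case via a minimal pair $(P,L)$, Theorem~\ref{thm_fix} applied to $\widetilde G/N_{\widetilde G}(P)$, and the final diagram chase. Two small points of justification need correction. First, $A_D$ is \emph{not} semilocal when $r\ge 2$ (for instance $(f_1-f_2)$ gives a height-one prime of $A$ avoiding every $f_i$), so ``Hilbert~90 over semilocal rings'' is not the right reason for $H^1(A_D,C_0)=0$; the paper uses Lemma~\ref{lem_picard}, which gives $\Pic(A_D)=0$ directly. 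Second, in your final square you mark $H^1(A_D,\widetilde L)_{irr}\hookrightarrow H^1(A_D,\widetilde G)$ as an injection; the reference \cite[lemme 4.2.1]{Gi15} assumes the base is semilocal, so this is not available over $A_D$. Fortunately only the $K_v$-level injection $H^1(K_v,\widetilde L)_{irr}\hookrightarrow H^1(K_v,\widetilde G)$ is needed for the chase (this is exactly how the paper argues), so drop the unjustified arrow and the argument goes through.
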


In the case $r=1$, $A$ is a henselian DVR of fraction field
$K$ and $K_v$ is the completion of $K$.
In this case we know that the map $H^1(K,\widetilde G) \to H^1(K_v,\widetilde G)$ is bijective
\cite[Proposition 3.3.1,(2)]{GGMB} so that the statement holds. Though the case ``$r=0$" is excluded of the statement, 
it makes sense to treat  it separately as warm-up exercise.

\begin{slemma} \label{lem_r0} The map $H^1(A, \widetilde G) \to H^1_{tame}(K_v, \widetilde G)$
is injective.
\end{slemma}

\begin{proof}  We consider the following commuative diagram

\begin{equation} \label{diag15}
\xymatrix{
H^1\bigl( A ,  \widetilde G ) \ar[r] \ar[d]^{\cong}&
H^1\bigl( R_v ,  \widetilde G ) \ar@{^{(}->}[r] \ar[d]^{\cong}&
 H^1\bigl( K_v ,  \widetilde G ) \\
H^1\bigl( k ,  \widetilde G ) \ar@{^{(}->}[r]
 & H^1\bigl( k(t_1,\dots, t_{d-1}) ,  \widetilde G ). 
}
\end{equation}
where the vertical maps are the bijections of the Hensel lemma \cite[XXIV, Proposition 8.1]{SGA3};
the top  horizontal injection is  Theorem \ref{thm_Nis}
and the bottom horizontal injection 
follows of Theorem \ref{thm_FG}.
By diagram chase, we conclude that the map
$H^1\bigl( A ,  \widetilde G ) \to
 H^1\bigl( K_v ,  \widetilde G )$ is injective.
\end{proof}

We proceed now to the proof of Theorem \ref{thm_main}.

\begin{proof} The proof goes along the same steps than 
the proof of Theorem \ref{thm_gp_main}.
The injectivity of the map
$H^1_{\substack{tame \\ loop}}(A_D, \widetilde G) \to H^1(K_v, \widetilde G)$ 
rephases 
to show that the fiber at the class $[\phi]$ of any
loop cocycle $\phi$ consists in one element.
If $\phi$ is an anisotropic tame loop cocycle, 
Proposition \ref{prop_pure}, $(iii) \Longrightarrow (ii)$,  shows that the fiber
at $[\phi]$ of $H^1_{\substack{tame \\ loop}}(A_D, \widetilde G) \to H^1(K_v, \widetilde G)$
is $\{[\phi] \}$.

A first generalization is the irreducible case.
We assume then that the tame loop cocycle $\phi$
is irreducible.
Once again the usual twisting argument enables us to assume that $\phi^{ar}=1$.
Let $\phi'$ be another tame loop cocycle
such that $[\phi']=[\phi] \in H^1(K_v, \widetilde G)$.
According to Lemma \ref{lem_isotrivial}.(1),
the map  $H^1(A_D, J) \to   H^1_{tame}(K_v, J)$ is injective;
it follows that 
 $\phi$ and $\phi'$ have same image in  $H^1(A_D,J)$. 
 Next  Lemma \ref{lem_J2} permits to assume without loss
 of generality that  $\phi$ and $\phi'$ have same image in 
 $Z^1(\pi_1^t(X,x),J(\widetilde B)))$.
 We denote by $J_1$ the image of $\phi^{geo}: \widehat \ZZ'(1)^r \to J$,
 it is a finite smooth algebraic $A$--group of multiplicative type.
We put $\widetilde G_1= G_1 \times_J J_1$, by construction
$\phi$ and $\phi'$ have value in $\widetilde G_1(\widetilde B)$. 
To avoid any confusion we denote them by $\phi_1$ and $\phi'_1$.
We consider the commutative diagram of pointed sets 
\[
\xymatrix{
(J/J_1)(\widetilde B)^{\pi_1^t(X,x)_{\phi_1}} \ar[r] \ar[d]^{=} & 
H^1(\pi_1^t(X,x), \,  {_{\phi_1}\!G_1(\widetilde B)})  \ar[r] \ar[d] &
H^1(\pi_1^t(X,x), \, {_{\phi_1}G(\widetilde B)})  \ar[d] \\
\bigl({_{\phi_1}\!(J/J_1)}\bigr)(K_v) \ar[r] & H^1(K_v, \, _{\phi_1}\!\widetilde G_1)  \ar[r] & 
H^1(K_v,\,  {_{\phi_1}\!\widetilde G}).
}
\] 
The second one is associated to the exact sequence 
$1 \to {_{\phi_1}\!(\widetilde G_1}) \to
 {_{\phi_1}\!(\widetilde G )} \to {_{\phi_1}\!(J/J_1)} \to 1$ of $K_v$--spaces
and the first one is associated to the exact sequences
of $\pi_1^t(X,x)$-sets $1 \to {_{\phi_1}\!(\widetilde G_1(\widetilde B))} \to  {_{\phi_1}\!(\widetilde G(\widetilde B))}  \to {_{\phi_1}\!(J/J_1)(\widetilde B)} \to 1$.
By diagram chase involving the torsion bijection
$H^1(\pi_1^t(X,x), {_{\phi_1}\!\widetilde G_1(\widetilde B)})
\simlgr H^1(\pi_1^t(X,x), \widetilde G_1(\widetilde B))$, we see that we can arrange
 $\phi'_1$ in order
that $\phi'_1$ has same image than $\phi_1$ in $H^1(K_v,\widetilde G_1)$.
We can work then with then $\widetilde G_1$
which is generated by $G$ and the image of $\phi_1^{geo}$. 

Since the $A$--torus $C$ is central in $G$,
the  $A$--subgroup $C^{\phi^{geo}}$ is central in $\widetilde G_1$.
We denote by $C_0$ the maximal split $A$-subtorus of $C^{\phi^{geo}}$ 
and consider the central exact sequence of  $A$--group schemes
$$
1 \to C_0 \to \widetilde G_1 \to  \widetilde G_1/C_0 \to 1
$$
The gain is that the image of $\phi_1$ in 
$Z^1( \pi_1^t(X,x), (\widetilde G_1/C_0)(\widetilde B))$
is an anisotropic tame loop cocycle by applying the criterion of 
Corollary \ref{cor_PY2}.
Since $H^1(A_D,C_0)=1$ (Lemma \ref{lem_picard}), we obtain the following commutative diagram
 \[
\xymatrix{
 H^1(A_D, \widetilde G_1)  \ar[d]  & \hookrightarrow & H^1(A_D, \widetilde G_1/C_0)  \ar[d] \\
 H^1(K_v, \widetilde G_1)   & \hookrightarrow & H^1(K_v, \widetilde G_1/C_0) 
}
\]
 where the horizontal maps are injections \cite[III.3.4.5.(iv)]{Gd}. 
Proposition \ref{prop_pure} shows that $[\phi_1]$ and 
$[\phi'_1]$ have same image in $H^1(A_D, \widetilde G_1/C_0)$.
The diagram shows that $[\phi_1]= 
[\phi'_1] \in H^1(A_D, \widetilde G_1)$.
By pushing in  $H^1(A_D, \widetilde G)$
we get that $[\phi]=  [ \phi'] \in H^1(A_D, \widetilde G)$ as desired.

We deal now with the general case. The above reduction (with $\widetilde G_1$)
permits to assume that $J$ is finite \'etale so that $\widetilde G$ is affine and also that $\phi$ is purely geometric.
 Let $(P,L) \subset G$ be an $A$--parabolic subgroup $P$ of 
 $G$ together with a Levi subgroup $L$ both normalized by $\phi^{geo}$ 
 and which is minimal for this property.
Then the tame loop cocycle $\phi$ takes value in 
$\widetilde L(B)=N_{\widetilde G}(P,L)(\widetilde B)$. We 
have the exact sequence \eqref{eq_JPL} 
$$
1 \to L \to \widetilde L \to J_{P,L} \to 1
$$ 
 of smooth affine $A$-group schemes with $L= (\widetilde L)^0$
 and such that $J_{P,L}$ is twisted constant.
 We denote by $\psi$ the image of $\phi$ in 
 $Z^1\bigl(\pi_1^t(X,x), \widetilde L(\widetilde B)\bigr)$.
 Lemma \ref{lem_irr2}.(2) states that $\psi$ is irreducible.

 We deal now with the tame loop cocycle $\phi'$ having
 same image in $H^1(A_D, \widetilde G)$ as $\phi$. 
 Lemma \ref{lem_special} implies that 
 $\phi'$ is purely geometrical.
We consider the  $A$--scheme  $X= \widetilde G/ N_{\widetilde G}(P)$
which is projective according to Lemma \ref{lem_proj}.
Since $\phi$ takes values in $N_{\widetilde G}(P)(\widetilde B)$,
we have ${_\phi  X} =  ({_\phi  \widetilde G})_{K_v}/  ({_\phi N_{\widetilde G}(P)})_{K_v}$
so in particular  
 $({_\phi X})(K_v) \not = \emptyset$.
 Since  ${_{\phi'} X} \cong {_{\phi} X}$,
 it follows that  $({_{\phi'} X})(K_v) \not = \emptyset$.
Theorem \ref{thm_fix}, $(iv) \Longrightarrow (i)$,
  shows that $X^{{\phi'}^{geo}}(A)$ is not empty.
  We pick $x' \in X^{{\phi'}^{geo}}(A)$ 
and choose $x  \in X^{{\phi}^{geo}}(A)$ such that $G_x=P$. There exists 
$\widetilde g \in \widetilde G(\widetilde B)$
such that $x'=\widetilde g \, . \, x$.
For $\sigma \in \Gal(\widetilde B/A)$, we have that 
$x'=\sigma(\widetilde g) \, . \, x$ so that
$\sigma \to n_\sigma =\widetilde g^{-1} \, \sigma(\widetilde g)$
is a $1$-cocycle with value in $\widetilde G_x(\widetilde B)=N_{\widetilde G}(P)(\widetilde B)$.
For $\sigma \in  \Gal(\widetilde B/A)$, we have 
$\phi'(\sigma).x'=x'$ so that
$\phi'(\sigma).\widetilde g \, . \, x=\widetilde g \, . \, x$.
It follows that $\widetilde g^{-1} \phi'(\sigma) 
\widetilde g \, . \, x=x$. Since $n_\sigma\ x \, =x$
it follows that 
 $\phi''(\sigma)= \widetilde g^{-1} \, \phi'(\sigma)$
 fixes $x$.
\noindent Up to replace $\phi'$ by the equivalent cocycle
 $\phi''$ we can then assume that 
 $\phi'$ takes value in $N_{\widetilde G}(P)(\widetilde B)$.
As for $\phi$, we can modify further
by a coboundary in order that $\phi'$ has value in $N_{\widetilde G}(P,L)(\widetilde B)$.
We denote by $\psi': \Gamma \to N_{\widetilde G}(P,L)(\widetilde B)$
the result of this reduction.
Proposition \ref{prop_morozov} tells  us that 
$P$ admits a Levi subgroup $L'$ normalized by ${\phi'}^{geo}$.
Then $L'= \, {^g\!L}$ for some $g \in P(A)$.
 We note that  
 $P$ is minimal for this property (otherwise it will not be minimal
 for $\phi$ and equivalently for $\phi_{K_v})$.
  The above argument tells us that the image $\psi'$ of $\phi'$ in 
 $Z^1\bigl(\pi_1^t(X,x), \widetilde L(\widetilde B))$
 is irreducible. We consider now the commutative diagram

\[
\xymatrix{
 H^1(A_D,\widetilde G)  \ar[r] & H^1(K_v,\widetilde G)  \\ 
 H^1(A_D,N_{\widetilde G}(P,L))_{irr} \ar[r] \ar[u] & H^1(K_v,N_{\widetilde G}(P,L))_{irr} \incl[u] \quad .
}
\]
The bottom  horizontal map is well-defined in view of Proposition \ref{prop_red}.
 The  right vertical map is injective  \cite[lemme 4.2.1.(2)]{Gi15}.
 We have seen that $\phi$, $\phi'$ provide elements $[\psi]$, $[\psi']$ of 
 $H^1(A_D,N_{\widetilde G}(P))_{irr}$ which give then the same image in
 $H^1(K_v,N_{\widetilde G}(P))$. By diagram chase we conclude that 
 $[\phi]= [\phi'] \in H^1(A_D,N_{\widetilde G}(P))$.
 Thus  $[\phi]= [\phi'] \in H^1(A_D,\widetilde G)$ as desired.
\end{proof}

\section{Examples}
In the Laurent polynomial setting we discussed examples 
of orthogonal groups in 
\cite[\S 10]{GP13}.
Starting with the $\GL_N$-case, we will discuss for the same groups the main results of the paper
in the Abhyankar's setting.
In other words, we continue to work with the setting
of \S \ref{sect_murre}, it means with the  regular local henselian ring $A$ of dimension $d \geq 1$, a regular system of parameters $(f_1, \dots, f_d)$ and the localization 
$A_D=A_{f_1 \dots f_r}$ where $r$ is an integer
satisfying $1 \leq r \leq d$.

\subsection{The  linear group}

\begin{slemma}\label{lem_gl} For each $N \geq 1$, we have
$H^1_{\substack{tame \\ loop}}(A_D,\GL_N)=1$.
\end{slemma}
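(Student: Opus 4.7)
The plan is to apply the main cohomological result of the paper, Theorem~\ref{thm_main}, in its simplest possible case. I take $G = \widetilde G = \GL_d$, which is a smooth reductive $A$--group scheme, together with the trivial twisted constant quotient $J = 1$; the exact sequence $1 \to \GL_d \to \GL_d \to 1 \to 1$ trivially satisfies the hypotheses of that theorem, so it yields an injection
\[
H^1_{\substack{tame \\ loop}}(A_D, \GL_d) \hookrightarrow H^1_{tame}(K_v, \GL_d).
\]

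Next, since $K_v$ is a field, Hilbert's Theorem~90 gives $H^1(K_v, \GL_d) = 1$, and hence a fortiori $H^1_{tame}(K_v, \GL_d) = 1$. Combined with the injectivity above, the pointed set $H^1_{\substack{tame \\ loop}}(A_D, \GL_d)$ maps into a singleton and is therefore itself a singleton, i.e.\ trivial. This is the desired conclusion.

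I do not expect any serious obstacle: the argument is essentially one line once Theorem~\ref{thm_main} is available. As a sanity check, there is an elementary alternative route that avoids the main theorem entirely: using Lemma~\ref{lem_dico2} one can decompose a tame-loop cocycle into an arithmetic part $\phi^{ar}\colon \Gal(B/A) \to \GL_d(B)$ and a geometric part $\phi^{geo}\colon \mu_n^r \to \GL_d$. The arithmetic part is cohomologically trivial because $A$ is henselian local so every finitely generated projective $A$--module of constant rank is free; after this reduction $\phi^{geo}$ can be diagonalized into weight characters, and the associated twisted $A_D$--module then visibly decomposes as a direct sum of rank-one free $A_D$--modules. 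This direct route is however notably longer than invoking Theorem~\ref{thm_main}, so I would present only the one-step proof.
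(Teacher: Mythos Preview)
Your main proof via Theorem~\ref{thm_main} is correct: there is no circularity since the lemma sits in the Examples section, strictly after the main theorem, and Hilbert~90 over the field $K_v$ finishes the argument in one line.

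However, the paper does \emph{not} take this route. It instead uses exactly your ``sanity check'' alternative: since $A$ is local, $H^1(A,\GL_d)=1$, so every tame loop cocycle is purely geometric; the representation $\phi^{geo}\colon \mu_n^r \to \GL_{d,A}$ is diagonalizable by \cite[I.4.7.3]{SGA3}, hence factors through a split maximal torus $T\cong \GG_m^d$; and $H^1(A_D,T)=\Pic(A_D)^d=0$ (the Picard vanishing being Lemma~\ref{lem_picard} applied to the trivial cover). This is four lines, not ``notably longer'' as you suggested.

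The trade-off is this: your approach is a clean corollary of the main theorem, but it makes the lemma logically downstream of the entire Bruhat--Tits machinery used to prove Theorem~\ref{thm_main}. The paper's direct proof is self-contained, uses only elementary facts about multiplicative-type groups and the Picard group, and is more in the spirit of an ``Examples'' section where one wants to see the objects concretely rather than invoke the general result. Both are valid; the direct one is arguably more informative here.
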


\begin{proof} Since 
$H^1(A,\GL_N)=1$, any tame loop $A_D$--torsor under 
$\GL_N$ is purely geometrical. Let $\phi^{geo}: \mu_{n,A}^r \to \GL_{N,A}$
be a homomorphism of $A$-group schemes. According to \cite[I.4.7.3]{SGA3},
the representation $\phi^{geo}$ is diagonalizable so that factors through
 a maximal split maximal $A$-subtorus $T \subset \GL_N$. 
 Since $H^1(A_D,T)=\Pic(A_D)^r=0$, it follows
 that $[\phi^{geo}]=1 \in H^1(A_D,\GL_N)=1$.
Thus  $H^1_{\substack{tame \\ loop}}(A_D,\GL_N)=1$.
\end{proof}

Since $H^1(A_D, \GL_N)$ classifies
projective $A_D$--modules of rank $N$, we may expect that 
those $A_D$--modules are free. This is actually a conjecture
of Rao \cite{Rao} and  we list the known cases.

\begin{sproposition}\label{prop_gl}
Let $N \geq 1$ be an integer.
\smallskip

\noindent (1) Projective $A_D$--modules of  rank $N$ are stably free and are free if $N \geq d+1$.
In other words, $H^1(A_D, \GL_N)=1$ if $N \geq d+1$.

\smallskip

\noindent (2) If $d \leq 3$, a projective $A_D$-module of rank $N$ is free. In other words, $H^1(A_D,\GL_N)=1$.

\end{sproposition}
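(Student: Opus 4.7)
The proof combines a $K$-theoretic computation (to get stable freeness) with Bass-type cancellation (to upgrade to freeness in the appropriate rank range). Part (1) uses only regularity of $A$; part (2) additionally exploits the low dimension of $A_D$ and the triviality of $\Pic(A_D)$.

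\textbf{Part (1).} I first compute $K_0(A_D)$. Since $A$ is regular local, every finitely generated $A$-module admits a finite free resolution and $K_0(A) = G_0(A) = \ZZ$, generated by $[A]$. The ring $A_D$ is a localization of the regular ring $A$, so it is itself regular and $K_0(A_D) = G_0(A_D)$. The right-exact localization sequence in $G$-theory
$$
G_0\bigl(V(f_1\cdots f_r)\bigr) \longrightarrow G_0(A) \longrightarrow G_0(A_D) \longrightarrow 0
$$
implies that $K_0(A_D)$ is generated by $[A_D]$, hence $K_0(A_D) = \ZZ$. Consequently every finitely generated projective $A_D$-module of rank $d$ has class $d[A_D]$ in $K_0(A_D)$ and is therefore stably free. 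For the freeness claim, $\dim(A_D) \leq \dim(A) = r$ since $A_D$ is a localization of $A$; by Bass's cancellation theorem a stably free module of rank $>\dim(A_D)$ over a noetherian ring is free, so $d \geq r+1$ suffices.

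\textbf{Part (2) and the main obstacle.} I treat the cases $r=1,2,3$ in turn. For $r=1$, $A_D = \mathrm{Frac}(A)$ is a field. For $r=2$, $A_D$ is a $1$-dimensional regular ring, hence a Dedekind domain, and Lemma \ref{lem_picard} (applied to the trivial tame cover $V=U$) yields $\Pic(A_D) = 0$; hence $A_D$ is a PID and all finitely generated projectives are free. For $r=3$, the rank $1$ case follows from $\Pic(A_D)=0$, and the case $d \geq 3$ follows from Bass's theorem (since $\dim(A_D)=2$); the delicate remaining case is rank $d=2$ with $\dim(A_D)=2$, which lies exactly on the boundary of Bass's estimate and is the main obstacle. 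Stably free modules of rank $2$ on a general $2$-dimensional regular ring can fail to be free (the tangent bundle of the real $2$-sphere being the standard example), so dimension alone is insufficient. One has to exploit further structure: $A_D$ is a $2$-dimensional regular UFD with $K_0=\ZZ$ and trivial Picard group, arising as a localization of the regular henselian local ring $A$. The expected route is via Popescu's theorem (writing $A$ as a filtered colimit of smooth algebras) combined with a result of Bhatwadekar--Sridharan or Mohan Kumar about projective modules over regular affine $2$-folds with trivial $K_0$ and $\Pic$, to force stably free rank $2$ modules over $A_D$ to be free.
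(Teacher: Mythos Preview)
Your Part (1) is correct and essentially coincides with the paper's argument: both compute $K_0(A_D)=\ZZ$ from regularity and the $G$-theory localization sequence, then invoke Bass cancellation.

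For Part (2), your case analysis handles $r\leq 2$ cleanly, and for $r=3$ you correctly isolate $d=2$ as the only outstanding case (your sharper bound $\dim(A_D)\leq r-1$ is what lets you dispose of $d\geq 3$ via Bass, an improvement over Part (1)). But the case $r=3$, $d=2$ is left as a genuine gap: an ``expected route'' is not a proof. Popescu's approximation requires a geometrically regular structure morphism, and the standing hypotheses on $A$ (regular henselian local) do not force excellence or the existence of a coefficient field; even granting Popescu, the results of Mohan Kumar or Bhatwadekar--Sridharan you allude to concern smooth affine algebras of finite type over a field, and you give no mechanism for descending freeness from such approximations to the non-finite-type localization $A_D$. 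Stably free rank-$2$ modules over $2$-dimensional regular UFDs with $K_0=\ZZ$ are not automatically free, as you yourself note, so something substantial is still missing.

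The paper sidesteps this entirely by citing Gabber's purity theorem for vector bundles \cite[Thm.~2.3]{Gb}, a special case of which says that for $r\leq 3$ every projective $A_D$-module of rank $d$ extends to a projective $A$-module of rank $d$, hence is free since $A$ is local. This single reference handles all ranks and all $r\leq 3$ uniformly, with no case split and no approximation argument.
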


\begin{proof}
(1) We consider the commutative diagram 
\[
\xymatrix{
 K_0(A) \ar[r] \ar[d] & K_0(A_D) \ar[d]  \\
G_0(A) \ar@{->>}[r]  & G_0(A_D), 
}\]
where the surjectivity of the bottom map is 
\cite[6.4.1]{We}.
Since $A$ is regular, the vertical maps are isomorphisms 
({\it ibid}, Theorem 8.2).
Since $A$ is local, we have $\ZZ= K_0(A)$
so that we conclude that $\ZZ= K_0(A_D)$.
In other words, f.g.\ $A_D$--modules are stably free.
Since $A_D$ has Krull dimension $\leq d$, 
Bass stability theorem \cite[Corollary 3.2.4]{Kn}
enables to conclude that a projective $A_D$-module of rank $N \geq d+1$ is free.
\sm

\noindent (2) If $d \leq 3$, a special case of 
Gabber's purity theorem \cite[Theorem 2.3]{Gb}
tells us that a projective $A_D$-module of rank $N$
extends to a projective $A$-module of rank $N$ so is free.
\end{proof}

\begin{sremark}{\rm
 We discuss now Rao's conjecture implying that  f.g.\ $A_D$-projective modules
 are free. This is related with the following Nisnevich's conjecture \cite[conj.\ 3.5]{N2}.
Let $F$ be an arbitrary field, let $X$ be a local 
essentially smooth $F$-scheme.
Let $D$ be an $F$-divisor on $X$ such that all irreducible components $D_i$ of $D$ are smooth  and such that their pairwise intersections are transversal. 
Let $Y = X \setminus  D$ and let $E$ be a vector bundle on $Y$. Then $E$ is free.
}
\end{sremark}

\subsection{The  orthogonal group }\label{subsec_witt}
If $2$ is not invertible in $R$,
then tame loop torsors are not interesting for orthogonal groups.
We assume then that $2 \in R^\times$ and  consider the orthogonal group $\mathrm{O}(N)$ for $N\geq 1$ of the diagonal quadratic $\sum\limits_{i=1}^N X_i^2$.
We know that $H^1\big(A_D, \mathrm{O}(N)\big)$  classifies regular quadratic forms over $A_D^{N}$ \cite[\S 4.6]{Kn}. 
Our goal is to determine the set of tame loop quadratic forms.
We start with Witt groups of fields and rings
(as defined in \cite[\S I.6]{Sch}) and remind
the reader of the bijection $H^1\big(A, \mathrm{O}(N)\big)
\simlgr H^1(k, \mathrm{O}(N)\big)$  \cite[XXIV.8.1]{SGA3}.
This implies  that the map 
$W(A) \to W(k)$ is an isomorphism. We consider the map

\begin{equation}\label{eq_h}
\xymatrix{
W(k)^{2d} \ar[rr]^h && W(A_D) \\
(q_I)_{I \subset \{1,\dots, r \}} 
& \mapsto & \perp_{I \subset \{1,\dots, r\}} \enskip
\langle f_I \rangle \otimes \widetilde q_i 
}
\end{equation} 
is $f_I=\prod\limits\limits_{\in I} f_i$ and
where each $\widetilde q_I$ is an $A$-lift of the $k$-form $q_I$.
We will see that it is a  quite good approximation of $W(A_D)$.
Taking again $f_1$ as uniformizing parameter for $v$, we consider
the Springer isomorphism \cite[VI, Corollary 1.6]{Lam}
\begin{equation}\label{eq_residue}
W(K_v) \xrightarrow{s_{f_1} \oplus \partial}
W(k(t_1,\dots, t_{r-1})) \oplus W(k(t_1,\dots, t_{r-1}) ) .
\end{equation}
  The first projection is called the specialization along 
  $f_1$ and the second projection is called the residue.

\begin{sproposition} \label{prop_witt}
(1) The isomorphism \eqref{eq_residue} induces commutative diagram

\begin{equation}\label{diag_witt1}
\xymatrix{
W(K_v) \ar[r]^{s_{f_1} \oplus \partial \qquad \qquad \quad\qquad}_{\sim \qquad \qquad \quad\qquad}&
W(k(t_1,\dots, t_{r-1})) \oplus W(k(t_1,\dots, t_{r-1}))  \\
    W(A_D) \ar@{->>}[r]^{h \qquad\qquad\qquad\quad } \ar[u]&
    W(k[t_1^{\pm 1},\dots, t^{\pm}_{r-1}]) \oplus W(k[t_1^{\pm 1},\dots, t^{\pm}_{r-1}]) \ar@{^{(}->}[u] .
}
\end{equation}

\noindent (2) The map \eqref{eq_h} induces 
an isomorphism $$
W(k)^{2d} \simlgr \mathrm{Im}\Bigl( W(A_D) \to W(K_v) \Bigr)
$$
Furthermore given anisotropic 
quadratic $k$-forms $(q_I)_{I \subset \{1,\dots, r \}}$,
the $A_D$-form $\perp_{I \subset \{1,\dots, r\}} \,
\langle f_I \rangle \otimes \widetilde q_i$
is $A_D$-anisotropic and $K_v$--anisotropic.

\smallskip

\noindent (3) Let $q$ be a regular quadratic $A_D$-form 
of rank $N \geq 1$. There exists an unique 
diagonalizable quadratic $A_D$-form $q'$ of rank $N$
such that $q_{K_v} \cong q'_{K_v}$.

\smallskip

\noindent (4) For each $N \geq 1$, diagonalizable regular quadratic $A_D$-forms of rank $N$ are exactly the tame loop objets of $H^1(A_D,\rO(N))$.

\smallskip

\noindent (5) Diagonalizable regular quadratic $A_D$-forms of rank $N$ are classified by their Witt class in $W(K_v)$
and a fortiori by their Witt class in $W(A_D)$.

\end{sproposition}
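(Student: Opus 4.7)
My plan is to treat the five assertions in order, leveraging the Springer decomposition \eqref{eq_residue} together with the classification of tame loop cocycles from Lemma \ref{lem_dico2}.

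For (1), the idea is to chase a generator $\langle f_I\rangle\otimes\widetilde q_I$ of the image of $h$ through Springer. Since $v(f_i)=1$ for every $i$, one has $v(f_I)=|I|$; writing $f_I=f_r^{|I|}\prod_{i\in I\setminus\{r\}}(f_i/f_r)$ when $r\in I$, and $f_I=f_r^{|I|}\prod_{i\in I}(f_i/f_r)\cdot f_r^{-|I|}\cdot f_r^{|I|}$-style rearrangements otherwise, the two Springer components become diagonal forms whose entries are monomials in the $t_i=f_i/f_r\in k[t_1^{\pm 1},\dots,t_{r-1}^{\pm 1}]$. Hence the composition $W(A_D)\to W(K_v)\to W(k(t_1,\dots,t_{r-1}))^{\oplus 2}$ lands in $W(k[t_1^{\pm 1},\dots,t_{r-1}^{\pm 1}])^{\oplus 2}$, and injectivity of the right vertical arrow is the classical purity statement $W(R)\hookrightarrow W(\mathrm{Frac}\, R)$ for the regular ring $R=k[t_1^{\pm 1},\dots,t_{r-1}^{\pm 1}]$ with $1/2\in R$. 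For (2), I would then use Kummer theory: the $\mathbb{F}_2$-vector space $k(t_1,\dots,t_{r-1})^\times/k(t_1,\dots,t_{r-1})^{\times 2}$ contains the $2^{r-1}$ square classes of monomials $\prod t_i^{\varepsilon_i}$, and each component of Springer identifies the relevant sum of $W(k)$-summands. Anisotropy of $\perp \langle f_I\rangle\otimes\widetilde q_I$ over $A_D$ (and $K_v$) then follows because Springer decomposes anisotropic sums of anisotropic coefficient forms without cancellation (Lam, \cite{Lam}, VI.1).

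For (3), given a regular rank $n$ quadratic form $q$ over $A_D$, I apply Springer to $q_{K_v}$ and invoke (2) to lift the resulting pair of Witt classes over $k(t_1,\dots,t_{r-1})$ to a diagonal form $\perp\langle f_I\rangle\otimes\widetilde q_I$ over $A_D$; by adding or deleting hyperbolic planes one arranges its rank to equal $n$, obtaining $q'$ with $q'_{K_v}\cong q_{K_v}$. Uniqueness is then immediate from (2), since a diagonalizable form of given rank is determined by its image in $W(K_v)$. Assertion (5) is a formal consequence: Witt-equivalent diagonalizable forms of the same rank coincide by the injection of (2), and the composite $W(A_D)\to W(K_v)$ is injective on diagonalizable classes by the same token.

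The crux is (4). By Lemma \ref{lem_dico2} and the henselian identification $\pi_1(X,\xi)\simeq \Gal(k_s/k)$ (via $\pi_1(U,\xi)\to\pi_1(X,\xi)$), any tame loop class is represented by a pair $(z,\eta)$ with $z\in Z^1(\Gal(k_s/k),\rO_n(k_s))$ and $\eta:\mu_m^r\to {}_z\rO_n$ an $A$-group homomorphism (for some $m$ prime to $p$). Via $W(A)\simlgr W(k)$, the class $[z]$ produces a diagonalizable quadratic form $\widetilde q$ over $A$ with $\rO(\widetilde q)={}_z\rO_n$. Now $\eta$ is a representation of $\mu_m^r$ in $\rO(\widetilde q)$; over the splitting cover, the ambient module decomposes into weight spaces $V=\bigoplus_\chi V_\chi$, and orthogonality of the $\mu_m^r$-action forces $\widetilde q(V_\chi,V_{\chi'})=0$ unless $\chi+\chi'=0$. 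Thus $V$ splits as an orthogonal sum of hyperbolic planes $V_\chi\oplus V_{-\chi}$ (for $\chi\neq -\chi$) and $1$-dimensional self-dual lines (for $2\chi=0$, i.e.\ $\chi$ factoring through $\mu_2^r$). Descent of this decomposition to $A$ (using that all characters of $\mu_m^r$ are $A$-rational and that weight spaces for a $\mu_m^r$-action are locally direct summands) shows the twisted form is diagonal. Conversely, any diagonal form of shape $\langle u_1 f_{I_1},\dots,u_n f_{I_n}\rangle$ is obviously loop via the obvious cocycle $\mu_2^r\to\rO_n$ associated with the sign characters $\varepsilon_i(\zeta_1,\dots,\zeta_r)=\prod_{j\in I_i}\zeta_j$.

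\textbf{Main obstacle.} The delicate step is (4): carefully executing the weight-space descent from the splitting cover to $A$, and verifying that in the $m$ even case the $1$-dimensional eigen-lines give a genuine diagonal $A$-form (not just a Witt-equivalent one) with the same rank $n$. The computation for (1), while somewhat routine, also requires tracking signs and parities carefully in the Springer formula, particularly for subsets $I$ containing $r$; this needs to be done uniformly in $|I|\bmod 2$ so that the resulting monomials in the $t_i$ genuinely live in the Laurent polynomial ring and not merely in its fraction field.
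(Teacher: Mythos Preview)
Your direct approach to (4) via the weight-space decomposition is correct and is genuinely different from the paper's route. The paper deduces (4) from (3) together with the main injectivity Theorem~\ref{thm_main}: once a tame loop form $q$ and a diagonal $q'$ of the same rank agree over $K_v$, the injection $H^1_{\substack{tame\\ loop}}(A_D,\rO_n)\hookrightarrow H^1(K_v,\rO_n)$ forces $q\cong q'$ over $A_D$. You instead unpack the cocycle $(z,\eta)$ and compute the twist by hand, which bypasses Theorem~\ref{thm_main} entirely. The same applies to your derivation of uniqueness in (3) and of (5) from (2): it does work, but you should spell out the argument (match the anisotropic $k$-kernels in each $f_I$-slot via iterated Springer, then absorb the hyperbolic discrepancies using $\langle f_I\rangle\otimes\mathbb H\cong\mathbb H$ over $A_D$) rather than declare it ``immediate from (2)''.

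There is, however, a genuine gap in your treatment of (1). Your ``Hence'' asserts that the composite $W(A_D)\to W(K_v)\xrightarrow{\sim} W(k(t_1,\dots,t_{r-1}))^{\oplus 2}$ lands in $W(k[t_1^{\pm 1},\dots,t_{r-1}^{\pm 1}])^{\oplus 2}$, but you have only verified this on the classes $\langle f_I\rangle\otimes\widetilde q_I$. These are not known to generate $W(A_D)$: for $r\ge 3$ the ring $A_D$ has Krull dimension $r-1\ge 2$, and a regular quadratic form on a free $A_D$-module has no a priori reason to be diagonalizable (statement (4) itself only identifies the diagonalizable forms with the tame loop ones, not with all forms). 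The paper closes this gap by a different mechanism: working with the regular subring $C=A_D\cap R$, for which $C/f_rC=k[t_1^{\pm 1},\dots,t_{r-1}^{\pm 1}]$, it invokes Balmer's Witt groups with support and S.~Gille's d\'evissage isomorphism $W^1_{f_rC}(C)\cong W(k[t_1^{\pm 1},\dots,t_{r-1}^{\pm 1}])$ to obtain a residue $\partial':W(A_D)\to W(k[t_1^{\pm 1},\dots,t_{r-1}^{\pm 1}])$ defined on all of $W(A_D)$, with no diagonalization hypothesis. Since the containment $\mathrm{Im}\bigl(W(A_D)\to W(K_v)\bigr)\subseteq W(k)^{2^r}$ in (2) and the existence half of (3) both rest on this factorization, the gap propagates.
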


\begin{sremark}{\rm If $d \leq 3$, we have 
$W(A_D) = W_{nr}(A_D)$ according to a general purity statement \cite[Corollary 10.3]{BW}.
}
\end{sremark}

Note that the group  $W(k[t_1^{\pm 1},\dots, t^{\pm}_{r-1}])$
is well-understood as $W(k)^{2r-2}$ by iterating Karoubi-Ranicki's decomposition for the Witt group of Laurent polynomials over a regular ring 
(\cite[\S 3]{Kb}, \cite[\S 4]{Rn}), 
see also \cite{OP} for a survey.
In particular this makes clear that 
 $W(k[t_1^{\pm 1},\dots, t^{\pm}_{r-1}])$
 identifies with the  
 unramified part of $W(k(t_1,\dots, t_{r-1}))$ 
 with respect to $k[t_1^{\pm 1},\dots, t^{\pm}_{r-1}]$.
We proceed now to the proof of Proposition \ref{prop_witt}.

\begin{proof} (1) {\it Step 1:} $h$ is well-defined.
We check first that the residue  $\partial$ applies
$W(A_D)$ on 
$W(k[t_1^{\pm 1},\dots, t_{r-1}^{\pm 1}])$.
The ring $C=A_D\cap R$ is regular (Lemma \ref{lem_AC}.(3)) and
$C/f_1 C= k[t_1^{\pm 1},\dots, t_{r-1}^{\pm 1}]$ is regular as well. We can use then the commutative diagram of residues
of Balmer-Witt theory

\[\xymatrix{
W(K_v) \ar[r]^{\partial \qquad}&
W(k(t_1,\dots, t_{r-1})) & \\
W(A_D) \ar[r]^{\partial'} \ar[u] & 
W^1_{f_1 C}(C) & \ar[l]^\sim 
W(k[t_1^{\pm 1},\dots, t_{r-1}^{\pm 1}]) \ar[ul]
}\]
where the  bottom isomorphism is due to S.~Gille
 \cite[Theorem 4.12]{Ge}.
Since the specialization can be written as 
$\partial( \langle - f_1 \rangle \otimes ?)$,
the same argument shows that it takes value as well
in $W(k[t_1^{\pm 1},\dots, t_{r-1}^{\pm 1}])$.

\smallskip
\noindent {\it Step 2:} $h$ is onto by using
the map \eqref{eq_h}.

\smallskip

\noindent (2) The first assertion follows from the step 2
above. We are given anisotropic 
quadratic $k$-forms $(q_I)_{I \subset \{1,\dots, r \}}$,
and 
$\perp_{I \subset \{1,\dots, r\}} \enskip
\langle f_I \rangle \otimes \widetilde q_i$
is $K_v$-anisotropic by iterating Springer's theorem 
\cite[VI, Proposition 1.9.(2)]{Lam}.
A fortiori it is  $A_D$--anisotropic.

\smallskip

\noindent (3) As we saw before there exists 
an diagonalizable quadratic $A_D$-form $q'$ such that $[q]_{K_v}=[q']_{K_v} \in W(K_v) $ 
with $q'_{K_v}$ anisotropic.
It follows that the $\rank(q') \leq \rank(q)$
so that $q_{K_v}= (q'\perp \mathbb{H}^c)_{K_v}$
where $c$ is the Witt index of $q_{K_v}$.
  Such a $q' \perp \perp \mathbb{H}^c$ is unique since $H^1_{\substack{tame \\ loop}}\big(A_D, \mathrm{O}(N)\big)$ injects in
  $H^1\big(K_v, \mathrm{O}(N)\big)$ in view of Theorem 
  \ref{thm_main}.
  
\smallskip

\noindent (4) 
 Diagonalizable quadratic $A_D$--forms of rank $N$
arise  from the diagonal subgroup $\mu_2^N \subset \rO(N)$ as in the introduction. It follows that  
 diagonalizable quadratic $A_D$-forms are tame loop objects
  in view of Lemma \ref{lem_finite}.(3).
  We have to check that it exhausts tame loop objects.
  Let $q$ be a regular quadratic $A_D$-form of dimension $N$
which is a tame loop object.  
By  (3), there exists  a diagonalizable $A_D$-form of rank $N$
such that $q_{K_v} \cong q'_{K_v}$.
  Once again the injection  $H^1_{\substack{tame \\ loop}}\big(A_D, \mathrm{O}(N)\big)$ enables us to conclude that  $q$ is isometric to $q'$. Thus $q$ is diagonalizable.

\smallskip

\noindent (5)  This is one more consequence of 
Theorem \ref{thm_main}.

\end{proof}

\begin{scorollary} \label{cor_witt}
Let $q$ be a diagonalizable  regular quadratic $A_D$-form of rank $N$. Then for each $c\geq 1$, the following are equivalent: 
\smallskip

(i) $q$ decomposes as $q=q_0 \perp \mathbb{H}^c$ where $q_0$
is an tame loop  regular quadratic $A_D$-form of rank $N-2c$;

\smallskip

(ii)  $q_{K_v}$ decomposes as $q_{K_v}=q_1
 \perp \mathbb{H}^c$ where $q_1$
is a regular quadratic $K_v$-form of rank $N-2c$;

\smallskip

(ii') The Witt index of  $q_{K_v}$ is $\geq c$.
\end{scorollary}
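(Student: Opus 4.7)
The plan is to observe that (ii) $\Leftrightarrow$ (ii') holds by the very definition of the Witt index, while (i) $\Rightarrow$ (ii) is immediate by base change $A_D \to K_v$. The substance of the statement is therefore the implication (ii') $\Rightarrow$ (i), and for this I would exploit the tight comparison between diagonalizable $A_D$-forms and their image in $W(K_v)$ provided by Proposition \ref{prop_witt}.

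The first step is to extract a canonical anisotropic diagonalizable representative of the Witt class of $q$. Since $q$ is diagonalizable, its Witt class $[q_{K_v}]$ lies in the image of the map $h$ of \eqref{eq_h}, which by Proposition \ref{prop_witt}.(2) is an isomorphism onto that image. Choosing anisotropic $k$-representatives $(q_I)_{I \subseteq \{1,\dots,r\}}$ of the preimage of $[q_{K_v}]$ and picking $A$-lifts $\widetilde q_I$, one obtains a distinguished diagonalizable $A_D$-form $\widetilde q^{\mathrm{an}} := \perp_I \langle f_I \rangle \otimes \widetilde q_I$, of some rank $m$, which remains $K_v$-anisotropic by the second assertion of Proposition \ref{prop_witt}.(2).

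The second step is to feed in the Witt-index hypothesis. Since $[q_{K_v}] = [\widetilde q^{\mathrm{an}}_{K_v}] \in W(K_v)$ and $\widetilde q^{\mathrm{an}}_{K_v}$ is anisotropic, the Witt decomposition over $K_v$ forces $q_{K_v} \cong \widetilde q^{\mathrm{an}}_{K_v} \perp \mathbb{H}^{(n-m)/2}$; in particular $n-m$ is even and the Witt index of $q_{K_v}$ equals $(n-m)/2$. The hypothesis (ii') then reads $c \leq (n-m)/2$, so I set
$$q_0 := \widetilde q^{\mathrm{an}} \perp \mathbb{H}^{(n-2c-m)/2},$$
which is a diagonalizable regular quadratic $A_D$-form of rank $n-2c$, and is a tame loop object by Proposition \ref{prop_witt}.(4).

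The final step is to conclude $q \cong q_0 \perp \mathbb{H}^c$ by comparing $K_v$-base changes. By construction $(q_0 \perp \mathbb{H}^c)_{K_v} \cong q_{K_v}$, and both $q$ and $q_0 \perp \mathbb{H}^c$ are diagonalizable $A_D$-forms of rank $n$; Proposition \ref{prop_witt}.(5), which itself rests on Theorem \ref{thm_main} applied to the orthogonal group, therefore forces the wished isometry over $A_D$. The only delicate point I foresee is ensuring that one is working with Witt classes in $W(K_v)$ rather than $W(A_D)$: diagonalizable $A_D$-forms of the same Witt class in $W(A_D)$ need not split off hyperbolic planes cleanly, but passing to $K_v$ via Proposition \ref{prop_witt}.(5) is precisely the device that upgrades a formal Witt-index argument into an honest orthogonal decomposition over~$A_D$.
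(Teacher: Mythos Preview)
Your proof is correct and follows essentially the same route as the paper: build the anisotropic diagonalizable $A_D$-representative of the Witt class via Proposition~\ref{prop_witt}.(2), pad with hyperbolic planes to reach rank $n-2c$, and conclude the isometry over $A_D$ from Proposition~\ref{prop_witt}.(5). Your handling of the case where the Witt index of $q_{K_v}$ strictly exceeds $c$ is in fact slightly more careful than the paper's, which glosses over this by writing ``$q_1$ can be assumed anisotropic'' without adjusting the rank.
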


\begin{proof} The equivalence $(ii) \Longleftrightarrow (ii')$
follows of the definition of the Witt index and the implication $(i) \Longrightarrow (ii)$ is obvious.
Assume (ii), that is, $q_{K_v}=q_1
 \perp \mathbb{H}^c$ for some $K_v$-form $q_1$
 which can be assumed anisotropic. As we have seen in the proof of 
 Proposition \ref{prop_witt}.(3), there exists a diagonalizable quadratic $A_D$-form $q_0$ such that $q_{0, K_v}=q_1$. Since the diagonalizable quadratic form $q_0 \perp  \mathbb{H}^c$ becomes isomorphic to $q$ over $K_v$,
 Proposition \ref{prop_witt}.(5) enables us to conclude that 
$q \cong q_0 \perp  \mathbb{H}^c$.
\end{proof}

\begin{sremark}{\rm
We encourage the reader to concoct another proof by applying Proposition \ref{prop_iso} and the interpretation of
Witt decomposition with group schemes (e.g.\ \cite[5.5]{GN}). 
}
\end{sremark}

This opens  questions, first 
whether  any quadratic $A_D$-form is stably diagonalizable, that is, diagonalizable after adding a suitable hyperbolic form. This is equivalent to
the injectivity of the map $W(A_D) \to W(K_v)$ in view of 
Theorem \ref{thm_main}.



\appendix

\section{Injectivity property for non-abelian cohomology}

We start by extending a classical fact on algebraic $k$--groups 
which goes back to Bruhat and Tits in the 
reductive case \cite[Proposition 5.5]{Guo}, 
see \cite[Proposition 5.4]{F} and \cite[Theorem 5.4]{FG} for extensions.

\begin{stheorem} \label{thm_FG} Let $G$ be a locally algebraic $k$--group.
Then the map $H^1(k,G) \to H^1(k((t)),G)$ is injective.
\end{stheorem}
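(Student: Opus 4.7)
The strategy is a dévissage along the canonical short exact sequence
$$1 \to G^0 \to G \to J \to 1$$
where $G^0$ is the identity component (a connected algebraic $k$-group) and $J := \pi_0(G)$ is the étale twisted constant $k$-group of connected components. By the standard twisting trick for non-abelian $H^1$, injectivity of the restriction map is equivalent to the triviality of the pointed kernel on every twist ${}^EG$; since twists of a locally algebraic $k$-group are again locally algebraic, it suffices to establish triviality of $\ker\bigl(H^1(k,H) \to H^1(k((t)),H)\bigr)$ uniformly.

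Two ingredients feed the dévissage. For the étale piece $J$: the natural morphism $\Gal(k((t))_s/k((t))) \to \Gal(k_s/k)$ is surjective, because the Galois extension $k_s((t))/k((t))$ lies inside $k((t))_s$ and realises $\Gal(k_s/k)$ as its Galois group. Since $J$ is étale over $k$, one has canonically $J(k((t))_s) = J(k_s)$ with compatible Galois actions via this surjection, so inflation provides an injection
$$ H^1(k, J) \hookrightarrow H^1(k((t)), J) $$
together with the identification $J(k) = J(k((t)))$. For the connected algebraic piece $G^0$, the already-cited result \cite[Thm.\ 5.4]{FG} (extending Bruhat--Tits) provides the injectivity of $H^1(k, G^0) \to H^1(k((t)), G^0)$.

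Now for the dévissage proper, let $[E] \in H^1(k,G)$ have trivial image in $H^1(k((t)), G)$, and write $q : G \to J$. The image $q_*[E] \in H^1(k,J)$ lies in the kernel of $H^1(k,J) \to H^1(k((t)),J)$, hence is trivial by the first ingredient, so $[E] = q_*([E^0])$ for some $[E^0] \in H^1(k, G^0)$. Then $[E^0_{k((t))}]$ lies in the kernel of $H^1(k((t)), G^0) \to H^1(k((t)), G)$, so by exactness there exists $y \in J(k((t)))$ with $\partial_{k((t))}(y) = [E^0_{k((t))}]$. Using $J(k((t))) = J(k)$, we may take $y \in J(k)$, and by functoriality of the connecting map $[E^0_{k((t))}] = \partial_k(y)_{k((t))}$. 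Injectivity of $H^1(k,G^0) \to H^1(k((t)),G^0)$ (applied, after twisting by $\partial_k(y)$, to the connected algebraic form ${}^{\partial_k(y)}G^0$) forces $[E^0] = \partial_k(y)$ in $H^1(k, G^0)$, so $[E] = q_*(\partial_k(y)) = 1$, as desired.

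The main obstacle is the connected algebraic case itself, which is not reproved here but supplied by \cite{FG}; the present contribution is organisational, the key technical check being that the étale component group $J$ is sufficiently transparent under the base change $k \rightsquigarrow k((t))$ for its $H^1$ to inject and its $k$-points to persist, so that the short exact sequence of sheaves can be traversed cleanly on both sides without loss of information.
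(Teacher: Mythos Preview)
Your proof is correct and follows essentially the same approach as the paper: reduce to triviality of the kernel by twisting, d\'evissage along $1 \to G^0 \to G \to J \to 1$, invoke \cite{FG} for the connected piece, and diagram chase. The only minor variation is in how you handle the \'etale quotient $J$: you argue directly via surjectivity of $\Gal(k((t))_s/k((t))) \to \Gal(k_s/k)$ and inflation, whereas the paper factors $H^1(k,J) \to H^1(k[[t]],J) \to H^1(k((t)),J)$, using that the first map is split injective and the second is injective by the paper's Lemma on torsors under twisted constant groups over normal schemes. Both routes are equally valid and short.
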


\begin{proof} 
We have an exact sequence $1 \to G^0 \to G \to J \to 1$
where $G^0$ is an algebraic $k$--group
and $J$ is an \'etale $k$-group 
and then a twisted constant $k$--group scheme \cite[\S II.5.1]{DG}.
It follows that the representability facts of Lemma \ref{lem_descent0}
apply.
In particular the usual twisting argument boils
down to establish only the triviality of the kernel
of  $H^1(k,G) \to H^1(k((t)),G)$. 
We consider the commutative diagram of exact sequence of pointed sets

\[\xymatrix{
J(k) \ar[r] \ar[d]^{\wr}_\alpha & H^1(k,G^0) \ar[r] \ar[d]_\beta  &  H^1(k,G) \ar[r] \ar[d]_\gamma & 
    H^1(k,J) \ar[d]_\delta \\
    J(k((t))) \ar[r] & H^1(k((t)),G^0) \ar[r]  &  H^1(k((t)),G) \ar[r]  & 
    H^1(k((t)),J).
}\]
The map $\alpha$ is bijective, the map $\beta$ is injective
\cite[Theorem 5.4]{FG}. On the other hand, the map $\delta$
decomposes in $$ H^1(k,J) \to  H^1(k[[t]],J) \to  H^1(k((t)),J).$$
The first map is obviously split injective
and the second one  is injective  (Lemma \ref{lem_isotrivial}.(1))
so that $\delta$ is injective.  A diagram chase enables us to conclude
that $\ker(\gamma)=1$.
\end{proof}

For a  general DVR, we can extend Nisnevich result 
as follows \cite[Theorem 4.2]{N1} \cite[Theorem 1.2]{Guo} from the reductive case
to our setting.

\begin{stheorem} \label{thm_Nis}
Let $R$ be a semilocal Dedekind ring of fraction field $K$.
Let $1 \to G \to \widetilde G \to J \to 1$ be an exact 
sequence of  smooth $R$-group schemes such that 
$G$ is reductive and $J$ is twisted constant. 
Then the map $H^1(R,\widetilde G) \to H^1(K,\widetilde G)$
is injective. 
\end{stheorem}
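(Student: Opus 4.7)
The plan is to prove the injectivity by combining a twisting argument with a diagram chase on the long exact sequence of non-abelian cohomology attached to $1 \to G \to \widetilde G \to J \to 1$. Decomposing $R$ as a finite product of semilocal Dedekind domains reduces me to the case where $S=\Spec(R)$ is connected, normal and noetherian. The standard torsion trick then reduces the injectivity to showing that $\ker\bigl(H^1(R,\widetilde G) \to H^1(K,\widetilde G)\bigr)$ is trivial: the fiber at a class $[\mathcal{E}]$ is identified, via inner twisting, with the kernel for the twisted sequence $1 \to {^{\mathcal{E}}G} \to {^{\mathcal{E}}\widetilde G} \to {^{\mathcal{E}}J} \to 1$, where ${^{\mathcal{E}}G}$ is again reductive and ${^{\mathcal{E}}J}$ is again twisted constant (see \S \ref{subsec_isotrivial}).

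The three inputs I will combine are: (a) for the reductive $R$-group scheme $G$, the injectivity of $H^1(R,G) \to H^1(K,G)$, which is Guo's theorem \cite[Thm.\ 1.2]{Guo}; (b) for the twisted constant $R$-group scheme $J$, the injectivity of $H^1(R,J) \to H^1(K,J)$ provided by Lemma \ref{lem_isotrivial}.(1); and (c) the bijectivity of $J(R) \to J(K)$, which follows from Proposition \ref{prop_BS}.(1) applied to $S$ and to $J$ (since $S$ is connected, normal and noetherian and $J$ is twisted constant).

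The diagram chase itself runs on the commutative ladder of pointed sets
\[
\xymatrix{
J(R) \ar[r]^{\partial_R} \ar[d]_{\alpha} & H^1(R,G) \ar[r]^{\iota_R} \ar[d]_\beta & H^1(R,\widetilde G) \ar[r] \ar[d]_\gamma & H^1(R,J) \ar[d]_\delta \\
J(K) \ar[r]^{\partial_K} & H^1(K,G) \ar[r]^{\iota_K} & H^1(K,\widetilde G) \ar[r] & H^1(K,J)
}
\]
whose rows are exact in the pointed-set sense. Given $[\mathcal{E}] \in \ker(\gamma)$, its image in $H^1(R,J)$ lies in $\ker(\delta)=\{\ast\}$ by (b), so by exactness $[\mathcal{E}]=\iota_R([\mathcal{F}])$ for some $[\mathcal{F}] \in H^1(R,G)$. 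Then $\beta([\mathcal{F}]) \in \ker(\iota_K)=\im(\partial_K)$, so $\beta([\mathcal{F}])=\partial_K(j_K)$ for some $j_K \in J(K)$; by (c) I lift $j_K$ to a unique $j_R \in J(R)$, and $\partial_R(j_R)$ and $[\mathcal{F}]$ become equal after applying $\beta$, so they coincide in $H^1(R,G)$ by (a). Therefore $[\mathcal{F}] \in \im(\partial_R)=\ker(\iota_R)$, and pushing forward gives $[\mathcal{E}]=\iota_R([\mathcal{F}])=\ast$.

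The main obstacle here is really only the substantive input (a), namely Guo's resolution of Nisnevich's conjecture over semilocal Dedekind rings; everything else is formal bookkeeping, entirely analogous to the proof of Theorem \ref{thm_FG} in this appendix and to \cite[Thm.\ 5.4]{FG} over a field.
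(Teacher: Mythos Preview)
Your proof is correct and follows essentially the same approach as the paper: reduce to triviality of the kernel by the torsion trick, then run the diagram chase on the long exact sequence using $J(R)=J(K)$, Lemma \ref{lem_isotrivial}.(1), and Guo's theorem. The only cosmetic difference is that the paper, instead of invoking injectivity of $\beta$ directly to conclude $[\mathcal{F}]=\partial_R(j_R)$, twists $G$ by the torsor $E=p^{-1}(j_R)$ and applies Guo's theorem to the inner form ${^EG}$ to get the same conclusion; your formulation is if anything slightly more direct.
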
 

\begin{proof}
The usual twisting argument
boils down to establish the triviality of the kernel of the map 
$H^1(R,\widetilde G) \to H^1(K,\widetilde G)$.
According to Proposition  \ref{prop_BS},
we have  $J(R)=J(K)$. We consider now the 
commutative diagram of exact sequences of pointed sets
\begin{equation}\label{diag_Nis}
\xymatrix{ 
  J(R) \ar[d]^{\wr} \ar[r]^{\phi} & H^1(R,G)  \ar[r] 
  \ar@{^{(}->}[d] & H^1(R,\widetilde G)  \ar[r] \ar[d]& H^1(R,J) \ar@{^{(}->}[d]\\
  J(K) \ar[r]^{\phi}  & H^1(K,G)  \ar[r]  & H^1(K,\widetilde G)  \ar[r]& 
  H^1(K, J)
}
\end{equation}
where we reported injectivity for $G$ \cite[Theorem 1.2]{Guo} 
and for $J$
 (Lemma \ref{lem_isotrivial}.(1)).  
 Let $\widetilde \gamma$ be a class in the kernel of 
 $H^1(R,\widetilde G) \to H^1(K,\widetilde G)$.
  A diagram chase shows that $\widetilde \gamma$
  comes from $\gamma \in  H^1(R,G)$ such that $\gamma_K= (\phi(x))_K$
  for some $x \in J(R)$. We consider the $G$-torsor $E=p^{-1}(x)$; its 
  class is $\phi(x)$. Then the torsion bijection \break
  $\tau: H^1(R, {^EG}) \simlgr H^1(R,G)$ satisfies
  that $\tau^{-1}(\gamma) \in \ker\Bigl( H^1(R, {^EG}) \to  H^1(K,{^EG}) \Bigr)$.
  Since $G$ is reductive, Nisnevich-Guo's theorem shows that 
  $\tau^{-1}(\gamma)=1$ so that $\gamma_= \phi(x)$.
  Thus $\widetilde \gamma=1$ as desired.
\end{proof}

The same kind of reduction provides the following useful fact.

\begin{slemma} Under the assumptions of Theorem \ref{thm_Nis}, 
we have $\widetilde G(K)= G(K) \, \widetilde G(R)$.
\end{slemma}

\begin{proof} We consider the extended left side of the diagram \eqref{diag_Nis}
above
\begin{equation}\label{diag_Nis2}
\xymatrix{ 
 G(R) \ar[r] \ar[d]&  \widetilde G(R) \ar[d] \ar[r]^q & J(R) \ar[d]^{\wr} \ar[r]^{\phi} & H^1(R,G)  
  \ar@{^{(}->}[d] \\
  G(R) \ar[r] &  \widetilde G(R) \ar[r]^q & J(K)  \ar[r]^{\phi_K} & H^1(K,G).
}
\end{equation}
We are given  $\widetilde g \in \widetilde G(K)$.
By diagram chase, we find $\widetilde g_0 \in \widetilde G(R)$
such that  $q(\widetilde g)= q(\widetilde g_0) \in J(K)$.
It follows that  $\widetilde g \, (\widetilde g_0)^{-1} 
=g_1$ with $g_1 \in G(K)$. Thus $ \widetilde g= g_1 \, \widetilde g_0$. 
\end{proof}

\section{Appendix: (enlarged) Bruhat-Tits theory of a split reductive group} \label{app_BT}
This theory simplifies quite substantially
when we deal with the case of a split reductive group
$G$ over a field $K$ which is henselian for 
a discrete valuation. We denote by $O$ the valuation 
ring of $K$ and by $k$ its residue field (not  assumed necessarily perfect).
The $K$--group $G$ is then extended from a Chevalley $O$--group 
scheme $G$ itself extended from $\ZZ$.

Let $C$ (resp.\ $Q$)
 be the radical torus of $G$. Both tori  are  split
 and the map $p: C \to Q$ is an isogeny 
 whence an isomorphism $p_*: \widehat C^0 \otimes_\ZZ
 \RR \simlgr  \widehat Q^0 \otimes_\ZZ \RR$.
Let $(B,T)$ be a Killing couple of $G$
and put $N=N_G(T)$. 
The enlarged building of $G$ over $K$
is $$
\cB_e(G_K)=  \cB(DG_K) \times (\widehat C^0 \otimes_\ZZ \RR)
 =\cB(DG_K) \times E
$$
where $\cB(DG_K)$ is the Bruhat-Tits building 
of the derived $K$--group $DG$ which is isomorphic to the building of 
its simply connected cover $(DG)^{sc}_K$ (and of its adjoint group $G_{ad,K}$).

The valuation induces a map $v_Q: Q(K) =  \widehat Q^0 \otimes_\ZZ K^\times\to 
\widehat Q^0$, it gives rise
to an action of $Q(K)$ on $\widehat Q^0 \otimes_\ZZ \RR$
by $q. \lambda= \lambda - v_Q(q)$.
It provides as well an action on 
$Q(K)$ on $E=\widehat C^0 \otimes_\ZZ \RR$ (and $E$ has to be seen
as an affine space)
and an action of $G(K)$ through the morphism
$G(K) \to Q(K)$  (see \cite[\S 4.3]{KP}).

We have a natural action of $G(K)$
on $\cB_e(G_K)$. It acts through the action 
of $G_{ad}(K)$ on $\cB(DG_K)$
and the previous action of $G(K)$
on $(\widehat C^0 \otimes_\ZZ \RR)$.
According to \cite[9.1.19.(c)]{BT1}, there exists a unique point $\phi \in 
\cB(DG_K)$ called the center of the building
such that
$(DG)^{sc}(O)=\Stab_{(DG)^{sc}(K)}(\phi)$; we consider the point $ \phi_e=(\phi, 0 )$
of the extended building.

\begin{slemma} \label{lem_stab}
$G(O)= \Stab_{G(K)}(\phi_e)$.
\end{slemma}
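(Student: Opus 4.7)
The plan is to decompose the $G(K)$-action on $\cB_e(G_K)=\cB(DG_K)\times E$ and analyze each factor. Since $Z(G)$ acts trivially on $\cB(DG_K)$ (the action factors through $G_{ad}$) and $DG\subseteq\ker(G\to Q)$ acts trivially on $E$, for $g\in G(K)$ with images $\bar g\in G_{ad}(K)$ and $q_g\in Q(K)$ one has $g\cdot(\phi,0)=\bigl(\bar g\cdot\phi,\,-v_Q(q_g)\bigr)$ under the identification $p_*\colon E\simeq\widehat Q^0\otimes_\ZZ\RR$. Hence $g$ fixes $\phi_e$ if and only if $\bar g\in \Stab_{G_{ad}(K)}(\phi)$ and $q_g\in\ker v_Q=Q(O)$, the last equality using that $Q$ is split. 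Combining the cited \cite[9.1.19.(c)]{BT1} applied to $(DG)^{sc}$ with the central isogeny $(DG)^{sc}\to G_{ad}$ and with the fact that $T_{ad}(O)$ acts by zero translations on the apartment (hence fixes $\phi$) identifies $\Stab_{G_{ad}(K)}(\phi)$ with $G_{ad}(O)$, the standard hyperspecial parahoric of the split Chevalley model. The inclusion $G(O)\subseteq\Stab_{G(K)}(\phi_e)$ then follows at once.

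For the reverse inclusion, suppose $g\in G(K)$ fixes $\phi_e$, so that $(\bar g,q_g)\in(G_{ad}\times Q)(O)$. The key tool is the central isogeny $\iota\colon G\to G_{ad}\times Q$ whose kernel $\zeta:=Z(G)\cap DG$ is a finite $O$-group scheme of multiplicative type (a dimension count $\dim G=\dim DG+\dim Q=\dim G_{ad}+\dim Q$ confirms $\iota$ is an isogeny). The fibre $\iota^{-1}(\bar g,q_g)\to\Spec(O)$ is a $\zeta$-torsor carrying the $K$-point $g$, so its class lies in $\ker\bigl(H^1_{\mathrm{fppf}}(O,\zeta)\to H^1_{\mathrm{fppf}}(K,\zeta)\bigr)$. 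I would show this kernel vanishes by embedding $\zeta\hookrightarrow T'$ into a split $O$-torus with split quotient $T'':=T'/\zeta$ (via duality applied to a free presentation of the character group), invoking $H^1(O,T')=H^1(K,T')=0$ (Hilbert~90 together with $\Pic(O)=0$) to rewrite both $H^1$s as quotients of the form $T''(?)/\mathrm{im}\,T'(?)$, and reducing via the valuation to the injectivity of the cocharacter map $X_*(T')\hookrightarrow X_*(T'')$, whose cokernel is the finite group $X^*(\zeta)$.

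Consequently there exists $g'\in G(O)$ with $\iota(g')=(\bar g,q_g)$, and then $gg'^{-1}\in\zeta(K)=\zeta(O)$ by the valuative criterion of properness applied to the finite $O$-scheme $\zeta$. This yields $g\in G(O)\cdot\zeta(O)\subseteq G(O)$, completing the proof. The main obstacle is the cohomological injectivity step in the second paragraph: this is immediate when $\zeta$ is smooth (the residue characteristic being prime to $|\zeta|$), but in the general case one must run the concrete split-torus and cocharacter argument above in order to cover bad characteristics uniformly.
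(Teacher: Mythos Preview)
Your reduction via the central isogeny $\iota\colon G\to G_{ad}\times Q$ and the cohomological lifting argument for $\zeta=Z(G)\cap DG$ are correct and elegant; the injectivity of $H^1_{\mathrm{fppf}}(O,\zeta)\to H^1_{\mathrm{fppf}}(K,\zeta)$ via the split-torus embedding works exactly as you describe. However, there is a genuine gap earlier: your identification $\Stab_{G_{ad}(K)}(\phi)=G_{ad}(O)$ is precisely the statement of the lemma in the special case $G=G_{ad}$ (where $E=0$ and $\phi_e=\phi$), and your one-line justification does not prove it. The cited result \cite[9.1.19.(c)]{BT1} gives only $(DG)^{sc}(O)=\Stab_{(DG)^{sc}(K)}(\phi)$, and the isogeny $(DG)^{sc}\to G_{ad}$ is \emph{not} surjective on $K$-points, so you cannot simply transport the conclusion across it. Knowing that $T_{ad}(O)$ fixes $\phi$ addresses only the easy inclusion $G_{ad}(O)\subseteq\Stab_{G_{ad}(K)}(\phi)$.

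The paper confronts exactly this difficulty (for $DG$ rather than $G_{ad}$, but the issue is identical). After using a section of $T\to Q$ to reduce to $g\in DG(K)$, it writes $DG(K)=p\bigl((DG)^{sc}(K)\bigr)\cdot T'(K)$ via $H^1(K,T^{sc})=0$, and then runs a building argument: writing $g=p(g_1)\,g_2^{-1}$ with $g_2\in T'(K)$, the point $g_2\cdot\phi=p(g_1)\cdot\phi$ lies in the apartment $\cA$ and has the same \emph{type} as $\phi$ (since $(DG)^{sc}(K)$ acts type-preservingly), whence transitivity of $T^{sc}(K)$ on type-$0$ vertices of $\cA$ lets one adjust so that $g_1\in(DG)^{sc}(O)$ and $g_2\in T'(O)$ separately. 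Your cohomological trick cannot replace this step: it lifts $O$-points backwards along an isogeny, but it does not show that a stabilizing element already lies in the $O$-points on the target side. To repair your argument you would need to insert this apartment/type reasoning for $G_{ad}$ before invoking $\iota$; at that point your isogeny step becomes a clean alternative to the paper's first reduction (to $DG$), but it does not shortcut the core of the proof.
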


\begin{proof} The direct inclusion is clear. 
Conversely let $g \in G(K)$ fixing
$ \phi_e$  and denote by $q \in Q(K)$
its image under the canonical map $G(K) \to Q(K)$. Then $v_Q(q)=0$ so that 
$q \in Q(O)$. We consider the exact sequence
$1 \to DG \to G \to Q \to 1$.
According to \cite[A.2.7]{CGP}, $T'=T \cap DG$ is a maximal 
torus of $DG$ so that we have an exact sequence
of split tori $1 \to T' \to
T \to Q \to 1$.
It follows that the map $G(O) \to Q(O)$ is split 
and a fortiori onto. We can pick then $g_0 \in G(O)$
mapping on   $q$; replacing $g$ by $g g_0^{-1}$
 reduces to the case $g \in DG(K)$.
We use now the isogeny $1 \to \mu \to (DG)^{sc} \xrightarrow{p} DG \to 1$.
We have a compatible exact sequence $1 \to \mu \to T^{sc} \to 
T' \to 1$ where $T^{sc}$ is the inverse image of $T'$.
 We have  a commutative diagram of exact sequences of 
 pointed sets
\[\xymatrix{
 (DG)^{sc}(K)  \ar[r] & DG(K)  \ar[r] & 
 H^1_{flat}(K,\mu) \ar[r] & H^1(K,(DG)^{sc})   \\ 
T^{sc}(K)  \ar[r] \ar[u] & T'(K) \ar[u]  \ar[r] & 
 H^1_{flat}(K,\mu) \ar[r] \ar[u]^{\wr} & \ar[u] H^1(K,T^{sc})=0 .
}\]
By diagram chase we  get  a decomposition 
$$
(DG)^{sc}(K) = \mathrm{Im}\Bigl( (DG)^{sc}(K) 
\xrightarrow{p} DG(K) \Bigr) \, T'(K).
$$
We deal then with $g = p(g_1) \,  g_2^{-1}$ with $g_1 \in 
 (DG)^{sc}(K) $ and $g_2 \in T'(K)$.
It follows that $p(g_1). \phi= g_2. \phi$.
This point is then of same type than $\phi$
and belongs to the apartment $\cA$ defined by $T^{sc}$.
Since $T^{sc}(K)$ acts  transitively
on the points of   $\cA$ of type $0$,
there exists $g_3 \in T^{sc}(K)$ such that 
$p(g_1). \phi= g_2 .\phi=  p(g_3) .\phi$.
Up to replace $g_1$ (resp.\ $g_2$) by $g_1 \, g_3^{-1}$ 
(resp.\ $g_2 \,  p(g_3)^{-1}$) we are reduced to the case
that $p(g_1) . \phi=\phi$ and $g_2. \phi=\phi$.
It follows that  $g_1 \in (DG)^{sc}(O)$.
On the other  hand $g_2$ acts then trivially on the apartment $\cA$
hence $g_2 \in T'(O)$. Thus $g= p(g_1) \, g_2^{-1}$
belongs to $DG(O)$ as desired.
\end{proof}

We deal now with an exact sequence of locally algebraic
$k$--groups 
$1 \to G \to  \widetilde G \to J \to 1$
such that $G=\widetilde G^0$.
We claim that the action of $G(K)$
on $\cB_e(G_K)$ extends naturally to an action 
on $\widetilde G(K)$.  Pushing the above sequence
by $G \to Q$ gives rise to an exact sequence 
$1 \to Q \to  \widetilde Q \to J \to 1$
and to the commutative exact diagram of exact sequences 
aves  such that the following diagram commutes

\[\xymatrix{
 1 \ar[r] & Q(O) \ar[d] \ar[r] & \widetilde Q(O)   \ar[d]
  \ar[r] & J(O )\ar[r] \ar[d]^{\wr} & 1  \\ 
 1 \ar[r] & Q(K) \ar[r] & \widetilde Q(K)   
  \ar[r] & J(K )\ar[r] & 1  
}\]
by using the triviality of $H^1(O,Q)$ and  $H^1(K,Q)$.
Pushing one more time by \break $v_Q: Q(K) \to (\widehat Q)^0 \otimes_\ZZ \RR$
gives rise to the diagram

\[\xymatrix{ 
1 \ar[r] & Q(K) \ar[r] \ar[d] & \widetilde Q(K)   \ar[d]
  \ar[r] & J(K) \ar[r]\ar[d]^{\wr} & 1  \\ 
  0 \ar[r] & (\widehat Q)^0 \otimes_\ZZ \RR \ar[r] & 
  H    \ar[r] & J(K) \ar[r] & 1
}\]
with a canonical decomposition $H= \bigl( (\widehat Q)^0 \otimes_\ZZ \RR \Bigr) \rtimes J(K)$. The group $H$ acts on 
$(\widehat Q)^0 \otimes_\ZZ \RR \cong E$ as follows:
$$
(y, \tau).x=   \tau(x)- y .
$$
This extends then the opposite translation action of 
$(\widehat Q)^0 \otimes_\ZZ \RR$ on itself.
Composing with the projection  $\widetilde G(K) \to H$,
we extended then the action of $G(K)$ on 
$E$.
On the other hand, $\widetilde G(K)$ acts by group automorphisms
on $DG$ hence acts on $\cB(DG_K)$.
Altogether we have then an action of $\widetilde G(K)$
on $\cB_e(G_K)$.

\begin{slemma} \label{lem_stab2}
We have 
$\widetilde G(O)= \Stab_{\widetilde G(K)}(\phi_e)$.
\end{slemma}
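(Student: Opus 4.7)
My plan is to handle the two inclusions separately, reducing the nontrivial direction to the already established Lemma \ref{lem_stab} by producing an $O$-integral lift of the $J$-component of the given element. First I would verify the easy direction $\widetilde G(O) \subseteq \Stab_{\widetilde G(K)}(\phi_e)$ by checking the two factors of $\cB_e(G_K) = \cB(DG_K) \times E$ independently. For $\widetilde g \in \widetilde G(O)$, conjugation by $\widetilde g$ on the normal subgroup $DG$ is an $O$-group automorphism, and since the center $\phi$ is canonically attached to the $O$-structure (being the unique point fixed by $(DG)^{sc}(O)$), any such automorphism preserves $\phi$. On the $E$-factor, the image of $\widetilde G(O)$ in $H = ((\widehat Q)^0 \otimes_\ZZ \RR) \rtimes J(K)$ factors through $\widetilde Q(O)$, where the $Q(O)$-contribution is killed by $v_Q$; hence it lies in $\{0\} \rtimes J(O)$, and since $J$ acts linearly on $E$ the origin is fixed.

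For the converse, let $\widetilde g \in \widetilde G(K)$ fix $\phi_e$ and let $j \in J(K) = J(O)$ denote its image. The key step is to produce $\widetilde g_0 \in \widetilde G(O)$ with the same image $j$. Once this is achieved, $\widetilde g \, \widetilde g_0^{-1} \in G(K)$ will inherit the stabilization of $\phi_e$ from $\widetilde g$ and from $\widetilde g_0$ (the latter by the direct inclusion), so Lemma \ref{lem_stab} will force $\widetilde g \, \widetilde g_0^{-1} \in G(O)$, giving $\widetilde g \in \widetilde G(O)$ as desired.

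To construct the lift, I would look at the fiber $T = \widetilde G \times_J \{j\}$ of $\widetilde G \to J$ above $j$, which is a $G$-torsor over $O$. Since $\widetilde g \in T(K)$, the class $[T] \in H^1(O, G)$ becomes trivial in $H^1(K, G)$. The hard step—and the place where I would invoke serious input—is to conclude that $[T]$ is already trivial in $H^1(O, G)$. This is precisely the reductive case of the Nisnevich--Guo purity theorem, available here as the $G$-part of Theorem \ref{thm_Nis} in Appendix A: the restriction map $H^1(O, G) \hookrightarrow H^1(K, G)$ is injective for the split reductive $O$-group $G$ over the henselian DVR $O$. Given this injectivity, $T(O) \neq \emptyset$ furnishes the required element $\widetilde g_0$, and the argument closes as indicated.
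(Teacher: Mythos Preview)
Your proof is correct and follows essentially the same route as the paper: establish the easy inclusion, then for the converse lift the $J$-component to $\widetilde G(O)$ via triviality of the relevant $G$-torsor over $O$, and finish with Lemma~\ref{lem_stab}. The only difference is that you invoke the full Nisnevich--Guo injectivity (Theorem~\ref{thm_Nis}) for the kernel of $H^1(O,G) \to H^1(K,G)$, whereas the paper uses the more elementary observation that, since $G$ is split, any class in this kernel admits a Borel reduction over $O$ (via the valuative criterion applied to the twisted flag variety) and hence vanishes because $H^1(O,B)=0$.
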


\begin{proof} 
The direct inclusion is straightforward. 
Conversely we are given 
$\widetilde g \in \widetilde G(K)$ fixing $\phi_e$.
We  consider the following diagram
of exact sequences of pointed sets

\[\xymatrix{ 
  G(K) \ar[r] & \widetilde G(K) \ar[r] &  J(K) \ar[r] & H^1(K,G) \\
  G(O) \ar[r] \ar[u] & \widetilde G(O) \ar[r] \ar[u] &  J(O) \ar[r] \ar[u]^{=} & H^1(O,G) .\ar[u] 
}\]
Using the reduction argument to a Borel subgroup of $G$,
the map $H^1(O,G) \to H^1(K,G)$ has trivial kernel.
By diagram chase it follows that $\widetilde g= g \, \widetilde g_0$
with $\widetilde g_0 \in  \widetilde G(O)$ and $g \in  G(K)$. According to Lemma \ref{lem_stab}, we have that $g \in G(O)$.
Thus $\widetilde g$ belongs to $\widetilde G(O)$.
\end{proof}

 \bigskip

\bigskip

\medskip

\end{document}